\documentclass[11pt,a4paper]{article}
\usepackage{titlesec}
\usepackage{a4wide}
\usepackage{graphicx}
\usepackage{float}
\usepackage{amssymb}
\usepackage{amsmath}
\usepackage{amsthm}
\usepackage{appendix}
\usepackage{color}
\usepackage{array}
\usepackage{eucal}
\usepackage{mathrsfs}
\usepackage{esint}
\usepackage{lmodern}
\usepackage{tikz}
\usepackage{bbm}
\usepackage{hyperref}
\usepackage{geometry}
\usepackage{changepage}
\geometry{hmargin=2.3cm, vmargin=2.3cm }
\changepage{0pt}{}{}{}{}{0pt}{}{0pt}{10pt}
\usepackage[numbers]{natbib}
\setlength{\bibsep}{0.0pt}
\hypersetup{
	pdfpagemode=UseThumbs,
	pdftoolbar=true,        
	pdfmenubar=true,        
	pdffitwindow=false,     
	pdfstartview={Fit},    
	pdftitle={Maxwell's equations with hypersingularities \\[6pt]
	at a negative index material conical tip},    
	pdfauthor={A.-S. Bonnet-Ben Dhia, L. Chesnel, M. Rihani},     
	pdfsubject={},  
	pdfcreator={A.-S. Bonnet-Ben Dhia, L. Chesnel, M. Rihani},   
	pdfproducer={A.-S. Bonnet-Ben Dhia, L. Chesnel, M. Rihani}, 
	pdfkeywords={}, 
	pdfnewwindow=true,      
colorlinks=true,       
linkcolor=magenta,          
citecolor=red,        
filecolor=cyan,      
urlcolor=blue           
}

\def\u{\boldsymbol{u}}
\def\v{\boldsymbol{v}}
\def\e{\boldsymbol{e}}
\def\vb{\overline{\boldsymbol{v}}}
\def\w{\boldsymbol{w}}
\def\E{\boldsymbol{E}}

\def\H{\boldsymbol{H}}
\def\Hspace{\boldsymbol{\mrm{H}}}
\def\Lspace{\boldsymbol{\mrm{L}}}

\def\Houtspace{\pmb{\mathscr{H}}_N^{\hspace{0.2mm}\out}(\curl)}
\def\Xoutspace{\pmb{\mathscr{X}}_N^{\hspace{0.2mm}\out}}
\def\HToutspace{\pmb{\mathscr{H}}^{\hspace{0.2mm}\out}(\curl)}
\def\XToutspace{\pmb{\mathscr{X}}_T^{\hspace{0.2mm}\out}}

\def\J{\boldsymbol{J}}

\def\Cgras{\pmb{\mathscr{C}}}

\newcommand{\mA}{\mrm{A}}

\def\C{\mathbb C}		
\newcommand{\dsp}{\displaystyle}

\newcommand{\eps}{\varepsilon}
\newcommand{\om}{\omega}
\newcommand{\Om}{\Omega}
\newcommand{\mrm}[1]{\mathrm{#1}}

\newcommand{\Cplx}{\mathbb{C}}
\newcommand{\N}{\mathbb{N}}
\newcommand{\R}{\mathbb{R}}

\renewcommand{\div}{\mrm{div}}
\newcommand{\mL}{\mrm{L}}
\newcommand{\mH}{\mrm{H}}
\newcommand{\mV}{\mrm{V}}
\newcommand{\mmV}{\boldsymbol{\mrm{V}}}
\newcommand{\mX}{\boldsymbol{\mrm{X}}}
\newcommand{\mY}{\boldsymbol{\mrm{Y}}}
\newcommand{\mZ}{\boldsymbol{\mrm{Z}}}

\newcommand{\out}{\mrm{out}}

\newcommand{\curl}{\boldsymbol{\mrm{curl}}\,}
\newcommand{\psib}{\boldsymbol{\psi}}

\usepackage{bbm}

\newtheorem{theorem}{Theorem}[section]
\newtheorem{lemma}[theorem]{Lemma}
\newtheorem{remark}[theorem]{Remark}

\newtheorem{proposition}[theorem]{Proposition}
\newtheorem{Assumption}{Assumption} 
\usepackage{dsfont}
\everymath{\displaystyle}
\setlength\parindent{0pt}

\setcounter{secnumdepth}{2}
\setcounter{tocdepth}{2}
\begin{document}
\begin{center}
{\sc \bf\fontsize{20}{20}\selectfont  	Maxwell's equations with hypersingularities \\[6pt]
	at a negative index material conical tip}
\end{center}
\begin{center}
	\textsc{Anne-Sophie Bonnet-Ben Dhia}$^1$, \textsc{Lucas Chesnel}$^2$, \textsc{Mahran Rihani}$^{3}$\\[16pt]
	\begin{minipage}{0.9\textwidth}
		{\small
$^1$  POEMS, CNRS, Inria, ENSTA Paris, Institut Polytechnique de Paris, 91120 Palaiseau, France;\\
$^2$ Inria, ENSTA Paris, Institut Polytechnique de Paris, 91120 Palaiseau, France;\\
$^3$ CMAP, \'Ecole Polytechnique, Institut Polytechnique de Paris, 91128 Palaiseau, France.\\[10pt]
			E-mails:
			\texttt{anne-sophie.bonnet-bendhia@ensta-paris.fr}, \texttt{lucas.chesnel@inria.fr},\\ \texttt{mahran.rihani@polytechnique.edu}.\\[-14pt]
			\begin{center}
				(\today)
			\end{center}
		}
	\end{minipage}
\end{center}
\textbf{Abstract:} We study a transmission problem for the time harmonic Maxwell's equations between a classical positive material and a so-called negative index material in which both the permittivity $\eps$ and the permeability $\mu$ take negative values. Additionally, we assume that the interface between the two domains is smooth everywhere except at a point where it coincides locally with a conical tip. In this context, it is known that for certain critical values of the contrasts in $\eps$ and in $\mu$, the corresponding scalar operators are not of Fredholm type in the usual $\mH^1$ spaces. In this work, we show that in these situations, the   Maxwell's equations are not well-posed in the classical  $\Lspace^2$  framework due to existence of hypersingular fields which are of infinite energy at the tip.
By combining the $\mrm{T}$-coercivity approach and the Kondratiev theory, we explain how to construct new functional frameworks to recover well-posedness of the Maxwell's problem. We also explain how to select the setting which is consistent with the limiting absorption principle.
From a technical point of view, the fields as well as their curls decompose as the sum of an explicit singular part, related to the black hole singularities of the scalar operators, and a smooth part belonging to some weighted spaces. The analysis we propose rely in particular on the proof of new key results of scalar and vector potential representations of singular fields.\\[6pt]
\textbf{Key words:} Maxwell's equations, negative index materials, Kondratiev theory, black hole singularities, Mandelstam radiation principle, limiting radiation principle.\\[6pt]
\textbf{MSC:} 78A25, 35Q61, 78A48, 35B65, 78M30

\tableofcontents

\section{Introduction}
Recent progress in the conception of artificial microstructured materials with unusual effective properties, related to exciting physical experiments, have led to the development of new fields of research in applied mathematics. In particular, there has been a need to reconsider equations of acoustics, electromagnetism and elastodynamics with material laws which do not fit into the classical theories. In electromagnetism for example, it is generally assumed that the dielectric permittivity $\varepsilon$ and the magnetic permeability $\mu$ are real positive quantities or positive-definite tensors for anisotropic materials. However, nowadays it seems possible to engine metamaterials which are well-approximated in certain frequency ranges by effective $\varepsilon$, $\mu$  which are real negative functions or negative-definite tensors. In practice, to obtain interesting devices for applications, one needs to combine these non standard materials with classical ones to create interfaces where unusual phenomena occur. For instance, as described by Veselago in his pioneering article \cite{Vese68}, for a planar interface, a plane wave impinging from one side is refracted to the other side in a direction which is opposite to the standard one. This is the so-called negative refraction which may be useful to create perfect lenses \cite{Pend00} and which has given the naming ``negative index materials''. Mathematically, in time-harmonic regime, this leads to study non classical transmission problems which have been the motivation for a series of articles that we present now.\\
\newline
In 2D, the Maxwell's equations can be reduced to a scalar Helmholtz-type problem for which the theory now is quite complete \cite{BCCC16,dhia2012t,bonnet2013radiation,costabel1985direct,nguyen2016limiting}. Let us give a brief summary of the main results. In the analysis, the smoothness of the interface $\Sigma$ between the so-called positive and negative materials, as well as the contrast, defined as the ratio of the values of the negative coefficient over the positive coefficient at $\Sigma$, play a key role. When $\Sigma$ is of class $\mathscr{C}^1$, there is no strong consequence of the change of sign of the coefficients, except in the very critical case where the contrast is equal to $-1$. The change of sign of the coefficients has a more striking effect when the interface has geometric singularities. Thus, when $\Sigma$ has corners, the operator associated with the problem in the classical Sobolev space $\mH^1$ is not Fredholm for a whole interval of contrasts (the so-called critical interval) which contains $-1$. This is due to the existence of non classical singularities which oscillates more and more when approaching the corner (see Figure \ref{PicturePropa} left). 

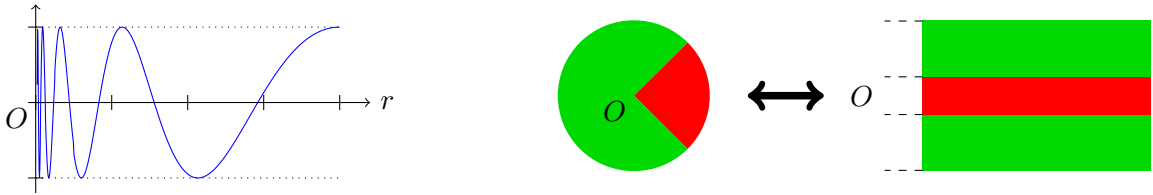
\begin{figure}[h!]
\centering
\begin{tikzpicture}
\draw[->] (0,0) -- (4.4,0) node [right] {$r$};
\draw (-0.25,-0.2) node {$O$};
\draw[dotted] (0,1) -- (4,1);
\draw[dotted] (0,-1) -- (4,-1);
\draw[samples=800,domain=0.02:4,blue] plot(\x,{cos((deg(ln(\x*0.25)*5)))});
\draw[->] (0,-1.2) -- (0,1.3);
\draw (-0.3,1);
\draw (0.3,-1);
\foreach \x in {0,...,4} \draw (\x,0.1) -- (\x,-0.1);
\foreach \y in {-1,...,1} \draw (-0.1,\y) -- (0.1,\y);
\end{tikzpicture}\qquad\qquad\quad
\raisebox{2mm}{\begin{tikzpicture}
\draw[fill=red,draw=none] (0,0) -- (-45:1) arc (-45:45:1)--cycle; 
\draw[fill=green!85!black,draw=none] (0,0) -- (45:1) arc (45:315:1)--cycle; 
\draw (-0.25,-0.2) node {$O$};
\draw[<->,line width=1mm] (1.5,0)--(2.5,0);
\begin{scope}[xshift=0.8cm]
\draw[fill=green!85!black,draw=none] (3,-1) rectangle (6,1);
\draw[fill=red,draw=none] (3,-0.25) rectangle (6,0.25);
\draw[dashed] (3,-0.25) -- (2.5,-0.25);
\draw[dashed] (3,0.25) -- (2.5,0.25);
\draw[dashed] (3,-0.99) -- (2.5,-0.99);
\draw[dashed] (3,0.99) -- (2.5,0.99);
\draw (2.2,0) node {$O$};
\end{scope}
\end{tikzpicture}}
\caption{Left: real part of the radial behaviour of the hyper-singularity in 2D ($\Re e\,r^{i\eta}=\cos(\eta\ln r)$). Right: schematic correspondence between the corner and waveguide problems.
\label{PicturePropa}}
\end{figure}

To understand the phenomenon, it is convenient to rely on the analogy with a waveguide problem which appears naturally when considering the change of variables $(r,\theta) \mapsto (\ln r,\theta)$, where $(r,\theta)$ are the polar coordinates centered at the corner. Observe that this map transforms the vicinity of the corner into a semi-infinite strip (the waveguide) with the corner rejected to infinity (Figure \ref{PicturePropa} right). Moreover, singularities at the corner correspond to modes in the strip. Importantly, one finds that propagating modes exist if and only if the contrast belongs to  the critical interval. Back in polar coordinates, propagating modes become what we call hyper-singularities or propagating singularities. Their radial behaviour is of the form $r^{\pm i\eta}=e^{\pm i\eta\ln r}$ for some real $\eta$. Like for scattering problems in waveguides, it is necessary  to impose a radiation condition at infinity in the strip, or equivalently at the corner, to obtain a well-posed problem. This radiation condition allows one to select the physically relevant solution in the sense that it is the limit of the solutions obtained by adding some small dissipation to the medium, which corresponds to add a small uniformly positive imaginary part to the sign-changing coefficient. This is the so-called limiting absorption principle. Note that due to the sign-changing coefficient, the selection of the outgoing behaviour, $r^{i\eta}$ or $r^{- i\eta}$, cannot be deduced from the sign of $\eta$. Instead, one uses energy considerations: the outgoing propagating singularity is the one which carries energy to the corner and for this reason, it is sometimes called black-hole wave. Justifying rigorously all this formal analysis and providing a framework taking into account the black-hole wave to recover a well-posed problem is a task in itself, which has been realized in \cite{bonnet2013radiation} thanks to the theory of detached asymptotics in Kondratiev spaces (see the reviews \cite{Naza99a,Naza99b}). \\
\newline
While the 2D scalar case has been intensively studied, the 3D Maxwell's equations with sign-changing coefficients have received much less attention. One of the reasons, in addition to intrinsic difficulties of the analysis of Maxwell's equations, is that a good knowledge of the corresponding 3D scalar problems (with sign-changing coefficients) is necessary to address the problem. And this is much more complicated than in 2D as soon as the interface is not smooth.  This explains why, up to recently, the only references were \cite{dhia2014t} and \cite{nguyen2020limiting}. In \cite{nguyen2020limiting}, the authors  consider in detail the case of a smooth interface between a material with negative $\varepsilon$, $\mu$ and another one with positive $\varepsilon$, $\mu$. In \cite{dhia2014t}, it is proved that the Maxwell's problem is well-posed in the classical framework, that is with electric and magnetic fields $\E,\H$ in $\Lspace^2$, as soon as the contrasts in $\varepsilon$ and in $\mu$ are not critical. Note that if the interface is not smooth, it is known that the latter condition is satisfied for contrasts outside of a given interval which contains $-1$. In the existing literature, let us also mention \cite{BCRR21} where the authors derive a homogenized model for a composite medium with periodically distributed small inclusions of negative material.\\
\newline
The goal of the present article is to complement the previous studies and more precisely to consider the time-harmonic 3D Maxwell's equations in configurations involving non smooth interfaces with critical contrasts so that a solution cannot be found in general with $\E,\H$ in $\Lspace^2$. 
We focus our attention on situations where the interface is smooth except at one point where it coincides with a conical tip, which is the 3D configuration that most closely looks like the case of the 2D corner. We also assume that the base of the conical tip is smooth, which excludes the possibility of having edges.\\
\newline
The present study constitutes a follow-up to the article \cite{dhia2022maxwell}. In that work, we addressed a part of the difficulties by considering the case where $\varepsilon$ is critical but $\mu$ is not. Additionally, we assumed that the scalar problem for $\eps$, which involves the differential operator $\div(\varepsilon \nabla \cdot)$, admits exactly one propagating singularity, like for 2D corners. Let us mention here that in 3D, the propagating singularity behaves like $r^{-1/2\pm i\eta}$ with $\eta\in\R$. We proved that, due to the black-hole effect at the tip, the electric field $\E$ must not be searched in $\Lspace^2$. Instead, it must be a linear combination of a $\Lspace^2$ function and the gradient of the propagating singularity. To prescribe this behaviour, we adapted the theory of detached asymptotics in Kondratiev spaces to Maxwell's equations, which seems to be new. Note that even for the application of the more classical Kondratiev theory to the standard Maxwell's equations, there are only few works (see \cite{costabel2002weighted} and \cite{PlPo2014,PlPo18}). Additionally, it must be added that in the framework proposed in \cite{dhia2022maxwell}, $\curl\E$ belongs to $\Lspace^2$ (because $\mu$ is not critical) while, as expected, $\H\in\Lspace^2$, $\curl\H\notin\Lspace^2$.\\ 
\newline
In this article, we extend the analysis of \cite{dhia2022maxwell} in several directions. First, we suppose that both $\varepsilon$ and $\mu$ are critical, so that in general $\E$, $\curl\E$, $\H$, $\curl\H$ are not in $\Lspace^2$. In addition, we take into account the fact that they may exist several black-hole singularities for the two scalar problems involving the differential operators $\div(\varepsilon \nabla\cdot)$ and $\div(\mu \nabla\cdot)$ respectively. Note that roughly speaking, the smaller the cone aperture, the higher the number of black-hole singularities. We will see in \S\ref{ParaHyperSingu} that the black-hole singularities are defined from eigenvalues and eigenfunctions of a non-selfadjoint problem. For this reason, for certain exceptional values of the  contrasts in $\varepsilon$ or $\mu$, Jordan blocks occur. With the Mandelstam principle, we will provide mathematical frameworks where well-posedness holds even in this case that was avoided in \cite{dhia2022maxwell}. Finally, the geometric setting that we consider here is a bit more general than the one of the previous article because we allow the conical tip to touch the boundary of the domain (see Figure \ref{FigGeo} center). This might seem a minor technical point but it forces us to write more systematic proofs for the vector potential representations of singular fields (see the appendix).\\
\newline
Let us mention that time-harmonic Maxwell's equations with sign-changing coefficients also appear in the study of magnetized plasma, however with an important difference compare to what is done here. Indeed, cold plasma models lead to consider smooth $\varepsilon$ which vanish when changing sign (which never happens for $\varepsilon$ and $\mu$ below). This is responsible for the phenomenon of hybrid resonance near the interface where $\varepsilon$ is equal to zero (see \cite{DeIW14,NiCD19}). On the other hand, note that there are several connections between the scalar operators with sign-changing coefficients and the spectral theory of the Neumann-Poincar\'e operator. Indeed, the latter acts on functions defined on a surface which plays a role similar to the interface in our work. It is known that when the surface has corners in 2D \cite{BoZh19} or tips in 3D \cite{HePe18}, the spectrum of the Neumann-Poincar\'e operator contains an interval of essential spectrum with possibly embedded eigenvalues \cite{LiSh19,BoHM21,LiPS20}. This interval of essential spectrum is in exact correspondence with our critical interval and is generated by the same black-hole singularities. Similar results have been proved for the strongly singular volume integral operator that describes the scattering of time-harmonic electromagnetic waves (see \cite{CoDS12} for 3D smooth interfaces and \cite{CoDS15} for 2D Lipschitz interfaces).\\
\newline
The paper is organized as follows. We start by presenting the problem and the notation. In Section \ref{SectionScalar}, we recall some properties concerning the scalar operators involved in the analysis of Maxwell's equations. In particular, we present how one should incorporate in the functional frameworks some of the hypersingularities of the corresponding problems to get well-posedness. Section \ref{SectionPbElectrique} constitutes the heart of the article. There we propose a new functional framework to study the electric problem. The electric field decomposes on the hypersingularities associated with the scalar problem for $\eps$ while its curl decomposes on the hypersingularities associated with the scalar problem for $\mu$. Additionally, we show that the new framework satisfies the limiting absorption principle as soon as the two scalar problems do. In Section \ref{SectionH}, we summarize the results for the magnetic problem.
In Section \ref{SectionNecessity}, we prove that the Maxwell's equations are not well-posed in the classical $\Lspace^2$ setting. Then we give a few words of conclusion before proving crucial results of representation by potential as well as results of compact embeddings in weighted Sobolev spaces in Section \ref{SectionAppendix}.
The main results of this work are Theorem \ref{SectionEResFinal} for the electric field and Theorem \ref{SectionHResFinal} for the magnetic field.

\section{Setting of the problem }
\label{Setting}

\subsection{General notation}
Let $\Om$ be an open, connected and bounded subset of $\R^3$ with a Lipschitz-continuous boundary $\partial\Om$. To simplify, we assume that $\Om$ is simply connected and that $\partial\Om$ is connected. When this assumption is not satisfied, the results below can be adapted by working as in \cite[\S8.2]{dhia2014t}. For some frequency $\om\ne0$, ($\om\in\R$), the time-harmonic Maxwell's equations are given by
\begin{equation}\label{EqMaxwellInitiale}
\curl\E-i\om\mu\H = 0\qquad\mbox{ and }\qquad\curl\H+i\om\eps\E = \J\,\mbox{ in }\Om.
\end{equation}
Above $\E$ and $\H$ are respectively the electric and magnetic components of the electromagnetic field while $\J$ stands for some current density injected in  $\Om$. In this work, we suppose that  $\Om$ is surrounded  by  a  perfect conductor. This leads us  to complete the previous system of equations with  the boundary conditions
	\begin{equation}\label{CLMaxwell}
		\E\times\nu=0\qquad\mbox{ and }\qquad\mu\H\cdot\nu=0\,\mbox{ on }\partial\Om.
	\end{equation}
Here $\nu$ denotes the unit outward normal vector to $\partial\Om$. The dielectric permittivity $\eps$ and the magnetic permeability $\mu$ in (\ref{EqMaxwellInitiale}) are assumed to be real valued functions such that $\eps$, $\mu\in\mL^{\infty}(\Om)$ and $\eps^{-1}$, $\mu^{-1}\in\mL^{\infty}(\Om)$. However their signs change in $\Om$ as described below. As it is classical in the study of Maxwell's equations, we will work with the spaces
\[
\begin{array}{rcl}
\Lspace^2(\Omega)&:=&(\mL^2(\Omega))^3\\
\Hspace^1(\Omega)&:=&(\mH^1(\Omega))^3\\
\mH^1_{0}(\Omega)&:=&\{\varphi\in\mH^1(\Om)\,|\,\varphi=0\mbox{ on }\partial\Om\}\\
\mH^1_{\#}(\Omega)&:=&\{\varphi\in\mH^1(\Om)\,|\,\int_{\Om}\varphi\,dx=0\}\\
\dsp \Hspace(\curl) &:=& \dsp \{ \boldsymbol{H}\in \Lspace^2(\Omega) \,|\, \curl \boldsymbol{H}\in\Lspace^2(\Omega)\}\\[2pt]
\dsp \Hspace_N(\curl) &:=& \dsp \{ \boldsymbol{E}\in \Hspace(\curl) \,|\, \boldsymbol{E}\times\nu=0 \mbox { on } \partial\Omega\}.
\end{array}
\]
The density current $\J$ in (\ref{EqMaxwellInitiale}) belongs to a subspace of $\Lspace^2(\Omega)$ that will be specified later and satisfies $\div\,\J=0$ in $\Om\backslash\{O\}$. This leads us to introduce, for $\xi\in \mrm{L}^{\infty}(\Om)$, the spaces
\begin{equation}\label{DefSpaceMaxwell}
\begin{array}{rcl}
\mX_N(\xi) & := &\left\{\boldsymbol{E}\in \Hspace_N(\curl)\,|\,\div(\xi\boldsymbol{E})=0\right\}\\[3pt]
\mX_T(\xi) & :=& \left\{\boldsymbol{H}\in \Hspace(\curl)\,|\,\div(\xi\boldsymbol{H})=0,\,\xi\boldsymbol{H}\cdot\nu=0 \mbox{ on }\partial\Om \right\}.
\end{array}
\end{equation}
We denote indistinctly by $(\cdot,\cdot)_{\Om}$ the classical inner products of $\mL^2(\Om)$ and $\Lspace^2(\Om)$. Moreover, $\|\cdot\|_{\Om}$ stands for the corresponding norms. We endow the spaces $\Hspace(\curl)$, $\Hspace_N(\curl)$, $\mX_N(\xi)$, $\mX_T(\xi)$ with the norm 
\[
\|\cdot\|_{\Hspace(\curl)}:=(\|\cdot\|^2_{\Om}+\|\curl\cdot\|^2_{\Om})^{1/2}.
\]
Let us recall a well-known property for the particular spaces $\mX_N(1)$ and $\mX_T(1)$  (cf. \cite{Webe80,AmrBerDau98}).
\begin{proposition}\label{PropoEmbeddingCla}
The embeddings of $\mX_N(1)$ in $\Lspace^2(\Om)$ and of $\mX_T(1)$ in $\Lspace^2(\Om)$ are compact. Moreover there is a constant $C>0$ such that 
\[
\|\u\|_{\Om}\le C\,\|\curl\u\|_{\Om},\qquad \forall\u\in\mX_N(1)\cup\mX_T(1).
\]
Therefore, in $\mX_N(1)$ and in $\mX_T(1)$, $\|\curl\cdot\|_{\Om}$ is a norm which is equivalent to $\|\cdot\|_{\Hspace(\curl)}$.
\end{proposition}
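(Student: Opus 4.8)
The plan is to establish the two compact embeddings first, to deduce the Poincaré-type estimate from them by a compactness-contradiction argument, and finally to read off the norm equivalence as an immediate consequence.

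For the compactness, observe that $\mX_N(1)$ and $\mX_T(1)$ are exactly the divergence-free subspaces of the two function spaces appearing in Weber's embedding theorem \cite{Webe80}, namely fields in $\Lspace^2(\Om)$ whose $\curl$ and $\div$ lie in $\Lspace^2(\Om)$ and which have, respectively, vanishing tangential trace or vanishing normal trace on $\partial\Om$. The mechanism I would use is the fractional regularity of such fields: following \cite{AmrBerDau98}, on a bounded Lipschitz domain there exists $s>0$ such that both spaces embed continuously into $\Hspace^s(\Om)$. This is obtained by reconstructing $\u$ from scalar and vector potentials and invoking the shift estimates for the Dirichlet and Neumann Laplacian on $\Om$, together with the continuity of the trace operators. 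Since the injection $\Hspace^s(\Om)\hookrightarrow\Lspace^2(\Om)$ is compact by Rellich's theorem, the compactness of the two embeddings follows.

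For the inequality I argue by contradiction, treating $\mX_N(1)$ in detail (the case of $\mX_T(1)$ being entirely analogous, and the union in the statement merely meaning that the claim holds in each of the two spaces). If no constant worked, there would exist a sequence $(\u_n)\subset\mX_N(1)$ with $\|\u_n\|_\Om=1$ and $\|\curl\,\u_n\|_\Om\to0$. By the compactness just proved, a subsequence converges in $\Lspace^2(\Om)$ to some $\u$ with $\|\u\|_\Om=1$; since moreover $\curl\,\u_n\to0$ in $\Lspace^2(\Om)$, passing to the distributional limit gives $\curl\,\u=0$, so in fact $\u_n\to\u$ in $\Hspace(\curl)$ and the tangential condition $\u\times\nu=0$ survives the limit, while $\div\,\u_n=0$ yields $\div\,\u=0$. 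It remains to show $\u=0$, and here the topological hypotheses enter: as $\Om$ is simply connected, $\curl\,\u=0$ gives $\u=\nabla p$ with $p\in\mH^1(\Om)$; the condition $\u\times\nu=0$ forces $p$ to be constant on the connected set $\partial\Om$, and $\div\,\u=\Delta p=0$ makes $p$ harmonic, hence constant, so $\u=\nabla p=0$, contradicting $\|\u\|_\Om=1$. For $\mX_T(1)$ the associated potential instead satisfies $\Delta p=0$ with $\nabla p\cdot\nu=0$ on $\partial\Om$ (the normal condition $\u\cdot\nu=0$ surviving the limit because $\div\,\u_n=0$), again forcing $p$ constant and $\u=0$. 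Once the inequality holds, $\|\curl\cdot\|_\Om$ is clearly a norm on each space which both dominates and is dominated by $\|\cdot\|_{\Hspace(\curl)}$, yielding the stated equivalence.

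The main obstacle is the compactness itself, and more precisely securing the exponent $s>0$ on a merely Lipschitz domain: because $\Om$ carries a conical tip one cannot take $s=1$, so the smooth-domain shortcut (where $\u\in\Hspace^1(\Om)$ and Rellich applies directly) is unavailable and the regularity must be extracted carefully from the mapping properties of the Laplacian and of the tangential and normal trace operators on $\partial\Om$. Everything else is routine; in particular the triviality of the space of curl-free, divergence-free fields with the relevant boundary condition is exactly the place where the simple connectedness of $\Om$ and the connectedness of $\partial\Om$ are used.
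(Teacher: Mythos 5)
Your argument is correct and is exactly the classical one the paper invokes by citing \cite{Webe80,AmrBerDau98}: compactness via the fractional regularity $\mX_N(1),\mX_T(1)\hookrightarrow\Hspace^s(\Om)$ on a Lipschitz domain followed by Rellich, then the Poincar\'e-type estimate by contradiction, with the simple connectedness of $\Om$ and the connectedness of $\partial\Om$ killing the curl-free, divergence-free limit. The paper states this proposition without proof as a recalled result, so there is nothing to contrast; your write-up supplies the standard argument faithfully.
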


\subsection{Description of the conical tip}\label{paragraphGeom}

In this paragraph, we detail the assumptions made on the geometry. We study a situation where $\Om$ is partitioned into two non empty subdomains $\Om_+$ and $\Om_-$ corresponding respectively to the positive and negative materials. We assume that $\overline{\Om}=\overline{\Om_+}\cup\overline{\Om_-}$, $\Om_+\cap\Om_-=\emptyset$ and that both $\partial\Om_+$, $\partial\Om_-$ are Lipschitz-continuous. The functions $\eps$, $\mu$ are such that
\[
\eps=\begin{array}{|ll}
\eps_+>0 & \mbox{ in }\Om_+\\[2pt]
\eps_-<0 & \mbox{ in }\Om_-,
\end{array}\qquad\qquad \mu=\begin{array}{|ll}
\mu_+>0 & \mbox{ in }\Om_+\\[2pt]
\mu_-<0 & \mbox{ in }\Om_-,
\end{array}
\]
where $\eps_\pm$, $\mu_\pm$ are some constants. In the study of Problem 
(\ref{EqMaxwellInitiale})-(\ref{CLMaxwell}), the contrasts $\kappa_\eps$, $\kappa_\mu$ defined by
\[
\kappa_\eps:=\eps_-/\eps_+,\qquad\qquad\kappa_\mu:=\mu_-/\mu_+
\]
play a key role. Let us come now to the description of the conical tip. We assume that $\Om_-$ is of class $\mathscr{C}^2$ except at some point $O$ where $\Om_-$ coincides locally with a  cone $\mathcal{K}_-$. More precisely, we choose the system of coordinates such that $O=(0,0,0)$ and we assume that there is $\rho>0$ as well as some smooth domain $\varpi_-$ (of class $\mathscr{C}^2$) of the unit sphere $\mathbb{S}^2:=\{x\in\R^3\,|\,|x|=1\}$ such that 
\begin{equation}\label{defRho}
\Om_-\cap B(O,\rho)=\mathcal{K}_-\cap B(O,\rho)\qquad\mbox{ with }\mathcal{K}_-:=\{r\boldsymbol{\theta}\,|\,r>0,\,\boldsymbol{\theta}\in\varpi_-\}.
\end{equation}
Here $B(O,\rho)$ denotes the open  ball centered at $O$ and of radius $\rho$. Additionally, we assume that the setting satisfies one of the two following assumptions:
\begin{equation}\label{ChoixCase}
\begin{array}{l|}
\begin{minipage}{0.875\textwidth}
\mbox{\textbf{Case 1:} There holds $O\in\Om$ and so we can choose $\rho$ such that $B(O,\rho)\subset\Om$} (internal conical tip, see Figure \ref{FigGeo} left). In that situation, we set $\varpi:=\mathbb{S}^2$. \\
\newline
\textbf{Case 2:} There holds $O\in\partial\Om$ (conical tip on the boundary). In that situation, to simplify, we assume that $\Om$ also coincides with a conical tip in a neighbourhood of $O$:
\[
\Om\cap B(O,\rho)=\mathcal{K}\cap B(O,\rho)\qquad\mbox{ with }\mathcal{K}:=\{r\boldsymbol{\theta}\,|\,r>0,\,\boldsymbol{\theta}\in\varpi\}
\]
where $\varpi$ is a smooth domain of $\mathbb{S}^2$ such that $\overline{\varpi_-}\subset\varpi$ (see Figure \ref{FigGeo} center).
\end{minipage}
\end{array}
\end{equation}

\begin{figure}[h!]
\includegraphics[width=0.2\textwidth,angle =-90]{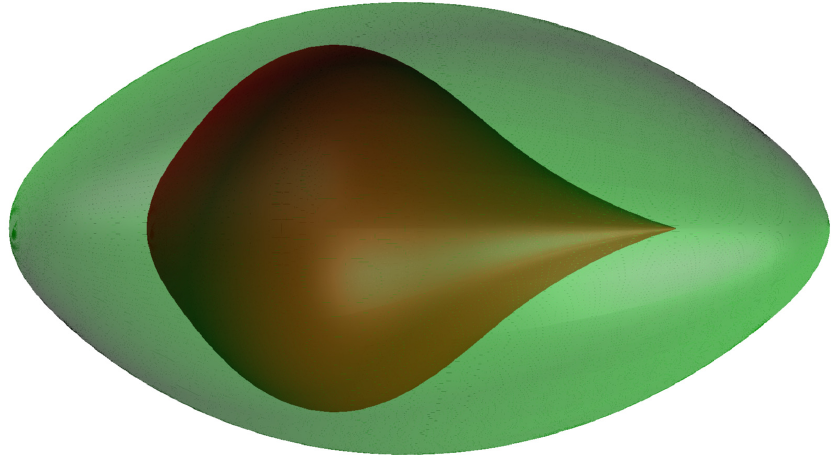}
\includegraphics[width=0.2\textwidth,angle =-90]{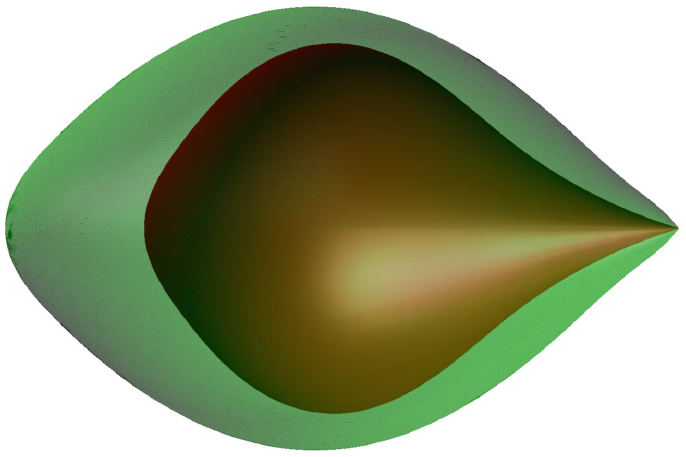}
\includegraphics[width=0.2\textwidth,angle =-90]{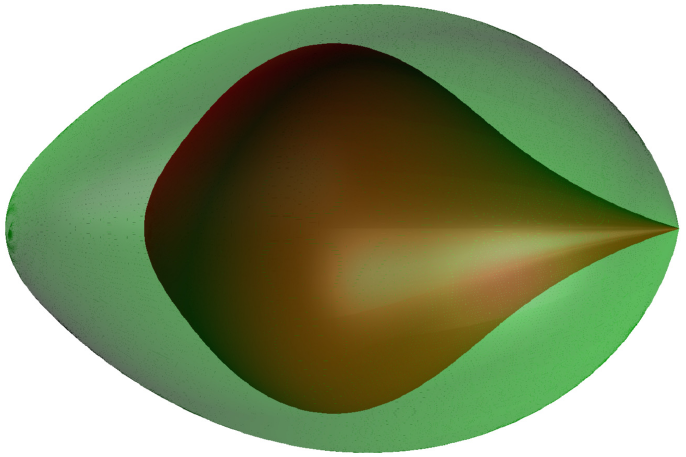}
\caption{Internal conical tip (left), conical tips on the boundary (center and right).}
\label{FigGeo}
\end{figure}

\begin{remark}
Admittedly, the case 2 above is rather academic. However it will force us to write more systematic proofs. Note also that by rectifying the boundary with a diffeomorphism, adapting for example the ideas of \cite[\S1.3.7, vol. 1]{MaNP00}, we could consider the case $O\in\partial\Om$ where $\partial\Om$ is smooth in a neighbourhood of $O$ (see Figure \ref{FigGeo} right).
\end{remark}

Now that the geometry is fixed, it remains to clarify the assumptions made on the contrasts $\kappa_\eps$, $\kappa_\mu$. The study of the Maxwell's system is directly related to the properties of two scalar operators that we present now. In this document, for a Banach space $\mrm{X}$, $\mrm{X}^{\ast}$ stands for the topological antidual space of $\mrm{X}$ (the set of continuous anti-linear forms on $\mrm{X}$). We denote by $\langle\cdot,\cdot\rangle$ the corresponding duality pairing. Define $\mA_\eps:\mH^1_0(\Om)\to(\mH^1_0(\Om))^\ast$  such that 
\begin{equation}\label{Def_Aeps}
	\langle \mA_{\eps}\varphi,\varphi'\rangle=\int_{\Om}\eps\nabla\varphi\cdot\nabla\overline {\varphi'}\,dx,\qquad \forall \varphi,\varphi'\in\mH^1_0(\Om)
\end{equation}
and $\mA_\mu:\mH^1_\#(\Om)\to (\mH^1_\#(\Om))^\ast$ such that
\begin{equation}\label{Def_Amu}
	\langle \mA_{\mu}\varphi,\varphi'\rangle=\int_{\Om}\mu\nabla\varphi\cdot\nabla\overline{\varphi'}\,dx,\qquad \forall \varphi,\varphi'\in\mH^1_{\#}(\Om).
\end{equation}
It is proved in \cite{dhia2014t} that when the functions $\eps$, $\mu$ are such that $\mA_\eps$, $\mA_\mu$ are of Fredholm type (without assumption of sign for $\eps$, $\mu$), then the Maxwell's system is also well-posed in the Fredholm sense in the classical $\Lspace^2$ framework (see Section \ref{SectionNecessity} below for more details). In this work, our goal is to study a situation where precisely $\mA_\eps$, $\mA_\mu$ are not of Fredholm type. When the sign of $\eps$ and $\mu$ is not constant, determining if this holds or not is not straightforward. For a general Lipschitz-continuous  interface $\partial\Om_+\cap\partial\Om_-$ between the materials, it is known that $\mA_\eps$ (resp. $\mA_\mu$) is of Fredholm type if and only if $\kappa_\eps\in\R_-^\ast\backslash I_\eps$  (resp. $\kappa_\mu\in\R_-^\ast\backslash I_\mu$), where $I_\eps$ (resp. $I_\mu$)  is a bounded closed subset of $\R_-^\ast:=(-\infty;0)$ called the critical interval.\\
\newline
As mentioned in the introduction, when the interface is smooth, one can show that $I_\eps=I_\mu=\{-1\}$ \cite{costabel1985direct,nguyen2016limiting}. When the interface is not smooth, the situation is different. This has been investigated in details for the case of 2D interfaces with corners in \cite{dauge2011non,dhia2012t,bonnet2013radiation} and for the case of 3D interfaces with conical tips in \cite[chapter 2]{rihani2022maxwell}.  In the latter works, it has been shown that in these configurations $I_\eps$, $I_\mu$ are intervals with a non empty interior. In 2D for corners, the expressions of $I_\eps$, $I_\mu$ can be derived explicitly. One finds that $I_\eps$, $I_\mu$ get even larger as the corner becomes sharp. For 3D conical tips, there is a larger variety of situations and in general, we cannot get simple expressions for $I_\eps$, $I_\mu$. However we can still give a characterization of $I_\eps$, $I_\mu$ and that will be done in the next section where we recall how to study the scalar problems. Let us mention that for wedges, which will not be considered here, one may consult \cite{dhia2012t} and the more recent work \cite{Kale2022}.

\section{Study of the scalar problems}\label{SectionScalar}

\subsection{Kondratiev spaces}
We start by introducing weighted Sobolev spaces adapted to the kind of singularities we want to handle. For $\beta \in \mathbb{R}$ and $m \in \N$, we define the Kondratiev space (see \cite{kond67,MaNP00,KoMR97})  $\mV_\beta^m(\Om)$ as the closure of $\mathscr{C}^{\infty}_0(\overline{\Om}\backslash \{O\})$
	for the norm
	\[
	\|\varphi\|_{\mV_\beta^m(\Om)}=\left(\sum_{|\alpha| \leq m} \|r^{|\alpha|-m+\beta} \partial_x^{\alpha}\varphi\|_{\Om}^2\right)^{1/2}.
	\]
Here $r=|x|$ and $\mathscr{C}^{\infty}_0(\overline{\Om}\backslash \{O\})$ denotes the space of infinitely differentiable functions which are supported in $\overline{\Om}\backslash \{O\}$. For $m\in\mathbb{N}^\ast:=\{1,2,\dots\}$ and $\beta\in\mathbb{R}$, we have the inclusion
	\begin{equation}
		\mV_\beta^m(\Om)\subset\mV_{\beta-1}^{m-1}(\Om).
		\label{InclusionKondratiev}
	\end{equation}
Clearly there holds $\mV^m_\beta(\Om)\subset \mV^m_\gamma(\Om)$ when $\beta\le\gamma$. Additionally, the elements of $\mV^m_\beta(\Om)$ belong to $\mH^m$ of any region excluding a neighbourhood of $O$. Furthermore, we have the following
compactness result (see e.g. \cite[Lemma 6.2.1]{KoMR97}).
\begin{lemma}	\label{CompacitePoids}
For $m\in\N^\ast$ and $\beta<\gamma$, the embedding
$\mV^m_\beta(\Om)\subset \mV^{m-1}_{\gamma-1}(\Om)$
is compact.
\end{lemma}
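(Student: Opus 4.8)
The plan is to reduce the statement to the classical Rellich--Kondrachov theorem away from the conical point $O$, while controlling the contribution of a small neighbourhood of $O$ by exploiting the strict inequality $\beta<\gamma$. Throughout, I fix a radius $\delta\in(0,\mrm{diam}(\Om))$ to be chosen at the end, and I split $\Om$ into the two pieces $\Om_\delta^{\mrm{in}}:=\Om\cap B(O,\delta)$ and $\Om_\delta^{\mrm{out}}:=\Om\cap\{|x|>\delta\}$. The whole difficulty is concentrated near $O$; once it is handled, the rest is a packaging of standard local compactness together with a diagonal extraction.

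First I would record the only quantitative ingredient, which governs the region $\Om_\delta^{\mrm{in}}$. For a multi-index $\alpha$ with $|\alpha|\le m-1$, the weight carried by $\partial_x^\alpha\varphi$ in the $\mV^{m-1}_{\gamma-1}(\Om)$-norm is $r^{|\alpha|-m+\gamma}$, whereas the same derivative appears in the $\mV^m_\beta(\Om)$-norm with weight $r^{|\alpha|-m+\beta}$. Since $r\le\delta$ on $\Om_\delta^{\mrm{in}}$ and $\gamma-\beta>0$, one gets
\[
\|r^{|\alpha|-m+\gamma}\partial_x^\alpha\varphi\|_{\Om_\delta^{\mrm{in}}}=\|r^{\gamma-\beta}\,r^{|\alpha|-m+\beta}\partial_x^\alpha\varphi\|_{\Om_\delta^{\mrm{in}}}\le\delta^{\gamma-\beta}\,\|\varphi\|_{\mV^m_\beta(\Om)}.
\]
Summing over $|\alpha|\le m-1$ yields $\|\varphi\|_{\mV^{m-1}_{\gamma-1}(\Om_\delta^{\mrm{in}})}\le C\,\delta^{\gamma-\beta}\|\varphi\|_{\mV^m_\beta(\Om)}$, so the mass near $O$ of any $\mV^m_\beta$-bounded family is uniformly small once $\delta$ is small. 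This is precisely the point where the hypothesis $\beta<\gamma$ is used, and it is the main (indeed the only) genuinely delicate step.

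Then I would treat $\Om_\delta^{\mrm{out}}$. For almost every $\delta$ the sphere $\{|x|=\delta\}$ meets $\partial\Om$ transversally, so that $\Om_\delta^{\mrm{out}}$ is a bounded Lipschitz domain on which $r$ is bounded above and below; consequently $\|\cdot\|_{\mV^{m-1}_{\gamma-1}(\Om_\delta^{\mrm{out}})}$ is equivalent to the unweighted $\mH^{m-1}(\Om_\delta^{\mrm{out}})$-norm, and $\mV^m_\beta$ restricts there to $\mH^m$ (using that elements of $\mV^m_\beta$ belong to $\mH^m$ away from $O$). The classical Rellich--Kondrachov theorem then gives the compactness of the embedding $\mH^m(\Om_\delta^{\mrm{out}})\subset\mH^{m-1}(\Om_\delta^{\mrm{out}})$. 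Choosing an exhaustion $\delta=\delta_k\downarrow0$ and extracting successively, a diagonal argument produces, from any bounded sequence $(\varphi_n)$ in $\mV^m_\beta(\Om)$, a subsequence that is Cauchy in $\mH^{m-1}$ on each $\Om_{\delta_k}^{\mrm{out}}$, hence in $\mV^{m-1}_{\gamma-1}(\Om_{\delta_k}^{\mrm{out}})$.

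Finally I would assemble the two estimates to prove that the extracted subsequence is Cauchy in $\mV^{m-1}_{\gamma-1}(\Om)$. Given $\eps>0$, the tail bound of the second paragraph lets me pick $\delta$ with $C\delta^{\gamma-\beta}\sup_n\|\varphi_n\|_{\mV^m_\beta(\Om)}<\eps/3$, so that $\|\varphi_n\|_{\mV^{m-1}_{\gamma-1}(\Om_\delta^{\mrm{in}})}<\eps/3$ for every $n$; the Cauchy property on $\Om_\delta^{\mrm{out}}$ then makes $\|\varphi_n-\varphi_{n'}\|_{\mV^{m-1}_{\gamma-1}(\Om)}<\eps$ for $n,n'$ large. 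Hence the subsequence converges in $\mV^{m-1}_{\gamma-1}(\Om)$, which is the asserted compactness. I expect no obstacle beyond the uniform tail control near the tip; the transversality choice of $\delta$ and the diagonal extraction are routine, and an alternative to the single cutoff would be the textbook dyadic-annulus rescaling argument, which I would fall back on only if the Lipschitz regularity of $\Om_\delta^{\mrm{out}}$ caused trouble.
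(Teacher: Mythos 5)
Your argument is correct. Note that the paper does not prove this lemma at all: it is quoted directly from the literature (Lemma 6.2.1 of Kozlov--Maz'ya--Rossmann), so there is no in-paper proof to compare against. What you wrote is the standard self-contained argument: the weight comparison $r^{|\alpha|-m+\gamma}=r^{\gamma-\beta}\,r^{|\alpha|-m+\beta}\le\delta^{\gamma-\beta}\,r^{|\alpha|-m+\beta}$ on $B(O,\delta)$ gives the uniform smallness of the tail (this is exactly where $\beta<\gamma$ enters, and you identify it correctly), Rellich--Kondrachov handles the exterior piece where the weights are bounded above and below, and the diagonal extraction glues the two. The only point worth tightening is the Lipschitz regularity of $\Om\cap\{|x|>\delta\}$: rather than invoking transversality for almost every $\delta$, it is cleaner to multiply by a smooth cut-off $\chi_\delta$ equal to $1$ for $r\ge\delta$ and to $0$ for $r\le\delta/2$, apply Rellich on the fixed Lipschitz domain $\Om$ to the sequence $(\chi_\delta\varphi_n)_n$, and absorb the commutator terms (which are supported in the annulus $\delta/2\le r\le\delta$ where all weights are comparable to constants) into the bounded $\mV^m_\beta$-norm. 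With that cosmetic adjustment the proof is complete and fully in line with the cited reference.
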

It is obvious that  $\mV_0^1(\Om)\subset\mH^1(\Om)$. Moreover,   since $\Om$ is bounded, a classical Hardy inequality (see e.g. \cite[Theorem~7.1.1]{KoMR97}) guarantees that $\mH^1(\Om)=\mV_0^1(\Om)$ (note that this is not true in 2D).\\
\newline
To study problems with Dirichlet boundary conditions, introduce the space $\mathring{\mV}_\beta^1(\Om)$ defined as the closure of  $\mathscr{C}^{\infty}_0(\Om\backslash \{O\})$ for the norm
	$\|\cdot\|_{\mV_\beta^1(\Om)}$ (note that in case 2 where $O\in\partial\Om$, see (\ref{ChoixCase}), one gets $\mathscr{C}^{\infty}_0(\Om\backslash \{O\})=\mathscr{C}^{\infty}_0(\Om)$). We have the characterization
\[
\mathring{\mV}_\beta^1(\Om)=\{\varphi\in\mV_\beta^1(\Om)\,|\,\varphi=0\mbox{ on }\partial\Om\}.
\]
There holds $\mH_0^1(\Om)=\mathring{\mV}_0^1(\Om)$ and for  $\beta>0$,  one has   the inclusions 
\[
\mathring{\mV}_{-\beta}^1(\Om)\subset \mH^1_0(\Om)\subset \mathring{\mV}_\beta^1(\Om)\qquad\mbox{ and so }\qquad(\mathring{\mV}^1_{\beta}(\Om))^{\ast}\subset (\mH^1_0(\Om))^{\ast}\subset (\mathring{\mV}^1_{-\beta}(\Om))^{\ast}.
	\]
On the other hand, for $0\le \beta\le1$, from \eqref{InclusionKondratiev}, one gets $\mH^1_0(\Om)\subset\mV^0_{-\beta}(\Om)\subset\mL^2(\Om)$ which implies 
\[
\mL^2(\Om)\subset\mV^0_\beta(\Om)\subset (\mH^1_0(\Om))^\ast.
\]
Above, we have also used that $(\mV^0_\beta(\Om))^\ast=\mV^0_{-\beta}(\Om)$.\\
\newline
To study problems with Neumann boundary conditions, we will work in spaces of mean free functions. To define them, first observe that for all $\beta >-3/2$, we have
\[
\int_{\Om}r^{2\beta}\,dx<+\infty.
\]
As a consequence, for $u\in\mV^1_{\beta}(\Om)$ with $\beta<5/2$, we can write
\begin{equation}\label{EstimZeroMean}
\Big|\int_\Om u\,d x\Big|\le \|1\|_{\mV^0_{-\beta+1}(\Om)}\,\|u\|_{\mV^0_{\beta-1}(\Om)}  \le C\,\|u\|_{\mV^1_{\beta}(\Om)}.
\end{equation}
This allows us to define for all  $\beta<5/2,$ the space
\[
\mathcal{V}^1_\beta(\Om):=\{ u\in\mV_\beta^1(\Om)| \int_\Om u\,d x =0 \}.
\]
Note in particular that we have	$\mH_\#^1(\Om)=\mathcal{V}^1_0(\Om). $

\subsection{Scalar operators in Kondratiev spaces}\label{SectionScalaireCritique}

For $\beta\in\mathbb{R}$, define the continuous operator $\mA^{\beta}_{\eps}:\mathring{\mV}^1_{\beta}(\Om)\to(\mathring{\mV}^1_{-\beta}(\Om))^{\ast}$ such that
\[
\langle \mA^{\beta}_{\eps}\varphi,\varphi'\rangle=\int_{\Om}\eps\nabla\varphi\cdot\nabla\overline{\varphi'}\,dx,\qquad \forall\varphi\in\mathring{\mV}^1_{\beta}(\Om),\,\varphi'\in\mathring{\mV}^1_{-\beta}(\Om).
\]
In the same way, for $\beta\in(-5/2;5/2)$, define $\mA^{\beta}_{\mu}:\mathcal{V}^1_{\beta}(\Om)\to(\mathcal{V}^1_{-\beta}(\Om))^{\ast}$ such that
\[
\langle \mA^{\beta}_{\mu}\varphi,\varphi'\rangle=\int_{\Om}\mu\nabla\varphi\cdot\nabla\overline{\varphi'}\,dx,\qquad \forall\varphi\in\mathcal{V}^1_{\beta}(\Om),\varphi'\in\mathcal{V}^1_{-\beta}(\Om).
\]	
	
When applying the Kondratiev approach \cite{kond67,NaPl94,MaNP00,KoMR97,dauge2011non} and in particular, the Mellin transform, to analyse, roughly speaking, the properties of $\mA^{\beta}_{\eps}$, $\mA^{\beta}_{\mu}$ in a neighbourhood of $O$, one is led to study the operators $\mathscr{L}_\eps(\lambda):\mH^1_0(\varpi)\to\mH^1_0(\varpi)$, $\mathscr{L}_\mu(\lambda):\mH^1(\varpi)\to\mH^1(\varpi)$ defined via the Riesz representation theorem such that for $\lambda\in\Cplx$, 
\begin{equation}\label{DefLeps}
(\mathscr{L}_\eps(\lambda)\varphi,\phi)_{\mH^1(\varpi)}=\int_{\varpi}\eps\nabla_S \varphi\cdot\nabla_S \overline{\phi}\,d\boldsymbol{\theta}-\lambda(\lambda+1)\int_{\varpi} \eps\varphi \overline{\phi}\,d\boldsymbol{\theta},\qquad\forall \varphi,\phi\in\mH^1_0(\varpi),
\end{equation}
\begin{equation}\label{DefLmu}
(\mathscr{L}_\mu(\lambda)\varphi,\phi)_{\mH^1(\varpi)}=\int_{\varpi}\mu\nabla_S \varphi\cdot\nabla_S \overline{\phi}\,d\boldsymbol{\theta}-\lambda(\lambda+1)\int_{\varpi} \mu\varphi \overline{\phi}\,d\boldsymbol{\theta},\qquad\forall \varphi,\phi\in\mH^1(\varpi).
\end{equation}
Above $\varpi\subset\mathbb{S}^2$ is the domain introduced in (\ref{ChoixCase}), $\mH^1_0(\varpi):=\{\varphi\in\mH^1(\varpi)\,|\,\varphi=0\mbox{ on }\partial\varpi\}$ (note that $\mH^1_0(\varpi)=\mH^1(\varpi)$ if $\varpi=\mathbb{S}^2$) while $\nabla_S$ stands for the surface gradient on $\mathbb{S}^2$. Abusively, here for $\sigma=\eps$, $\mu$, we redefine $\sigma$ as the function such that $\sigma=\sigma_+$ in $\varpi_+:=\varpi\backslash\overline{\varpi_-}$ and $\sigma=\sigma_-$ in $\varpi_-$.

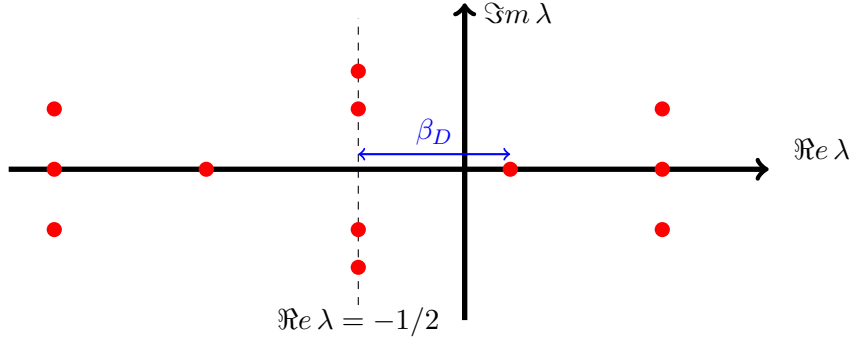
\begin{figure}[!ht]
\centering
\begin{tikzpicture}
\draw[draw=black,line width=2pt,->](-6,0)--(4,0);
\draw[draw=black,line width=2pt,->](0,-2)--(0,2.2);
\begin{scope}[xshift=-1.4cm]
\draw[draw=black,dashed](0,-1.8)--(0,2);
\node at (1,0.8) [anchor=north] {$\textcolor{blue}{\beta_D}$};
\draw[draw=blue,thick,<->](0,0.2)--(2,0.2);
\filldraw [red,draw=none] (2,0) circle (0.1);
\filldraw [red,draw=none] (-2,0) circle (0.1);
\filldraw [red,draw=none] (0,0.8) circle (0.1);
\filldraw [red,draw=none] (0,-0.8) circle (0.1);
\filldraw [red,draw=none] (0,1.3) circle (0.1);
\filldraw [red,draw=none] (0,-1.3) circle (0.1);
\filldraw [red,draw=none] (4,0) circle (0.1);
\filldraw [red,draw=none] (-4,0) circle (0.1);
\filldraw [red,draw=none] (4,0.8) circle (0.1);
\filldraw [red,draw=none] (4,-0.8) circle (0.1);
\filldraw [red,draw=none] (-4,0.8) circle (0.1);
\filldraw [red,draw=none] (-4,-0.8) circle (0.1);
\node at (0,-1.7) [anchor=north] {$\Re e\,\lambda=-1/2$};
\end{scope}
\node at (4.7,0) [anchor=south] {$\Re e\,\lambda$};
\node at (0.7,1.8) [anchor=south] {$\Im m\,\lambda$};
\end{tikzpicture}\\[-8pt]
\caption{Schematic picture of the eigenvalues of $\mathscr{L}_\eps$ in the complex plane when Assumption \ref{AssumptionCritique} is satisfied. One has something similar for $\mathscr{L}_\mu$. \label{fig eigenvalues}}
\end{figure}

Note that for $\sigma=\eps$, $\mu$, the symbol $\mathscr{L}_\sigma(\cdot)$ appears naturally when looking for functions with separate variables in coordinates $(r,\boldsymbol{\theta})$ such that $\div(\sigma\nabla s)=0$ in $\Om\cap B(O,\rho)$ together with the homogeneous Dirichlet/Neumann (according to the case) boundary conditions on $\partial\Om\cap \partial B(O,\rho)$. We say that $\lambda$ is in the spectrum of $\mathscr{L}_\sigma$ ($\lambda\in\mrm{spec}(\mathscr{L}_\sigma)$) if $\mathscr{L}_\sigma(\lambda)$ is not invertible. Additionally, $\lambda$ is an eigenvalue of $\mathscr{L}_\sigma$ if $\ker\mathscr{L}_\sigma(\lambda)\ne\{0\}$. Note that the analysis of the spectral properties of $\mathscr{L}_\sigma$ is made complicated by the presence of the sign-changing coefficient $\sigma$ both in the principal and compact part which prevents from identifying inner products. For this reason, we are unable to  recast the spectra corresponding to (\ref{DefLeps}), (\ref{DefLmu}) as the the spectra of some self-adjoint operators and actually, there are situations where complex eigenvalues exist. However, it has been shown in \cite[Chapter 3]{rihani2022maxwell} that when $\kappa_\sigma\ne-1$, the spectrum of $\mathscr{L}_\sigma$ is discrete, made of eigenvalues which do not accumulate in bounded regions of $\mathbb{C}$. Writing $\lambda(\lambda+1)=(\lambda+1/2)^2-(1/2)^2$, one observes that if $\lambda\in\mrm{spec}(\mathscr{L}_\sigma)$, then $-1-\lambda\in\mrm{spec}(\mathscr{L}_\sigma)$ (symmetry with respect to the point $-1/2+0i$, see Figure \ref{fig eigenvalues}). Additionally, using that $\sigma$ is real valued, one finds that $\lambda\in\mrm{spec}(\mathscr{L}_\sigma)$ implies $\overline{\lambda}\in\mrm{spec}(\mathscr{L}_\sigma)$ (symmetry with respect to the line $\Im m\,\lambda=0$). Then adapting the Kondratiev approach to the present situation with sign-changing coefficients (see \cite[Chapter 2]{rihani2022maxwell} for the details), one establishes the following result.
\begin{proposition}\label{PropoFredholmCriterion}
For $\sigma=\eps$, $\mu$, assume that $\kappa_\sigma\ne-1$. Then for $\beta\in\R$, the operator $\mA^{\beta}_{\sigma}$ is of Fredholm type if and only if $\mathscr{L}_\sigma$ has no eigenvalue on the line $\{\lambda\in\Cplx\,|\,\Re e\,\lambda=-1/2+\beta\}$.
\end{proposition}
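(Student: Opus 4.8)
The plan is to deduce the Fredholm property of $\mA^\beta_\sigma$ from the invertibility of the operator pencil $\mathscr{L}_\sigma$ along a vertical line, exploiting that Fredholmness is a local property and that the conical tip $O$ is the only point where the weighted analysis is delicate. First I would fix a cut-off $\chi\in\mathscr{C}^{\infty}_0(B(O,\rho))$ equal to $1$ near $O$ and localise the operator as $\chi\mA^\beta_\sigma+(1-\chi)\mA^\beta_\sigma$. On the support of $1-\chi$ the interface $\partial\Om_+\cap\partial\Om_-$ is of class $\mathscr{C}^2$, and since the critical interval of a smooth interface reduces to $\{-1\}$ (see \cite{costabel1985direct,nguyen2016limiting}) the hypothesis $\kappa_\sigma\ne-1$ provides a local $\mrm{T}$-coercivity isomorphism; together with interior ellipticity and the boundary condition on $\partial\Om$, this makes the far part Fredholm between the weighted spaces, the $r$-weight being irrelevant there because $r$ stays bounded below. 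Hence everything reduces to the model operator $\div(\sigma\nabla\cdot)$ on the infinite cone of cross-section $\varpi$.

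Second, near $O$ the coefficient $\sigma$ depends only on the angular variable $\boldsymbol\theta$, so $\div(\sigma\nabla\cdot)$ is homogeneous of degree $-2$ and fits the Kondratiev framework exactly. Setting $t=\ln(1/r)$ turns the punctured cone into the cylinder $\R\times\varpi$ and $\div(\sigma\nabla\cdot)$ into a $t$-independent operator; the Mellin transform in $r$ (Fourier transform in $t$) then reduces it fibrewise to the holomorphic family $\mathscr{L}_\sigma(\lambda)$ of \eqref{DefLeps}--\eqref{DefLmu}. This is the content of the identity $\div(\sigma\nabla(r^\lambda\varphi))=r^{\lambda-2}\big(\lambda(\lambda+1)\,\sigma\varphi+\div_S(\sigma\nabla_S\varphi)\big)$, which shows that $\lambda$ is the homogeneity exponent of the separated solutions $r^\lambda\varphi(\boldsymbol\theta)$ and that $\lambda\in\mrm{spec}(\mathscr{L}_\sigma)$ corresponds to a nontrivial such solution of $\div(\sigma\nabla\cdot)=0$. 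By the Parseval identity for the Mellin transform, the weight of the domain space $\mathring{\mV}^1_\beta$ corresponds to the line $\Re e\,\lambda=-1/2-\beta$ and the weight of the target $(\mathring{\mV}^1_{-\beta})^\ast$ to $\Re e\,\lambda=-1/2+\beta$; by the symmetry $\lambda\mapsto-1-\lambda$ of $\mrm{spec}(\mathscr{L}_\sigma)$ these two lines carry an eigenvalue simultaneously, so either may be used and we keep $\Re e\,\lambda=-1/2+\beta$ as in the statement. The mean-value constraint in $\mathcal{V}^1_\beta(\Om)$ for $\sigma=\mu$ is a finite-dimensional modification and does not affect the Fredholm property, so both cases are treated at once.

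Third, I would invoke the general theorem of the Kondratiev calculus (\cite[Chapters 3--6]{KoMR97}, \cite{MaNP00}): the model operator is Fredholm from $\mathring{\mV}^1_\beta$ to $(\mathring{\mV}^1_{-\beta})^\ast$ if and only if $\mathscr{L}_\sigma(\lambda)$ is invertible for every $\lambda$ on the weight line. Two ingredients make this usable. (i) For fixed $\lambda$, $\mathscr{L}_\sigma(\lambda)$ is a compact perturbation of the sign-changing principal part $(\varphi,\phi)\mapsto\int_\varpi\sigma\nabla_S\varphi\cdot\nabla_S\overline\phi$; since $\partial\varpi_-$ is smooth and $\kappa_\sigma\ne-1$, $\mrm{T}$-coercivity on $\varpi$ makes this part an isomorphism up to compact, so $\mathscr{L}_\sigma(\lambda)$ is Fredholm of index $0$ and fails to be invertible exactly when $\lambda$ is an eigenvalue. (ii) One needs $\mathscr{L}_\sigma(\lambda)^{-1}$ to stay uniformly bounded as $|\Im m\,\lambda|\to\infty$ along the line; this follows from the parameter-dependent estimate in which $\mrm{T}$-coercivity replaces the sign-indefinite form by one governed by $|\sigma|$, the large real part of $-\lambda(\lambda+1)$ then reinforcing coercivity rather than opposing it. Since $\mrm{spec}(\mathscr{L}_\sigma)$ is discrete when $\kappa_\sigma\ne-1$ (\cite[Chapter 3]{rihani2022maxwell}), ``no eigenvalue on the line'' is equivalent to uniform invertibility along it; the forward implication builds a Mellin parametrix, and the converse follows by turning a singular solution $r^\lambda\varphi_\lambda$ on the line into a Weyl sequence showing the range is not closed.

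The main obstacle is that the sign change of $\sigma$ destroys the coercivity underlying the classical Kondratiev theory: neither the far operator nor the transverse symbol $\mathscr{L}_\sigma(\lambda)$ derives from a coercive form, so Fredholmness cannot be read off from Lax--Milgram. The resolution, and the technical core, is to substitute $\mrm{T}$-coercivity for coercivity at the two places above --- on $\varpi$ to make $\mathscr{L}_\sigma(\lambda)$ Fredholm and to secure the uniform estimates in $\Im m\,\lambda$, and on the smooth part of $\Om$ for the far operator --- which is precisely where $\kappa_\sigma\ne-1$ (keeping us off the smooth-interface critical value) is indispensable. For the detailed adaptation of the Mellin and Kondratiev machinery to this sign-changing setting I would follow \cite[Chapters 2--3]{rihani2022maxwell}.
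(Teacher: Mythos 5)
The paper gives no proof of Proposition \ref{PropoFredholmCriterion}: it simply asserts that the result follows by "adapting the Kondratiev approach to the present situation with sign-changing coefficients" and refers to \cite[Chapter 2]{rihani2022maxwell} for the details. Your sketch — localization, $\mrm{T}$-coercivity away from the tip and on the cross-section $\varpi$, Mellin reduction to the pencil $\mathscr{L}_\sigma(\lambda)$ with the correct weight-line bookkeeping (including the equivalence of the lines $\Re e\,\lambda=-1/2\pm\beta$ via the pencil symmetry), uniform estimates in $\Im m\,\lambda$, and a Weyl-sequence argument for the converse — is exactly that strategy and is correct.
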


\subsection{Hypersingularities}\label{ParaHyperSingu}

\begin{figure}[h!]
\centering
\includegraphics[width=5cm,angle =90]{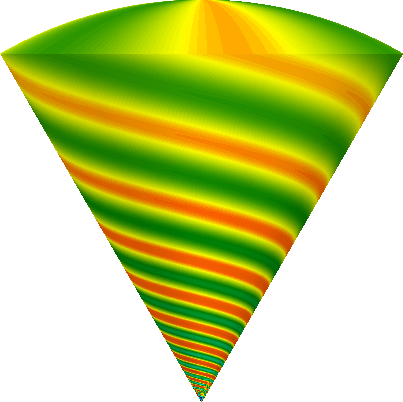}
\caption{Restriction to $\mathcal{K}_-$ of the imaginary part of a propagating singularity for an internal circular conical tip.}
\label{FigSingu}
\end{figure}

Observe that since $\mathring{\mV}^1_0(\Om)=\mH^1_0(\Om)$, $\mathcal{V}^1_0(\Om)=\mH^1_\#(\Om)$, we have $\mA_\eps^0=\mA_\eps$ and $\mA_\mu^0=\mA_\mu$ where $\mA_\eps$, $\mA_\mu$ are defined in (\ref{Def_Aeps}), (\ref{Def_Amu}). For this reason, for $\sigma=\eps$, $\mu$, the eigenvalues of $\mathscr{L}_\sigma$ located on the energy line $\Re e\,\lambda=-1/2$ ($\beta=0$) are of particular importance. Dividing (\ref{DefLeps}), (\ref{DefLmu}) by a constant if necessary, we see that the spectrum of $\mathscr{L}_\sigma$, which is a priori a function of $\sigma_+$, $\sigma_-$ , depends only on the contrast $\kappa_\sigma=\sigma_-/\sigma_+$. The Proposition \ref{PropoFredholmCriterion} directly gives the following characterization result for the set $I_\sigma$. 
\begin{proposition}For $\sigma=\eps$, $\mu$, we have
\[
I_\sigma=\{-1\}\cup\{\kappa_\sigma\in\R^\ast_-\,|\,\mbox{$\mathscr{L}_\sigma$ has an eigenvalue on the line $\Re e\,\lambda=-1/2$}\}.
\]
\end{proposition}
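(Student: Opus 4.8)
The strategy is to connect the two definitions of $I_\sigma$: the one given in the statement (in terms of eigenvalues of $\mathscr{L}_\sigma$ on the energy line) and the one implicit in the earlier text, namely that $I_\sigma$ is precisely the set of negative contrasts for which the scalar operator $\mA_\sigma$ fails to be of Fredholm type. The bridge between them is Proposition~\ref{PropoFredholmCriterion}. First I would recall the identifications $\mathring{\mV}^1_0(\Om)=\mH^1_0(\Om)$ and $\mathcal{V}^1_0(\Om)=\mH^1_\#(\Om)$, which give $\mA_\sigma^0=\mA_\sigma$, so that the Fredholmness of $\mA_\sigma$ in its natural $\mH^1$ space coincides with the Fredholmness of $\mA_\sigma^\beta$ at the distinguished value $\beta=0$.

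Next I would specialize Proposition~\ref{PropoFredholmCriterion} to $\beta=0$. That proposition states, under the hypothesis $\kappa_\sigma\neq-1$, that $\mA_\sigma^\beta$ is Fredholm if and only if $\mathscr{L}_\sigma$ has no eigenvalue on the line $\Re e\,\lambda=-1/2+\beta$. Taking $\beta=0$, this reads: for $\kappa_\sigma\neq-1$, the operator $\mA_\sigma$ is Fredholm if and only if $\mathscr{L}_\sigma$ has no eigenvalue on the energy line $\Re e\,\lambda=-1/2$. Equivalently, $\mA_\sigma$ fails to be Fredholm if and only if $\mathscr{L}_\sigma$ has such an eigenvalue. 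Since $I_\sigma$ was defined precisely as the set of $\kappa_\sigma\in\R_-^\ast$ for which $\mA_\sigma$ is not Fredholm, this immediately yields, for the part of $I_\sigma$ away from $-1$, the equality
\[
I_\sigma\cap(\R_-^\ast\setminus\{-1\})=\{\kappa_\sigma\in\R_-^\ast\setminus\{-1\}\mid \mbox{$\mathscr{L}_\sigma$ has an eigenvalue on $\Re e\,\lambda=-1/2$}\}.
\]

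It remains to handle the value $\kappa_\sigma=-1$, which is excluded from the hypotheses of Proposition~\ref{PropoFredholmCriterion} and must therefore be treated separately. This is the only delicate point of the argument. The excerpt recalls that for smooth interfaces $I_\sigma=\{-1\}$ and, more importantly, that $-1$ always lies in the critical interval: the text states that $I_\sigma$ is a bounded closed subset of $\R_-^\ast$ which contains $-1$. Thus $-1\in I_\sigma$ unconditionally, and adjoining the singleton $\{-1\}$ to the right-hand side simply reincorporates this value. The claimed formula then follows by taking the union of the two cases, with the $\{-1\}$ term accounting for the contrast excluded from the Fredholm criterion. I would note that at $\kappa_\sigma=-1$ the spectral analysis of $\mathscr{L}_\sigma$ degenerates (the spectrum need not even be discrete, as flagged in the preceding discussion), which is exactly why the $-1$ case cannot be read off from the eigenvalue condition and must be inserted by hand; this bookkeeping around $\kappa_\sigma=-1$ is the main, if modest, obstacle, the rest being a direct transcription of Proposition~\ref{PropoFredholmCriterion} at $\beta=0$.
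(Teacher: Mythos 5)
Your proposal is correct and follows exactly the paper's route: the paper derives this proposition directly from Proposition \ref{PropoFredholmCriterion} at $\beta=0$, using the identification $\mA_\sigma^0=\mA_\sigma$ and the definition of $I_\sigma$ as the set of contrasts where $\mA_\sigma$ fails to be Fredholm, with $\kappa_\sigma=-1$ adjoined by hand since it is excluded from the hypotheses of that proposition. Your explicit bookkeeping around the value $-1$ is, if anything, slightly more careful than the paper's one-line justification.
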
 

In this work, we shall make the following assumption.
\begin{Assumption}	\label{AssumptionCritique}
We suppose that $\eps$ and $\mu$ are such that $\kappa_\eps\in I_\eps\backslash\{-1\}$, $\kappa_\mu \in I_\mu\backslash\{-1\}$.
\end{Assumption}
We say that $\kappa_\eps$, $\kappa_\mu$ are critical when, respectively, $\kappa_\eps\in I_\eps\backslash\{-1\}$, $\kappa_\mu\in I_\mu\backslash\{-1\}$. Note that in this work, we shall systematically exclude the cases where $\kappa_\eps=-1$ or $\kappa_\mu=-1$. In these latter situations, somehow singularities appear not only at the tip but all along the interface. The analysis is then completely different and not fully understood yet. We refer the interested reader to \cite{Ola95,behrndt2018indefinite,pankrashkin2018self,cacciapuoti2019self,nguyen2016limiting,nguyen2020limiting} for more details in this direction.\\
\newline
Let us look at Assumption \ref{AssumptionCritique} in the simplest geometrical setting. Assume here that $O\in\Om$ and that $\mathcal{K}_-$ coincides with a circular conical tip, that is 
\begin{equation}\label{ConicalTip}
\varpi_-=\{(\cos\theta\cos\phi,\sin\theta\cos\phi,\sin\phi)\,|\,-\pi\le\theta\le\pi,\,-\pi/2\le\phi <-\pi/2+\alpha\}
\end{equation}
for some $\alpha\in(0;\pi)$ (see Figure \ref{FigGeo} left). In this situation, as mentioned in \cite[\S2.1]{dhia2022maxwell}, adapting the results of \cite{KCHWS14,HePe18,LiPS20}, one can show that $I_\eps=I_\mu=(-1;-\aleph_{\alpha})$ (resp. $I_\eps=I_\mu=(-\aleph_{\alpha};-1)$ when $\alpha<\pi/2$ (resp. $\alpha>\pi/2$). Here $\aleph_{\alpha}$ is the constant defined by
\begin{equation}\label{DefInterSimple}
\aleph_{\alpha}:= \dfrac{_2\mrm{F}_1(1/2,1/2,1,\cos^2(\alpha/2))\,_2\mrm{F}_1(3/2,3/2,2,\sin^2(\alpha/2))}{_2\mrm{F}_1(1/2,1/2,1,\sin^2(\alpha/2))\,_2\mrm{F}_1(3/2,3/2,2,\cos^2(\alpha/2))}>0,
\end{equation}
where $_2\mrm{F}_1$ stands for the Gauss's hypergeometric function. Note that we have $\aleph_{\alpha}=1/\aleph_{\pi-\alpha}$ and $\aleph_{\alpha}\in(0;1)$ for $\alpha\in(0;\pi/2)$. Additionally, there holds for example $\aleph_{\pi/4}\approx0.218$ as well as $\textstyle \lim_{\alpha\to \pi/2}\aleph_{\alpha}=1$, $\textstyle\lim_{\alpha\to 0^+}\aleph_{\alpha}=0^+$, $\textstyle\lim_{\alpha\to \pi^-}\aleph_{\alpha}=+\infty$. As a consequence, for given materials such that $\kappa_\eps$, $\kappa_\mu\in(-1;0)$ (resp. $\kappa_\eps$, $\kappa_\mu\in(-\infty;-1)$), by taking $\alpha$ small enough (resp. large enough), one can always find geometries such that Assumption \ref{AssumptionCritique} is satisfied.\\
\newline
We come back to the general analysis. According to Proposition \ref{PropoFredholmCriterion}, when Assumption \ref{AssumptionCritique} holds, $\mA_\eps$, $\mA_\mu$ are not of Fredholm type. This is directly related to the existence of hypersingularities supported by the tip that we describe now. For $\sigma=\eps$, $\mu$, we denote by 
\[
\lambda^\sigma_j=-1/2+i\eta^\sigma_j,\qquad\qquad j=1,\dots,n^\sigma, 
\]
the eigenvalues of $\mathscr{L}_\sigma$ located on the line $\Re e\,\lambda=-1/2$. Let $I^\sigma_j=\dim\,\ker\mathscr{L}_\sigma(\lambda^\sigma_j)$ stand for the geometric multiplicity of $\lambda^\sigma_j$ and let $\varphi^\sigma_{j,1,0},\dots,\varphi^\sigma_{j,I^\sigma_j,0}$ be a canonical system of eigenfunctions of $\mathscr{L}_\sigma$ corresponding to the eigenvalue $\lambda^\sigma_j$. For certain contrasts, generalized eigenfunctions can also exist that we have to take into account. Denote by $\kappa^{\sigma}_{j,k}$ the partial multiplicity of $\lambda^\sigma_j$ (see \cite[\S5.1.1]{KoMR97} for the definition). Finally, let 
\[
\varphi^\sigma_{j,k,0},\dots,\varphi^\sigma_{j,k,\kappa^{\sigma}_{j,k}-1},\qquad\quad k=1,\dots,I^\sigma_j,
\]
be a canonical system of Jordan chains (see again \cite[\S5.1.1]{KoMR97}) corresponding to the eigenvalue $\lambda^\sigma_j$. For $j=1,\dots,n^{\sigma}$, $k=1,\dots,I^\sigma_j$, $l=0,\dots,\kappa^{\sigma}_{j,k}-1$, we define the hypersingularity
\begin{equation}\label{defHypersingu}
s^{\sigma}_{j,k,l}:=r^{-1/2+i\eta^\sigma_j}\sum_{p=0}^{l}\frac{1}{p!} (\log r)^p\varphi^\sigma_{j,k,l-p}(\boldsymbol{\theta}).
\end{equation}
Note that if $\lambda_j=-1/2+i\eta_j$ is a simple eigenvalue (geometric multiplicity equal to one and no generalized eigenfunction), we just have $s^{\sigma}_{j,1,0}=r^{-1/2+i\eta}\varphi^\sigma_{j,1,0}(\boldsymbol{\theta})$ (see Figure \ref{FigSingu} for a representation of such function). Additionally, using the definition of Jordan chains, we find that for $\lambda^\sigma_j\ne-1/2$, we have $\kappa^\sigma_{j,k}=1$, \textit{i.e.} absence of generalized eigenfunction associated with the eigenpair $(\lambda^\sigma_j,\varphi^\sigma_{j,k,0})$, as soon as there holds
\[
\int_{\varpi} \sigma\varphi^\sigma_{j,k,0}\,\varphi^\sigma_{j,\tilde{k},0}\,d\boldsymbol{\theta} \ne0\qquad\mbox{ for some }\tilde{k}\in\{1,\dots,I^\sigma_j\}.
\]
The hypersingularities $s^\sigma$ in (\ref{defHypersingu}) satisfy
\begin{equation}\label{PropertyKernel}
\div(\sigma\nabla s^{\sigma})=0\qquad\mbox{ in }\Om\cap B(O,\rho)
\end{equation}
together with the homogeneous Dirichlet/Neumann (according to the case) boundary conditions on $\partial\Om\cap \partial B(O,\rho)$. Moreover, they are in $\mL^2(\Om)$ but lay ``just outside'' of $\mH^1(\Om)$. More precisely, we have $s^{\sigma}\notin \mH^1(\Om)$ and $r^{\tau}s^{\sigma}\in\mH^1(\Om)$ for all $\tau>0$. This is directly related to the calculus
\[
\int_{\delta}^1|\nabla(r^{-1/2+i\eta})|^2\,r^2dr=|-1/2+i\eta|^2\int_{\delta}^1r^{-1}dr=(1/4+\eta^2)|\ln\delta|\underset{\delta\to0^+}{\longrightarrow}+\infty.
\]
The hypersingularities are also known as propagating or black hole singularities. In the following, to simplify, we will simply call them singularities. They can be interpreted as waves guided by the interface  between the two materials that propagate to or from $O$. 
The group velocity of these waves tend to $0$ as $r\to0$ so that they never reach $O$ (this is why they are called black hole waves). Everything happens as if energy was trapped at the tip, see the important identity (\ref{ImportantIdentity}) below to understand this sentence, and the point $O$ plays the role of infinity. For adapted numerical methods to catch them, one may look at \cite{BCCC16,HeKa18,BoHM21,HeKR21}. For related problems, we refer the reader to \cite{BoCh13,PePu17,BoZh19,BDTZ19,HaPa20,Perf21}.\\
\newline
To discard in particular problems of boundary conditions on $\partial\Om$ far from $O$, classically we multiply the singularities by a cut-off function. Introduce $\chi\in\mathscr{C}^{\infty}(\R)$ such that $\chi(r)=1$ for $r\le\rho/2$ and $\chi(r)=0$ for $r\ge \rho$. For $\sigma=\eps,\mu$, this leads us to define the space
\begin{equation}\label{DefSpace}
\mathcal{S}_\sigma:=\mrm{span}\{\ \chi(r)s^{\sigma}_{j,k,l}(x)\ ,\,j=1,\dots,n^{\sigma},\,k=1,\dots,I^\sigma_j,\,l=0,\dots,\kappa^{\sigma}_{j,k}-1\}.
\end{equation}
Exploiting (\ref{PropertyKernel}), for $\mathfrak{s}\in\mathcal{S}_\sigma$, we obtain
\begin{equation}\label{PropertySingu}
\div(\sigma\nabla \mathfrak{s})\in\mL^2(\Om)\qquad\mbox{ and }\qquad \div(\sigma\nabla \mathfrak{s})=0\mbox{ in }\Om\cap B(O,\rho/2).
\end{equation}
The study of \cite[Chapter 2]{rihani2022maxwell} gives the following result. 
\begin{lemma}
Suppose that Assumption \ref{AssumptionCritique} holds. Then $\mathcal{S}_\eps$ and $\mathcal{S}_\mu$ are of finite dimension and their dimensions are even, i.e. we have $\dim(\mathcal{S}_\eps)=2N_\eps$, $\dim(\mathcal{S}_\mu)=2N_\mu$ with $N_\eps,N_\mu\in\N^\ast$.
\end{lemma}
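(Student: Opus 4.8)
The plan is to identify $\dim\mathcal{S}_\sigma$ with the total algebraic multiplicity carried by $\mrm{spec}(\mathscr{L}_\sigma)$ on the critical line $\Re e\,\lambda=-1/2$, and then to exploit the fact that $\mathscr{L}_\sigma(\lambda)$ depends on $\lambda$ only through the combination $\lambda(\lambda+1)$. By construction (\ref{defHypersingu})--(\ref{DefSpace}), the functions $\chi(r)s^\sigma_{j,k,l}$ are linearly independent, since the radial exponents $r^{\lambda^\sigma_j}$ together with the degrees of the attached powers of $\log r$ distinguish them; hence
\[
\dim\mathcal{S}_\sigma=\sum_{j=1}^{n^\sigma}\sum_{k=1}^{I^\sigma_j}\kappa^\sigma_{j,k}=\sum_{j=1}^{n^\sigma}\mathfrak{m}(\lambda^\sigma_j),
\]
where $\mathfrak{m}(\lambda^\sigma_j)$ is the algebraic multiplicity of $\lambda^\sigma_j$ as a characteristic value of the holomorphic Fredholm operator function $\mathscr{L}_\sigma$.

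\textbf{Finiteness.} The discreteness of $\mrm{spec}(\mathscr{L}_\sigma)$ recalled above (valid since $\kappa_\sigma\ne-1$) already gives finitely many eigenvalues in any bounded region, so it only remains to rule out a sequence escaping to infinity along the line, i.e. with $|\eta^\sigma_j|\to+\infty$. For $\lambda=-1/2+i\eta$ one has $-\lambda(\lambda+1)=1/4+\eta^2$, so that $(\mathscr{L}_\sigma(\lambda)\varphi,\phi)_{\mH^1(\varpi)}=\int_\varpi\sigma\nabla_S\varphi\cdot\nabla_S\overline{\phi}\,d\boldsymbol{\theta}+(1/4+\eta^2)\int_\varpi\sigma\varphi\overline{\phi}\,d\boldsymbol{\theta}$. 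Since the interface $\partial\varpi_-$ is a smooth curve of $\mathbb{S}^2$ and $\kappa_\sigma\ne-1$, a sign-changing isomorphism $T$ of $\mH^1_0(\varpi)$ (resp. of $\mH^1(\varpi)$ modulo constants) makes the stiffness form $T$-coercive; the same $T$ renders the dominant mass term $(1/4+\eta^2)\int_\varpi\sigma\,\varphi\,\overline{T\varphi}\,d\boldsymbol{\theta}$ coercive on $\mL^2(\varpi)$ up to an interface layer that is absorbed once $|\eta|$ is large. This yields invertibility of $\mathscr{L}_\sigma(-1/2+i\eta)$ for $|\eta|$ large enough, hence finitely many eigenvalues on the line and $\dim\mathcal{S}_\sigma<+\infty$.

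\textbf{Evenness.} This is the heart of the matter. Because $\mathscr{L}_\sigma(\lambda)$ depends on $\lambda$ solely through $t:=\lambda(\lambda+1)=(\lambda+1/2)^2-1/4$, it defines a holomorphic Fredholm operator function $\mathcal{N}$ of the single variable $t$ through $\mathcal{N}(\lambda(\lambda+1)):=\mathscr{L}_\sigma(\lambda)$, whose spectrum is discrete. The map $g:\lambda\mapsto\lambda(\lambda+1)$ sends the critical line $\{\Re e\,\lambda=-1/2\}$ onto the ray $(-\infty,-1/4]$: it is two-to-one for $t<-1/4$, with the conjugate pair $-1/2\pm i\sqrt{-t-1/4}$ as preimages, and it ramifies at the single point $\lambda=-1/2\leftrightarrow t=-1/4$, where $g'(-1/2)=0$ while $g''(-1/2)=2\ne0$. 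I would compare multiplicities via the logarithmic residue (Gohberg--Sigal theory): for a small positively oriented circle $\gamma$ around $\lambda_0$,
\[
\mathfrak{m}(\lambda_0)=\frac{1}{2\pi i}\oint_{\gamma}\mrm{tr}\big(\mathscr{L}_\sigma(\lambda)^{-1}\mathscr{L}_\sigma'(\lambda)\big)\,d\lambda.
\]
Writing $\mathscr{L}_\sigma'(\lambda)=\mathcal{N}'(g(\lambda))\,g'(\lambda)$ and changing variables $t=g(\lambda)$, the image contour $g(\gamma)$ winds once around $t_0=g(\lambda_0)$ when $\lambda_0$ is a regular point of $g$, and exactly twice around $t_0=-1/4$ when $\lambda_0=-1/2$. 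Consequently each eigenvalue $t_0\in(-\infty,-1/4]$ of $\mathcal{N}$ contributes exactly $2\,\mathfrak{m}_{\mathcal{N}}(t_0)$ to $\dim\mathcal{S}_\sigma$: either through a conjugate pair $\{\lambda_0,\overline{\lambda_0}\}$ of regular points each of multiplicity $\mathfrak{m}_{\mathcal{N}}(t_0)$, or through the single ramified point $-1/2$ of multiplicity $2\,\mathfrak{m}_{\mathcal{N}}(-1/4)$. Summing over the finitely many eigenvalues of $\mathcal{N}$ on the ray gives $\dim\mathcal{S}_\sigma=2\sum_{t_0}\mathfrak{m}_{\mathcal{N}}(t_0)$, which is even; Assumption \ref{AssumptionCritique} together with the characterization of $I_\sigma$ guarantees at least one such $t_0$, so $N_\sigma\in\N^\ast$.

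\textbf{Main obstacle.} The delicate case is the ramified point $\lambda=-1/2$: a simple eigenvalue $t_0=-1/4$ of $\mathcal{N}$ generates a length-two Jordan chain of $\mathscr{L}_\sigma$, which is exactly the appearance of the $\log r$ term ($l=1$) in (\ref{defHypersingu}). A naive pairing of singularities by complex conjugation pairs all eigenvalues with $\eta^\sigma_j\ne0$ but leaves the self-conjugate value $-1/2$ unaccounted for; it is precisely the branched-covering/winding argument that treats this point uniformly and guarantees that its multiplicity is doubled, and this is what I expect to require the most care to justify rigorously.
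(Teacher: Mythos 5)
The paper itself does not prove this lemma; it is imported from \cite{rihani2022maxwell}, so there is no in-text argument to measure yours against. Judged on its own merits, your proof is essentially correct, and the evenness argument is the right mechanism. Reducing $\dim\mathcal{S}_\sigma$ to the total algebraic multiplicity of $\mathscr{L}_\sigma$ on the line $\Re e\,\lambda=-1/2$, noting that $\mathscr{L}_\sigma(\lambda)=A-\lambda(\lambda+1)B$ factors through the linear pencil $\mathcal{N}(t)=A-tB$ (Fredholm of index zero for $\kappa_\sigma\ne-1$ since $B$ is compact on $\mH^1(\varpi)$), and transporting the Gohberg--Sigal logarithmic residue along the double cover $\lambda\mapsto\lambda(\lambda+1)$ does double every contribution. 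This is exactly what is needed at the self-conjugate point $\lambda=-1/2$: the two symmetries $\lambda\mapsto\overline{\lambda}$ and $\lambda\mapsto-1-\lambda$ recalled in \S\ref{SectionScalaireCritique} coincide on the critical line and pair off the eigenvalues with $\eta^\sigma_j\ne0$, but say nothing about $-1/2$, and your ramification/winding argument (confirmed by the scalar model $\mathcal{N}(t)=t+1/4$, $\mathscr{L}(\lambda)=(\lambda+1/2)^2$, multiplicity two, one $\log r$ term) is a clean way to close that case.

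The one step I would press you on is finiteness. As written, the sentence suggests that the interface cross term coming from the mass form is ``absorbed once $|\eta|$ is large''; it is multiplied by $1/4+\eta^2$, so the large parameter works against you there, not for you. What is actually needed is a single operator $T$ for which the cross terms of the \emph{mass} form are absorbed by its main terms with constants uniform in $\eta$, i.e.\ $\Re e\,b(\varphi,T\varphi)\ge c'\|\varphi\|^2_{\mL^2(\varpi)}$ with $b(\varphi,\phi)=\int_\varpi\sigma\varphi\overline{\phi}\,d\boldsymbol{\theta}$, while simultaneously $\Re e\,a(\varphi,T\varphi)\ge c\,\|\nabla_S\varphi\|^2_{\mL^2(\varpi)}-C\|\varphi\|^2_{\mL^2(\varpi)}$ for the stiffness form. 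This forces the transfer operator in $T$ to be a reflection in normal coordinates localized near $\partial\varpi_-$ at a scale making both its $\mL^2\to\mL^2$ and $\mH^1\to\mH^1$ norms arbitrarily close to $1$ --- which is available for the $\mathscr{C}^2$ interface and is precisely what fails at $\kappa_\sigma=-1$. Once this is in place, $\Re e\bigl(a(\varphi,T\varphi)+(1/4+\eta^2)\,b(\varphi,T\varphi)\bigr)>0$ for $1/4+\eta^2>C/c'$, giving the invertibility of $\mathscr{L}_\sigma(-1/2+i\eta)$ for $|\eta|$ large and hence finiteness. With that step rewritten, the proof is complete.
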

\begin{remark}
In the particular case of the circular conical tip described in (\ref{ConicalTip}), one can show that by taking contrasts $\kappa_\eps$, $\kappa_\mu$ close to $-1$, one can get dimensions $N_\eps$, $N_\mu$ as large as desired.
\end{remark}

In what follows we explain in a brief way how to take into account the singularities in the functional framework to get Fredholm operators.

\subsection{Additional properties for the scalar operators in Kondratiev spaces}
Introduce the quantities 
\begin{equation}\label{defBetaDN}
\begin{array}{l}
\beta_D:=\min \{\Re e\,(\lambda-1/2) \,|\,\lambda\in\mrm{spec}(\mathscr{L}_\eps)\mbox{ and }\Re e\,\lambda>-1/2\},\\[6pt]
\beta_N:=\min (\{\Re e\,(\lambda-1/2) \,|\,\lambda\in\mrm{spec}(\mathscr{L}_\mu)\mbox{ and }\Re e\,\lambda>-1/2\}\cup\{5/2\})
\end{array}
\end{equation}
(see the illustration of Figure \ref{fig eigenvalues}). The results of \cite[Chapter 2]{rihani2022maxwell} guarantee that we have both $\beta_D>0$ and $\beta_N>0$. With this definition, all the eigenvalues of $\mathscr{L}_\eps$ (resp. $\mathscr{L}_\mu$) in the strip $-\beta_D<\Re e\,\lambda<\beta_D$ ($-\beta_N<\Re e\,\lambda<\beta_N$) are located on the line $\Re e\,\lambda=-1/2$. Additionally, we have the following result (we recall that the index of a Fredholm operator is defined as the difference of the dimensions of its kernel and of its cokernel). 
\begin{proposition}\label{PropositionFreholm}
Suppose that Assumption \ref{AssumptionCritique} holds.\\[2pt]
For $\beta\in(0;\beta_D)$, $\mA_\eps^{\beta}$ is of index $N_\eps$ while $\mA_\eps^{-\beta}$ is of index $-N_\eps$.\\[2pt]
For $\beta\in(0;\beta_N)$, $\mA_\mu^{\beta}$ is of index $N_\mu$ while $\mA_\mu^{-\beta}$ is of index $-N_\mu$.
\end{proposition}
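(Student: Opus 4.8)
The plan is to treat the permittivity and permeability parts symmetrically and to deduce each index from just two pieces of information about the operator: a Fredholm \emph{index jump} across the critical line $\Re e\,\lambda=-1/2$, and a duality relation tying $\mA^{\beta}_{\sigma}$ to $\mA^{-\beta}_{\sigma}$. I discuss $\mA^{\pm\beta}_{\eps}$ in detail; the argument for $\mA^{\pm\beta}_{\mu}$ is identical once the mean-free spaces $\mathcal{V}^1_{\pm\beta}(\Om)$ and the constraint $\beta<5/2$ are accounted for. First I would record that both $\mA^{\beta}_{\eps}$ and $\mA^{-\beta}_{\eps}$ are Fredholm for $\beta\in(0;\beta_D)$: the weight lines attached to these operators are $\Re e\,\lambda=-1/2+\beta$ and $\Re e\,\lambda=-1/2-\beta$, and since $0<\beta<\beta_D$, by the very definition of $\beta_D$ neither line carries an eigenvalue of $\mathscr{L}_\eps$, so Proposition \ref{PropoFredholmCriterion} applies.

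Next I would exploit the fact that $\eps$ is real-valued to identify the adjoint of $\mA^{\beta}_{\eps}$ with respect to the antiduality pairing. A direct computation on the defining sesquilinear form gives $\langle \mA^{\beta}_{\eps}\varphi,\varphi'\rangle=\overline{\langle \mA^{-\beta}_{\eps}\varphi',\varphi\rangle}$ for all $\varphi\in\mathring{\mV}^1_{\beta}(\Om)$, $\varphi'\in\mathring{\mV}^1_{-\beta}(\Om)$, so that $\mA^{-\beta}_{\eps}=(\mA^{\beta}_{\eps})^{\ast}$ under the reflexive pairing between $\mathring{\mV}^1_{\beta}(\Om)$ and $\mathring{\mV}^1_{-\beta}(\Om)$. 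Since the indices of a Fredholm operator and of its adjoint are opposite, this yields $\mrm{ind}(\mA^{-\beta}_{\eps})=-\mrm{ind}(\mA^{\beta}_{\eps})$ and reduces the whole statement to the single identity $\mrm{ind}(\mA^{\beta}_{\eps})=N_\eps$.

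The analytic core is then to compute the difference $\mrm{ind}(\mA^{\beta}_{\eps})-\mrm{ind}(\mA^{-\beta}_{\eps})$ through the index-jump formula of Kondratiev theory. Moving the weight line from $\Re e\,\lambda=-1/2-\beta$ to $\Re e\,\lambda=-1/2+\beta$ sweeps across the open strip $-1/2-\beta<\Re e\,\lambda<-1/2+\beta$; since $\beta<\beta_D$, the only eigenvalues of $\mathscr{L}_\eps$ in this strip lie on the critical line $\Re e\,\lambda=-1/2$, and the jump of the index equals the total algebraic multiplicity of these eigenvalues, Jordan chains included. By the construction of the hypersingularities $s^{\eps}_{j,k,l}$ in \eqref{defHypersingu} and of the space $\mathcal{S}_\eps$ in \eqref{DefSpace}, this total algebraic multiplicity is precisely $\dim\mathcal{S}_\eps=2N_\eps$, and, with the orientation chosen so that the larger weight $\beta>0$ corresponds to the larger space $\mathring{\mV}^1_{\beta}(\Om)$, the jump carries the sign consistent with the statement. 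Hence $\mrm{ind}(\mA^{\beta}_{\eps})-\mrm{ind}(\mA^{-\beta}_{\eps})=2N_\eps$; combined with the antisymmetry of the previous step, this gives $\mrm{ind}(\mA^{\beta}_{\eps})=N_\eps$ and $\mrm{ind}(\mA^{-\beta}_{\eps})=-N_\eps$.

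The main obstacle is to justify the index-jump formula in the present non-selfadjoint, sign-changing setting, where $\mathscr{L}_\sigma$ is not a pencil of selfadjoint operators and Jordan blocks may occur: one must verify that the Mellin machinery still provides the asymptotic decomposition of every $\mathring{\mV}^1_{\beta}(\Om)$-solution into an explicit singular part in $\mathcal{S}_\eps$ plus a remainder in $\mathring{\mV}^1_{-\beta}(\Om)$, and that the bookkeeping of the partial multiplicities $\kappa^{\eps}_{j,k}$ produces exactly $\dim\mathcal{S}_\eps$. This is the content developed in \cite[Chapter 2]{rihani2022maxwell}, which I would invoke. For $\mA^{\pm\beta}_{\mu}$ the only additional care concerns the constant functions, that is the eigenvalue $\lambda=0$ of $\mathscr{L}_\mu$: working in the mean-free spaces $\mathcal{V}^1_{\pm\beta}(\Om)$ removes them, and since $\beta<\beta_N$ keeps both weight lines strictly to the left of $\Re e\,\lambda=0$, they never enter the swept strip, so the identical count gives $\mrm{ind}(\mA^{\pm\beta}_{\mu})=\pm N_\mu$.
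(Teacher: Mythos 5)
The paper does not actually prove this proposition: it is imported wholesale from \cite[Chapter 2]{rihani2022maxwell}, so there is no in-text argument to compare against. Your proof is correct and is the canonical Kondratiev-theory route, which is surely what the cited source does: (i) both weight lines $\Re e\,\lambda=-1/2\pm\beta$ are eigenvalue-free for $\beta\in(0;\beta_D)$ (using the symmetry of $\mrm{spec}(\mathscr{L}_\eps)$ about $-1/2$), so Proposition \ref{PropoFredholmCriterion} gives Fredholmness; (ii) the reality of $\eps$ identifies $\mA^{-\beta}_\eps$ with the adjoint of $\mA^{\beta}_\eps$ under the (reflexive, Hilbertian) pairings, whence opposite indices; (iii) the index jump across the strip equals the total algebraic multiplicity of the eigenvalues on $\Re e\,\lambda=-1/2$, which is $\dim\mathcal{S}_\eps=2N_\eps$ by the very construction of the canonical Jordan chains in (\ref{defHypersingu})--(\ref{DefSpace}), with the sign fixed by the inclusion $\mathring{\mV}^1_{-\beta}(\Om)\subset\mathring{\mV}^1_{\beta}(\Om)$. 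You correctly flag that the only genuinely nontrivial ingredient is the validity of the jump formula for the non-selfadjoint, sign-changing symbol (with possible Jordan blocks), and you defer that to \cite[Chapter 2]{rihani2022maxwell} — exactly as the paper itself does. Your remark on the Neumann case is also the right one: the eigenvalues $\lambda=0$ and $\lambda=-1$ of $\mathscr{L}_\mu$ stay outside the swept strip because $\beta<\beta_N\le1/2$, so the mean-free normalization causes no extra contribution to the count.
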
		
Note that we can prove that the dimension of $\ker\mA_\eps^{-\beta}$ (resp. $\ker\mA_\mu^{-\beta}$) is independent of $\beta\in(0;\beta_D)$ (resp. $\beta\in(0;\beta_N)$). Therefore $\mA_\eps^{-\beta}$ (resp. $\mA_\mu^{-\beta}$) is injective for all $\beta\in(0;\beta_D)$ (resp. $\beta\in(0;\beta_N)$) if and only if it is injective for one $\beta\in(0;\beta_D)$ (resp. one $\beta\in(0;\beta_N)$). To simplify the analysis below and to avoid to be obliged to handle kernels of finite dimensions, we shall make  the
\begin{Assumption}\label{AssumptionNoTrappedMode}
We suppose that $\eps$ (resp. $\mu$) is  such that $\mA_\eps^{-\beta}$ (resp. $\mA_\mu^{-\beta}$) is injective for all $\beta\in(0;\beta_D)$ (resp. $\beta\in(0;\beta_N)$).
\end{Assumption}
With Proposition \ref{PropositionFreholm}, this gives the estimates
\begin{equation}\label{Estimmonomorphism}
\begin{array}{lll}
\forall\beta\in[0;\beta_D),&\qquad	\|u\|_{\mathring{\mV}^1_{-\beta}(\Om)}\leq C\,\|\mA^{-\beta}_\eps u\|_{(\mathring{\mV}^1_\beta(\Om))^\ast},\ \ \quad \forall u\in\mathring{\mV}^1_{-\beta}(\Om),\\[8pt]
\forall\beta\in[0;\beta_N),&\qquad			\|u\|_{\mathcal{V}^1_{-\beta}(\Om)}\leq C\,\|\mA^{-\beta}_\mu u\|_{(\mathcal{V}^1_\beta(\Om))^\ast},\qquad \forall u\in\mathcal{V}^1_{-\beta}(\Om).
\end{array}
\end{equation}
With the help of the residue theorem applied to the symbols $\lambda\mapsto\mathscr{L}_\eps(\lambda)$, $\lambda\mapsto\mathscr{L}_\mu(\lambda)$ (adapt \cite[Theorem 5.4.2]{KoMR97}), one shows the following regularity result.
\begin{proposition}\label{RegulariteScalaire}
Suppose that Assumptions \ref{AssumptionCritique}-\ref{AssumptionNoTrappedMode}  hold.\\[2pt]
If $u\in\mH^1_0(\Om)$ satisfies  $\div(\eps\nabla u)\in(\mathring{\mV}^1_\beta(\Om))^\ast\subset(\mathring{\mV}^1_{-\beta}(\Om))^\ast$ for some  $\beta\in(0;\beta_D)$, then $u\in\mathring{\mV}^1_{-\beta}(\Om)$.\\[2pt]
If $u\in\mH^1_\#(\Om)$ satisfies  $\div(\mu\nabla u)\in(\mathcal{V}^1_\beta(\Om))^\ast\subset(\mathcal{V}^1_{-\beta}(\Om))^\ast$ for some  $\beta\in(0;\beta_N)$, then $u\in\mathcal{V}^1_{-\beta}(\Om)$.		
\end{proposition}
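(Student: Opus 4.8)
The plan is to prove the statement by the Kondratiev contour-shift technique, which is the precise content of the residue theorem alluded to above; I treat the Dirichlet problem for $\eps$, the Neumann problem for $\mu$ being entirely parallel after replacing $\mathring{\mV}^1_\beta(\Om)$, $\mathscr{L}_\eps$, $\beta_D$ by $\mathcal{V}^1_\beta(\Om)$, $\mathscr{L}_\mu$, $\beta_N$ and working with mean-free functions (the cap $\beta_N\le 5/2$ being exactly what keeps the zero-mean constraint meaningful, cf. (\ref{EstimZeroMean})). First I would localise at the tip. With the cut-off $\chi$ from (\ref{DefSpace}), split $u=\chi u+(1-\chi)u$; the far part $(1-\chi)u$ is supported in $\{r\ge\rho/2\}$, where the weights $r^{\pm\beta}$ are bounded, so it lies in $\mathring{\mV}^1_{-\beta}(\Om)$ as soon as it lies in $\mH^1_0(\Om)$, which it does. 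Hence it suffices to show $\chi u\in\mathring{\mV}^1_{-\beta}(\Om)$. Since $\div(\eps\nabla(\chi u))=\chi\,\div(\eps\nabla u)+[\,\div(\eps\nabla\cdot),\chi\,]u=:g$ and the commutator is a first-order expression supported in the annulus $\{\rho/2\le r\le\rho\}$, hence away from $O$, the hypothesis $\div(\eps\nabla u)\in(\mathring{\mV}^1_\beta(\Om))^\ast$ gives $g\in(\mathring{\mV}^1_\beta(\Om))^\ast$.

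Next I would pass to the Mellin transform in the cone. Setting $v=\chi u$, the equation $\div(\eps\nabla v)=g$ becomes, after the change of variables $(r,\boldsymbol{\theta})\mapsto(\ln r,\boldsymbol{\theta})$, the relation $\mathscr{L}_\eps(\lambda)\hat v(\lambda)=\hat g(\lambda)$, so that $\hat v(\lambda)=\mathscr{L}_\eps(\lambda)^{-1}\hat g(\lambda)$ whenever $\lambda\notin\mrm{spec}(\mathscr{L}_\eps)$. By the Mellin--Parseval identity, $v\in\mathring{\mV}^1_0(\Om)=\mH^1_0(\Om)$ is equivalent to square-integrability of $\hat v$, in the natural $\lambda$-dependent $\mH^1(\varpi)$-norm, on the line $\{\Re e\,\lambda=-1/2\}$, whereas the desired conclusion $v\in\mathring{\mV}^1_{-\beta}(\Om)$ amounts to square-integrability on $\{\Re e\,\lambda=-1/2+\beta\}$. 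The strategy is thus to continue $\hat v$ across the strip $\{-1/2\le\Re e\,\lambda\le-1/2+\beta\}$ and to estimate it on the right-hand line.

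The conclusion then rests on a residue count. The function $\hat v=\mathscr{L}_\eps(\lambda)^{-1}\hat g(\lambda)$ is meromorphic with poles only at the eigenvalues of $\mathscr{L}_\eps$; by the very definition of $\beta_D$ in (\ref{defBetaDN}) and the restriction $\beta\in(0;\beta_D)$, there is no such eigenvalue in the open strip $\{-1/2<\Re e\,\lambda<-1/2+\beta\}$. Crucially, although the eigenvalues $\lambda^\eps_j=-1/2+i\eta^\eps_j$ do sit on the boundary line $\{\Re e\,\lambda=-1/2\}$ of the strip, they contribute no singular term to $v$: were such a term present, $v$ would contain a multiple of a hypersingularity $s^\eps_{j,k,0}=r^{-1/2+i\eta^\eps_j}\varphi^\eps_{j,k,0}(\boldsymbol{\theta})$, contradicting $v\in\mathring{\mV}^1_0(\Om)=\mH^1_0(\Om)$ since $s^\eps_{j,k,0}\notin\mH^1(\Om)$. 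Hence $\hat v$ extends holomorphically across the strip, no residue is collected, and the bound $g\in(\mathring{\mV}^1_\beta(\Om))^\ast$ together with the resolvent estimates for $\mathscr{L}_\eps(\lambda)^{-1}$ yields square-integrability of $\hat v$ on $\{\Re e\,\lambda=-1/2+\beta\}$. By Mellin--Parseval, $v=\chi u\in\mathring{\mV}^1_{-\beta}(\Om)$, and therefore $u\in\mathring{\mV}^1_{-\beta}(\Om)$.

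The step I expect to be the main obstacle is the rigorous justification of this continuation and of the shift itself: one must verify that $\hat g$ is holomorphic and decays suitably in the closed strip $\{-1/2\le\Re e\,\lambda\le-1/2+\beta\}$, so that only the poles of $\mathscr{L}_\eps(\lambda)^{-1}$ can contribute, and that the horizontal segments joining the two vertical lines give a vanishing contribution as $|\Im m\,\lambda|\to+\infty$. This is exactly where $g\in(\mathring{\mV}^1_\beta(\Om))^\ast$ is used, while the required bounds on $\mathscr{L}_\eps(\lambda)^{-1}$ away from the spectrum, uniform in $\Im m\,\lambda$, are obtained by adapting \cite[Theorem~5.4.2]{KoMR97}. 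Finally, the injectivity provided by Assumption \ref{AssumptionNoTrappedMode} is not needed for the regularity itself but, through the estimate (\ref{Estimmonomorphism}), identifies the $\mathring{\mV}^1_{-\beta}(\Om)$-function produced by the continuation with the original $u$ rather than with something differing from it by an element of $\ker\mA^{-\beta}_\eps$.
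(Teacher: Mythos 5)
Your proposal is correct and follows exactly the route the paper itself indicates (the paper gives no detailed proof, only the instruction to apply the residue theorem to the symbols $\lambda\mapsto\mathscr{L}_\eps(\lambda)$, $\lambda\mapsto\mathscr{L}_\mu(\lambda)$ by adapting \cite[Theorem 5.4.2]{KoMR97}): your localisation, Mellin transform, contour shift from $\Re e\,\lambda=-1/2$ to $\Re e\,\lambda=-1/2+\beta$, and the observation that the poles sitting on the line $\Re e\,\lambda=-1/2$ cannot contribute because the corresponding hypersingularities do not belong to $\mH^1(\Om)$ constitute precisely that adaptation. The one step worth tightening is the continuation when poles lie on the starting contour itself: it is cleaner to use $\mH^1_0(\Om)=\mathring{\mV}^1_0(\Om)\subset\mathring{\mV}^1_{\gamma}(\Om)$ for small $\gamma>0$ so that the shift starts from the eigenvalue-free line $\Re e\,\lambda=-1/2-\gamma$, apply the standard asymptotics theorem across the strip (collecting the singular terms $\chi s^{\eps}_{j,k,l}$ attached to the eigenvalues on $\Re e\,\lambda=-1/2$), and then conclude that all their coefficients vanish because no nontrivial linear combination of these functions lies in $\mH^1(\Om)$ while $u$ and the $\mathring{\mV}^1_{-\beta}(\Om)$-remainder do.
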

The singularities belong to the domains of the operators $\mA_\eps^{\beta}$, $\mA_\mu^{\beta}$. However these operators are not yet satisfactory because they are onto (as adjoints of injective operators) but not injective. In order to construct isomorphisms, we have to make a selection of the singularities and incorporate in the functional framework only some of them. This is the next step in the analysis.

\subsection{Mandelstam radiation principle }
The properties (\ref{PropertySingu}) ensure that for $\sigma=\eps,\mu,$ we can define the form $q_\sigma(\cdot,\cdot):\mathcal{S}_\sigma\times\mathcal{S}_\sigma\to\C$ such that
$$q_\sigma(u,v)=\int_{\Om}-\div(\sigma\nabla u)\overline{v}+u\,\div(\sigma\nabla \overline{v})\,dx\qquad\forall u,v\in\mathcal{S}_\sigma.
$$
Note that $q_\sigma(\cdot,\cdot)$ is sesquilinear and anti-Hermitian, i.e. it is symplectic.
Using  the dominated convergence
theorem and integrating by parts, we obtain
$$ q_\sigma(u,v)=\lim\limits_{\delta\to0}\int_{\{|x|=\delta\}\cap\Om}\sigma (\partial_r u \overline{v}- \partial_r u \overline{v})\,d s.$$
This shows that $q_\sigma(\cdot,\cdot)$ does not depend on the choice of the cut-off function $\chi$ in (\ref{DefSpace}).   On the other hand, we observe that for all $u \in\mathcal{S}_\sigma$,
\[
q_\sigma(u,u)=2i\Im m(\int_{\Om}u\,\div(\sigma\nabla\overline{u})\,dx)\in i\R. 
\]
Physically, the magnitude $q_{\sigma}(u,u)$ represents the energy transported by the wave   $u\in\mathcal{S}_\sigma$ to or from the point $O$ (depending on the sign). A wave $u\in\mathcal{S}_{\sigma}$ is said to be outgoing (resp. incoming) if $\Im m(q_\sigma(u,u))>0$ (resp. $\Im m(q_\sigma(u,u))<0$ ). If $u\in\mathcal{S}_{\sigma}$ satisfies $q_\sigma(u,u)=0$, we say that $u$ is unclassified.
It has been shown in \cite[Chapter 2]{rihani2022maxwell} that $q_\sigma(\cdot,\cdot)$ is non-degenerate ($q_\sigma(u,v)=0$ for all $v\in\mathcal{S}_\sigma$ implies $u\equiv0$).	By applying the Sylvester's law of inertia, we obtain the following result.
\begin{lemma}\label{ChoiceDeLaBase}
Suppose that Assumption \ref{AssumptionCritique} holds. For $\sigma=\eps$, $\mu$, there exists  $(\mathfrak{s}^\pm_{\sigma,j})_{j=1,\dots,N_\sigma}$  a basis of $\mathcal{S}_\sigma$ such that we have
\begin{equation}\label{OrthoConditions}
q_\sigma(\mathfrak{s}^\pm_{\sigma,j},\mathfrak{s}^\pm_{\sigma,k})=\pm i\delta_{j,k},\qquad q_\sigma(\mathfrak{s}^\pm_{\sigma,j},\mathfrak{s}^\mp_{\sigma,k})=0 \quad\mbox{ and }\quad\mathfrak{s}^+_{\sigma,j}=\overline{\mathfrak{s}^-_{\sigma,j}}\qquad \mbox{ for }j,k=1,\dots,N_\sigma.
\end{equation}
\end{lemma}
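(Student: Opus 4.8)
The plan is to reduce the statement to a standard normal-form result for Hermitian forms, exploiting two symmetries of $q_\sigma$: the anti-Hermitian (symplectic) structure and an extra symmetry coming from the fact that $\sigma$ is real valued. First I would replace $q_\sigma$ by the scaled form $h_\sigma:=-i\,q_\sigma$. Since $q_\sigma$ is anti-Hermitian, i.e. $q_\sigma(u,v)=-\overline{q_\sigma(v,u)}$, one checks immediately that $h_\sigma(u,v)=\overline{h_\sigma(v,u)}$, so $h_\sigma$ is a genuine Hermitian form on the finite-dimensional space $\mathcal{S}_\sigma$; moreover $h_\sigma$ is non-degenerate because $q_\sigma$ is (as recalled just above the statement). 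Note that $q_\sigma=i\,h_\sigma$, so the target identities \eqref{OrthoConditions} are equivalent to asking for an $h_\sigma$-orthonormal basis $(\mathfrak{s}^\pm_{\sigma,j})$ with $h_\sigma(\mathfrak{s}^\pm_{\sigma,j},\mathfrak{s}^\pm_{\sigma,k})=\pm\delta_{j,k}$ and $h_\sigma(\mathfrak{s}^+_{\sigma,j},\mathfrak{s}^-_{\sigma,k})=0$.

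Next I would record the two consequences of $\sigma\in\R$, after observing that $\mathcal{S}_\sigma$ is stable under complex conjugation $C:u\mapsto\overline{u}$. Indeed the eigenvalues of $\mathscr{L}_\sigma$ on the line $\Re e\,\lambda=-1/2$ occur in conjugate pairs $\lambda^\sigma_j,\overline{\lambda^\sigma_j}$ (established before the statement), and since $\sigma$ is real the conjugate of an eigenfunction for $\lambda^\sigma_j$ is one for $\overline{\lambda^\sigma_j}$; as the cut-off $\chi(r)$ is real, conjugating the singularity \eqref{defHypersingu} maps $\mathcal{S}_\sigma$ into itself. Integrating by parts in the definition of $q_\sigma$ and using that $\sigma$ is real, one then gets the two key identities
\[
q_\sigma(\overline{u},\overline{v})=\overline{q_\sigma(u,v)}\qquad\text{and}\qquad q_\sigma(u,\overline{u})=0,\qquad\forall u,v\in\mathcal{S}_\sigma,
\]
the first by conjugating the defining integral, the second because the two boundary-type contributions cancel when $v=\overline{u}$. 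In terms of $h_\sigma$ these read $h_\sigma(\overline{u},\overline{v})=-\overline{h_\sigma(u,v)}$ and $h_\sigma(u,\overline{u})=0$; in particular $h_\sigma(\overline{u},\overline{u})=-h_\sigma(u,u)$, so conjugation exchanges the positive and negative cones of $h_\sigma$.

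I would then conclude in two steps. By Sylvester's law of inertia applied to the non-degenerate Hermitian form $h_\sigma$, there is a basis in which $h_\sigma$ has signature $(p,q)$ with $p+q=\dim\mathcal{S}_\sigma=2N_\sigma$; since $C$ is an anti-linear bijection sending any $h_\sigma$-positive-definite subspace to an $h_\sigma$-negative-definite one of the same dimension, necessarily $p=q=N_\sigma$. To obtain a basis compatible with the constraint $\mathfrak{s}^+_{\sigma,j}=\overline{\mathfrak{s}^-_{\sigma,j}}$ I would argue by induction on $N_\sigma$: pick $e$ with $h_\sigma(e,e)=1$ (possible as $p\ge1$), set $\mathfrak{s}^+_{\sigma,1}:=e$ and $\mathfrak{s}^-_{\sigma,1}:=\overline{e}$; then $h_\sigma(\mathfrak{s}^-_{\sigma,1},\mathfrak{s}^-_{\sigma,1})=-1$ and $h_\sigma(\mathfrak{s}^+_{\sigma,1},\mathfrak{s}^-_{\sigma,1})=h_\sigma(e,\overline{e})=0$, exactly the required hyperbolic pair. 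The $h_\sigma$-orthogonal complement $W$ of $\mrm{span}(e,\overline{e})$ is again $C$-stable (using $h_\sigma(\overline{w},e)=-\overline{h_\sigma(w,\overline{e})}$) and carries a non-degenerate Hermitian form of signature $(N_\sigma-1,N_\sigma-1)$, so the induction proceeds. Translating back through $q_\sigma=i\,h_\sigma$ yields precisely \eqref{OrthoConditions}. The only genuinely delicate point is the pair of conjugation identities above, and in particular the stability of $\mathcal{S}_\sigma$ under $C$; once these are in place the normal-form construction is routine linear algebra.
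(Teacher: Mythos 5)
Your proposal is correct and follows essentially the same route as the paper, which proves this lemma by invoking the non-degeneracy of $q_\sigma$ (cited from \cite[Chapter 2]{rihani2022maxwell}) together with Sylvester's law of inertia. You simply supply the details the paper leaves implicit: the reduction to the Hermitian form $-i\,q_\sigma$, the conjugation-stability of $\mathcal{S}_\sigma$ and the identities $q_\sigma(\overline{u},\overline{v})=\overline{q_\sigma(u,v)}$, $q_\sigma(u,\overline{u})=0$, which force the balanced signature $(N_\sigma,N_\sigma)$ and yield the compatibility condition $\mathfrak{s}^+_{\sigma,j}=\overline{\mathfrak{s}^-_{\sigma,j}}$.
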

In the literature (see in particular \cite{NaPl94,Naza13,nazarov2014umov}), the decomposition presented in the previous lemma is known as the Mandelstam radiation principle. It means that the space of waves can be decomposed as the sum of a space of outgoing waves and a space of incoming waves.
\begin{remark}\label{RqChoixdeLaBase}
We emphasize that the choice of the basis  $(\mathfrak{s}^\pm_{\sigma,j})_{j=1,\dots,N_\sigma}$ is not unique. More precisely, one can find an infinite number of bases satisfying all the conditions of  Lemma \ref{ChoiceDeLaBase}. From a mathematical point of view, this is not a problem and any choice of basis provides a functional framework in  which the corresponding scalar problem is well-posed. More physically however, in most cases there is only one particular choice which is consistent with the limiting  absorption principle. We will come back to this point in \S\ref{SectionELimiting}.
\end{remark}
From now on, for $\sigma=\eps$, $\mu$, we fix  $(\mathfrak{s}^\pm_{\sigma,j})_{j=1,\dots,N_\sigma}$ a basis of   $\mathcal{S}_\sigma$ satisfying  the orthogonality relations of Lemma \ref{ChoiceDeLaBase}. Moreover, we define the space
\begin{equation}\label{DefBases}
\mathcal{S}_\sigma^+:=\mrm{span}\{\mathfrak{s}_{\sigma,j}^+,\,j=1,\dots,N_\sigma\}.
\end{equation}
Using (\ref{OrthoConditions}), one gets directly the following result. 
\begin{lemma}	\label{FluxScalairDefPosi}
		Suppose that Assumption \ref{AssumptionCritique} holds. For $\sigma=\eps$, $\mu$, if $\mathfrak{s}\in\mathcal{S}_\sigma^+$ satisfies $q_\sigma(\mathfrak{s},\mathfrak{s})=0$, then $\mathfrak{s}\equiv0$.
	\end{lemma}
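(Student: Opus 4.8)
The plan is to exploit the fact that the orthogonality relations (\ref{OrthoConditions}) turn the restriction of $q_\sigma$ to $\mathcal{S}_\sigma^+$ into a definite Hermitian form, up to the factor $i$. First I would write an arbitrary element $\mathfrak{s}\in\mathcal{S}_\sigma^+$ in the basis fixed above the statement, namely $\mathfrak{s}=\sum_{j=1}^{N_\sigma}a_j\,\mathfrak{s}^+_{\sigma,j}$ with coefficients $a_j\in\C$. This is legitimate precisely because $\mathcal{S}_\sigma^+$ is defined in (\ref{DefBases}) as the span of the $\mathfrak{s}^+_{\sigma,j}$.

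Next, since $q_\sigma(\cdot,\cdot)$ is sesquilinear (linear in the first slot and antilinear in the second, as one reads off directly from its definition), I would expand
\[
q_\sigma(\mathfrak{s},\mathfrak{s})=\sum_{j,k=1}^{N_\sigma}a_j\,\overline{a_k}\,q_\sigma(\mathfrak{s}^+_{\sigma,j},\mathfrak{s}^+_{\sigma,k}).
\]
The key step is then to insert the first relation in (\ref{OrthoConditions}), namely $q_\sigma(\mathfrak{s}^+_{\sigma,j},\mathfrak{s}^+_{\sigma,k})=i\,\delta_{j,k}$, which collapses the double sum onto its diagonal and yields
\[
q_\sigma(\mathfrak{s},\mathfrak{s})=i\sum_{j=1}^{N_\sigma}|a_j|^2.
\]

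Finally, the hypothesis $q_\sigma(\mathfrak{s},\mathfrak{s})=0$ forces $\sum_{j}|a_j|^2=0$, hence $a_j=0$ for every $j$, and therefore $\mathfrak{s}\equiv0$. There is no genuine obstacle here: once the Mandelstam basis of Lemma \ref{ChoiceDeLaBase} has been constructed, the statement is simply the observation that $\mathcal{S}_\sigma^+$ is a maximal subspace on which the symplectic form $-i\,q_\sigma$ is positive definite (the analogue of the span of purely outgoing modes), so that no nonzero element can carry zero flux. The only point requiring care is to respect the sesquilinearity convention of $q_\sigma$, so that the cross terms carrying the conjugates $\overline{a_k}$ are placed correctly before the relations (\ref{OrthoConditions}) are applied.
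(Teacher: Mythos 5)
Your proof is correct and coincides with the paper's argument: the paper states that the lemma follows ``directly'' from the orthogonality relations (\ref{OrthoConditions}), which is precisely the diagonal computation $q_\sigma(\mathfrak{s},\mathfrak{s})=i\sum_j|a_j|^2$ that you carry out. Nothing further is needed.
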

\subsection{A new functional framework for the scalar problems}
For $\beta>0$, define the so-called space with detached asymptotics
\[
\mathring{\mV}^\out_{\beta}(\Om):=\mathring{\mV}^1_{-\beta}(\Om)\oplus\mathcal{S}^+_\eps.
\]
Endowed with the norm
\[
\|u\|_{\mathring{\mV}^\out_{\beta}(\Om)}:=(\|\tilde{u}\|_{\mV^1_{-\beta}(\Om)}+\|\mathfrak{s}^+_\eps\|_{\mV^1_{\beta}(\Om)})^{1/2},\quad u=\tilde{u}+\mathfrak{s}^+_\eps,\quad \tilde{u}\in\mathring{\mV}^1_{-\beta}(\Om),\ \mathfrak{s}_\eps^+\in\mathcal{S}_\eps^+,
\]
this is a Banach space. Note that $\mathcal{S}_\eps^+$ is of finite dimension so that in this space $\|\cdot\|_{\mV^1_{\beta}(\Om)}$ is one norm among others. However it will be convenient in the sequel. Then introduce the operator $\mA^\out_\eps:\mathring{\mV}^\out_\beta(\Om)\to(\mathring{\mV}^1_\beta(\Om))^\ast$ such that 
\[
\langle\mA^\out_\eps u,v\rangle=\fint_\Om\eps\nabla u\cdot\nabla \overline{v}\,dx,\qquad\forall u=\tilde{u}+\mathfrak{s}^+_\eps\in\mathring{\mV}^\out_\beta(\Om),\, v\in\mathring{\mV}^1_\beta(\Om),
\]
where
\begin{equation}\label{defExtenInt}
\fint_\Om\eps\nabla u\cdot\nabla \overline{v}\,dx:=\int_\Om\eps\nabla\tilde{u}\cdot\nabla \overline{v}\,dx-\int_{\Om}\div(\eps\nabla \mathfrak{s}_\eps^+)\overline{v}\,dx.
\end{equation}
We emphasize that since $\div(\eps\nabla \mathfrak{s}_\eps^+)$ vanishes in a neighbourhood of the origin, (\ref{defExtenInt}) is well defined and we have 
\[
\Big|\fint_\Om\eps\nabla u\cdot\nabla \overline{v}\,dx\Big| \le C\,\|u\|_{\mathring{\mV}^\out_{\beta}(\Om)}\|v\|_{\mV^1_{\beta}(\Om)}
\]
where $C$ is independent of $u$, $v$. This guarantees that  $\mA^\out_\eps:\mathring{\mV}^\out_\beta(\Om)\to(\mathring{\mV}^1_\beta(\Om))^\ast$ is continuous. Note also that for $\varphi\in\mathscr{C}^\infty_0(\Om\backslash\{O\})$, we simply have
\[
\langle\mA^\out_\eps u,\varphi\rangle=\int_\Om\eps\nabla u\cdot\nabla \overline{\varphi}\,dx.
\]
Similarly, for $\beta>0$, set
\[
\mathcal{V}^\out_{\beta}(\Om):=\{u\in\mV^1_{-\beta}(\Om)\oplus\mathcal{S}^+_\mu\,|\, \int_\Om u\, dx=0\}.
\]	  
Equipped with the norm 
\[
\|u\|_{\mathcal{V}^\out_\beta(\Om)}:=(\|\tilde{u}\|_{\mV^1_{-\beta}(\Om)}+\|\mathfrak{s}^+_\mu\|_{\mV^1_{\beta}(\Om)})^{1/2},\quad u=\tilde{u}+\mathfrak{s}^+_\mu,\quad \tilde{u}\in\mV^1_{-\beta}(\Om),\ \mathfrak{s}_\mu^+\in\mathcal{S}_\mu^+,
\]
this is also a Banach space. For $\beta\in(0;5/2)$, define the continuous operator $\mA^\out_\mu:\mathcal{V}^\out_\beta(\Om)\to (\mathcal{V}^1_{\beta}(\Om))^\ast$ such that
\[
\langle\mA^\out_\mu u,v\rangle=\fint_\Om\mu\nabla u\cdot\nabla \overline{v}\,dx,\qquad \forall  u=\tilde{u}+\mathfrak{s}_\mu^+\in\mathcal{V}^\out_\beta(\Om),\,v\in\mathcal{V}^1_\beta(\Om)
\]
where
\[
\fint_\Om\mu\nabla u\cdot\nabla \overline{v}\,dx:=\int_\Om\mu\nabla\tilde{u}\cdot\nabla \overline{v}\,dx-\int_{\Om}\div(\mu\nabla\mathfrak{s}_\mu^+)\overline{v}\,dx.
\]
As for $\mA^\out_\eps$, note that for $\varphi\in\mathscr{C}^\infty_0(\Om\backslash\{O\})$, we simply have
\[
\langle\mA^\out_\mu u,\varphi\rangle=\int_\Om\mu\nabla u\cdot\nabla \overline{\varphi}\,dx.
\]
Owing to \cite[Chapter 2]{rihani2022maxwell}, we admit here the following crucial theorem for the scalar problems, which extends to 3D the results of \cite{bonnet2013radiation}. 
\begin{theorem}\label{MainThmScalaire}
		Suppose that Assumptions
		\ref{AssumptionCritique}-\ref{AssumptionNoTrappedMode} hold. Then for all $\beta\in(0;\beta_D)$ (resp.  $\beta\in(0;\beta_N)$), the operator $\mA^\out_\eps$(resp. $\mA^\out_\mu$) is an isomorphism. Here $\beta_D$, $\beta_N$ are defined in  (\ref{defBetaDN}).
\end{theorem}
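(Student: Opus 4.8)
The plan is to prove, for $\sigma=\eps$, that $\mA^\out_\eps$ is simultaneously injective and Fredholm of index zero; surjectivity is then automatic, and the case $\sigma=\mu$ follows by the same scheme with the obvious modifications (mean-free spaces and Neumann-type condition on $\partial\Om$). The two ingredients are the symplectic selection of Lemma \ref{FluxScalairDefPosi} (for injectivity) and the Fredholm counts of Proposition \ref{PropositionFreholm} together with Assumption \ref{AssumptionNoTrappedMode} (for the index).

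For injectivity, I would take $u=\tilde u+\mathfrak s^+_\eps\in\ker\mA^\out_\eps$ with $\tilde u\in\mathring{\mV}^1_{-\beta}(\Om)$ and $\mathfrak s^+_\eps\in\mathcal{S}^+_\eps$, and note that $u\in\mathring{\mV}^\out_\beta(\Om)\subset\mathring{\mV}^1_\beta(\Om)$ is an admissible test function. Testing $\mA^\out_\eps u=0$ against $\varphi\in\mathscr{C}^\infty_0(\Om\backslash\{O\})$ yields $\div(\eps\nabla u)=0$ in $\Om\backslash\{O\}$, while membership in $\mathring{\mV}^1_\beta(\Om)$ encodes the homogeneous Dirichlet condition on $\partial\Om$. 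The heart of the matter is the generalized Green identity attached to the regularized form \eqref{defExtenInt}, namely $2i\,\Im m\,\langle\mA^\out_\eps u,u\rangle=q_\eps(\mathfrak s^+_\eps,\mathfrak s^+_\eps)$. This identity is obtained by applying Green's formula on $\Om\backslash\overline{B(O,\delta)}$ and letting $\delta\to0$: the boundary term on $\partial\Om$ vanishes by the Dirichlet condition, while the flux through $\{|x|=\delta\}$ localizes to $q_\eps(\mathfrak s^+_\eps,\mathfrak s^+_\eps)$ because the contributions of $\tilde u$ and of the cross terms decay. Since $\mA^\out_\eps u=0$, the left-hand side vanishes, so $q_\eps(\mathfrak s^+_\eps,\mathfrak s^+_\eps)=0$, and Lemma \ref{FluxScalairDefPosi} forces $\mathfrak s^+_\eps=0$. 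Then $u=\tilde u$ solves $\mA^{-\beta}_\eps\tilde u=0$, and the injectivity granted by Assumption \ref{AssumptionNoTrappedMode} (estimate \eqref{Estimmonomorphism}) gives $\tilde u=0$.

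For surjectivity I would argue by index. The operators $\mA^\out_\eps$ and $\mA^{-\beta}_\eps$ share the same target $(\mathring{\mV}^1_\beta(\Om))^\ast$, and $\mA^\out_\eps$ restricted to $\mathring{\mV}^1_{-\beta}(\Om)$ (that is, $\mathfrak s^+_\eps=0$ in \eqref{defExtenInt}) coincides with $\mA^{-\beta}_\eps$. Hence $\mA^\out_\eps$ is the enlargement of the Fredholm operator $\mA^{-\beta}_\eps$ to the domain $\mathring{\mV}^1_{-\beta}(\Om)\oplus\mathcal{S}^+_\eps$ by the finite-dimensional space $\mathcal{S}^+_\eps$ of dimension $N_\eps$. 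Writing $\mA^\out_\eps=\mA^{-\beta}_\eps\circ P+R$, where $P$ is the bounded projection onto $\mathring{\mV}^1_{-\beta}(\Om)$ and $R$ has finite rank, one computes that $\mA^{-\beta}_\eps\circ P$ has kernel $\ker\mA^{-\beta}_\eps\oplus\mathcal{S}^+_\eps$ and the same cokernel as $\mA^{-\beta}_\eps$, so $\mrm{ind}\,(\mA^{-\beta}_\eps\circ P)=\mrm{ind}\,\mA^{-\beta}_\eps+N_\eps$; since finite-rank perturbations preserve the index, $\mrm{ind}\,\mA^\out_\eps=\mrm{ind}\,\mA^{-\beta}_\eps+N_\eps$. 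By Proposition \ref{PropositionFreholm}, $\mA^{-\beta}_\eps$ has index $-N_\eps$, whence $\mrm{ind}\,\mA^\out_\eps=0$. An injective Fredholm operator of index zero is an isomorphism, which settles the case $\eps$.

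The operator $\mA^\out_\mu$ on $\mathcal{V}^\out_\beta(\Om)$ is treated verbatim: in the flux computation the outer boundary term is now annihilated by the natural condition $\mu\,\partial_n u=0$ on $\partial\Om$, Lemma \ref{FluxScalairDefPosi} and Assumption \ref{AssumptionNoTrappedMode} again give injectivity, and $\mA^\out_\mu$ is a finite-dimensional enlargement of $\mA^{-\beta}_\mu$ by $\mathcal{S}^+_\mu$ (dimension $N_\mu$), modulo the single linear mean-free constraint, so its index is $-N_\mu+N_\mu=0$. I expect the genuine obstacle to be the rigorous justification of the flux localization in the generalized Green identity, i.e. showing that the contributions of the regular part $\tilde u$ and of the cross terms through the sphere $\{|x|=\delta\}$ vanish as $\delta\to0$. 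This rests on sphere-trace and decay estimates for elements of $\mathring{\mV}^1_{-\beta}(\Om)$ combined with the explicit radial profile $r^{-1/2+i\eta}$ of the singularities, and it is precisely where the restriction $\beta\in(0;\beta_D)$ (resp. $\beta\in(0;\beta_N)$) and the spectral gap about the line $\Re e\,\lambda=-1/2$ enter.
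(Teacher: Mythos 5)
Your argument is correct, and it is exactly the standard mechanism of the theory of detached asymptotics that the paper relies on here: the paper does not reproduce a proof of Theorem \ref{MainThmScalaire} (it defers to \cite[Chapter 2]{rihani2022maxwell}), but your two ingredients — injectivity from the energy-flux identity $\langle\mA^\out_\eps u,u\rangle-\overline{\langle\mA^\out_\eps u,u\rangle}=q_\eps(\mathfrak s^+_\eps,\mathfrak s^+_\eps)$ combined with Lemma \ref{FluxScalairDefPosi} and Assumption \ref{AssumptionNoTrappedMode}, and index zero from Proposition \ref{PropositionFreholm} plus the $N_\eps$-dimensional enlargement of the domain — are precisely the scalar analogue of what the paper does explicitly for the Maxwell operator (compare identity (\ref{ImportantIdentity}) and the injectivity step of Theorem \ref{SectionEPrinicpalPartIsom}). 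The one point you rightly flag as needing care, the vanishing of the cross-term fluxes through $\{|x|=\delta\}$, follows from the mismatch between the $\mV^1_{-\beta}$ decay of $\tilde u$ and the $r^{-1/2}$ profile of the singularities, so there is no gap.
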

\begin{remark}
Note that looking for solutions in $\mathring{\mV}^\out_{\beta}(\Om)$, $\mathcal{V}^\out_\beta(\Om)$ boils down to impose radiation conditions at the origin.
\end{remark}

\section{A new framework for the electric problem}\label{SectionPbElectrique}

In Section \ref{SectionNecessity} we will prove that when $\kappa_\eps$ and $\kappa_\mu$ are critical, the Maxwell's equations in the classical $\Lspace^2$ spaces are not well-posed. In this section, the heart of the article, we propose a new framework. Let us fix $\beta$ once for all such that
\begin{equation}\label{ChoixPoids}
\beta\in(0;\beta_\star)\qquad\mbox{with }\beta_\star:=\min\{\beta_D,\beta_N,1/2\}.
\end{equation}
Here $\beta_D$, $\beta_N$ are the weights appearing in (\ref{defBetaDN}). For $m\in\N$, set
\[ 
\mmV^m_{\beta}(\Om):=(\mV^m_\beta(\Om))^3
\]
and endow this space with the norm 
\[
\|\u\|_{\mmV^m_\beta(\Om)}:=\Big(\sum_{i=1}^3 \|u_i\|^2_{\mV^m_\beta(\Om)}\Big)^{1/2},\qquad\forall \u=(u_1,u_2,u_3)\in\mmV^m_\beta(\Om).
\]
Then define 
\[
\begin{array}{l}
\nabla \mathcal{S}^+_\eps:=\mrm{span}\{\nabla\mathfrak{s}^+_{\eps,j},j=1,\dots,N_\eps\},\qquad \nabla \mathcal{S}^+_\mu:=\mrm{span}\{\nabla\mathfrak{s}^+_{\mu,j},j=1,\dots,N_\mu\}\\[8pt]
\Hspace_N^{\out,\beta}(\curl):= \{\u\in\nabla\mathcal{S}^+_\eps\oplus\mmV^0_{-\beta}(\Om)\,|\,\curl \u\in\mmV^0_\beta(\Om),\, \u\times\nu=0\,\mbox{on }\partial\Om\backslash\{O\}\} \\[8pt]
	\Houtspace:= \{\u\in\nabla\mathcal{S}^+_\eps\oplus\mmV^0_{-\beta}(\Om)\,|\,\curl \u\in\mu\nabla \mathcal{S}^+_\mu\oplus\mmV^0_{-\beta}(\Om),\, \u\times\nu=0\,\mbox{on }\partial\Om\backslash\{O\}\}.
\end{array}
\]
To simplify the  presentation of the results below, we adopt the following convention:\\[4pt]
- if $\u\in\Hspace_N^{\out,\beta}(\curl)$, let $\tilde{\u}$, $\mathfrak{s}_{\u,\eps}$ be the elements of $\mmV^0_{-\beta}(\Om)$, $\mathcal{S}^+_\eps$ such that $\u=\tilde{\u}+\nabla \mathfrak{s}_{\u,\eps}$;\\[4pt]
- if $\u\in\Houtspace$, let $\boldsymbol{\psi}_{\u}$, $\mathfrak{s}_{\u,\mu}$ be the elements of $\mmV^0_{-\beta}(\Om)$, $\mathcal{S}^+_\mu$ such that $\curl\u=\boldsymbol{\psi}_{\u}+\mu\nabla \mathfrak{s}_{\u,\mu}$.\\
\newline
We equip $\nabla \mathcal{S}^+_\eps$, $\nabla \mathcal{S}^+_\mu$ with the norm $\mmV^0_{\beta}(\Om)$, which is one norm among others in these spaces of finite dimension. On the other hand, 
for $\u=\tilde{\u}+\nabla \mathfrak{s}_{\u,\eps}^+\in \Hspace_N^{\out,\beta}(\curl)$, we set
\[
\|\u \|_{\Hspace_N^{\out,\beta}(\curl)}:=(\|\Tilde{\u}\|^2_{\mmV^0_{-\beta}(\Om)}+\|\nabla \mathfrak{s}_{\u,\eps}^+\|^2_{\mmV^0_{\beta}(\Om)}+\|\curl\u\|^2_{\mmV^0_\beta(\Om)})^{1/2},
\]
while for $\u=\Tilde{\u}+\nabla\mathfrak{s}_{\u,\eps}^+\in\Houtspace$   such that  $\curl\u=\boldsymbol{\psi}_{\u}+\mu\nabla \mathfrak{s}_{\u,\mu}$ , we denote
\[
\|\u \|_{\Houtspace}:=(\|\Tilde{\u}\|^2_{\mmV^0_{-\beta}(\Om)}+\|\nabla \mathfrak{s}_{\u,\eps}^+\|^2_{\mmV^0_{\beta}(\Om)}+\|\boldsymbol{\psi}_{\u}\|^2_{\mmV^0_{-\beta}(\Om)}+\|\nabla \mathfrak{s}_{\u,\mu}^+\|^2_{\mmV^0_{\beta}(\Om)})^{1/2}. 
\]
Endowed with their associated norms, all the previous spaces are hilbertian. Note also that we have the continuous inclusions
\[
\Houtspace\subset \Hspace_N^{\out,\beta}(\curl)\subset \mmV^0_{\beta}(\Om)
\]
and that these spaces are not empty because they contain 
$\Cgras^{\infty}_0(\Om\backslash\{O\}):=\mathscr{C}^{\infty}_0(\Om\backslash\{O\})^3$. Observe also that we have both $\Houtspace\not\subset\Hspace_N(\curl)$ and $\Hspace_N(\curl)\not\subset\Houtspace$.

\begin{remark}
Note that if $\u\in\Houtspace$, according to (\ref{PropertySingu}), we have 
\begin{equation}\label{ProprieteHcurl}
\div\,\boldsymbol{\psi}_{\u}=-\div(\mu\nabla \mathfrak{s}_{\u,\mu})\in\mL^2(\Om).
\end{equation}
\end{remark}

\subsection{Definition of the electric problem}
In \S\ref{SectionELimiting} below, we explain that the limiting absorption principle leads to look for an electric field in the space $\Houtspace$. For this reason, we consider the following electric problem associated with \eqref{EqMaxwellInitiale}-\eqref{CLMaxwell}

\begin{equation}\label{SectionEEqE}
		\begin{array}{|ll}
		\multicolumn{2}{|l}{\mbox{Find } \u\in\Houtspace\mbox{ such that  }}\\[3pt]
			\curl(\mu^{-1}\curl \u)-\om^2 \eps \u=i\om\boldsymbol{J} & \mbox{in }  \Om\backslash\{O\} \\[3pt]
			\u\times \nu=0  & \mbox{on } \partial \Om\backslash\{O\}.
\end{array}
	\end{equation}
Above we wrote the equations in $\Om\backslash\{O\}$ and not in $\Om$ to allow for singular behaviours at the origin. The next step consists in writing a variational formulation of \eqref{SectionEEqE}. We assume that $\J$ belongs to $\mmV^0_{-\eta}(\Om)$ for some $\eta>0$ and satisfies $\div\,\J=0$ in $\Om\backslash\{O\}$. Note that if $\u\in\Houtspace$, the relation $\curl\u=\boldsymbol{\psi}_{\u}+\mu\nabla \mathfrak{s}_{\u,\mu}$ implies
\[
\curl(\mu^{-1}\curl \u)=\curl(\mu^{-1}\psib_{\u})\quad\mbox{ in }\Om\backslash\{O\}. 
\]
Exploiting this, we consider the problem
\begin{equation}\label{SectionEFv1}
		\begin{array}{|l}
			\text{Find } \u \in   \Houtspace \text{ such that } \\
			\int_\Om \mu^{-1} \boldsymbol{\psi}_{\u}\cdot \curl\vb\,dx -\om^2\fint_\Om \eps\u\cdot\vb\,dx =i\om\int_\Om \boldsymbol{J} \cdot\vb\,dx,\qquad\forall\v\in  \Hspace_N^{\out,\beta}(\curl),
		\end{array}
	\end{equation}
where for $\u\in\Houtspace$, $\v\in\Hspace_N^{\out,\beta}(\curl)$, we set
	$$ \fint_\Om \eps\u\cdot\vb\,dx:= \int_\Om \eps\Tilde{\u}\cdot\overline{\Tilde{\v}}\,dx+\int_{\Om}\eps\nabla \mathfrak{s}_{\u,\eps}\cdot\overline{\Tilde{\v}}\,dx+\int_{\Om}\eps\Tilde{\u}\cdot\nabla\overline{\mathfrak{s}_{\v,\eps}}\,dx-\int_\Om\div(\eps\nabla\mathfrak{s}_{\u,\eps})\,\overline{\mathfrak{s}_{\v,\eps}}\,dx.$$
It is not difficult to show that the sesquilinear form
 $$(\u,\v)\mapsto \fint_\Om \eps\u\cdot\vb\,dx$$
  is well-defined for  $\u$, $\v\in\Hspace_N^{\out,\beta}(\curl)$ and  is continuous in this space. However, it is not hermitian. Indeed, for  $\u$,$\v\in\Hspace_N^{\out,\beta}(\curl)$, we have
	\begin{equation}
		\fint_\Om \eps\u\cdot\vb\,dx-  \overline{\fint_\Om \eps\v\cdot\overline{\u}}\,dx=-\int_\Om\div(\eps\nabla\mathfrak{s}_{\u,\eps})\overline{\mathfrak{s}_{\v,\eps}}\,dx+\int_\Om\mathfrak{s}_{\u,\eps}\,\div(\eps\nabla\overline{\mathfrak{s}_{\v,\eps}})\,dx=q_\eps(\mathfrak{s}_{\u,\eps},\mathfrak{s}_{\v,\eps}).
\end{equation}
Note that in (\ref{SectionEFv1}) the solution and the test functions do not belong to the same space. The interest of this formulation is justified by the following result. 
\begin{proposition}\label{SectionEResEquivDis}
Every solution of \eqref{SectionEEqE} solves \eqref{SectionEFv1}. Conversely, every solution of \eqref{SectionEFv1} solves \eqref{SectionEEqE}. 
\end{proposition}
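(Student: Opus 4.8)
The plan is to prove the two implications separately, using throughout the reduction $\curl(\mu^{-1}\curl \u)=\curl(\mu^{-1}\psib_{\u})$ in $\Om\backslash\{O\}$, which replaces the principal part by the $\mmV^0_{-\beta}(\Om)\subset\Lspace^2(\Om)$ field $\mu^{-1}\psib_{\u}$. I would start with the easy direction \eqref{SectionEFv1}$\Rightarrow$\eqref{SectionEEqE}. Testing \eqref{SectionEFv1} only against $\varphi\in\Cgras^\infty_0(\Om\backslash\{O\})$, for which $\mathfrak{s}_{\varphi,\eps}=0$, gives $\fint_\Om\eps\u\cdot\overline{\varphi}\,dx=\int_\Om\eps\u\cdot\overline{\varphi}\,dx$, while $\int_\Om\mu^{-1}\psib_{\u}\cdot\curl\overline{\varphi}\,dx$ is just the action of the distributional curl on $\overline{\varphi}$. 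Hence \eqref{SectionEFv1} reads $\langle\curl(\mu^{-1}\psib_{\u})-\om^2\eps\u-i\om\J,\overline{\varphi}\rangle=0$ for all such $\varphi$, so $\curl(\mu^{-1}\psib_{\u})-\om^2\eps\u=i\om\J$ in $\mathcal D'(\Om\backslash\{O\})$; together with the reduction above this is the volume equation of \eqref{SectionEEqE}, and the condition $\u\times\nu=0$ on $\partial\Om\backslash\{O\}$ is built into $\Houtspace$.

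For the converse \eqref{SectionEEqE}$\Rightarrow$\eqref{SectionEFv1}, I would use the splitting $\Hspace_N^{\out,\beta}(\curl)=\mmW\oplus\nabla\mathcal{S}^+_\eps$, where $\mmW:=\{\v\in\mmV^0_{-\beta}(\Om)\,|\,\curl\v\in\mmV^0_\beta(\Om),\ \v\times\nu=0\}$, and argue by linearity. Both sides of \eqref{SectionEFv1} are continuous on $\Hspace_N^{\out,\beta}(\curl)$, and for $\v\in\Cgras^\infty_0(\Om\backslash\{O\})$ the identity is trivial (a clean Green formula, with no contribution either at $\partial\Om$ or at the tip). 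A density lemma — $\Cgras^\infty_0(\Om\backslash\{O\})$ is dense in $\mmW$, of the kind established in the appendix — then settles the case $\v\in\mmW$; this is where the continuity of the forms $(\u,\v)\mapsto\int_\Om\mu^{-1}\psib_{\u}\cdot\curl\vb\,dx$ and $(\u,\v)\mapsto\fint_\Om\eps\u\cdot\vb\,dx$ is invoked.

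It remains to treat a purely singular test field $\v=\nabla\mathfrak{s}$ with $\mathfrak{s}\in\mathcal{S}^+_\eps$, which is not reachable by smooth compactly supported fields, so density is unavailable. Here $\curl\v=0$, the magnetic term drops, and the claim reduces to $-\om^2\fint_\Om\eps\u\cdot\nabla\overline{\mathfrak{s}}\,dx=i\om\int_\Om\J\cdot\nabla\overline{\mathfrak{s}}\,dx$. I would integrate the volume equation of \eqref{SectionEEqE} against $\nabla\overline{\mathfrak{s}}$ over $\Om_\delta:=\Om\backslash\overline{B(O,\delta)}$ and let $\delta\to0$. The singular self-interaction $\int_{\Om_\delta}\eps\nabla\mathfrak{s}_{\u,\eps}\cdot\nabla\overline{\mathfrak{s}}\,dx$ is integrated by parts; using \eqref{PropertySingu} its finite part is exactly $-\int_\Om\div(\eps\nabla\mathfrak{s}_{\u,\eps})\overline{\mathfrak{s}}\,dx$, i.e. the regularisation built into \eqref{defExtenInt}, so the finite part converges to $\fint_\Om\eps\u\cdot\nabla\overline{\mathfrak{s}}\,dx$; integrating $\curl(\mu^{-1}\psib_{\u})$ by parts leaves only a spherical flux of $\mu^{-1}\psib_{\u}$ on $\{|x|=\delta\}$ (the $\partial\Om$ term vanishing since $\mathfrak{s}=0$ there); and the right-hand side converges because $\J\in\mmV^0_{-\eta}(\Om)$ and $\nabla\mathfrak{s}\in\mmV^0_\beta(\Om)$.

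The whole difficulty concentrates in two residual surface integrals on $\{|x|=\delta\}$, a curl-flux of $\mu^{-1}\psib_{\u}$ and an energy-flux of $\mathfrak{s}_{\u,\eps}$, each only $O(1)$ and individually oscillating (like $\delta^{i(\eta_j-\eta_k)}$) as $\delta\to0$; the key claim is that their combination tends to $0$. I expect this to be the main obstacle. I would first prove the combination converges by an annulus (Cauchy) argument: integrating the same equation over $B(O,\delta')\backslash\overline{B(O,\delta)}$ shows that the difference of the two fluxes between radii $\delta$ and $\delta'$ equals $-\om^2\int\eps\tilde{\u}\cdot\nabla\overline{\mathfrak{s}}\,dx-i\om\int\J\cdot\nabla\overline{\mathfrak{s}}\,dx$ over the annulus, which is integrable and hence vanishes as $\delta,\delta'\to0$. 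Identifying the limit as $0$ is the delicate point and is where the fine structure enters: near $O$ the field $\mu^{-1}\psib_{\u}$ is slaved to the $\eps$-harmonic gradient $\nabla\mathfrak{s}_{\u,\eps}$ through $\curl(\mu^{-1}\psib_{\u})=\om^2\eps\u+i\om\J$, so that its leading asymptotics force the magnetic flux and the electric flux to coincide in the limit; the explicit radial behaviour $r^{-1/2\pm i\eta}$ of the singularities and the potential representations of the appendix are the tools that make this rigorous. Once the combined flux is shown to vanish, the limit of the tested equation is exactly \eqref{SectionEFv1} for $\v=\nabla\mathfrak{s}$, which completes the proof.
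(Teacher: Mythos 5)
Your first direction and your treatment of test functions with $\mathfrak{s}_{\v,\eps}=0$ coincide with the paper's proof (which invokes Proposition \ref{AppednixResDensity} for the density step). The divergence, and the gap, lies in the purely singular test functions $\v=\nabla\mathfrak{s}$, $\mathfrak{s}\in\mathcal{S}_\eps^+$. Your starting premise there — that such $\v$ are ``not reachable by smooth compactly supported fields, so density is unavailable'' — is what sends you down the hard road, and it is not quite right: since $\mathcal{S}_\eps^+\subset\mathring{\mV}^1_\beta(\Om)$ for every $\beta>0$, one can take $\varphi_n\in\mathscr{C}^{\infty}_0(\Om\backslash\{O\})$ with $\varphi_n\to\mathfrak{s}$ in $\mathring{\mV}^1_\beta(\Om)$, hence $\nabla\varphi_n\to\nabla\mathfrak{s}$ in $\mmV^0_\beta(\Om)$ — a topology weaker than that of $\Hspace_N^{\out,\beta}(\curl)$, but sufficient here. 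Indeed the curl term vanishes identically for gradient test functions, $\int_\Om\J\cdot\nabla\overline{\varphi_n}\,dx\to\int_\Om\J\cdot\nabla\overline{\mathfrak{s}}\,dx$ because $\J\in\mmV^0_{-\eta}(\Om)$, and — this is exactly Lemma \ref{SectionELemmeEquiv} — $\int_\Om\eps\u\cdot\nabla\overline{\varphi_n}\,dx\to\fint_\Om\eps\u\cdot\nabla\overline{\mathfrak{s}}\,dx$, precisely because the regularised term $-\int_\Om\div(\eps\nabla\mathfrak{s}_{\u,\eps})\,\overline{\varphi_n}\,dx$ only involves an $\mL^2$ function supported away from $O$. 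Testing the equation of \eqref{SectionEEqE} against $\nabla\overline{\varphi_n}$ and passing to the limit finishes the proof with no flux analysis at all; this is the paper's argument.

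Your alternative route through truncation at radius $\delta$ is not wrong in spirit, but it is incomplete exactly where you say you expect the main obstacle: the combined sphere flux must be shown to tend to $0$, and your Cauchy/annulus argument only gives convergence of the combination, not the value of the limit. This is a genuine issue, not a formality. The one-sided electric flux $\int_{\{|x|=\delta\}\cap\Om}\eps\,\partial_r\mathfrak{s}_{\u,\eps}\,\overline{\mathfrak{s}}\,ds$ has, for matching singular exponents, a generically nonzero limit of the form $(-1/2+i\eta^\eps_j)\int_{\varpi}\eps\,\varphi^\eps_{j,k,0}\overline{\varphi^\eps_{j,\tilde k,0}}\,d\boldsymbol{\theta}$ (note this is not the symplectic form $q_\eps$, which carries the antisymmetrised combination), so the magnetic flux must cancel it exactly; proving that would require pointwise asymptotics of $\psib_{\u}$ near $O$ which the framework does not supply — $\psib_{\u}$ is only known to lie in $\mmV^0_{-\beta}(\Om)$, and its trace on $\{|x|=\delta\}$ is not even defined for every $\delta$, only for well-chosen radii. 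As written, the crucial cancellation is asserted rather than proved, and I do not see how to complete it without effectively reconstructing the limiting argument sketched above.
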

\begin{proof}
		Since $\Cgras^{\infty}_0(\Om\backslash\{O\})\subset\Hspace_N^{\out,\beta}(\curl)$, any solution to  \eqref{SectionEFv1} is   a solution to  \eqref{SectionEEqE}. Now, let us show the converse statement.   To proceed, we start by observing that  if $\u$ solves \eqref{SectionEEqE}, then using Proposition \ref{AppednixResDensity} which guarantees that $\Cgras^{\infty}_0(\Om\backslash\{O\})$ is dense in $\{\v\in\Hspace_N^{\out,\beta}(\curl)\,|\,\mathfrak{s}_{\v,\eps}=0\}$, we find that $\u$ satisfies
$$
\int_\Om \mu^{-1} \boldsymbol{\psi}_{\u}\cdot \curl\vb\,dx -\om^2\int_\Om \eps\u\cdot\vb\,dx =i\om\int_\Om \boldsymbol{J} \cdot\vb\,dx,\quad\forall \v\in\Hspace_N^{\out,\beta}(\curl)\mbox{ such that }\mathfrak{s}_{\v,\eps}=0.
$$
Therefore, it only remains to prove that \eqref{SectionEFv1} holds true for  $\v=\nabla\varphi\in\nabla \mathcal{S}_\eps^+$. Introduce $(\varphi_n)_{n}$ a sequence of elements of $\mathscr{C}^{\infty}_0(\Om\backslash\{O\})$ that converges to $\varphi$ in $\mathring{\mV}^1_\beta(\Om)$. Using Lemma \ref{SectionELemmeEquiv} below, we can write
\[
\begin{array}{ll}
 &\int_\Om \mu^{-1} \boldsymbol{\psi}_{\u}\cdot \curl\vb\,dx -\om^2\fint_\Om \eps\u\cdot\vb\,dx \\[10pt]
=& -\om^2\fint_\Om \eps\u\cdot\nabla\overline{\varphi}\,dx = \lim_{n\to+\infty}-\om^2\int_\Om \eps\u\cdot\nabla\overline{\varphi_n}\,dx = \lim_{n\to+\infty} i\om\int_\Om \J\cdot\nabla\overline{\varphi_n}\,dx =i\om\int_\Om \boldsymbol{J} \cdot\vb\,dx.
\end{array}
\]		
This ends the proof
\end{proof}
\begin{lemma}\label{SectionELemmeEquiv}
Let $\u$ be a field of $\Hspace_N^{\out,\beta}(\curl)$ and $(\varphi_n)_{n}$ a sequence of elements of $\mathscr{C}^{\infty}_0(\Om\backslash\{O\})$ that converges to some $\varphi\in\mathring{\mV}^1_\beta(\Om)$. Then we have
\[
\lim_{n\to+\infty} \int_\Om\eps \u\cdot \nabla \overline{\varphi_n}\,dx=\fint_\Om\eps \u\cdot \nabla \overline{\varphi}\,dx .
\]
\end{lemma}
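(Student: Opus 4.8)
The plan is to write $\u=\tilde{\u}+\nabla\mathfrak{s}_{\u,\eps}$ with $\tilde{\u}\in\mmV^0_{-\beta}(\Om)$ and $\mathfrak{s}_{\u,\eps}\in\mathcal{S}^+_\eps$, to split the integral $\int_\Om\eps\u\cdot\nabla\overline{\varphi_n}\,dx$ into a regular contribution $\int_\Om\eps\tilde{\u}\cdot\nabla\overline{\varphi_n}\,dx$ and a singular contribution $\int_\Om\eps\nabla\mathfrak{s}_{\u,\eps}\cdot\nabla\overline{\varphi_n}\,dx$, and to pass to the limit in each separately. The target quantity is the natural value of the bracket for a scalar test gradient, namely
\[
\fint_\Om\eps\u\cdot\nabla\overline{\varphi}\,dx=\int_\Om\eps\tilde{\u}\cdot\nabla\overline{\varphi}\,dx-\int_\Om\div(\eps\nabla\mathfrak{s}_{\u,\eps})\,\overline{\varphi}\,dx,
\]
which is well defined for every $\varphi\in\mathring{\mV}^1_\beta(\Om)$ and which coincides with the definition of $\fint_\Om\eps\u\cdot\vb\,dx$ when $\varphi\in\mathcal{S}^+_\eps$ (then $\v=\nabla\varphi$ has $\tilde{\v}=0$ and $\mathfrak{s}_{\v,\eps}=\varphi$). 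So the goal is to show that the two pieces above converge respectively to these two terms.

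For the regular contribution I would use the duality between the weighted spaces $\mmV^0_{-\beta}(\Om)$ and $\mmV^0_{\beta}(\Om)$. Since $\varphi_n\to\varphi$ in $\mathring{\mV}^1_\beta(\Om)$, the definition of the Kondratiev norm gives $\nabla\varphi_n\to\nabla\varphi$ in $\mmV^0_\beta(\Om)$; writing $\tilde{\u}\cdot\nabla\overline{\varphi_n}=(r^{-\beta}\tilde{\u})\cdot(r^{\beta}\nabla\overline{\varphi_n})$ and applying Cauchy--Schwarz together with the boundedness of $\eps$ yields
\[
\Big|\int_\Om\eps\tilde{\u}\cdot\nabla\overline{(\varphi_n-\varphi)}\,dx\Big|\le C\,\|\tilde{\u}\|_{\mmV^0_{-\beta}(\Om)}\,\|\varphi_n-\varphi\|_{\mathring{\mV}^1_\beta(\Om)}\longrightarrow0,
\]
so this term converges to $\int_\Om\eps\tilde{\u}\cdot\nabla\overline{\varphi}\,dx$.

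The delicate point, and the step I expect to be the main obstacle, is the singular contribution, because one cannot pass to the limit by the same duality argument: both $\nabla\mathfrak{s}_{\u,\eps}$ and $\nabla\varphi_n$ lie only in $\mmV^0_\beta(\Om)$, and the product of two factors carrying the \emph{same} weight $r^\beta$ is not controlled by their $\mmV^0_\beta$-norms near $O$. The remedy is to integrate by parts. As $\varphi_n\in\mathscr{C}^\infty_0(\Om\backslash\{O\})$ vanishes both near $O$ and near $\partial\Om$, there is no boundary term and
\[
\int_\Om\eps\nabla\mathfrak{s}_{\u,\eps}\cdot\nabla\overline{\varphi_n}\,dx=-\int_\Om\div(\eps\nabla\mathfrak{s}_{\u,\eps})\,\overline{\varphi_n}\,dx.
\]
Now \eqref{PropertySingu} tells us that $\div(\eps\nabla\mathfrak{s}_{\u,\eps})\in\mL^2(\Om)$ and vanishes in $B(O,\rho/2)$, so on its support $r$ is bounded away from $0$; since $\varphi_n\to\varphi$ in $\mathring{\mV}^1_\beta(\Om)$ forces $\varphi_n\to\varphi$ in $\mL^2$ of any region excluding a neighbourhood of $O$, the right-hand side converges to $-\int_\Om\div(\eps\nabla\mathfrak{s}_{\u,\eps})\,\overline{\varphi}\,dx$. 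Adding the two limits reproduces exactly the expression for $\fint_\Om\eps\u\cdot\nabla\overline{\varphi}\,dx$ recalled above, which concludes the proof.
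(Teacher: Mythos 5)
Your proof is correct and follows essentially the same route as the paper's: split $\u=\tilde{\u}+\nabla\mathfrak{s}_{\u,\eps}$, handle the regular part by the $\mmV^0_{-\beta}$/$\mmV^0_{\beta}$ duality, and integrate the singular part by parts so that only $\div(\eps\nabla\mathfrak{s}_{\u,\eps})\in\mL^2(\Om)$, supported away from $O$, remains, where local $\mL^2$ convergence of $(\varphi_n)_n$ suffices. Your explicit identification of the limit expression for a general $\varphi\in\mathring{\mV}^1_\beta(\Om)$ is a welcome clarification of what the paper leaves implicit, but the argument is the same.
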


\begin{proof}
By integrating by parts,  we obtain 
$$\int_\Om\eps \u\cdot \nabla \overline{\varphi_n}\,dx=\int_\Om\eps\tilde{\u}\cdot\nabla\overline{\varphi_n}\,dx-\int_\Om\div(\eps\nabla \mathfrak{s}_{\u,\eps})\,\overline{ \varphi_n}\,dx.
$$
Since $\div(\eps\nabla \mathfrak{s}_{\u,\eps})\in\mL^2(\Om)$ vanishes in a neighbourhood of $O$ (see (\ref{PropertySingu})) and since the convergence of $(\varphi_n)_{n}$ to $\varphi$ in $\mathring{\mV}^1_\beta(\Om)$ implies  the convergence of
$(\nabla\varphi_n)_{n}$ to $\nabla \varphi$ in $\mmV^0_\beta(\Om)$ as well as the one of  $(\varphi_n)_{n}$ to $\varphi$ in $\mL^2(\Om\backslash\overline{B(O,\delta)})$ for any given $\delta>0$, we obtain the desired result.
\end{proof}

\subsection{Equivalent formulation of the electric problem}
For all $\varphi\in\mathring{\mV}^1_{-\beta}(\Om)$,  we have $\nabla\varphi\in\Houtspace$. Using this remark, we deduce that (\ref{SectionEFv1}) with $\om=0$ has a kernel of infinite dimension. To deal with this issue, we work as with $\kappa_\eps$, $\kappa_\mu$ outside of the critical intervals and take into account the divergence free condition. This  leads  us to introduce   the spaces
\begin{equation}\label{DefSpaceChampE}
\begin{array}{rcl}
\mX_N^{\out,\beta}&\hspace{-0.2cm}:=&\hspace{-0.2cm}\{\u\in\Hspace_N^{\out,\beta}(\curl) \, |\, \div(\eps\u)=0\mbox{ in }\Om\backslash\{O\}\},\\[6pt]
\Xoutspace&\hspace{-0.2cm}:=&\hspace{-0.2cm}\{\u\in \Houtspace \, |\, \div(\eps\u)=0\mbox{ in }\Om\backslash\{O\}\}.
\end{array}
\end{equation}
We endow $\mX_N^{\out,\beta}$, $\Xoutspace$ respectively with the norms of $\Hspace_N^{\out,\beta}(\curl)$,  $\Houtspace$.
\begin{remark}\label{SectionERqDivNull}
Let us make two observations concerning the elements $\u\in\mX_N^{\out,\beta}$.\\[3pt]
1) The constraint $\div(\eps\u)=0$ in $\Om\backslash\{O\}$ must be understood as 
\[		
\int_\Om\eps \u\cdot\nabla\overline{\varphi}\,dx=0,\qquad\forall \varphi\in\mathscr{C}^{\infty}_0(\Om\backslash\{O\}).
\]
Therefore, from Lemma \ref{SectionELemmeEquiv}, we infer that if $\u\in\mX_N^{\out,\beta}$, we have
\begin{equation}\label{defDivNulle}
\fint_\Om\eps\u\cdot\nabla \overline{ v}\,dx=0,\qquad\forall v\in\mathring{\mV}^1_\beta(\Om).
\end{equation}
2) If $\u=\Tilde{\u}+\mathfrak{s}_{\u,\eps}\in\mX_N^{\out,\beta}$, then we have 
\begin{equation}\label{defDivNulle2}
\div(\eps\Tilde{\u})=-\div(\eps\nabla\mathfrak{s}_{\u,\eps})\in\mL^2(\Om).
\end{equation}
\end{remark}
By replacing $\Houtspace$, $\Hspace_N^{\out,\beta}(\curl)$ respectively by $\Xoutspace$, $\mX_N^{\out,\beta}$ in \eqref{SectionEFv1}, we get the problem
\begin{equation}\label{SectionEFv2}
\begin{array}{|l}
			\text{Find } \u \in   \Xoutspace \text{ such that } \\[6pt]
			\int_\Om \mu^{-1} \boldsymbol{\psi}_{\u}\cdot \curl\vb\,dx -\om^2\fint_\Om \eps\u\cdot\vb\,dx =i\om\int_\Om \boldsymbol{J} \cdot\vb\,dx,\qquad\forall\v\in  \mX_N^{\out,\beta}.
		\end{array}
\end{equation}
As for (\ref{SectionEFv1}), observe that in (\ref{SectionEFv2}) the solution and the test functions do not belong to the same space. This feature is important to show the well-posedness of the problem and we have not been able to get rid of it (see Remark \ref{RemarkNotSym} for explanations). Besides, we emphasize that in $\Xoutspace$, the space for the solution, both the field and its curl are singular at the origin.
\begin{proposition}\label{SectionEResEquiv}
Assume that $\om\ne0$.\\[3pt]
$\bullet$ Every solution of \eqref{SectionEFv1} solves \eqref{SectionEFv2}.\\[3pt]
$\bullet$ Suppose that Assumptions \ref{AssumptionCritique}-\ref{AssumptionNoTrappedMode} hold. Then every solution of \eqref{SectionEFv2} solves \eqref{SectionEFv1}. 
\end{proposition}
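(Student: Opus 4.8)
The statement asserts the equivalence of the two variational formulations, so the plan is to treat the two bullets separately, the difference between the two problems being only the replacement of $\Houtspace$, $\Hspace_N^{\out,\beta}(\curl)$ by their $\eps$-divergence-free subspaces $\Xoutspace$, $\mX_N^{\out,\beta}$. For the first bullet, let $\u$ solve \eqref{SectionEFv1}; I first show $\u\in\Xoutspace$, i.e. $\div(\eps\u)=0$ in $\Om\backslash\{O\}$. I would test \eqref{SectionEFv1} with $\v=\nabla\varphi$, $\varphi\in\mathscr{C}^{\infty}_0(\Om\backslash\{O\})$, which lies in $\Hspace_N^{\out,\beta}(\curl)$ with zero singular part. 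As $\curl\nabla\varphi=0$, the curl term drops; as $\div\,\J=0$ in $\Om\backslash\{O\}$, an integration by parts gives $\int_\Om\J\cdot\nabla\overline{\varphi}\,dx=0$; and on such $\varphi$ the regularised product $\fint$ reduces to the ordinary integral. Using $\om\ne0$, I am left with $\int_\Om\eps\u\cdot\nabla\overline{\varphi}\,dx=0$ for all $\varphi$, which is precisely $\div(\eps\u)=0$ in $\Om\backslash\{O\}$. Since moreover $\mX_N^{\out,\beta}\subset\Hspace_N^{\out,\beta}(\curl)$, restricting the identity of \eqref{SectionEFv1} to $\v\in\mX_N^{\out,\beta}$ gives \eqref{SectionEFv2}.

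For the converse, the key ingredient is a gradient decomposition of the enlarged test space. Given $\u\in\Xoutspace$ solving \eqref{SectionEFv2} and an arbitrary $\v\in\Hspace_N^{\out,\beta}(\curl)$, I would write $\v=\v_0+\nabla p$ with $\v_0\in\mX_N^{\out,\beta}$ and $p\in\mathring{\mV}^\out_\beta(\Om)$. To build $p$, I use the scalar isomorphism $\mA^\out_\eps:\mathring{\mV}^\out_\beta(\Om)\to(\mathring{\mV}^1_\beta(\Om))^\ast$ from Theorem \ref{MainThmScalaire} and solve $\langle\mA^\out_\eps p,v'\rangle=\fint_\Om\eps\v\cdot\nabla\overline{v'}\,dx$ for all $v'\in\mathring{\mV}^1_\beta(\Om)$; the right-hand side is a continuous antilinear form by Lemma \ref{SectionELemmeEquiv}. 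By construction $\nabla p\in\Hspace_N^{\out,\beta}(\curl)$ and $\fint_\Om\eps(\v-\nabla p)\cdot\nabla\overline{v'}\,dx=0$ for all $v'$, so $\v_0:=\v-\nabla p\in\mX_N^{\out,\beta}$ by Remark \ref{SectionERqDivNull}. It is exactly here that Assumptions \ref{AssumptionCritique}--\ref{AssumptionNoTrappedMode} are needed, since for critical contrasts the lifting $p$ must be sought in the detached-asymptotics space and not in an ordinary Kondratiev space.

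It then remains to insert $\v=\v_0+\nabla p$ into \eqref{SectionEFv1} and use linearity. Since $\curl\nabla p=0$, the curl and $\J$-terms split so that the $\v_0$ part reproduces \eqref{SectionEFv2}, valid by hypothesis, and I only have to check that the $\nabla p$ part is consistent, i.e. $-\om^2\fint_\Om\eps\u\cdot\overline{\nabla p}\,dx=i\om\int_\Om\J\cdot\overline{\nabla p}\,dx$. I would prove both sides vanish. On the one hand, the singular basis functions of $\mathcal{S}^+_\eps$ belong to $\mathring{\mV}^1_\beta(\Om)$ for $\beta>0$, hence $p\in\mathring{\mV}^1_\beta(\Om)$, so \eqref{defDivNulle} for $\u\in\Xoutspace$ forces $\fint_\Om\eps\u\cdot\overline{\nabla p}\,dx=0$. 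On the other hand, approximating $p$ in $\mathring{\mV}^1_\beta(\Om)$ by elements of $\mathscr{C}^{\infty}_0(\Om\backslash\{O\})$ and using $\div\,\J=0$ yields $\int_\Om\J\cdot\overline{\nabla p}\,dx=0$. Hence \eqref{SectionEFv1} holds for every $\v\in\Hspace_N^{\out,\beta}(\curl)$.

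The main obstacle is the construction of $p$ in the second paragraph: it is what makes the whole equivalence rest on the non-trivial scalar theory, and it is the only place where the critical-contrast assumptions are essential. A secondary point I would check with care is that the regularised product $\fint_\Om\eps\u\cdot\overline{\nabla p}$ (viewed in $\Hspace_N^{\out,\beta}(\curl)$) coincides with $\fint_\Om\eps\u\cdot\nabla\overline{p}$ (viewed as in Lemma \ref{SectionELemmeEquiv}); the two agree after an integration by parts whose contribution at $O$ vanishes thanks to the complementary weights $\pm\beta$, which is what legitimises the use of \eqref{defDivNulle}.
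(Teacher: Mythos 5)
Your proof is correct and follows essentially the same strategy as the paper's: project an arbitrary test function of $\Hspace_N^{\out,\beta}(\curl)$ onto $\mX_N^{\out,\beta}$ by subtracting a gradient produced by the isomorphism $\mA^\out_\eps:\mathring{\mV}^\out_\beta(\Om)\to(\mathring{\mV}^1_\beta(\Om))^\ast$ of Theorem \ref{MainThmScalaire}, then use \eqref{defDivNulle} and $\div\,\J=0$ to discard the gradient contributions. The only (harmless) deviation is that you apply $\mA^\out_\eps$ directly to the functional $v'\mapsto\fint_\Om\eps\v\cdot\nabla\overline{v'}$, which is indeed continuous on $\mathring{\mV}^1_\beta(\Om)$ since $\tilde{\v}\in\mmV^0_{-\beta}(\Om)$, whereas the paper first splits $\tilde{\v}=\nabla\varphi+\curl\boldsymbol{\zeta}$ via the weighted Helmholtz decomposition of Proposition \ref{AppendixHelmholtzWeighted} and only feeds $\curl\boldsymbol{\zeta}+\nabla\mathfrak{s}_{\v,\eps}$ to the scalar solver; both yield the same corrected test function.
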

	\begin{proof}
		To show the first part of the statement, one   needs to justify that every solution  $\u$ of \eqref{SectionEFv1} satisfies the equation $\div(\eps \u)=0$ in $\Om\backslash\{O\}$. To proceed, take  $\v=\nabla\varphi$ in  \eqref{SectionEFv1} with $\varphi\in\mathscr{C}^{\infty}_0(\Om\backslash\{O\})$ and use that $\div\boldsymbol{J}=0$ in $\Om\backslash\{O\}$.\\
The proof of the second part is a bit more involved. Assume that $\u\in\Xoutspace$ solves \eqref{SectionEFv2}. Since $\Xoutspace\subset
\Houtspace$, it suffices to show that the variational identity \eqref{SectionEFv2} is also valid for $\v\in\Hspace_N^{\out,\beta}(\curl)$. Consider some $\v=\Tilde{\v}+\nabla\mathfrak{s}_{\v,\eps}\in\Hspace_N^{\out,\beta}(\curl)$ with $\Tilde{\v}\in\mmV^0_{-\beta}(\Om)$ and $\mathfrak{s}_{\v,\eps}\in\mathcal{S}_\eps^+$. The first item of Proposition \ref{AppendixHelmholtzWeighted} guarantees that the function $\Tilde{\v}$  admits the decomposition 
\begin{equation}\label{DecompoPartReg}
\Tilde{\v}=\nabla \varphi+\curl \boldsymbol{\zeta}
\end{equation}
		with $\varphi\in\mathring{\mV}^1_{-\beta}(\Om)$ and $ \boldsymbol{\zeta}\in\mX_T(1)$ such that $ \curl \boldsymbol{\zeta}\in\mmV^0_{-\beta}(\Om)$. Since we have $\div(\eps(\curl \boldsymbol{\zeta}+\nabla\mathfrak{s}_{\v,\eps}))\in(\mathring{\mV}_\beta^1(\Om))^\ast$, Theorem \ref{MainThmScalaire}	 guarantees that there is a unique $\phi\in\mathring{\mV}_\beta^\out(\Om)$ such that 
\[
\langle\mA^\out_\eps \phi,\phi'\rangle=\fint_\Om\eps\nabla \phi\cdot\nabla \overline{\phi'}\,dx=\fint_\Om\eps(\curl \boldsymbol{\zeta}+\nabla\mathfrak{s}_{\v,\eps})\cdot\nabla\overline{\phi'}\,dx,\qquad\forall \phi'\in\mathring{\mV}_\beta^1(\Om).
\]
Now, set $\hat{\v}:=\curl \boldsymbol{\zeta}+\nabla\mathfrak{s}_{\v,\eps}-\nabla \phi=\v-\nabla \varphi-\nabla \phi$. By observing that $\div(\eps \hat{\v})=0$ in $\Om\backslash\{O\}$, we deduce that $\hat{\v}\in\mX_N^{\out,\beta}$.  As a result,  one  can  take $\hat{\v}$ as a test function in  \eqref{SectionEFv2}.
		But, on the other hand, using (\ref{defDivNulle}) and the fact that $\div\,\J=0$ in $\Om\backslash\{O\}$, we obtain
		$$\left\{\begin{array}{rcl}
			\int_\Om \mu^{-1} \boldsymbol{\psi}_{\u}\cdot \curl\vb\,dx&=&\int_\Om \mu^{-1} \boldsymbol{\psi}_{\u}\, \cdot\curl\Bar{\hat{\v}}\,dx\\[10pt]
			\fint_\Om \eps\u\cdot\vb\,dx&=&\fint_\Om \eps\u\cdot\Bar{\hat{\v}}\,dx+\fint_\Om \eps\u\cdot \nabla \overline{ (\varphi+\phi)}\,dx=\fint_\Om \eps\u\cdot\Bar{\hat{\v}}\,dx\\[10pt]
			\int_\Om \boldsymbol{J}\cdot\vb\,dx&=&\int_\Om \boldsymbol{J}\cdot\overline{\hat{\v}}\,dx.
		\end{array}\right.$$
This shows that $\u$ satisfies \eqref{SectionEFv1} and ends the proof of the second item. 
\end{proof}
In the rest of this section, we focus our attention on the study of Problem (\ref{SectionEFv2}). Define the continuous operators $\mathbb{A}^\out_N,\,\mathbb{K}^\out_N:\Xoutspace\to(\mX_N^{\out,\beta})^\ast$ such that for $\u\in\Xoutspace$, $\v\in\mX_N^{\out,\beta}$,
\begin{equation}\label{DefTerm}
\langle\mathbb{A}^\out_N\u,\v\rangle=\int_\Om\mu^{-1}\boldsymbol{\psi}_{\u}\cdot\curl\vb\,dx,\qquad\quad 	\langle\mathbb{K}^\out_N\u,\v\rangle=\fint_\Om\eps\u\cdot \vb\,dx.
\end{equation}
Finally, set $\mathscr{A}^\out_N(\om):=\mathbb{A}^\out_N-\om^2\mathbb{K}^\out_N$ so that for $\u\in\Xoutspace$, $\v\in \mX_N^{\out,\beta}$, 
\begin{equation}\label{DefAoperateur}
\langle\mathscr{A}^\out_N(\om) \u,\v\rangle=\int_\Om\mu^{-1}\boldsymbol{\psi}_{\u}\cdot\curl\vb\,dx-\om^2\fint_\Om\eps\u\cdot \vb\,dx.
\end{equation}
Before getting into details, observe that for $\u,\v\in\Xoutspace$, using relation (\ref{ProprieteHcurl}), we obtain 
\begin{equation}\label{defAExpand}
\langle\mathbb{A}^\out_N\u,\v\rangle=\dsp\int_\Om\mu^{-1}\boldsymbol{\psi}_{\u}\cdot\overline{ \boldsymbol{\psi}_{\v}}\,dx+\int_\Om\div(\mu\nabla\mathfrak{s}_{\u,\mu})\,\overline{ \mathfrak{s}_{\v,\mu}}\,dx.
\end{equation}
On the other hand, exploiting (\ref{defDivNulle}), (\ref{defDivNulle2}), we can write, for $\u\in\Xoutspace$, $\v\in\mX_N^{\out,\beta}$,
\begin{equation}\label{defKExpand}
\langle\mathbb{K}^\out_N\u,\v\rangle=\int_\Om\eps\u\cdot \overline{\tilde{\v}}\,dx=\int_\Om\eps\Tilde{\u}\cdot\overline{\tilde{\v}}\,dx+\int_{\Om}\mathfrak{s}_{\u,\eps}\,\div(\eps\nabla\overline{\mathfrak{s}_{\v,\eps}})\,dx. 
\end{equation}
As a consequence, for $\u,\v\in\Xoutspace$, we obtain 
\begin{equation}\label{ImportantIdentity}
\langle\mathscr{A}^\out_N(\om) \u,\v\rangle-\overline{\langle\mathscr{A}^\out_N(\om) \v,\u\rangle}=-q_\mu(\mathfrak{s}_{\u,\mu},\mathfrak{s}_{\v,\mu})-\om^2q_\eps(\mathfrak{s}_{\u,\eps},\mathfrak{s}_{\v,\eps}).
\end{equation}
This is an important identity. The terms on the right hand side are the ones which are left when integrating twice by parts. For $\v=\u$, they represent the energy which is trapped at the tip. A part of it comes from the singularity of the field, another from the singularity of the curl of the field.

\subsection{Equivalent norms in \texorpdfstring{$\mX_N^{\out,\beta}$ and $\Xoutspace$ }{TEXT}}
	
	The goal  of this section is to introduce new  ``simpler'' equivalent norms in $\mX_N^{\out,\beta}$ and  $\Xoutspace$. First, consider the space  $\mX_N^{\out,\beta}$.
\begin{proposition}\label{SectionEEquivNormes1}
Suppose that Assumptions \ref{AssumptionCritique}-\ref{AssumptionNoTrappedMode} hold.  There is a constant $C>0$ such that
		\begin{equation}
			\|\tilde{\u}\|_{\mmV^0_{-\beta}(\Om)}+\|\nabla \mathfrak{s}_{\u,\eps}\|_{\mmV^0_{\beta}(\Om)}\leq C\,\|\curl \u\|_{\mmV^0_\beta(\Om)},\qquad\forall \u\in \mX_N^{\out,\beta}.
		\end{equation}
Consequently, the norms $\|\cdot\|_{\Hspace_N^{\out,\beta}(\curl)}$ and $\|\curl \cdot\|_{\mmV^0_\beta(\Om)}$ are equivalent in $\mX_N^{\out,\beta}$.
	\end{proposition}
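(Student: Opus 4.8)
The plan is to argue by contradiction and compactness, the two essential ingredients being the weighted compact embedding established in Section~\ref{SectionAppendix} and the isomorphism property of $\mA^\out_\eps$ from Theorem~\ref{MainThmScalaire}. Suppose the estimate fails. Then there is a sequence $(\u_n)\subset\mX_N^{\out,\beta}$, written $\u_n=\tilde\u_n+\nabla\mathfrak{s}_{\u_n,\eps}$, normalised by $\|\tilde\u_n\|_{\mmV^0_{-\beta}(\Om)}+\|\nabla\mathfrak{s}_{\u_n,\eps}\|_{\mmV^0_\beta(\Om)}=1$ and such that $\|\curl\u_n\|_{\mmV^0_\beta(\Om)}\to0$. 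Since $\curl(\nabla\mathfrak{s}_{\u_n,\eps})=0$, we have $\curl\tilde\u_n=\curl\u_n\to 0$ in $\mmV^0_\beta(\Om)$; moreover $\div(\eps\u_n)=0$ in $\Om\backslash\{O\}$ forces $\div(\eps\tilde\u_n)=-\div(\eps\nabla\mathfrak{s}_{\u_n,\eps})$, which by (\ref{PropertySingu}) belongs to $\mL^2(\Om)$ and vanishes near $O$; finally $\tilde\u_n\times\nu=0$ on $\partial\Om\backslash\{O\}$ because the Dirichlet singularities satisfy $\mathfrak{s}_{\u_n,\eps}=0$ on $\partial\Om$.

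First I would treat the singular part: as $\nabla\mathcal{S}^+_\eps$ is finite dimensional and $\|\nabla\mathfrak{s}_{\u_n,\eps}\|_{\mmV^0_\beta(\Om)}\le1$, up to a subsequence $\nabla\mathfrak{s}_{\u_n,\eps}\to\nabla\mathfrak{s}_\eps$ for some $\mathfrak{s}_\eps\in\mathcal{S}^+_\eps$ in every norm of this space, whence $\div(\eps\tilde\u_n)\to-\div(\eps\nabla\mathfrak{s}_\eps)$ in $\mL^2(\Om)$. Next I would handle the regular part: the $\tilde\u_n$ are bounded in $\mmV^0_{-\beta}(\Om)$, with $\curl\tilde\u_n$ bounded in $\mmV^0_\beta(\Om)$, $\div(\eps\tilde\u_n)$ bounded in $\mL^2(\Om)$ and $\tilde\u_n\times\nu=0$. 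Invoking the weighted analogue of the compact embedding $\mX_N(1)\hookrightarrow\Lspace^2(\Om)$ of Proposition~\ref{PropoEmbeddingCla} proved in Section~\ref{SectionAppendix}, I extract a further subsequence with $\tilde\u_n\to\tilde\u$ strongly in $\mmV^0_{-\beta}(\Om)$.

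It then remains to identify the limit $\u:=\tilde\u+\nabla\mathfrak{s}_\eps\in\mX_N^{\out,\beta}$. Passing to the limit yields $\curl\u=0$, $\div(\eps\u)=0$ in $\Om\backslash\{O\}$ and $\u\times\nu=0$. Since $\Om\backslash\{O\}$ is simply connected in $\R^3$, the curl-free field $\u$ is a gradient $\u=\nabla p$; matching this with $\u=\tilde\u+\nabla\mathfrak{s}_\eps$ gives $\nabla(p-\mathfrak{s}_\eps)=\tilde\u\in\mmV^0_{-\beta}(\Om)$, so $p-\mathfrak{s}_\eps\in\mathring{\mV}^1_{-\beta}(\Om)$ and thus $p\in\mathring{\mV}^{\out}_{\beta}(\Om)$, while $\div(\eps\u)=0$ and $\u\times\nu=0$ give $\mA^\out_\eps p=0$. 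By Theorem~\ref{MainThmScalaire}, $\mA^\out_\eps$ is an isomorphism, hence $p=0$ and $\u=0$. As the decomposition $\mmV^0_{-\beta}(\Om)\oplus\nabla\mathcal{S}^+_\eps$ is direct, this forces $\tilde\u=0$ and $\nabla\mathfrak{s}_\eps=0$, so by the strong convergences $\|\tilde\u_n\|_{\mmV^0_{-\beta}(\Om)}+\|\nabla\mathfrak{s}_{\u_n,\eps}\|_{\mmV^0_\beta(\Om)}\to0$, contradicting the normalisation. The stated equivalence of norms follows at once, since the inequality bounds the first two contributions of $\|\cdot\|_{\Hspace_N^{\out,\beta}(\curl)}$ by $\|\curl\cdot\|_{\mmV^0_\beta(\Om)}$ and the reverse bound is trivial.

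The main obstacle is the compactness step, and it is exactly where the unusual structure of the problem appears: $\curl\u$ is controlled only in the \emph{weaker} weighted space $\mmV^0_\beta(\Om)$, whereas $\tilde\u$ must be recovered in the \emph{stronger} space $\mmV^0_{-\beta}(\Om)$, so the field is genuinely less singular than its curl. This cannot follow from elliptic regularity alone; it is the divergence constraint $\div(\eps\tilde\u_n)\in\mL^2(\Om)$ together with the tangential boundary condition that restores enough control to apply the weighted embedding. Establishing that compactness — the Kondratiev-weighted counterpart of the Weber/Amrouche--Bernardi--Dauge--Girault embedding, adapted moreover to the sign-changing coefficient $\eps$ — is the real work, and I would isolate it as the key lemma of Section~\ref{SectionAppendix} on which the whole argument rests.
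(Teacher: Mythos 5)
Your overall strategy (normalise, extract a convergent subsequence, identify the limit, contradict) is legitimate and genuinely different from the paper's, which is a direct two-step construction: decompose $\tilde{\u}=\nabla\varphi+\curl\boldsymbol{\psi}$ by item $iii)$ of Proposition \ref{AppendixHelmoltzclassique}, observe that $\curl\boldsymbol{\psi}\in\mZ^\beta_N$ so that Proposition \ref{AppendixHelweightedRegularity} yields $\|\curl\boldsymbol{\psi}\|_{\mmV^0_{-\beta}(\Om)}\le C\,\|\curl\u\|_{\mmV^0_\beta(\Om)}$, and then control $\nabla\varphi+\nabla\mathfrak{s}_{\u,\eps}$ through the isomorphism $\mA^\out_\eps$ of Theorem \ref{MainThmScalaire} applied to $\fint_\Om\eps\nabla(\mathfrak{s}_{\u,\eps}+\varphi)\cdot\nabla\overline{\phi'}\,dx=-\int_\Om\eps\,\curl\boldsymbol{\psi}\cdot\nabla\overline{\phi'}\,dx$. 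That argument gives the estimate directly, with an explicit chain of constants and no contradiction.

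The gap in your version is the compactness step. The lemma you invoke --- compactness into $\mmV^0_{-\beta}(\Om)$ of the set of fields bounded in $\mmV^0_{-\beta}(\Om)$ with $\curl$ bounded only in the weaker space $\mmV^0_{\beta}(\Om)$, $\div(\eps\,\cdot)$ bounded in $\mL^2(\Om)$ and vanishing tangential trace --- is not proved anywhere in Section \ref{SectionAppendix}. The results there concern either divergence-free fields with coefficient one (Proposition \ref{AppendixWeightedRegularityX(1)} for $\mX_N(1)$, Proposition \ref{AppendixHelweightedRegularity} for $\mZ^\gamma_N$) or the space $\mX_N(\eps)$, whose elements have unweighted $\Lspace^2$ curls (Proposition \ref{AppendixX(eps)}); none covers a sign-changing divergence constraint combined with a curl controlled only in $\mmV^0_\beta(\Om)$, and with a sign-changing $\eps$ such a statement cannot be a routine citation of a Weber-type embedding. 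To prove the lemma you need, one would decompose $\tilde\u_n=\nabla\varphi_n+\curl\boldsymbol{\psi}_n$, obtain compactness of $(\curl\boldsymbol{\psi}_n)$ from Proposition \ref{AppendixHelweightedRegularity}, and then convert the convergence of $\div(\eps\,\curl\boldsymbol{\psi}_n)$ in $(\mathring{\mV}^1_\beta(\Om))^\ast$ into convergence of $(\varphi_n)$ via the injectivity estimates (\ref{Estimmonomorphism}) or Theorem \ref{MainThmScalaire} --- that is, exactly the two ingredients of the paper's direct proof. So your route can be completed, but only by reproducing the direct argument inside the key lemma, at which point the contradiction scaffolding buys nothing. (The limit identification itself is sound: a curl-free limit with $\u\times\nu=0$ on the connected boundary is $\nabla p$ with $p\in\mathring{\mV}^\out_\beta(\Om)$ and $\mA^\out_\eps p=0$, hence $p=0$ by Theorem \ref{MainThmScalaire}.)
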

\begin{proof}
	Let  $\u$ be an element of $\mX_N^{\out,\beta}$.  By definition of  $ \mX_N^{\out,\beta}$, we have $\u=\Tilde{\u}+ \nabla  \mathfrak{s}_{\u,\eps}$ with $\Tilde{\u}\in\mmV^0_{-\beta}(\Om)$ and $\mathfrak{s}_{\u,\eps}\in\mathcal{S}_\eps^+$. By means of item $iii)$ of Proposition \ref{AppendixHelmoltzclassique},  one can decompose  $\Tilde{\u}$ as
\begin{equation}\label{DecompouTilde}
\Tilde{\u}=\nabla \varphi+\curl \boldsymbol{\psi}
\end{equation}
with $\varphi\in\mrm{H}^1_{0}(\Om)$ and $\boldsymbol{\psi}\in \mX_T(1)$. Remarking that $\curl (\curl \boldsymbol{\psi})=\curl\u\in\mmV^0_\beta(\Om)$ and that $
	\curl\boldsymbol{\psi}\times\nu=0$ on $\partial\Om\backslash\{O\}$ yields 
\begin{equation}\label{SectionHzbeta}
\curl\boldsymbol{\psi}\in\mZ^\beta_N:=\{\v\in\Lspace^2(\Om)~|~ \curl\v\in \mmV^0_\beta(\Om),~ \div\,\v=0, ~\v\times\nu= 0\mbox{ on } \partial\Om\backslash\{O\}\}.
\end{equation}
Therefore, according to Proposition \ref{AppendixHelweightedRegularity},  we obtain  $\curl\boldsymbol{\psi}\in\mmV^0_{-\beta}(\Om)$ with the estimate
\begin{equation}\label{SectionHrotpsiChampE}
\|\curl\boldsymbol{\psi}\|_{\mmV^0_{-\beta}(\Om)}\leq C\,\|\curl \u\|_{\mmV^0_{\beta}(\Om)}.		
\end{equation}
\noindent Besides, since $\div(\eps\u)=0$ in $\Om\backslash\{O\}$, there holds
\[
\langle\mA^\out_\eps(\mathfrak{s}_{\u,\eps}+\varphi),\phi'\rangle=\fint_\Om\eps\nabla(\mathfrak{s}_{\u,\eps}+\varphi)\cdot\nabla \overline{\phi'}\,dx=-\int_\Om\eps\,\curl\boldsymbol{\psi}\cdot\nabla\overline{\phi'}\,dx,\qquad\forall \phi'\in\mathring{\mV}_\beta^1(\Om).
\]
Exploiting that $\mA^\out_\eps:\mathring{\mV}^\out_\beta(\Om)\to(\mathring{\mV}^1_\beta(\Om))^\ast$ is an isomorphism (Theorem \ref{MainThmScalaire}), we get 
\[
\|\nabla\varphi\|_{\mmV^0_{-\beta}(\Om)}+\|\nabla\mathfrak{s}_{\u,\eps}\|_{\mmV^0_{\beta}(\Om)}\leq C\,\|\curl\boldsymbol{\psi}\|_{\mmV^0_{-\beta}(\Om)}.
\]
By combining (\ref{DecompouTilde}), \eqref{SectionHrotpsiChampE} and the previous estimate, we obtain the desired result.
\end{proof}		
	Now, we turn our attention to $\Xoutspace$. 
	\begin{proposition}\label{SectionEEquivNormes2}
		Suppose that  Assumptions \ref{AssumptionCritique}-\ref{AssumptionNoTrappedMode} hold. There is a constant $C>0$ such that
		\begin{equation}
			\|\tilde{\u}\|_{\mmV^0_{-\beta}(\Om)}+\|\nabla \mathfrak{s}_{\u,\eps}\|_{\mmV^0_{\beta}(\Om)}\leq C\, \|\boldsymbol{\psi}_{ \u}\|_{\mmV^0_{-\beta}(\Om)},\quad\forall \u\in \Xoutspace\mbox{ with }\curl \u=\boldsymbol{\psi}_{\u}+\mu\nabla\mathfrak{s}_{\u,\mu}.
		\end{equation}
		Consequently, in $\Xoutspace$ the map $\u\mapsto\|\boldsymbol{\psi}_{ \u}\|_{\mmV^0_{-\beta}(\Om)}$ is a norm which is equivalent to $\|\cdot\|_{\Houtspace}$.
	\end{proposition}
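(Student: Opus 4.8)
The plan is to bootstrap from Proposition \ref{SectionEEquivNormes1}. Since $\Houtspace\subset\Hspace_N^{\out,\beta}(\curl)$, every $\u\in\Xoutspace$ also belongs to $\mX_N^{\out,\beta}$, so that proposition applies and gives $\|\tilde{\u}\|_{\mmV^0_{-\beta}(\Om)}+\|\nabla\mathfrak{s}_{\u,\eps}\|_{\mmV^0_\beta(\Om)}\le C\|\curl\u\|_{\mmV^0_\beta(\Om)}$. Writing $\curl\u=\boldsymbol{\psi}_{\u}+\mu\nabla\mathfrak{s}_{\u,\mu}$ and using the continuous inclusion $\mmV^0_{-\beta}(\Om)\subset\mmV^0_\beta(\Om)$ (valid for $\beta>0$ because $r$ is bounded), one gets $\|\curl\u\|_{\mmV^0_\beta(\Om)}\le C(\|\boldsymbol{\psi}_{\u}\|_{\mmV^0_{-\beta}(\Om)}+\|\nabla\mathfrak{s}_{\u,\mu}\|_{\mmV^0_\beta(\Om)})$. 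Hence the whole estimate reduces to the single bound $\|\nabla\mathfrak{s}_{\u,\mu}\|_{\mmV^0_\beta(\Om)}\le C\|\boldsymbol{\psi}_{\u}\|_{\mmV^0_{-\beta}(\Om)}$, i.e.\ to controlling the \emph{singular part of the curl} by its \emph{regular part}.

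For this crux I would invoke the scalar operator $\mA^\out_\mu$. By (\ref{ProprieteHcurl}), $\div\boldsymbol{\psi}_{\u}=-\div(\mu\nabla\mathfrak{s}_{\u,\mu})\in\mL^2(\Om)$, and by (\ref{PropertySingu}) this function vanishes in $B(O,\rho/2)$. Because $\beta<1/2$, constants belong to $\mV^1_{-\beta}(\Om)$, so subtracting $c:=|\Om|^{-1}\int_\Om\mathfrak{s}_{\u,\mu}\,dx$ yields $w:=\mathfrak{s}_{\u,\mu}-c\in\mathcal{V}^\out_\beta(\Om)$, whose singular part is exactly $\mathfrak{s}_{\u,\mu}$. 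Since $\nabla\tilde w=0$, the definition of $\mA^\out_\mu$ gives $\langle\mA^\out_\mu w,\phi'\rangle=\int_\Om\div\boldsymbol{\psi}_{\u}\,\overline{\phi'}\,dx=:\langle g,\phi'\rangle$ for all $\phi'\in\mathcal{V}^1_\beta(\Om)$. As $\mA^\out_\mu$ is an isomorphism (Theorem \ref{MainThmScalaire}) and $\mathcal{S}_\mu^+$ is finite dimensional, this provides $\|\nabla\mathfrak{s}_{\u,\mu}\|_{\mmV^0_\beta(\Om)}\le C\|w\|_{\mathcal{V}^\out_\beta(\Om)}\le C\|g\|_{(\mathcal{V}^1_\beta(\Om))^\ast}$.

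It then remains to show $\|g\|_{(\mathcal{V}^1_\beta(\Om))^\ast}\le C\|\boldsymbol{\psi}_{\u}\|_{\mmV^0_{-\beta}(\Om)}$, and this is where the only genuine difficulty lies. One must integrate by parts to transfer the derivative off $\div\boldsymbol{\psi}_{\u}$, obtaining $\int_\Om\div\boldsymbol{\psi}_{\u}\,\overline{\phi'}\,dx=-\int_\Om\boldsymbol{\psi}_{\u}\cdot\nabla\overline{\phi'}\,dx$, after which the weighted Cauchy--Schwarz inequality with the dual weights $r^{-\beta}$, $r^{\beta}$ gives $|\langle g,\phi'\rangle|\le\|\boldsymbol{\psi}_{\u}\|_{\mmV^0_{-\beta}(\Om)}\|\nabla\phi'\|_{\mmV^0_\beta(\Om)}\le C\|\boldsymbol{\psi}_{\u}\|_{\mmV^0_{-\beta}(\Om)}\|\phi'\|_{\mathcal{V}^1_\beta(\Om)}$. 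The delicate point, and the main obstacle, is to verify that no boundary contribution survives the integration by parts. The term on $\partial\Om$ vanishes because $\boldsymbol{\psi}_{\u}\cdot\nu=\curl\u\cdot\nu-\mu\nabla\mathfrak{s}_{\u,\mu}\cdot\nu=0$ on $\partial\Om\backslash\{O\}$: here $\curl\u\cdot\nu=0$ is the classical consequence of $\u\times\nu=0$ (a field with vanishing tangential trace has vanishing normal component of its curl), while $\mu\nabla\mathfrak{s}_{\u,\mu}\cdot\nu=0$ follows from the conical geometry ($\hat r\perp\nu$ on the lateral boundary of the cone) together with the homogeneous Neumann condition satisfied by the singularities of $\mathcal{S}_\mu^+$. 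The contribution at $O$ is harmless since $\div\boldsymbol{\psi}_{\u}$ vanishes near $O$. I would make this rigorous by the usual density argument, approximating $\phi'$ in $\mathcal{V}^1_\beta(\Om)$ by elements of $\mathscr{C}^\infty_0(\overline\Om\backslash\{O\})$ and passing to the limit, using that $\boldsymbol{\psi}_{\u}\in\mmV^0_{-\beta}(\Om)$ with $\div\boldsymbol{\psi}_{\u}\in\mL^2(\Om)$.

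Combining the two estimates yields $\|\u\|_{\Houtspace}\le C\|\boldsymbol{\psi}_{\u}\|_{\mmV^0_{-\beta}(\Om)}$. Since the reverse inequality is immediate, $\|\boldsymbol{\psi}_{\u}\|_{\mmV^0_{-\beta}(\Om)}$ being one of the terms defining $\|\cdot\|_{\Houtspace}$, the map $\u\mapsto\|\boldsymbol{\psi}_{\u}\|_{\mmV^0_{-\beta}(\Om)}$ is a norm on $\Xoutspace$ equivalent to $\|\cdot\|_{\Houtspace}$, which concludes the proof.
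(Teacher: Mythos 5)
Your proposal is correct and follows essentially the same route as the paper: reduce via Proposition \ref{SectionEEquivNormes1} to controlling the singular part of the curl, apply the isomorphism $\mA^\out_\mu$ to $\mathfrak{s}_{\u,\mu}-c_{\u}\in\mathcal{V}^\out_\beta(\Om)$ (using $\beta<1/2$ so that constants lie in $\mV^1_{-\beta}(\Om)$), and identify the right-hand side with $\phi'\mapsto-\int_\Om\boldsymbol{\psi}_{\u}\cdot\nabla\overline{\phi'}\,dx$, bounded by $\|\boldsymbol{\psi}_{\u}\|_{\mmV^0_{-\beta}(\Om)}$ by weighted Cauchy--Schwarz. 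You are merely more explicit than the paper about the absence of boundary terms in the integration by parts, which the paper dispatches by invoking $\curl\u\cdot\nu=0$ and $\div(\curl\u)=0$.
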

	\begin{proof}
	Since there holds $\Xoutspace\subset\mX_N^{\out,\beta}$, from Proposition \ref{SectionEEquivNormes1} we see that it is enough to show that we have 
\begin{equation}\label{EstimaToSatisfy}
\|\curl \u\|_{\mmV^0_\beta(\Om)} \leq C\,\|\boldsymbol{\psi}_{ \u}\|_{\mmV^0_{-\beta}(\Om)},\qquad \forall\u\in \Xoutspace.
\end{equation}
Consider some $\u\in\Xoutspace$.  We have 
\[
\curl \u=\boldsymbol{\psi}_{\u}+\mu\nabla\mathfrak{s}_{\u,\mu}=\boldsymbol{\psi}_{\u}+\mu\nabla(\mathfrak{s}_{\u,\mu}-c_{\u})\qquad\mbox{ with }\qquad c_{\u}:=\frac{1}{|\Om|}\int_\Om\mathfrak{s}_{\u,\mu}.
\]
The relation $\u\times\nu=0$ on $\partial\Om\backslash\{O\}$ implies  $\curl\u\cdot\nu=0$ on 	$\partial\Om\backslash\{O\}$. Furthermore, given that  $\div(\curl \u)=0,$ we deduce that $\mathfrak{s}_{\u,\mu}$ and $\psib_{\u}$ satisfy 
\[
\langle\mA^\out_\mu (\mathfrak{s}_{\u,\mu}-c_{\u}),\phi'\rangle=\fint_\Om\mu\nabla (\mathfrak{s}_{\u,\mu}-c_{\u})\cdot\nabla \overline{\phi'}\,dx=-\int_{\Om}\boldsymbol{\psi}_{\u}\cdot\nabla \overline{\phi'}\,dx,\qquad\forall \phi'\in\mathcal{V}^1_{\beta}(\Om).
\]
Note that $\mathfrak{s}_{\u,\mu}-c_{\u}$ is indeed an element of $\mathcal{V}^\out_\beta(\Om)$ because $1\in\mV^1_{-\gamma}(\Om)$ for all $\gamma<1/2$ and there holds $\beta<1/2$. Using that $\mA^\out_\mu:\mathcal{V}^\out_\beta(\Om)\to (\mathcal{V}^1_{\beta}(\Om))^\ast$ is an isomorphism (Theorem \ref{MainThmScalaire}), we obtain 
\[
|c_{\u}|+\|\mathfrak{s}_{\u,\mu}\|_{\mV^1_\beta(\Om)}\leq C\,\|\boldsymbol{\psi}_{ \u}\|_{\mmV^0_{-\beta}(\Om)}
\]
where here and below $C$ is independent of $\u$. This gives 
$$
\|\nabla\mathfrak{s}_{\u,\mu}\|_{\mmV^0_{\beta}(\Om)}\leq C\,\|\boldsymbol{\psi}_{ \u}\|_{\mmV^0_{-\beta}(\Om)}.
$$
and inserting this estimate into  
\[
\|\curl \u\|_{\mmV^0_\beta(\Om)}\leq C (\|\nabla\mathfrak{s}_{\u,\mu}\|_{\mmV^0_\beta(\Om)}+ \|\boldsymbol{\psi}_{\u}\|_{\mmV^0_\beta(\Om)})\leq C (\|\nabla\mathfrak{s}_{\u,\mu}\|_{\mmV^0_\beta(\Om)}+ \|\boldsymbol{\psi}_{\u}\|_{\mmV^0_{-\beta}(\Om)})
\]
finally leads to (\ref{EstimaToSatisfy}). 
\end{proof}

	\subsection{Analysis of the principal part }\label{paragraphePrincPart}
	In this section, we study the operator $\mathbb{A}^\out_N=\mathscr{A}^\out_N(0)$ defined in (\ref{DefTerm}). We emphasize that the operator $\mathbb{T}$ that we construct in the next proposition constitutes the main ingredient in the analysis of the electric problem.
	\begin{proposition}\label{propoOntoE}
		\label{SectionEResPrincipalPart} Suppose that Assumptions \ref{AssumptionCritique}-\ref{AssumptionNoTrappedMode} hold. There exists a continuous operator $\mathbb{T}:\mX_N^{\out,\beta}\to\Xoutspace$ such that
\begin{equation}\label{variaIdentity}
\langle\mathbb{A}^\out_N \circ \mathbb{T}\u,\v\rangle=\int_\Om r^{2\beta}\curl \u \cdot\curl \vb\,dx,\qquad \forall\u,\v \in\mX_N^{\out,\beta}.
\end{equation}
\end{proposition}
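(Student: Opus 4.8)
The plan is to exhibit $\w:=\mathbb{T}\u$ by turning the weighted field $r^{2\beta}\curl\u$ into a genuine divergence-free field through the scalar operator $\mA^\out_\mu$, and then by taking a vector potential of it inside $\Xoutspace$. Fix $\u\in\mX_N^{\out,\beta}$ and set $\boldsymbol{G}:=\mu\,r^{2\beta}\curl\u$. Since $\curl\u\in\mmV^0_\beta(\Om)$ and $\mu\in\mL^\infty(\Om)$, the identity $r^{-\beta}\boldsymbol{G}=\mu\,r^{\beta}\curl\u$ shows that $\boldsymbol{G}\in\mmV^0_{-\beta}(\Om)$, with $\|\boldsymbol{G}\|_{\mmV^0_{-\beta}(\Om)}\le C\|\curl\u\|_{\mmV^0_\beta(\Om)}$. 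The field $\boldsymbol{G}$ is not divergence free, so I first correct it. Because $\beta\in(0;\beta_N)$, Theorem~\ref{MainThmScalaire} guarantees that $\mA^\out_\mu:\mathcal{V}^\out_\beta(\Om)\to(\mathcal{V}^1_\beta(\Om))^\ast$ is an isomorphism, hence there is a unique $p\in\mathcal{V}^\out_\beta(\Om)$ such that
\[
\langle\mA^\out_\mu p,\phi'\rangle=\fint_\Om\mu\nabla p\cdot\nabla\overline{\phi'}\,dx=\int_\Om\boldsymbol{G}\cdot\nabla\overline{\phi'}\,dx,\qquad\forall\phi'\in\mathcal{V}^1_\beta(\Om),
\]
together with $\|p\|_{\mathcal{V}^\out_\beta(\Om)}\le C\|\boldsymbol{G}\|_{\mmV^0_{-\beta}(\Om)}$. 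I then write $p=\tilde{p}+\mathfrak{s}_p$ with $\tilde{p}\in\mV^1_{-\beta}(\Om)$ and $\mathfrak{s}_p\in\mathcal{S}^+_\mu$, which is licit since $\beta<1/2$.

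Next I set $\boldsymbol{H}:=\boldsymbol{G}-\mu\nabla p$. Using the definition of $\fint_\Om\mu\nabla p\cdot\nabla\overline{\phi'}\,dx$ and the fact that $\div(\mu\nabla\mathfrak{s}_p)\in\mL^2(\Om)$ vanishes near $O$ with $\mu\nabla\mathfrak{s}_p\cdot\nu=0$ on $\partial\Om$, the identity above is precisely the statement that $\int_\Om\boldsymbol{H}\cdot\nabla\overline{\phi'}\,dx=0$ for all $\phi'\in\mathcal{V}^1_\beta(\Om)$, i.e. $\div\boldsymbol{H}=0$ in $\Om\backslash\{O\}$ and $\boldsymbol{H}\cdot\nu=0$ on $\partial\Om\backslash\{O\}$. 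Moreover, splitting $p$ as above, the regular part of $\boldsymbol{H}$ is $\boldsymbol{G}-\mu\nabla\tilde{p}\in\mmV^0_{-\beta}(\Om)$ while its singular part is $-\mu\nabla\mathfrak{s}_p\in\mu\nabla\mathcal{S}^+_\mu$, so $\boldsymbol{H}\in\mu\nabla\mathcal{S}^+_\mu\oplus\mmV^0_{-\beta}(\Om)$. This is exactly the detached-asymptotics structure of the curl of a field of $\Houtspace$. I then invoke the vector potential representation of singular fields established in the appendix (Section~\ref{SectionAppendix}): since $\boldsymbol{H}$ is divergence free with vanishing normal trace and has this structure, there exists $\w\in\Xoutspace$ with $\curl\w=\boldsymbol{H}$, depending continuously on $\boldsymbol{H}$. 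I define $\mathbb{T}\u:=\w$; by construction $\boldsymbol{\psi}_{\w}=\boldsymbol{G}-\mu\nabla\tilde{p}$ and $\mathfrak{s}_{\w,\mu}=-\mathfrak{s}_p$.

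It remains to check the identity (\ref{variaIdentity}) and the continuity. For $\v\in\mX_N^{\out,\beta}$, the relation $\mu^{-1}\boldsymbol{\psi}_{\w}=r^{2\beta}\curl\u-\nabla\tilde{p}$ gives
\[
\langle\mathbb{A}^\out_N\mathbb{T}\u,\v\rangle=\int_\Om\mu^{-1}\boldsymbol{\psi}_{\w}\cdot\curl\vb\,dx=\int_\Om r^{2\beta}\curl\u\cdot\curl\vb\,dx-\int_\Om\nabla\tilde{p}\cdot\curl\vb\,dx.
\]
The last integral vanishes: with $\tilde{p}\in\mV^1_{-\beta}(\Om)$, $\curl\vb\in\mmV^0_\beta(\Om)$, $\div\curl\v=0$ and $\curl\v\cdot\nu=0$ on $\partial\Om\backslash\{O\}$ (a consequence of $\v\times\nu=0$), a weighted Green's formula with the matching weights $\pm\beta$ (justified by density of $\mathscr{C}^\infty_0(\overline{\Om}\backslash\{O\})$) yields $\int_\Om\nabla\tilde{p}\cdot\curl\vb\,dx=0$, which proves (\ref{variaIdentity}). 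Continuity is then immediate from the equivalent norms established just above: by Proposition~\ref{SectionEEquivNormes2}, $\|\boldsymbol{\psi}_{\w}\|_{\mmV^0_{-\beta}(\Om)}$ is an equivalent norm on $\Xoutspace$, and
\[
\|\boldsymbol{\psi}_{\w}\|_{\mmV^0_{-\beta}(\Om)}\le\|\boldsymbol{G}\|_{\mmV^0_{-\beta}(\Om)}+\|\mu\nabla\tilde{p}\|_{\mmV^0_{-\beta}(\Om)}\le C\|\boldsymbol{G}\|_{\mmV^0_{-\beta}(\Om)}\le C\|\curl\u\|_{\mmV^0_\beta(\Om)},
\]
the middle estimate coming from the isomorphism bound on $p$. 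Combined with Proposition~\ref{SectionEEquivNormes1}, this gives $\|\mathbb{T}\u\|_{\Houtspace}\le C\|\u\|_{\Hspace_N^{\out,\beta}(\curl)}$.

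I expect the main obstacle to be the vector potential step: one must produce a potential $\w$ which simultaneously carries the prescribed $\eps$-singular structure of $\Xoutspace$, satisfies $\div(\eps\w)=0$ and $\w\times\nu=0$, and has curl exactly equal to the given $\mu$-singular divergence-free field $\boldsymbol{H}$. This is precisely the delicate representation-by-potential result of the appendix; everything else reduces to the scalar isomorphism of Theorem~\ref{MainThmScalaire} and to routine weighted-space traces and integrations by parts.
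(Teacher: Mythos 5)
Your overall strategy is the paper's: solve the scalar $\mu$-problem with $\mA^\out_\mu$ to turn $\mu r^{2\beta}\curl\u$ into a divergence-free field with vanishing normal trace and the correct $\mu\nabla\mathcal{S}^+_\mu\oplus\mmV^0_{-\beta}(\Om)$ structure, take a vector potential, and check the identity (your field $\boldsymbol{H}$ is exactly the paper's $\boldsymbol{F}$, up to the sign convention $\varphi=-p$). Your verification that the cross term $\int_\Om\nabla\tilde{p}\cdot\curl\vb\,dx$ vanishes, and your continuity estimate, are fine and in fact more explicit than the paper's.

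The gap is in the vector potential step, and you have half-diagnosed it yourself. You ``invoke the vector potential representation of singular fields established in the appendix'' to produce directly some $\w\in\Xoutspace$ with $\curl\w=\boldsymbol{H}$. No such result exists in the appendix. What Proposition \ref{AppendixWeightedPotenials} gives you (applicable since $\boldsymbol{H}\in\mmV^0_{\beta}(\Om)$, $\div\,\boldsymbol{H}=0$ in $\Om\backslash\{O\}$, $\boldsymbol{H}\cdot\nu=0$ on $\partial\Om\backslash\{O\}$ and $\beta<1/2\le\tilde{\gamma}_N$) is a potential $\boldsymbol{\zeta}\in\mZ^\beta_N$ with $\curl\boldsymbol{\zeta}=\boldsymbol{H}$, hence satisfying $\div\,\boldsymbol{\zeta}=0$ and $\boldsymbol{\zeta}\times\nu=0$, and Proposition \ref{AppendixHelweightedRegularity} then upgrades it to $\boldsymbol{\zeta}\in\mmV^0_{-\beta}(\Om)$. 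This $\boldsymbol{\zeta}$ is \emph{not} in $\Xoutspace$: it has no $\nabla\mathcal{S}^+_\eps$ component and satisfies $\div\,\boldsymbol{\zeta}=0$ rather than $\div(\eps\boldsymbol{\zeta})=0$. The missing step is the gauge correction: since $\phi'\mapsto\int_\Om\eps\,\boldsymbol{\zeta}\cdot\nabla\overline{\phi'}\,dx$ defines an element of $(\mathring{\mV}^1_\beta(\Om))^\ast$, Theorem \ref{MainThmScalaire} yields a unique $\phi=\tilde{\phi}+\mathfrak{s}^+_\eps\in\mathring{\mV}^\out_\beta(\Om)$ with $\fint_\Om\eps\nabla\phi\cdot\nabla\overline{\phi'}\,dx=\int_\Om\eps\,\boldsymbol{\zeta}\cdot\nabla\overline{\phi'}\,dx$ for all $\phi'\in\mathring{\mV}^1_\beta(\Om)$, and one sets $\w:=\boldsymbol{\zeta}-\nabla\phi$. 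This $\w$ lies in $\Xoutspace$ (it is $\eps$-divergence-free by construction, its $\nabla\mathcal{S}^+_\eps$ part is $-\nabla\mathfrak{s}^+_\eps$, and the tangential trace is preserved), while $\curl\w=\curl\boldsymbol{\zeta}=\boldsymbol{H}$ is unchanged, so the rest of your argument goes through verbatim. Without this correction the operator you build does not map into $\Xoutspace$, which is essential for the injectivity argument in Theorem \ref{SectionEPrinicpalPartIsom}.
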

\begin{remark}\label{RemarkNotSym}
Note that considering a variational formulation (\ref{SectionEFv2}) with test functions in $\Xoutspace\subset\mX_N^{\out,\beta}$ would lead us to work with an operator $\tilde{\mathbb{A}}^\out_N:\Xoutspace\to(\Xoutspace)^\ast$. Then to prove invertibility of $\tilde{\mathbb{A}}^\out_N$ would require to construct some continuous operator $\tilde{\mathbb{T}}:\Xoutspace\to\Xoutspace$ such that $\langle\tilde{\mathbb{A}}^\out_N \circ \tilde{\mathbb{T}}\,\cdot,\cdot\rangle$ is coercive in $\Xoutspace$. But the norm of $\Xoutspace$ is more restrictive than the one of $\mX_N^{\out,\beta}$ and we do not know if we can find such a $\tilde{\mathbb{T}}$. 
\end{remark}
\begin{proof}
Pick some $\u\in\mX_N^{\out,\beta}$. 	Let us work in three steps.  \\[2pt]
		{\bf Step 1.} The operator $\mA^\out_\mu:\mathcal{V}^\out_\beta(\Om)\to (\mathcal{V}^1_{\beta}(\Om))^\ast$ is an isomorphism. As a result, there is a unique $\varphi=\tilde{\varphi}+\mathfrak{s}^+_\mu\in\mathcal{V}_\beta^\out(\Om)$, $\tilde{\varphi}\in\mrm{V}^1_{-\beta}(\Om)$, $\mathfrak{s}^+_\mu\in\mathcal{S}_\mu^+$, such that 
\[
\langle\mA^\out_\mu \varphi,\varphi'\rangle=\fint_\Om\mu\nabla \varphi\cdot\nabla \overline{\varphi'}\,dx=-\int_\Om\mu\, r^{2\beta}\curl \u\cdot\nabla\overline{\phi'}\,dx,\qquad\forall \phi'\in\mathcal{V}^1_{\beta}(\Om).
\]
Additionally, we have the estimate
		\begin{equation}\label{SectionEProofT-coestmiTcoE}
			\|\tilde{\varphi}\|_{\mrm{V}^1_{-\beta}(\Om)}+\|\nabla\mathfrak{s}_\mu^+\|_{\mmV^0_\beta(\Om)}\leq C\,\|r^{2\beta}\curl \u\|_{\mmV^0_{-\beta}(\Om)}= C\,\|\curl \u\|_{\mmV^0_{\beta}(\Om)}.
		\end{equation}
		{\bf Step 2.} Define the function  $\boldsymbol{F}:=\mu(\nabla \varphi+r^{2\beta}\curl \u)\in\mmV^0_\beta(\Om)$. There holds
$$ \begin{array}{|rcll}
			\div\,\boldsymbol{F}&=&0& \mbox{in } \Om\\[3pt]
			\boldsymbol{F}\cdot \nu&=&0 &\mbox{on } \partial\Om.
		\end{array}$$
		Since $\beta\in(0;1/2)$, Proposition  \ref{AppendixWeightedPotenials} guarantees that there exists a unique $\boldsymbol{\zeta}\in\mZ_N^\beta$ (see \eqref{SectionHzbeta}) for the definition of $\mZ_N^\beta$) such that
		$$\curl\boldsymbol{\zeta}=\boldsymbol{F}=\mu(\nabla \varphi+r^{2\beta}\curl \u).$$
		Furthermore, Proposition  \ref{AppendixHelweightedRegularity} ensures that $\boldsymbol{\zeta}\in\mmV^0_{-\beta}(\Om)$.\\

		{\bf Step 3.} Since $\mA^\out_\eps:\mathring{\mV}^\out_\beta(\Om)\to(\mathring{\mV}^1_\beta(\Om))^\ast$ is an isomorphism, there is a unique $\phi=\tilde{\phi}+\mathfrak{s}_\eps^+\in\mathring{\mV}_\beta^\out(\Om)$, $\tilde{\phi}\in\mathring{\mV}^1_{-\beta}(\Om)$, $\mathfrak{s}_\eps^+\in\mathcal{S}_\eps^+$, such that 		
\[
\langle\mA^\out_\eps \phi,\phi'\rangle=\fint_\Om\eps\nabla \phi\cdot\nabla \overline{\phi'}\,dx=\int_\Om\eps\,\boldsymbol{\zeta}\cdot\nabla\overline{\phi'}\,dx,\qquad\forall \phi'\in\mathring{\mV}_\beta^1(\Om).
\]		
Finally, we set  $\mathbb{T}\u=\boldsymbol{\zeta}-\nabla \phi$.
		One can check that $\mathbb{T}\u$ belongs to $\Xoutspace$. In addition, we have $\curl(\mathbb{T}\u)=\boldsymbol{\psi}_{\mathbb{T}(\u)}+\mu\nabla \mathfrak{s}_{\mathbb{T}(\u),\mu}$ with 
\begin{equation}\label{DefTcoerExpan}
			\boldsymbol{\psi}_{\mathbb{T}(\u)}=\mu(\nabla \Tilde{\varphi}+r^{2\beta}\curl \u),\qquad \qquad			\mathfrak{s}_{\mathbb{T}(\u),\mu}=\mathfrak{s}^+_\mu.
\end{equation}
		Using \eqref{SectionEProofT-coestmiTcoE}, we obtain
		$$  \|\boldsymbol{\psi}_{\mathbb{T}(\u)}\|_{\mmV^0_{-\beta}(\Om)}\leq  C\,\|\curl \u\|_{\mmV^0_{\beta}(\Om)}.$$
With Proposition \ref{SectionEEquivNormes2}, this shows that $\mathbb{T}:\mX_N^{\out,\beta}\to\Xoutspace$ is continuous.  Besides, using (\ref{DefTerm}), (\ref{DefTcoerExpan}), we find
		$$\langle\mathbb{A}^\out_N \circ \mathbb{T}\u,\v\rangle=\int_\Om \mu^{-1}\boldsymbol{\psi}_{\mathbb{T}\u}\cdot\curl\vb\,dx= \int_\Om r^{2\beta}\curl \u\cdot \,\curl \vb\,dx,\qquad\forall \v \in\mX_N^{\out,\beta},$$
which is nothing but the desired identity (\ref{variaIdentity}).
	\end{proof}
	\begin{theorem}\label{SectionEPrinicpalPartIsom} 
	Suppose that Assumptions \ref{AssumptionCritique}-\ref{AssumptionNoTrappedMode} hold. Then the operator $\mathbb{A}^\out_N:\Xoutspace\to(\mX_N^{\out,\beta})^\ast$ is an isomorphism.
	\end{theorem}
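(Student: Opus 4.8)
The plan is to use the operator $\mathbb{T}$ built in Proposition \ref{SectionEResPrincipalPart} as the key building block, in the spirit of the $\mrm{T}$-coercivity method, the central tool being the variational identity (\ref{variaIdentity}). I would split the argument into surjectivity and injectivity, the latter being the genuine difficulty.

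For surjectivity, take $\v=\u$ in (\ref{variaIdentity}): this gives $\langle\mathbb{A}^\out_N\circ\mathbb{T}\,\u,\u\rangle=\int_\Om r^{2\beta}|\curl\u|^2\,dx=\|\curl\u\|^2_{\mmV^0_\beta(\Om)}$. Proposition \ref{SectionEEquivNormes1} bounds this quantity from below by $c\,\|\u\|^2_{\mX_N^{\out,\beta}}$, while the form on the right-hand side of (\ref{variaIdentity}) is plainly continuous and Hermitian on $\mX_N^{\out,\beta}$. Hence, by the Lax--Milgram theorem (here simply Riesz, the form being an inner product), $\mathbb{A}^\out_N\circ\mathbb{T}:\mX_N^{\out,\beta}\to(\mX_N^{\out,\beta})^\ast$ is an isomorphism. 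Because $\mathbb{T}$ takes its values in $\Xoutspace$, this already proves that $\mathbb{A}^\out_N$ is onto: every $\ell\in(\mX_N^{\out,\beta})^\ast$ is the image of $\mathbb{T}(\mathbb{A}^\out_N\circ\mathbb{T})^{-1}\ell\in\Xoutspace$.

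The main obstacle is injectivity, since the principal term $\int_\Om\mu^{-1}|\boldsymbol{\psi}_{\u}|^2\,dx$ is indefinite ($\mu$ changes sign) and forbids a direct energy estimate. The way out is the anti-Hermitian identity (\ref{ImportantIdentity}), read at $\om=0$. Let $\u\in\ker\mathbb{A}^\out_N\subset\Xoutspace$. Choosing $\v=\u$ in (\ref{ImportantIdentity}) and using $\langle\mathbb{A}^\out_N\u,\u\rangle=0$ leaves $q_\mu(\mathfrak{s}_{\u,\mu},\mathfrak{s}_{\u,\mu})=0$; as $\mathfrak{s}_{\u,\mu}\in\mathcal{S}^+_\mu$, Lemma \ref{FluxScalairDefPosi} forces $\mathfrak{s}_{\u,\mu}=0$, so that $\curl\u=\boldsymbol{\psi}_{\u}$, the whole singular part of the curl being killed. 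Next, for an arbitrary $\w\in\mX_N^{\out,\beta}$ the test field $\mathbb{T}\w$ belongs to $\Xoutspace$, hence $\langle\mathbb{A}^\out_N\u,\mathbb{T}\w\rangle=0$; applying (\ref{ImportantIdentity}) to the pair $(\u,\mathbb{T}\w)$ and using $\mathfrak{s}_{\u,\mu}=0$ to cancel its right-hand side, I obtain $\overline{\langle\mathbb{A}^\out_N\circ\mathbb{T}\,\w,\u\rangle}=0$. By (\ref{variaIdentity}) this means $\int_\Om r^{2\beta}\curl\w\cdot\overline{\curl\u}\,dx=0$ for every $\w\in\mX_N^{\out,\beta}$; the choice $\w=\u$ yields $\curl\u=0$, and Proposition \ref{SectionEEquivNormes1} concludes $\u=0$.

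Combining the two parts with the continuity of $\mathbb{A}^\out_N$ and the open mapping theorem gives that $\mathbb{A}^\out_N$ is an isomorphism. The one subtle ingredient, beyond routine bookkeeping in the weighted spaces, is the observation that the symplectic flux $q_\mu$ is, up to the factor $i$, positive definite on $\mathcal{S}^+_\mu$; this is precisely the feature of the outgoing framework that compensates for the sign change of $\mu$ and forces the kernel to be trivial.
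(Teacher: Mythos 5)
Your proof is correct and follows essentially the same route as the paper: surjectivity from the Lax--Milgram argument applied to $\mathbb{A}^\out_N\circ\mathbb{T}$ via (\ref{variaIdentity}) and Proposition \ref{SectionEEquivNormes1}, and injectivity by first killing $\mathfrak{s}_{\u,\mu}$ through the flux identity (\ref{ImportantIdentity}) and Lemma \ref{FluxScalairDefPosi}, then testing against $\mathbb{T}\u$ to get $\curl\u=0$. The only cosmetic differences are that the paper symmetrizes via the explicit expansion (\ref{defAExpand}) rather than reapplying (\ref{ImportantIdentity}) to the pair $(\u,\mathbb{T}\w)$, and concludes with Proposition \ref{SectionEEquivNormes2} instead of Proposition \ref{SectionEEquivNormes1}; both variants are valid.
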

	\begin{proof}
Let $\mathbb{T}$ be the operator of Proposition \ref{propoOntoE}. From identity (\ref{variaIdentity}) together with the Lax-Milgram theorem and the result of Proposition \ref{SectionEEquivNormes1}, we infer that $\mathbb{A}^\out_N\circ \mathbb{T}: \mX_N^{\out,\beta}\to(\mX_N^{\out,\beta})^\ast $  is an  isomorphism. This guarantees that $\mathbb{A}^\out_N$ is onto. It remains to show that $\mathbb{A}^\out_N$ is injective. Let $\u$ be an element  of  $\Xoutspace$ such that $\langle\mathbb{A}^\out_N\u,\v\rangle=0$ for all $\v\in\mX_N^{\out,\beta}$. By definition of $\Xoutspace$, we have $\curl\u=\boldsymbol{\psi}_{\u}+\mu\nabla \mathfrak{s}_{\u,\mu}$. Taking $\v=\u$ in the relation \eqref{ImportantIdentity} with $\om=0$ and using Lemma \ref{FluxScalairDefPosi}, we infer that $\mathfrak{s}_{\u,\mu}=0$. From (\ref{defAExpand}), this implies
		$$0=\langle\mathbb{A}^\out_N\u,\v\rangle=\int_\Om\mu^{-1}\boldsymbol{\psi}_{\u}\cdot\overline{\boldsymbol{\psi}_{\v}}\,dx=\overline{\langle\mathbb{A}^\out_N\v,\u\rangle},\qquad\forall\v\in\Xoutspace. $$
By taking $\v=\mathbb{T}\u$ in the previous relation, according to (\ref{variaIdentity}), we obtain $\curl\u=0$ and so $\boldsymbol{\psi}_{\u}=0$. From Proposition \ref{SectionEEquivNormes2},  we deduce that $\u\equiv0$.
	\end{proof}
	\subsection{Compactness result}\label{paragraphCompactness}
Now we focus our attention on the operator $\mathbb{K}^\out_N$ defined in (\ref{DefTerm}).
\begin{theorem}\label{SectionERescompcaite}
Suppose that Assumptions \ref{AssumptionCritique}-\ref{AssumptionNoTrappedMode} hold. Then $\mathbb{K}^\out_N:\Xoutspace\to(\mX_N^{\out,\beta})^\ast$ is compact.
\end{theorem}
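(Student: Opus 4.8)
The plan is to start from the explicit expression (\ref{defKExpand}), which gives
\[
\langle\mathbb{K}^\out_N\u,\v\rangle=\int_\Om\eps\tilde{\u}\cdot\overline{\tilde{\v}}\,dx+\int_{\Om}\mathfrak{s}_{\u,\eps}\,\div(\eps\nabla\overline{\mathfrak{s}_{\v,\eps}})\,dx,\qquad\u\in\Xoutspace,\ \v\in\mX_N^{\out,\beta},
\]
and to treat the two terms separately. The second term depends on $\u$ only through $\mathfrak{s}_{\u,\eps}\in\mathcal{S}^+_\eps$, a fixed finite-dimensional space, and on $\v$ only through $\mathfrak{s}_{\v,\eps}$, with $\div(\eps\nabla\mathfrak{s}_{\v,\eps})\in\mL^2(\Om)$ by (\ref{PropertySingu}); hence the corresponding operator $\Xoutspace\to(\mX_N^{\out,\beta})^\ast$ has finite rank and is compact. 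It then remains to handle the operator associated with the first term.

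For that term I would factor it through the linear map $R:\Xoutspace\to\Lspace^2(\Om)$, $\u\mapsto\tilde{\u}$, which is well defined since $\beta>0$ forces $\mmV^0_{-\beta}(\Om)\subset\Lspace^2(\Om)$. Using the duality $(\mV^0_\beta(\Om))^\ast=\mV^0_{-\beta}(\Om)$ together with Proposition \ref{SectionEEquivNormes1}, which gives $\|\tilde{\v}\|_{\mmV^0_{-\beta}(\Om)}\le C\,\|\v\|_{\mX_N^{\out,\beta}}$, I get
\[
\Big|\int_\Om\eps\tilde{\u}\cdot\overline{\tilde{\v}}\,dx\Big|\le\|\eps\|_{\mL^\infty(\Om)}\,\|\tilde{\u}\|_{\mmV^0_\beta(\Om)}\,\|\tilde{\v}\|_{\mmV^0_{-\beta}(\Om)}\le C\,\|R\u\|_{\Lspace^2(\Om)}\,\|\v\|_{\mX_N^{\out,\beta}},
\]
where the last inequality uses $\|\cdot\|_{\mmV^0_\beta(\Om)}\le C\,\|\cdot\|_{\Lspace^2(\Om)}$ (again because $\beta>0$). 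Thus the first-term operator equals $B\circ R$ with $B:\Lspace^2(\Om)\to(\mX_N^{\out,\beta})^\ast$ bounded, and it suffices to prove that $R$ is compact.

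The core of the proof is therefore the relative compactness in $\Lspace^2(\Om)$ of $\{\tilde{\u}\,:\,\u\in\Xoutspace,\ \|\u\|\le1\}$, which I would establish by a cut-off argument separating the tip from the rest of $\Om$. Near $O$ the weight gives uniform smallness: for any such family, $\int_{B(O,\delta)\cap\Om}|\tilde{\u}|^2\,dx\le\delta^{2\beta}\,\|\tilde{\u}\|_{\mmV^0_{-\beta}(\Om)}^2\le C\delta^{2\beta}$. Away from $O$ I would write $\tilde{\u}=\u-\nabla\mathfrak{s}_{\u,\eps}$, observe that the finite-dimensional part $\nabla\mathfrak{s}_{\u,\eps}$ is automatically relatively compact, and note that, multiplying by a cut-off $\zeta_\delta$ vanishing near $O$, the field $\zeta_\delta\u$ lies in $\Hspace_N(\curl)$ with $\curl(\zeta_\delta\u)\in\Lspace^2(\Om)$ and (using $\div(\eps\u)=0$) $\div(\eps\,\zeta_\delta\u)\in\mL^2(\Om)$, both bounded. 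Since $\zeta_\delta$ is supported where the interface is smooth and $\kappa_\eps\ne-1$, the corresponding div–curl space embeds compactly into $\Lspace^2$ — the weighted version of this fact, in the spirit of Proposition \ref{PropoEmbeddingCla} and Lemma \ref{CompacitePoids}, is precisely what is proved in Section \ref{SectionAppendix}. A diagonal extraction over $\delta\to0$ combined with the uniform tail estimate then yields an $\Lspace^2(\Om)$-convergent subsequence.

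The main obstacle is exactly this away-from-$O$ compactness. Because $\eps$ changes sign and the contrast is critical, the global embedding $\mX_N(\eps)\hookrightarrow\Lspace^2(\Om)$ fails, and the failure is concentrated at the tip; the argument must therefore localize away from $O$, where the interface is smooth and the contrast non-critical (as $\kappa_\eps\ne-1$), so that classical div–curl compactness applies, while the weight alone controls the neighbourhood of the tip. Reconciling these two regimes through the quantitative weighted compact embedding is the only genuinely technical point — the finite-rank reduction and the duality estimate being routine by comparison.
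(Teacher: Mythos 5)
Your reduction of the problem is sound: splitting $\mathbb{K}^\out_N$ according to (\ref{defKExpand}) into a finite-rank part (the term involving $\mathfrak{s}_{\u,\eps}\in\mathcal{S}^+_\eps$) plus an operator factoring as $B\circ R$ with $R\u=\tilde{\u}$, and the duality estimate showing that $\Lspace^2(\Om)$-convergence of $\tilde{\u}_n$ suffices (because the test functions are controlled in $\mmV^0_{-\beta}(\Om)$), are both correct. The tail estimate $\int_{B(O,\delta)\cap\Om}|\tilde{\u}|^2\,dx\le\delta^{2\beta}\|\tilde{\u}\|^2_{\mmV^0_{-\beta}(\Om)}$ and the diagonal extraction are also fine.

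The gap is in the step you yourself identify as the only technical one: the compactness, away from $O$, of the family $\zeta_\delta\u_n$. You assert that the relevant div--curl compact embedding ``is precisely what is proved in Section \ref{SectionAppendix}'', but it is not. The appendix proves compact embeddings for $\mX_N(1)$, $\mX_T(1)$ (coefficient $1$, no sign change), for $\mZ^\gamma_N$, $\mZ^\gamma_T$, and for $\mX_N(\eps)$ --- the last one only for fields satisfying $\div(\eps\u)=0$ \emph{exactly}, and its proof (Proposition \ref{AppendixX(eps)}) itself goes through the global weighted machinery (isomorphism of $\mA^{-\gamma}_\eps$ under Assumptions \ref{AssumptionCritique}--\ref{AssumptionNoTrappedMode}), not through localization. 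What you actually need is a compact embedding into $\Lspace^2$ of $\{\w\in\Hspace_N(\curl)\,|\,\div(\eps\w)\in\mL^2(\Om)\mbox{ bounded}\}$ for fields supported away from the tip, where $\eps$ still changes sign across the (smooth) interface. This is true, but it is the nontrivial $\mrm{T}$-coercivity compactness result of \cite{dhia2014t} for smooth interfaces with $\kappa_\eps\ne-1$, applied on a truncated domain and in a version allowing nonzero $\eps$-divergence; it requires checking that the localized scalar operator $\div(\eps\nabla\cdot)$ on $\Om\setminus\overline{B(O,\delta)}$ is Fredholm and handling the induced kernel --- none of which you state or verify. As written, the central compactness claim rests on a misattributed reference.

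By contrast, the paper never localizes: it proves convergence of $(\tilde{\u}_n)$ directly in $\mmV^0_{-\beta}(\Om)$ by writing $\tilde{\u}_n=\nabla\varphi_n+\curl\psib_n$ (weighted Helmholtz decomposition, Proposition \ref{AppendixHelmholtzWeighted}), extracting a convergent subsequence of $(\curl\psib_n)$ from the compact embedding $\mZ^\beta_N\hookrightarrow\mmV^0_{-\beta}(\Om)$ (Proposition \ref{AppendixHelweightedRegularity}), and then converting the condition $\div(\eps\u_n)=0$ into convergence of $(\varphi_n)$ via the isomorphism $\mA^\out_\eps$ of Theorem \ref{MainThmScalaire}. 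The critical behaviour at the tip is thus absorbed by the outgoing singular space and the scalar isomorphism rather than excised by a cut-off; if you want to keep your localization strategy, you must import and adapt the smooth-interface compactness theory of \cite{dhia2014t}, which is a genuinely different (and heavier) set of tools than the paper uses.
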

\begin{proof}
Using (\ref{defKExpand}), we obtain the estimate
\begin{equation}\label{SectionEestimatecompact}
		\|\mathbb{K}^\out_N\u\|_{(\mX_N^{\out,\beta})^\ast }\leq C(\|\Tilde{\u}\|_{\mmV^0_{-\beta}(\Om)}+\|\mathfrak{s}_{\u,\eps}\|_{\mmV^1_{\beta}(\Om)},\qquad \forall\u\in\Xoutspace,
\end{equation}
where $C>0$ is independent of $\u$. Now consider $(\u_n)_{n}$ a bounded sequence of elements of $\Xoutspace$. By definition of $\Xoutspace$, one can introduce, for all $n\in\mathbb{N}$,  $\Tilde{\u}_n\in\mmV^0_{-\beta}(\Om)$ and $\mathfrak{s}_n\in\mathcal{S}_{\eps}^+$ such that
		$ \u_n=\Tilde{\u}_n+\mathfrak{s}_n$.
		The sequences $(\mathfrak{s}_n)_{n}$, $(\Tilde{\u}_n)_{n}$ are bounded respectively in $\mmV^1_{\beta}(\Om)$ and in $\mmV^0_{-\beta}(\Om)$.
		Since $\mathcal{S}_\eps^+$ is of finite dimension,  one can extract a sub-sequence from $(\mathfrak{s}_n)_{n}$, still denoted $(\mathfrak{s}_n)_{n}$, that converges  in $\mathcal{S}_{\eps}^+$. Therefore, to prove our claim, thanks to \eqref{SectionEestimatecompact},    it is enough to show that up to a subsequence, $(\tilde{\u}_n)_{n}$ converges in $\mmV^0_{-\beta}(\Om)$. Owing to Proposition \ref{AppendixHelmholtzWeighted},  for all $n\in\N,$  the vector field $\Tilde{\u}_n$ decomposes as
		$$\Tilde{\u}_n=\nabla\varphi_n+\curl \boldsymbol{\psi}_n,$$
		where $\varphi_n\in\mathring{\mV}^1_{-\beta}(\Om)$ and $\boldsymbol{\psi}_n\in\mX_T(1)$  is such that $\boldsymbol{\psi}_n\in\mmV^0_{-\beta}(\Om)$ (Proposition \ref{AppendixWeightedRegularityX(1)}). Given that
		$\nabla\varphi_n\times \nu=0$ on $\partial\Om$ and $\curl \curl\boldsymbol{\psi}_n=\curl\u_n\in\mmV^0_\beta(\Om),$ we infer that   $\curl\boldsymbol{\psi}_n\in\mZ_N^\beta$ (see (\ref{SectionHzbeta})). Since by assumption $\beta\in(0;1/2)$,  one  deduces, using Proposition \ref{AppendixHelweightedRegularity}, that $(\curl \boldsymbol{\psi_n})_{n}$ converges, up to a sub-sequence still denoted  $(\curl \boldsymbol{\psi_n})_{n}$, in $\mmV^0_{-\beta}(\Om)$.
Besides, we know that $\div(\eps\u_n)=0$ in $\Om\backslash\{O\}$. This implies 
\[
\langle\mA^\out_\eps \varphi_n,\varphi'\rangle=\fint_\Om\eps\nabla \varphi_n\cdot\nabla \overline{\varphi'}\,dx=-\fint_\Om\eps(\nabla \mathfrak{s}_n+\curl\boldsymbol{\psi}_n)\cdot\nabla\overline{\varphi'}\,dx,\qquad\forall \varphi'\in\mathring{\mV}_\beta^1(\Om).
\]
By noting that the functional appearing in the right hand side above converges in $(\mathring{\mV}^1_{\beta}(\Om))^\ast$ and by using that $\mA^\out_\eps$ is an isomorphism,  we deduce that   $(\varphi_n)_{n}$ converges in $\mathring{\mV}^1_{-\beta}(\Om)$. This gives the desired result: $(\tilde{\u}_n)_{n}$ converges, up to a sub-sequence, in $\mmV^0_{-\beta}(\Om)$.
	\end{proof}
	\subsection{Main result for the electric problem}\label{SectionEMainResSection}
	
Finally, we come to the properties of the operator  $\mathscr{A}^\out_N(\om)=\mathbb{A}^\out_N-\om^2\mathbb{K}^\out_N$ introduced in (\ref{DefAoperateur}). By  combining Theorems \ref{SectionEPrinicpalPartIsom}, \ref{SectionERescompcaite} and the analytical Fredholm  theorem (note that $\mathscr{A}^\out_N(0)=\mathbb{A}^\out_N$ is injective according to Theorem \ref{SectionEPrinicpalPartIsom}), we obtain the following result.
	\begin{theorem}\label{SectionEMainRes}
Suppose that  Assumptions \ref{AssumptionCritique}-\ref{AssumptionNoTrappedMode} hold.  \\[3pt]
$\bullet$ The operator   $\mathscr{A}^\out_N(\om)$ is Fredholm of index zero for all $\om\in\R$. \\[2pt]
$\bullet$ $\mathscr{A}^\out_N(\om)$ is an isomorphism for all $\om\in\R\backslash\mathscr{S}_{\E}$  where $\mathscr{S}_{\E}$ is a discrete set which can accumulate only at infinity. 
	\end{theorem}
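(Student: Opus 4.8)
The plan is to read $\mathscr{A}^\out_N(\om)=\mathbb{A}^\out_N-\om^2\mathbb{K}^\out_N$ as a compact perturbation of the isomorphism $\mathbb{A}^\out_N$, and then to apply the analytic Fredholm theorem in the parameter $\om$, using $\om=0$ as the point where invertibility is already known.

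For the first bullet, I would fix $\om\in\R$ and argue as follows. By Theorem~\ref{SectionEPrinicpalPartIsom}, $\mathbb{A}^\out_N:\Xoutspace\to(\mX_N^{\out,\beta})^\ast$ is an isomorphism, hence Fredholm of index $0$. By Theorem~\ref{SectionERescompcaite}, $\om^2\mathbb{K}^\out_N$ is compact. Since Fredholmness and the value of the index are preserved under compact perturbations, $\mathscr{A}^\out_N(\om)$ is Fredholm of index $0$ for every $\om\in\R$ (indeed for every $\om\in\C$).

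For the second bullet, I would compose on the left with the bounded inverse $(\mathbb{A}^\out_N)^{-1}:(\mX_N^{\out,\beta})^\ast\to\Xoutspace$. Setting $\mathbb{B}:=(\mathbb{A}^\out_N)^{-1}\mathbb{K}^\out_N:\Xoutspace\to\Xoutspace$, which is compact as a composition of a compact operator with a bounded one, one gets $(\mathbb{A}^\out_N)^{-1}\mathscr{A}^\out_N(\om)=\mrm{Id}-\om^2\mathbb{B}$, so that $\mathscr{A}^\out_N(\om)$ is an isomorphism if and only if $\mrm{Id}-\om^2\mathbb{B}$ is. The map $\om\mapsto\mrm{Id}-\om^2\mathbb{B}$ is operator-analytic on $\C$ (it is polynomial in $\om$), so the analytic Fredholm theorem applies and yields the dichotomy: either $\mrm{Id}-\om^2\mathbb{B}$ is non-invertible for all $\om\in\C$, or it is invertible outside a set $\mathscr{S}\subset\C$ without accumulation point in $\C$. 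Evaluating at $\om=0$ gives the invertible operator $\mrm{Id}$, which rules out the first case. I would then set $\mathscr{S}_{\E}:=\mathscr{S}\cap\R$, a discrete subset of $\R$ accumulating only at infinity, and conclude that $\mathscr{A}^\out_N(\om)$ is an isomorphism for all $\om\in\R\setminus\mathscr{S}_{\E}$.

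The hard part is not here: all the analytic substance has already been established in Theorems~\ref{SectionEPrinicpalPartIsom} and~\ref{SectionERescompcaite}. The only things to check are the two hypotheses of the analytic Fredholm theorem — operator-analyticity of the family, which is immediate since it is polynomial in $\om$, and the non-emptiness of the resolvent set, which is exactly the invertibility of $\mathbb{A}^\out_N=\mathscr{A}^\out_N(0)$. As a cosmetic refinement I would note that parametrizing by $z=\om^2$ makes the symmetry $\om\mapsto-\om$ of $\mathscr{S}_{\E}$ transparent, and that the reality of $\eps$ and $\mu$ makes $\mathscr{S}$ invariant under complex conjugation, but neither fact is needed for the statement.
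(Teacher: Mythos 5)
Your proposal is correct and follows essentially the same route as the paper, which likewise combines Theorem~\ref{SectionEPrinicpalPartIsom} (invertibility of $\mathbb{A}^\out_N$), Theorem~\ref{SectionERescompcaite} (compactness of $\mathbb{K}^\out_N$) and the analytic Fredholm theorem, using the injectivity of $\mathscr{A}^\out_N(0)=\mathbb{A}^\out_N$ to exclude the degenerate alternative. Your reduction to $\mrm{Id}-\om^2\mathbb{B}$ on $\Xoutspace$ merely makes explicit the standard step the paper leaves implicit.
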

The rest of this paragraph is devoted to the study of $\mathscr{S}_{\E}$, the set of values of $\om\in\R$ such that $\mathscr{A}^\out_N(\om)$ is not injective. 
\begin{proposition}\label{PropoKernelPoids}
Suppose that Assumptions \ref{AssumptionCritique}-\ref{AssumptionNoTrappedMode}  hold. The set $\mathscr{S}_{\E}$ appearing in Theorem \ref{SectionEMainRes} is independent of the choice of $\beta$ satisfying (\ref{ChoixPoids}).	
\end{proposition}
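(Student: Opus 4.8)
The plan is to establish the stronger fact that, for each fixed $\om\in\R$, the kernel of $\mathscr{A}^\out_N(\om)$ is, \emph{as a set of vector fields}, the same for every $\beta\in(0;\beta_\star)$; since by Theorem \ref{SectionEMainRes} the set $\mathscr{S}_{\E}$ is exactly the set of $\om$ for which this kernel is nontrivial, the independence of $\mathscr{S}_{\E}$ with respect to $\beta$ follows at once. Denote by $\Xoutspace(\beta)$ the space of \S\ref{SectionPbElectrique} built with the weight $\beta$ and by $\mathcal{N}_\beta\subset\Xoutspace(\beta)$ the kernel of $\mathscr{A}^\out_N(\om)$. By Propositions \ref{SectionEResEquivDis}--\ref{SectionEResEquiv} (licit under Assumptions \ref{AssumptionCritique}--\ref{AssumptionNoTrappedMode}), $\u\in\mathcal{N}_\beta$ if and only if $\u\in\Xoutspace(\beta)$ solves the homogeneous strong problem \eqref{SectionEEqE} with $\J=0$. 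The key remark is that \eqref{SectionEEqE} is posed in $\Om\backslash\{O\}$ and carries no reference to $\beta$, and that the singular spaces $\nabla\mathcal{S}^+_\eps$, $\mu\nabla\mathcal{S}^+_\mu$ entering $\Xoutspace(\beta)$ are fixed, being built from the spectra of $\mathscr{L}_\eps$, $\mathscr{L}_\mu$; only the decay of the regular parts $\tilde{\u}$, $\boldsymbol{\psi}_{\u}\in\mmV^0_{-\beta}(\Om)$ depends on $\beta$. Consequently, for $\beta_1<\beta_2$ in $(0;\beta_\star)$ the inclusion $\mmV^0_{-\beta_2}(\Om)\subset\mmV^0_{-\beta_1}(\Om)$ gives $\Xoutspace(\beta_2)\subset\Xoutspace(\beta_1)$, hence $\mathcal{N}_{\beta_2}\subset\mathcal{N}_{\beta_1}$.

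It remains to prove the reverse inclusion $\mathcal{N}_{\beta_1}\subset\mathcal{N}_{\beta_2}$, that is, that any $\u\in\mathcal{N}_{\beta_1}$ in fact satisfies $\tilde{\u},\boldsymbol{\psi}_{\u}\in\mmV^0_{-\beta_2}(\Om)$. This is a regularity shift whose engine is the fact that, since $\beta_1,\beta_2\in(0;\beta_\star)\subset(0;\beta_D)\cap(0;\beta_N)$, the open strip $-1/2<\Re e\,\lambda<-1/2+\beta_\star$ contains no eigenvalue of $\mathscr{L}_\eps$ or $\mathscr{L}_\mu$; therefore passing from the weight $\beta_1$ to $\beta_2$ creates no new singular term and the regular parts merely gain decay. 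I would carry this out in two stages. First I improve $\curl\u$: since $\u\in\mmV^0_{\beta_1}(\Om)\subset\mmV^0_{\beta_2}(\Om)$, the homogeneous equation gives $\curl(\mu^{-1}\curl\u)=\om^2\eps\u\in\mmV^0_{\beta_2}(\Om)$, while $\curl\u=\boldsymbol{\psi}_{\u}+\mu\nabla\mathfrak{s}_{\u,\mu}$ is divergence free with vanishing normal trace on $\partial\Om\backslash\{O\}$. Applying to the field $\mu^{-1}\curl\u$ the arguments of Propositions \ref{SectionEEquivNormes1}--\ref{SectionEEquivNormes2} in their form adapted to the coefficient $\mu$ (the weighted vector-potential representation of Proposition \ref{AppendixWeightedPotenials}, the weighted regularity of Proposition \ref{AppendixHelweightedRegularity}, and the isomorphism property of $\mA^\out_\mu$ at the weight $\beta_2$ from Theorem \ref{MainThmScalaire}, valid since $\beta_2<\beta_N$), I obtain $\boldsymbol{\psi}_{\u}\in\mmV^0_{-\beta_2}(\Om)$ and $\nabla\mathfrak{s}_{\u,\mu}\in\mmV^0_{\beta_2}(\Om)$, whence $\curl\u\in\mmV^0_{\beta_2}(\Om)$.

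In the second stage I improve $\tilde{\u}$, following the proof of Proposition \ref{SectionEEquivNormes1} with the weight $\beta_2$. Using the weighted Helmholtz decomposition of Proposition \ref{AppendixHelmholtzWeighted}, write $\tilde{\u}=\nabla\varphi+\curl\boldsymbol{\zeta}$ with $\varphi\in\mathring{\mV}^1_{-\beta_1}(\Om)$ and $\boldsymbol{\zeta}\in\mX_T(1)$. Since $\curl\curl\boldsymbol{\zeta}=\curl\u\in\mmV^0_{\beta_2}(\Om)$ and $\curl\boldsymbol{\zeta}\times\nu=0$ on $\partial\Om\backslash\{O\}$, one has $\curl\boldsymbol{\zeta}\in\mZ^{\beta_2}_N$, so Proposition \ref{AppendixHelweightedRegularity} yields $\curl\boldsymbol{\zeta}\in\mmV^0_{-\beta_2}(\Om)$. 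Finally, the constraint $\div(\eps\u)=0$ gives
\[
\langle\mA^\out_\eps(\mathfrak{s}_{\u,\eps}+\varphi),\phi'\rangle=-\int_\Om\eps\,\curl\boldsymbol{\zeta}\cdot\nabla\overline{\phi'}\,dx,\qquad\forall\phi'\in\mathring{\mV}^1_{\beta_2}(\Om),
\]
whose right-hand side is continuous on $\mathring{\mV}^1_{\beta_2}(\Om)$ because $\curl\boldsymbol{\zeta}\in\mmV^0_{-\beta_2}(\Om)$; since $\mA^\out_\eps$ is an isomorphism onto $(\mathring{\mV}^1_{\beta_2}(\Om))^\ast$ (Theorem \ref{MainThmScalaire}, $\beta_2<\beta_D$) and the sum $\mathring{\mV}^1_{-\beta_2}(\Om)\oplus\mathcal{S}^+_\eps$ is direct, uniqueness forces $\varphi\in\mathring{\mV}^1_{-\beta_2}(\Om)$ with singular part equal to $\mathfrak{s}_{\u,\eps}$. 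Hence $\tilde{\u}=\nabla\varphi+\curl\boldsymbol{\zeta}\in\mmV^0_{-\beta_2}(\Om)$, so $\u\in\Xoutspace(\beta_2)$ and $\u\in\mathcal{N}_{\beta_2}$. This proves $\mathcal{N}_{\beta_1}=\mathcal{N}_{\beta_2}$ and, the two weights being arbitrary, the proposition.

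The main obstacle is precisely this regularity shift: in $\Xoutspace$ both the field and its curl are singular, so the bootstrap must be organised in two coupled stages (curl first, then field), and at each stage one must verify that no singularity is created in the passage from $\beta_1$ to $\beta_2$ — which is exactly where the absence of eigenvalues of $\mathscr{L}_\eps$, $\mathscr{L}_\mu$ in the strip $(-1/2;-1/2+\beta_\star)$ and the fixedness of $\mathcal{S}^+_\eps$, $\mathcal{S}^+_\mu$ are used. The remaining points — extending the scalar variational identities from $\beta_1$- to $\beta_2$-admissible test functions by density of $\Cgras^{\infty}_0(\Om\backslash\{O\})$ and continuity, and checking the tangential/normal traces used above — are routine but must be handled with care.
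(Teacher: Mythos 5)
Your argument is correct in substance, but it is genuinely different from the one in the paper. You keep the singular parts $\mathfrak{s}_{\u,\eps}$, $\mathfrak{s}_{\u,\mu}$ of a kernel element and run a two-stage regularity bootstrap (curl first, then the field) to show that the regular parts automatically gain decay from $\mmV^0_{-\beta_1}(\Om)$ to $\mmV^0_{-\beta_2}(\Om)$, the engine being the absence of eigenvalues of $\mathscr{L}_\eps$, $\mathscr{L}_\mu$ in the strip $-1/2<\Re e\,\lambda<-1/2+\beta_\star$ and the isomorphism/regularity theorems at the larger weight. The paper instead starts from the energy identity (\ref{ImportantIdentity}) with $\v=\u$: for $\u\in\ker\mathscr{A}^\out_N(\om)$ it gives $q_\mu(\mathfrak{s}_{\u,\mu},\mathfrak{s}_{\u,\mu})+\om^2 q_\eps(\mathfrak{s}_{\u,\eps},\mathfrak{s}_{\u,\eps})=0$; since both terms have the same sign on $\mathcal{S}^+_\mu$, $\mathcal{S}^+_\eps$, Lemma \ref{FluxScalairDefPosi} forces $\mathfrak{s}_{\u,\eps}=\mathfrak{s}_{\u,\mu}=0$, so $\u\in\mX_N(\eps)$ and $\mu^{-1}\curl\u\in\mX_T(\mu)$, and Proposition \ref{AppendixX(eps)} then puts $\u$ and $\curl\u$ in $\mmV^0_{-\beta}(\Om)$ for every admissible $\beta$ in one stroke. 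The paper's route is shorter and yields the structurally important extra fact that kernel elements carry no black-hole singularity (a fact reused in Section \ref{SectionNecessity}); your route does not see this but is more robust, since it would survive even if the radiation condition did not kill the singular parts. One caveat on your Stage 1: applying the identity of Proposition \ref{SectionEEquivNormes2} directly at the weight $\beta_2$ is circular, because the right-hand side functional $\phi'\mapsto\int_\Om\boldsymbol{\psi}_{\u}\cdot\nabla\overline{\phi'}\,dx$ is only known to be continuous on $\mathcal{V}^1_{\beta_2}(\Om)$ once $\boldsymbol{\psi}_{\u}\in\mmV^0_{-\beta_2}(\Om)$ is established; you must first split $\boldsymbol{\psi}_{\u}$ by the weighted Helmholtz/potential results (Propositions \ref{AppendixWeightedPotenials}, \ref{AppendixHelweightedRegularity}) exactly as in the proof of Proposition \ref{SectionEEquivNormes1}, which your list of ingredients suggests you intend — but this should be made explicit.
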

\begin{proof} 
 Let $\u$ be an element of $\ker\mathscr{A}^\out_N(\om)$ for some $\beta$ satisfying (\ref{ChoixPoids}). Then we have $\langle \mathscr{A}^\out_N(\om) \u,\u\rangle-\overline{\langle\mathscr{A}^\out_N(\om) \u,\u\rangle}=0$, which according to (\ref{ImportantIdentity}) implies
\[
q_\mu(\mathfrak{s}_{\u,\mu},\mathfrak{s}_{\u,\mu})+\om^2 q_\eps(\mathfrak{s}_{\u,\eps},\mathfrak{s}_{\u,\eps})=0.
\]
Given that $\mathfrak{s}_{\u,\eps}\in\mathcal{S}_\eps^+$ and   $\mathfrak{s}_{\u,\mu}\in\mathcal{S}_\mu^+,$ we infer that $q_\eps(\mathfrak{s}_{\u,\eps},\mathfrak{s}_{\u,\eps})= q_\mu(\mathfrak{s}_{\u,\mu},\mathfrak{s}_{\u,\mu})=0$. Then Lemma \ref{FluxScalairDefPosi} guarantees that $\mathfrak{s}_{\u,\eps}=\mathfrak{s}_{\u,\mu}=0$. This shows that $\u$ belongs to $\mX_N(\eps)$ and satisfies the equation $\curl (\mu^{-1}\curl\u)=\om^2\eps\u$. We deduce that $\mu^{-1}\curl\u\in\mX_T(\mu)$. Owing to Proposition \ref{AppendixX(eps)} this implies that $\u,\curl\u\in\mmV^0_{-\beta}(\Om)$ for all $\beta$ satisfying (\ref{ChoixPoids}). Thus $\u\in\ker\mathscr{A}^\out_N(\om)$ for all $\beta$ satisfying (\ref{ChoixPoids}).
	\end{proof}
In the same vein, we have the following result.
	\begin{proposition}\label{SectionEIndepedencebeta}
		Suppose that Assumptions \ref{AssumptionCritique}-\ref{AssumptionNoTrappedMode} hold and let $\om\in\R\backslash \mathscr{S}_{\E}$. The solution of \eqref{SectionEFv2} is independent of the choice of $\beta$ satisfying (\ref{ChoixPoids}). 
\end{proposition}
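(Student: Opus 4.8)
The plan is to reduce the two solutions, built with different weights, to the weight-free strong formulation \eqref{SectionEEqE}, and then to close the argument by a uniqueness statement that is available off $\mathscr{S}_\E$. Let $\beta_1,\beta_2$ be two weights satisfying \eqref{ChoixPoids}, and assume without loss of generality that $\beta_1<\beta_2$. The decisive preliminary observation is that the singular spaces $\mathcal{S}^+_\eps$ and $\mathcal{S}^+_\mu$, hence also $\nabla\mathcal{S}^+_\eps$ and $\mu\nabla\mathcal{S}^+_\mu$, do not depend on $\beta$: they are defined from the eigenvalues of $\mathscr{L}_\eps$, $\mathscr{L}_\mu$ located on the line $\Re e\,\lambda=-1/2$, which are exactly the eigenvalues lying in the strip $\{|\Re e\,\lambda+1/2|<\beta_\star\}$. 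Combining this with the Kondratiev inclusion $\mmV^0_{-\beta_2}(\Om)\subset\mmV^0_{-\beta_1}(\Om)$ (valid since $-\beta_2\le-\beta_1$), I would obtain the continuous inclusions of $\Houtspace$ and of $\Xoutspace$ built from $\beta_2$ into the corresponding spaces built from $\beta_1$.

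Next I would pass to the strong formulation. Let $\u_i$ be the solution of \eqref{SectionEFv2} associated with $\beta_i$, for $\om\neq0$ (the case $\om=0$ is immediate, since then $\mathscr{A}^\out_N(0)=\mathbb{A}^\out_N$ is an isomorphism by Theorem \ref{SectionEPrinicpalPartIsom} and the unique solution is $\u=0$ for every admissible $\beta$). Applying Proposition \ref{SectionEResEquiv} and then Proposition \ref{SectionEResEquivDis} with the weight $\beta_i$, each $\u_i$ solves \eqref{SectionEEqE}, whose statement makes no reference to $\beta$. By the inclusion above, $\u_2$ also belongs to $\Xoutspace$ (built from $\beta_1$); in particular $\w:=\u_1-\u_2$ lies in $\Houtspace$ (built from $\beta_1$) and, by linearity, solves the homogeneous problem $\curl(\mu^{-1}\curl\w)-\om^2\eps\w=0$ in $\Om\backslash\{O\}$ with $\w\times\nu=0$ on $\partial\Om\backslash\{O\}$. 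Taking the divergence of this identity and using $\om\neq0$ gives $\div(\eps\w)=0$ in $\Om\backslash\{O\}$, so that $\w\in\Xoutspace$ (built from $\beta_1$).

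Finally, I would run the equivalences of Propositions \ref{SectionEResEquivDis} and \ref{SectionEResEquiv} backwards, this time with the weight $\beta_1$ and zero source: $\w$ solves the homogeneous version of \eqref{SectionEFv2} for $\beta_1$, i.e. $\w\in\ker\mathscr{A}^\out_N(\om)$ for that weight. Since $\mathscr{S}_\E$ is independent of $\beta$ by Proposition \ref{PropoKernelPoids} and $\om\notin\mathscr{S}_\E$, Theorem \ref{SectionEMainRes} ensures that $\mathscr{A}^\out_N(\om)$ is injective for the weight $\beta_1$, whence $\w=0$, that is $\u_1=\u_2$.

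The crux of the argument is the nestedness $\Xoutspace\ (\text{weight }\beta_2)\subset\Xoutspace\ (\text{weight }\beta_1)$, which makes it legitimate to view the more singular-looking $\beta_2$-solution inside the larger $\beta_1$-space; this hinges entirely on the $\beta$-independence of the selected outgoing singularities. The remaining difficulty is essentially bookkeeping: one must invoke the two equivalence propositions carefully at both weights to move between the variational formulations and the single weight-free boundary value problem \eqref{SectionEEqE}, verifying at each step that the field lies in the appropriate space. Once these inclusions and equivalences are secured, uniqueness off $\mathscr{S}_\E$ finishes the proof.
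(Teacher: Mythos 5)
Your proof is correct and follows essentially the same route as the paper: the nestedness $\pmb{\mathscr{X}}_{N,2}^{\hspace{0.2mm}\out}\subset\pmb{\mathscr{X}}_{N,1}^{\hspace{0.2mm}\out}$ for $\beta_1<\beta_2$, passage to the weight-free strong problem \eqref{SectionEEqE} via Propositions \ref{SectionEResEquivDis} and \ref{SectionEResEquiv}, and injectivity of $\mathscr{A}^\out_{N,1}(\om)$ off $\mathscr{S}_{\E}$. The only (harmless) divergence is at $\om=0$, where you correctly note that the right-hand side $i\om\int_\Om\J\cdot\vb\,dx$ vanishes so both solutions are zero by Theorem \ref{SectionEPrinicpalPartIsom}, whereas the paper reruns the injectivity argument for the difference $\e=\u_1-\u_2$ using \eqref{ImportantIdentity} and the operator $\mathbb{T}$.
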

\begin{proof}
Let $\beta_1<\beta_2$ satisfying (\ref{ChoixPoids}). For $j=1,2$, denote by 
\begin{center}
\begin{tabular}{|l}
$\u_j$ the solution of \eqref{SectionEFv2} \\[2pt]
$\mathscr{A}^\out_{N,j}(\om)$ the operator defined in (\ref{DefAoperateur})\\[2pt]
${\pmb{\mathscr{X}}_{N,j}^{\hspace{0.2mm}\out}}$ the space introduced in (\ref{DefSpaceChampE})
\end{tabular}
\end{center}
for $\beta=\beta_j$. Since $\beta_1<\beta_2$, we  have ${\pmb{\mathscr{X}}_{N,2}^{\hspace{0.2mm}\out}}\subset{\pmb{\mathscr{X}}_{N,1}^{\hspace{0.2mm}\out}}$. As a consequence, the function $\e:=\u_1-\u_2$ belongs to ${\pmb{\mathscr{X}}_{N,1}^{\hspace{0.2mm}\out}}$.\\[4pt] 
$\star$ When $\om\ne0$, by exploiting Propositions \ref{SectionEResEquivDis} and \ref{SectionEResEquiv}, we find that both $\u_1$ and $\u_2$ satisfy the equations of (\ref{SectionEEqE}) in the sense of distributions of $\Om\backslash\{O\}$. This implies that $\e$ is an element of $\ker\mathscr{A}^\out_{N,1}(\om)$.  Since $\om\in\R\backslash \mathscr{S}_{\E}$, we obtain $\e\equiv0$ and so $\u_1\equiv\u_2$.\\[4pt]
$\star$ When $\om=0$, we work as in the end of the proof of Theorem \ref{SectionEPrinicpalPartIsom}. First, by observing that $\langle\mathscr{A}^\out_{N,1}(0)\e,\e\rangle=0$, we get, thanks to \eqref{ImportantIdentity}, 
\[
0=\langle\mathscr{A}^\out_{N,1}(0) \e,\e\rangle-\overline{\langle\mathscr{A}^\out_{N,1}(0) \e,\e\rangle}=-q_\mu(\mathfrak{s}_{\e,\mu},\mathfrak{s}_{\e,\mu}).
\]
From Lemma \ref{FluxScalairDefPosi}, this gives $\mathfrak{s}_{\e,\mu}\equiv0$ and so 
\[
\langle\mathscr{A}^\out_{N,1}(0)\e,\v\rangle=\int_\Om\mu^{-1}\boldsymbol{\psi}_{\e}\cdot\overline{\boldsymbol{\psi}_{\v}}\,dx=0,\qquad\forall\v\in{\pmb{\mathscr{X}}_{N,1}^{\hspace{0.2mm}\out}}.
\]
By taking $\v=\mathbb{T}\e$ in the previous relation, where $\mathbb{T}$ is the operator of Proposition \ref{propoOntoE} constructed with $\beta=\beta_1$, we obtain $\boldsymbol{\psi}_{\e}=0$. From Proposition \ref{SectionEEquivNormes2},  we deduce that $\e\equiv0$ and so $\u_1\equiv\u_2$.
\end{proof}
	Now, by gathering Proposition \ref{SectionEResEquiv} and Theorem \ref{SectionEMainRes}, we can state the main result of the section.
\begin{theorem}\label{SectionEResFinal}
Suppose that Assumptions \ref{AssumptionCritique}-\ref{AssumptionNoTrappedMode} hold, $\J\in\mmV^0_{-\eta}(\Om)$ for some $\eta>0$ and $\div\,\J=0$ in $\Om$.\\[3pt]
$\bullet$ For all $\om\in\R\backslash \mathscr{S}_{\E}$, where $\mathscr{S}_{\E}$ appears in Theorem \ref{SectionEMainRes}, the problem \eqref{SectionEFv2} (or equivalently  \eqref{SectionEFv1} when $\om\ne0$) admits a unique solution.\\[3pt]
$\bullet$ When $\om\in \mathscr{S}_{\E}$  the problem \eqref{SectionEFv2} (or equivalently  \eqref{SectionEFv1})  is well-posed in the Fredholm sense. Moreover, it has a kernel of finite dimension that is independent of $\beta$ satisfying (\ref{ChoixPoids}).
	\end{theorem}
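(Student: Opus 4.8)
The plan is to read both assertions off from Theorem~\ref{SectionEMainRes}, the only point requiring a genuine verification being that the right-hand side of \eqref{SectionEFv2} defines a continuous antilinear form on $\mX_N^{\out,\beta}$. First I would fix $\beta\in(0;\beta_\star)$ and recast \eqref{SectionEFv2} as the operator equation $\mathscr{A}^\out_N(\om)\,\u=\mathcal{L}$, where $\mathcal{L}$ is the antilinear form $\v\mapsto i\om\int_\Om\J\cdot\vb\,dx$. Since the divergence-free hypothesis $\div\,\J=0$ in $\Om$ and the assumption $\J\in\mmV^0_{-\eta}(\Om)$ are exactly those used in Propositions~\ref{SectionEResEquivDis} and~\ref{SectionEResEquiv}, the equivalences between \eqref{SectionEEqE}, \eqref{SectionEFv1} and \eqref{SectionEFv2} are available, and it will suffice to solve \eqref{SectionEFv2}.

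To check that $\mathcal{L}\in(\mX_N^{\out,\beta})^\ast$ I would split a test field as $\v=\tilde{\v}+\nabla\mathfrak{s}_{\v,\eps}$ with $\tilde{\v}\in\mmV^0_{-\beta}(\Om)$ and $\nabla\mathfrak{s}_{\v,\eps}\in\nabla\mathcal{S}^+_\eps$, and estimate the two contributions separately. For the regular part, $\J\in\mmV^0_{-\eta}(\Om)\subset\mmV^0_{\beta}(\Om)$ (because $-\eta<\beta$), so the duality $(\mmV^0_{-\beta}(\Om))^\ast=\mmV^0_{\beta}(\Om)$ yields $\bigl|\int_\Om\J\cdot\overline{\tilde{\v}}\,dx\bigr|\le C\,\|\J\|_{\mmV^0_{-\eta}(\Om)}\,\|\tilde{\v}\|_{\mmV^0_{-\beta}(\Om)}$. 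For the singular part I would write $\int_\Om\J\cdot\overline{\nabla\mathfrak{s}_{\v,\eps}}\,dx=\int_\Om (r^{-\eta}\J)\cdot(r^{\eta}\overline{\nabla\mathfrak{s}_{\v,\eps}})\,dx$ and apply Cauchy--Schwarz: $r^{-\eta}\J\in\Lspace^2(\Om)$, while $\nabla\mathfrak{s}_{\v,\eps}$ behaves like $r^{-3/2}$ near $O$, so $r^{\eta}\nabla\mathfrak{s}_{\v,\eps}$ is square-integrable precisely because $\eta>0$. As $\nabla\mathcal{S}^+_\eps$ is finite dimensional, the resulting norm $\|r^{\eta}\nabla\mathfrak{s}_{\v,\eps}\|_{\Lspace^2(\Om)}$ is controlled by $C\,\|\nabla\mathfrak{s}_{\v,\eps}\|_{\mmV^0_\beta(\Om)}$. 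Summing the two bounds shows $\mathcal{L}\in(\mX_N^{\out,\beta})^\ast$ with norm dominated by $\|\J\|_{\mmV^0_{-\eta}(\Om)}$.

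For the first bullet, for $\om\in\R\backslash\mathscr{S}_{\E}$ Theorem~\ref{SectionEMainRes} asserts that $\mathscr{A}^\out_N(\om):\Xoutspace\to(\mX_N^{\out,\beta})^\ast$ is an isomorphism; hence $\mathscr{A}^\out_N(\om)\,\u=\mathcal{L}$ has a unique solution $\u\in\Xoutspace$. Proposition~\ref{SectionEResEquiv} (together with Proposition~\ref{SectionEResEquivDis} when $\om\ne0$) then transfers this unique solvability to \eqref{SectionEFv1}, and Proposition~\ref{SectionEIndepedencebeta} guarantees that the solution does not depend on the particular $\beta$ satisfying \eqref{ChoixPoids}. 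For the second bullet, when $\om\in\mathscr{S}_{\E}$ the same theorem states that $\mathscr{A}^\out_N(\om)$ is Fredholm of index zero, which is exactly well-posedness in the Fredholm sense; in particular its kernel is finite dimensional. Proposition~\ref{PropoKernelPoids}, whose proof shows that every element of $\ker\mathscr{A}^\out_N(\om)$ lies in $\mmV^0_{-\beta}(\Om)$ for every admissible $\beta$, then gives the independence of this kernel from $\beta$.

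I expect the only delicate point to be the continuity of $\mathcal{L}$ tested against the singular gradients in $\nabla\mathcal{S}^+_\eps$: one must reconcile the possibly small weight $\eta$ with the $r^{-3/2}$ blow-up of these fields, which is precisely why the hypothesis $\eta>0$ (rather than merely $\J\in\Lspace^2(\Om)$) together with the finite dimensionality of $\nabla\mathcal{S}^+_\eps$ enters. Everything else is a direct packaging of Theorem~\ref{SectionEMainRes} with Propositions~\ref{SectionEResEquiv}, \ref{PropoKernelPoids} and~\ref{SectionEIndepedencebeta}.
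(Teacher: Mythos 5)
Your proof is correct and follows essentially the same route as the paper, which obtains this theorem by simply gathering Proposition~\ref{SectionEResEquiv} with Theorem~\ref{SectionEMainRes} (and Proposition~\ref{PropoKernelPoids} for the $\beta$-independence of the kernel). The one item you treat that the paper leaves implicit — the continuity of $\v\mapsto i\om\int_\Om\J\cdot\vb\,dx$ on $\mX_N^{\out,\beta}$, obtained by splitting $\v=\tilde{\v}+\nabla\mathfrak{s}_{\v,\eps}$ and using $\eta>0$ against the $r^{-3/2}$ (times logarithms) behaviour of the finitely many singular gradients — is handled correctly.
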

\begin{remark}
Above we could remove the assumption $\div\,\J=0$ in $\Om\backslash\{O\}$ and simply suppose that $\J\in\mmV^0_{-\eta}(\Om)$. In this situation, to study the electric problem, the first step would be to introduce some potential $p\in\mathring{\mV}^\out_{\beta}(\Om)$, $\beta\in(0;\min(\beta_D,\eta))$, such that $\mA^\out_\eps p=-(i\om)^{-1}\div\,\J\in(\mathring{\mV}^1_\beta(\Om))^\ast$, and then to work with $\E_0:=\E-\nabla p$.
\end{remark}
	
Up to now, we have shown that given spaces of outgoing propagating singularities $\mathcal{S}_\eps^+$ and $\mathcal{S}^+_\mu$ lead to a functional framework in which the electric problem is well posed. On the other hand, there is an infinite number of choices for $\mathcal{S}_\eps^+$, $\mathcal{S}^+_\mu$ (see Remark \ref{RqChoixdeLaBase}) and they all provide different functional frameworks for the Maxwell's problem. The goal of the next section is to explain how to identify the one that is coherent with the limiting absorption principle.
\subsection{Limiting absorption principle}\label{SectionELimiting}

Let $\J$ be as in Theorem \ref{SectionEResFinal}. To model the dissipation of the materials, introduce, for $\delta>0$,  the functions $\eps_\delta:=\eps+i\delta$ and $\mu_\delta:=\mu+i\delta$. Denote by $\mA_{\eps_\delta}:\mH^1_0(\Om)\to (\mH^1_0(\Om))^\ast$, $\mA_{\mu_\delta}:\mH^1_\#(\Om)\to(\mH^1_\#(\Om))^\ast$ the operators defined as $\mA_\eps$, $\mA_\mu$ in (\ref{Def_Aeps}), (\ref{Def_Amu}) with $\eps$, $\mu$ replaced by $\eps_\delta$, $\mu_\delta$. Since the imaginary part of $\eps_\delta$, $\mu_\delta$ is positive in $\Om$, the Lax-Milgram theorem ensures that $\mA_{\eps_\delta}$, $\mA_{\mu_\delta}$ are isomorphisms for all $\delta>0$. Moreover, it guarantees that for all $\om\in\R$, the problem
\begin{equation}\label{SectionEMaxwelldissipation}
\begin{array}{|l}
\mbox{Find }\u_\delta \in  \mX_N(\eps_\delta)~\text{such that}\\[3pt]
\curl(\mu_\delta^{-1}\curl \u_\delta)-\om^2\eps_\delta\u_\delta=i\om\J\quad\mbox{ in }\Om
\end{array}		
\end{equation} 
admits a unique solution. Roughly speaking, our goal is to show that if the spaces $\mathcal{S}_\eps^+$, $\mathcal{S}_\mu^+$ in (\ref{DefBases}) have been chosen such that the limiting absorption principle holds for the scalar problems, then $(\u_\delta)_\delta$ converges to the solution of \eqref{SectionEFv2}, in other words that the limiting absorption principle is valid for the electric problem written in the framework \eqref{SectionEFv2}. Let us make this more precise.

\begin{Assumption}\label{Assumptionlimiting}
Suppose that  Assumptions \ref{AssumptionCritique}-\ref{AssumptionNoTrappedMode} hold. Assume that $\mathcal{S}_\eps^+$, $\mathcal{S}_\mu^+$ are such that\\[6pt]
$\bullet$ if $(f_\delta)_{\delta>0}$ converges to $f$ in $(\mathring{\mV}^1_\beta(\Om))^\ast\subset(\mH_0^1(\Om))^\ast$, then $\lim_{\delta\to0^+}\|(\mA_{\eps_\delta})^{-1}f_\delta-(\mA^\out_\eps)^{-1}f\|_{\mV^1_{\beta}(\Om)}=0$;\\[6pt]
$\bullet$ if $(f_\delta)_{\delta>0}$ converges to $f$ in $(\mathcal{V}^1_\beta(\Om))^\ast\subset(\mH_\#^1(\Om))^\ast$, then $\lim_{\delta\to0^+}\|(\mA_{\mu_\delta})^{-1}f_\delta-(\mA^\out_\mu)^{-1}f\|_{\mV^1_{\beta}(\Om)}=0$.
\end{Assumption}

Note that the previous assumption requires that the frameworks obtained for the scalar problems via the limiting absorption principle satisfies the Mandelstam radiation principle. Generally speaking, this seems to happen most of the times but may be wrong in certain rare circumstances. In the articles \cite{nazarov2020anomalies,nazarov2014umov}, the author gives examples of problems involving elliptic PDEs in unbounded domains for which the two principles contradict each other at the so-called cut-off frequencies.  In our case, it can be shown (see \cite[Chapter 2]{rihani2022maxwell}) that the validity of the previous assumption depends on the behaviour of the spectrum, eigenfunctions and generalized eigenfunctions of $\mathscr{L}_{\eps_\delta}$ , $\mathscr{L}_{\mu_\delta}$, the Mellin symbols defined as $\mathscr{L}_{\eps}$ , $\mathscr{L}_{\mu}$ in (\ref{DefLeps}), (\ref{DefLmu}) with $\eps$, $\mu$ replaced by $\eps_\delta$, $\mu_\delta$. For the particular case of the circular conical tip (\ref{ConicalTip}), it is proved in \cite[Chapter 2]{rihani2022maxwell} that Assumption \ref{Assumptionlimiting} is valid except for a discrete set of contrasts.\\
\newline	
The main result of this section is given by the following theorem which justifies the physical relevance of Problem (\ref{SectionEEqE}).
\begin{theorem}\label{SectionELimitingMainTheorem}
Suppose that Assumptions \ref{AssumptionCritique}-\ref{AssumptionNoTrappedMode}-\ref{Assumptionlimiting} hold, $\J$ is as in Theorem \ref{SectionEResFinal} and $\om\in\R\backslash \mathscr{S}_{\E}$, where $\mathscr{S}_{\E}$ is defined in Theorem \ref{SectionEMainRes}. We have
\[
\lim_{\delta\to0^+}\|\u_\delta-\u\|_{\mmV^0_\beta(\Om)}+\|\curl\u_\delta-\curl\u\|_{\mmV^0_\beta(\Om)} = 0
\]
where $\u_\delta$, $\u$ denote respectively the solutions of (\ref{SectionEMaxwelldissipation}), (\ref{SectionEEqE}).
\end{theorem}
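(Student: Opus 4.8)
The plan is to reduce the vector limiting-absorption statement to its scalar counterpart, Assumption \ref{Assumptionlimiting}. The point is that the whole construction of Sections \ref{paragraphePrincPart}--\ref{SectionEMainResSection} builds the solution of the outgoing problem out of the scalar outgoing inverses $(\mA^\out_\eps)^{-1}$, $(\mA^\out_\mu)^{-1}$, the weighted potential representations of the appendix, and compact perturbations. Repeating this assembly with $\eps_\delta$, $\mu_\delta$ and the standard $\Lspace^2$ scalar inverses $(\mA_{\eps_\delta})^{-1}$, $(\mA_{\mu_\delta})^{-1}$ in place of the outgoing ones produces $\u_\delta$. I would therefore show that every ingredient of this assembly passes to the limit, the scalar inverses being precisely controlled by Assumption \ref{Assumptionlimiting}, and that the compact and potential operations are stable as $\delta\to0^+$.

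Concretely, since $\div\,\J=0$ in $\Om\backslash\{O\}$ one has $\u_\delta\in\mX_N(\eps_\delta)$, and the auxiliary field $\bft_\delta:=\mu_\delta^{-1}\curl\u_\delta\in\mX_T(\mu_\delta)$ satisfies $\curl\bft_\delta=\om^2\eps_\delta\u_\delta+i\om\J$. Using the potential representations (Propositions \ref{AppendixHelmholtzWeighted}, \ref{AppendixWeightedPotenials} and the regularity \ref{AppendixHelweightedRegularity}) I would split $\curl\u_\delta=\boldsymbol{\psi}_\delta+\mu_\delta\nabla p_\delta$ with $\boldsymbol{\psi}_\delta\in\mmV^0_{-\beta}(\Om)$ and a scalar potential determined by $\mA_{\mu_\delta}p_\delta\in(\mathcal{V}^1_\beta(\Om))^\ast$; symmetrically, the Helmholtz splitting $\tilde{\u}_\delta=\nabla\varphi_\delta+\curl\boldsymbol{\zeta}_\delta$ together with $\div(\eps_\delta\u_\delta)=0$ gives $\mA_{\eps_\delta}\varphi_\delta\in(\mathring{\mV}^1_\beta(\Om))^\ast$. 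Once the right-hand sides are seen to converge in these dual spaces, Assumption \ref{Assumptionlimiting} yields the convergence of $p_\delta$, $\varphi_\delta$ in $\mV^1_\beta(\Om)$ to the outgoing potentials, whose detached-asymptotics parts lie exactly in $\mathcal{S}^+_\mu$, $\mathcal{S}^+_\eps$. This is the mechanism by which the limit field acquires the correct outgoing singular content, i.e. belongs to $\Houtspace$ rather than to some other admissible framework.

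The main obstacle is the uniform-in-$\delta$ a priori estimate in the weighted norm: a naive energy bound is useless because the $\Lspace^2$ norms of $\u_\delta$ and $\curl\u_\delta$ generically blow up as $\delta\to0^+$ (the limit carries singularities of infinite energy), so only the weaker norm $\mmV^0_\beta(\Om)$ can remain bounded. I would argue by contradiction, normalising $\|\u_\delta\|_{\mmV^0_\beta(\Om)}+\|\curl\u_\delta\|_{\mmV^0_\beta(\Om)}=1$ along a sequence $\delta\to0$; the right-hand side then carries a vanishing factor, so the normalised fields solve, in the limit, a homogeneous problem. The compact embeddings of the appendix, the compactness of the regular parts as in Theorem \ref{SectionERescompcaite}, and the scalar reduction above let me extract a limit $\u_\star\in\Xoutspace$ lying in $\ker\mathscr{A}^\out_N(\om)$; since $\om\notin\mathscr{S}_\E$, Theorem \ref{SectionEMainRes} forces $\u_\star=0$. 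Turning this into a contradiction requires the convergence to be \emph{strong} in $\mmV^0_\beta(\Om)$, which I would obtain by combining the strong convergence of the scalar potentials supplied by Assumption \ref{Assumptionlimiting} with the equivalent-norm estimates of Propositions \ref{SectionEEquivNormes1}--\ref{SectionEEquivNormes2} that bound the singular parts by the regular ones.

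With the uniform bound established, the conclusion follows the same lines: any weak limit point of $(\u_\delta)$ solves (\ref{SectionEEqE}) in the framework $\Houtspace$, the outgoing structure of its singular parts being guaranteed by Assumption \ref{Assumptionlimiting}; uniqueness for $\om\notin\mathscr{S}_\E$ (Theorem \ref{SectionEResFinal}) identifies it with $\u$ and upgrades subsequential convergence to convergence of the whole family. The asserted strong convergence $\u_\delta\to\u$ and $\curl\u_\delta\to\curl\u$ in $\mmV^0_\beta(\Om)$ is then assembled from the strong convergence of the regular parts (compactness) and of the scalar potentials (Assumption \ref{Assumptionlimiting}), using once more Propositions \ref{SectionEEquivNormes1} and \ref{SectionEEquivNormes2}.
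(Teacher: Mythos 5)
Your proposal is correct and follows essentially the same route as the paper: the paper first proves an auxiliary compactness proposition (Helmholtz splitting plus Assumption \ref{Assumptionlimiting} applied to the scalar potentials, giving subsequential convergence in $\mmV^0_\beta(\Om)$ of any family in $\mX_N(\eps_\delta)$ or $\mX_T(\mu_\delta)$ with curls bounded in $\mmV^0_\beta(\Om)$), applies it to $\u_\delta$ and to $\mu_\delta^{-1}\curl\u_\delta$, identifies the limit with the unique outgoing solution since $\om\notin\mathscr{S}_\E$, and obtains the uniform bound on $\|\curl\u_\delta\|_{\mmV^0_\beta(\Om)}$ by the same normalisation-and-contradiction device you describe. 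The only cosmetic differences are that the paper normalises by $\|\curl\u_n\|_{\mmV^0_\beta(\Om)}$ alone and packages the scalar reduction for the curl through the auxiliary field $\v_n:=\mu_n^{-1}\curl\u_n\in\mX_T(\mu_n)$ rather than an explicit splitting of $\curl\u_\delta$.
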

	The proof of the previous theorem is mainly  based on the following  proposition.
	\begin{proposition}
Suppose that Assumptions \ref{AssumptionCritique}-\ref{AssumptionNoTrappedMode}-\ref{Assumptionlimiting}  hold. Let $(\u_\delta)_{\delta}$ be a sequence of elements of $\mX_N(\eps_\delta)$ (resp. $\mX_T(\mu_\delta)$) such that  $(\curl\u_\delta)_{\delta}$ is bounded in $\mmV^0_{\beta}(\Om)$. Then, up to a sub-sequence, $(\u_\delta)_{\delta}$ converges in $\mmV^0_{\beta}(\Om)$ to an element of $\nabla\mathcal{S}^+_\eps\oplus \mmV^0_{-\beta}(\Om)$ (resp. $\nabla\mathcal{S}^+_\mu\oplus \mmV^0_{-\beta}(\Om)$) as $\delta \to0^+$.
	\end{proposition}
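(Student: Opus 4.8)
The plan is to treat in detail the electric (Dirichlet) case $\u_\delta\in\mX_N(\eps_\delta)$; the magnetic case $\u_\delta\in\mX_T(\mu_\delta)$ is handled identically after replacing $\eps$ and the Dirichlet spaces $\mH^1_0(\Om)$, $\mathring{\mV}^1_\beta(\Om)$ and the first bullet of Assumption \ref{Assumptionlimiting} by $\mu$, the corresponding mean-free Neumann spaces and the second bullet. The strategy is to split $\u_\delta$ by a $\delta$-independent Helmholtz decomposition, to show that its solenoidal part is compact in $\mmV^0_{-\beta}(\Om)$, and then to recognize its irrotational part as the solution of a dissipative scalar problem governed by Assumption \ref{Assumptionlimiting}. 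No a priori bound on $(\u_\delta)$ in $\Lspace^2(\Om)$ is required: convergence of each piece will be produced separately.

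First I would apply item $iii)$ of Proposition \ref{AppendixHelmoltzclassique} to each (fixed-$\delta$) field $\u_\delta\in\Hspace_N(\curl)\subset\Lspace^2(\Om)$, writing $\u_\delta=\nabla\varphi_\delta+\curl\boldsymbol{\psi}_\delta$ with $\varphi_\delta\in\mH^1_0(\Om)$ and $\boldsymbol{\psi}_\delta\in\mX_T(1)$. Crucially this splitting is taken with respect to the coefficient $1$, so the spaces involved do not depend on $\delta$. Since $\curl(\curl\boldsymbol{\psi}_\delta)=\curl\u_\delta$ is bounded in $\mmV^0_\beta(\Om)$ and $\curl\boldsymbol{\psi}_\delta\times\nu=0$ on $\partial\Om\backslash\{O\}$, we have $\curl\boldsymbol{\psi}_\delta\in\mZ_N^\beta$ (see \eqref{SectionHzbeta}). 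As $\beta\in(0;1/2)$, Proposition \ref{AppendixHelweightedRegularity} furnishes the bound $\|\curl\boldsymbol{\psi}_\delta\|_{\mmV^0_{-\beta}(\Om)}\le C\,\|\curl\u_\delta\|_{\mmV^0_\beta(\Om)}$ together with the compactness that lets me extract a subsequence along which $\curl\boldsymbol{\psi}_\delta\to\boldsymbol{g}$ in $\mmV^0_{-\beta}(\Om)$.

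Next I would use the constraint $\div(\eps_\delta\u_\delta)=0$. Substituting the decomposition yields $\div(\eps_\delta\nabla\varphi_\delta)=-\div(\eps_\delta\curl\boldsymbol{\psi}_\delta)=:f_\delta$, that is $\varphi_\delta=(\mA_{\eps_\delta})^{-1}f_\delta$, where $\langle f_\delta,\phi\rangle=\int_\Om\eps_\delta\,\curl\boldsymbol{\psi}_\delta\cdot\nabla\overline{\phi}\,dx$ defines an element of $(\mathring{\mV}^1_\beta(\Om))^\ast$. Writing $\eps_\delta\curl\boldsymbol{\psi}_\delta-\eps\boldsymbol{g}=\eps_\delta(\curl\boldsymbol{\psi}_\delta-\boldsymbol{g})+i\delta\,\boldsymbol{g}$ and using $\curl\boldsymbol{\psi}_\delta\to\boldsymbol{g}$ in $\mmV^0_{-\beta}(\Om)$ together with $\eps_\delta\to\eps$ in $\mL^\infty(\Om)$, I obtain $f_\delta\to f:=-\div(\eps\boldsymbol{g})$ in $(\mathring{\mV}^1_\beta(\Om))^\ast$. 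The first bullet of Assumption \ref{Assumptionlimiting} then gives $\varphi_\delta\to\varphi:=(\mA^\out_\eps)^{-1}f$ in $\mV^1_\beta(\Om)$, whence $\nabla\varphi_\delta\to\nabla\varphi$ in $\mmV^0_\beta(\Om)$. Since $\mA^\out_\eps$ is an isomorphism onto $\mathring{\mV}^{\out}_\beta(\Om)=\mathring{\mV}^1_{-\beta}(\Om)\oplus\mathcal{S}^+_\eps$ (Theorem \ref{MainThmScalaire}), the gradient $\nabla\varphi$ lies in $\mmV^0_{-\beta}(\Om)\oplus\nabla\mathcal{S}^+_\eps$.

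It then remains to recombine the two pieces. Using the inclusion $\mmV^0_{-\beta}(\Om)\subset\mmV^0_\beta(\Om)$, both $\curl\boldsymbol{\psi}_\delta\to\boldsymbol{g}$ and $\nabla\varphi_\delta\to\nabla\varphi$ hold in $\mmV^0_\beta(\Om)$, so along the extracted subsequence $\u_\delta\to\nabla\varphi+\boldsymbol{g}$ in $\mmV^0_\beta(\Om)$, and the limit belongs to $\nabla\mathcal{S}^+_\eps\oplus\mmV^0_{-\beta}(\Om)$, as required. I expect the main obstacle to be the middle step: one must check that the irrotational part $\varphi_\delta$ delivered by the $\delta$-independent Helmholtz splitting really coincides with the solution $(\mA_{\eps_\delta})^{-1}f_\delta$ of the dissipative scalar problem, and that the data $f_\delta$ converge in precisely the dual norm $(\mathring{\mV}^1_\beta(\Om))^\ast$ demanded by Assumption \ref{Assumptionlimiting}, after which everything is driven by that assumption, i.e. by the scalar limiting absorption principle transferred to the vector setting. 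The compactness of $(\curl\boldsymbol{\psi}_\delta)$ in $\mmV^0_{-\beta}(\Om)$, and the fact that $\|(\mA_{\eps_\delta})^{-1}\|$ may degenerate as $\delta\to0^+$ (which is why the subsequence for $\curl\boldsymbol{\psi}_\delta$ must be extracted \emph{first}, before invoking the convergence statement of Assumption \ref{Assumptionlimiting}), are the two secondary technical points.
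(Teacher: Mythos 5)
Your proof is correct and follows essentially the same route as the paper's: the $\delta$-independent Helmholtz splitting via item $iii)$ of Proposition \ref{AppendixHelmoltzclassique}, compactness of $(\curl\boldsymbol{\psi}_\delta)$ in $\mmV^0_{-\beta}(\Om)$ through $\mZ_N^\beta$ and Proposition \ref{AppendixHelweightedRegularity}, and then Assumption \ref{Assumptionlimiting} applied to $\mA_{\eps_\delta}\varphi_\delta=-\div(\eps_\delta\curl\boldsymbol{\psi}_\delta)$. Your explicit verification that $f_\delta$ converges in $(\mathring{\mV}^1_\beta(\Om))^\ast$ spells out a step the paper only asserts, but the argument is the same.
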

\begin{proof}
Let $(\u_\delta)_{\delta>0}$ be a sequence of elements of $\mX_N(\eps_\delta)$ such that $(\curl\u_\delta)_{\delta}$ is bounded in $\mmV^0_{\beta}(\Om)$. Owing to item $iii)$ of Proposition \ref{AppendixHelmoltzclassique}, for all $\delta>0$, we have the decomposition 
\begin{equation}\label{decompoAnnexe}
\u_\delta=\nabla\varphi_\delta+\curl\psib_\delta
\end{equation}
with $\varphi_\delta\in\mH^1_0(\Om)$ and $\psib_\delta\in\mX_T(1)$. Additionally, there holds $\curl\psib_\delta\in\mZ^\beta_N$ (see (\ref{SectionHzbeta}) for the definition of this space). But  Proposition \ref{AppendixHelweightedRegularity} ensures that in $\mZ^\beta_N$, $\|\curl\cdot\|_{\mmV^0_{\beta}(\Om)}$ is a norm which is equivalent to the natural one, and that $\mZ^\beta_N$ is compactly embedded in $\mmV^0_{-\beta}(\Om)$. From this, we infer that, up to a subsequence, $(\curl\psib_\delta)_\delta$ converges in $\mmV^0_{-\beta}(\Om)$ as $\delta\to0^+$.\\
Since we have $\div(\eps_\delta\,\u_\delta)=0$ in $\Om$, we deduce that there holds $\mA_{\eps_\delta}\varphi_\delta=-\div(\eps_\delta\,\curl\psib_\delta)$. On the other hand, the fact that $\curl\psib_\delta\in\mmV^0_{-\beta}(\Om)$ implies that $\div(\eps_\delta\,\curl\psib_\delta)\in(\mathring{\mV}^1_\beta(\Om))^\ast$. Furthermore, since $(\curl\psib_\delta)_\delta$ converges in $\mmV^0_{-\beta}(\Om)$, it follows that $(\div(\eps_\delta\,\curl\psib_\delta))_{\delta}$ converges in $(\mathring{\mV}^1_\beta(\Om))^\ast$ as $\delta\to0^+$. As a consequence,  under Assumption \ref{Assumptionlimiting}, we infer that up to a sub-sequence, $(\varphi_\delta)_\delta$ converges in $\mathring{\mV}^1_\beta(\Om)$ to an element of $\mathring{\mV}^\out_\beta(\Om)$. With (\ref{decompoAnnexe}), this gives the desired result. The proof for a bounded sequence of elements of $\mX_T(\mu_\delta)$ is similar. 
\end{proof}
	\begin{proof}[\textit{Proof of Theorem \ref{SectionELimitingMainTheorem}}]
	Let  $(\delta_n)_{n}$ be a sequence of positive numbers that converges to zero as $n\to+\infty$. Denote $\eps_n=\eps+i\delta_n,\mu_n=\mu+i\delta_n$ for all $n\in\N$. Denote by $\u_n$ the solution to \eqref{SectionEMaxwelldissipation} for $\delta=\delta_n$. Let us proceed in two steps. First, we establish the desired result by assuming that $(\curl\u_n)_n$ is bounded in $\mmV^0_\beta(\Om)$. Then we show that this hypothesis is indeed satisfied.\\[5pt]
			\textbf{Step 1.} Assume that  $(\curl \u_n)_{n}$ is bounded in $\mmV^0_\beta(\Om)$. According to the previous proposition, we know that up to a subsequence, $(\u_n)_{n}$ converges in    $\mmV^0_\beta(\Om)$ to an element $\u$ of $\nabla\mathcal{S}^+_\eps\oplus \mmV^0_{-\beta}(\Om)$. This implies in particular that $(\u_n)_{n}$ is bounded in $\mmV^0_\beta(\Om)$.  Next, for all $n\in\N,$ we define the vector field $\v_n:=\mu_n^{-1}\curl\u_n$. There holds $\div(\mu_n\v_n)=0$ in $\Om\backslash\{O\}$ and $\mu_n\v_n\cdot \nu=0$ on $\partial\Om\backslash\{O\}$. Furthermore,	by observing that 
\begin{equation}\label{EqnVol}
\curl \v_n=\om^2\eps_n\u_n+i\om\J\quad\mbox{ in }\Om\backslash\{O\},
\end{equation}
we conclude that $\v_n\in\mX_T(\mu_n)$ and that $(\curl \v_n)_n$ is bounded in $\mmV^0_\beta(\Om)$. Applying again the previous proposition, we deduce that  $(\curl \u_n)_n$ converges in $\mmV^0_\beta(\Om)$ to an element of $\mu^{-1}\nabla\mathcal{S}^+_{\mu}\oplus\mmV^0_{-\beta}(\Om)$ which is nothing but $\curl\u$ (use the convergence in the sense of distributions of $\Om\backslash\{O\}$ to establish this latter property). Thus we have
\[
\lim_{n\to+\infty}\|\u-\u_n\|_{\mmV^0_\beta(\Om)}+\|\curl\u-\curl\u_n\|_{\mmV^0_\beta(\Om)} = 0
\]
and by taking the limit in (\ref{EqnVol}) (again in the sense of distributions of $\Om\backslash\{O\}$), we get
\[
\curl(\mu^{-1}\curl \u)-\om^2\eps \u=i\om\J\quad\mbox{ in }\Om\backslash\{O\}.
\]
This implies in particular that $\u$ belongs to $\Xoutspace$. Given that $\om\in\R\backslash \mathscr{S}_{\E}$, this shows that $\u$ is the solution of (\ref{SectionEFv2}). Thus, we obtain the desired result. \\[5pt]
\textbf{Step 2.} Assume that there exists a sequence $(\u_n)_n$ of solutions to \eqref{SectionEMaxwelldissipation}, associated to some sequence  $(\delta_n)_n$ that tends to zero, such that $\|\curl \u_n\|_{\mmV^0_\beta(\Om)}\to +\infty$ as $n\to+\infty$. By considering the sequence  $(\u_n/\|\curl \u_n\|_{\mmV^0_\beta(\Om)})_n$ and using the result proved in the first step, we obtain a contradiction.
\end{proof}
	
\section{A new framework for the magnetic problem}\label{SectionH}

In this section, we provide an adapted framework to study the magnetic problem when the contrasts $\kappa_\eps$, $\kappa_\mu$ satisfy Assumption \ref{AssumptionCritique}. The procedure is similar to what has been done above for the electric field. For this reason, we do not 
give the details and simply state the main results.\\
\newline
For $\beta$ satisfying (\ref{ChoixPoids}), we define the spaces 
\[
\begin{array}{l}
\Hspace^{\out,\beta}(\curl):= \{\u\in\nabla\mathcal{S}^+_\mu\oplus\mmV^0_{-\beta}(\Om)\,|\,\curl \u\in\mmV^0_\beta(\Om)\} \\[8pt]
\HToutspace:= \{\u\in\nabla\mathcal{S}^+_\mu\oplus\mmV^0_{-\beta}(\Om)\,|\,\curl \u\in\eps\nabla \mathcal{S}^+_\eps\oplus\mmV^0_{-\beta}(\Om)\}.
\end{array}
\]
For $\u=\tilde{\u}+\nabla \mathfrak{s}_{\u,\mu}^+\in \Hspace^{\out,\beta}(\curl)$, we set
\[
\|\u \|_{\Hspace^{\out,\beta}(\curl)}:=(\|\Tilde{\u}\|^2_{\mmV^0_{-\beta}(\Om)}+\|\nabla \mathfrak{s}_{\u,\mu}^+\|^2_{\mmV^0_{\beta}(\Om)}+\|\curl\u\|^2_{\mmV^0_\beta(\Om)})^{1/2}
\]
while for $\u=\Tilde{\u}+\nabla\mathfrak{s}_{\u,\mu}^+\in\HToutspace$   such that  $\curl\u=\boldsymbol{\psi}_{\u}+\eps\nabla \mathfrak{s}_{\u,\eps}$ , we denote
\[
\|\u \|_{\HToutspace}:=(\|\Tilde{\u}\|^2_{\mmV^0_{-\beta}(\Om)}+\|\nabla \mathfrak{s}_{\u,\mu}^+\|^2_{\mmV^0_{\beta}(\Om)}+\|\boldsymbol{\psi}_{\u}\|^2_{\mmV^0_{-\beta}(\Om)}+\|\nabla \mathfrak{s}_{\u,\eps}^+\|^2_{\mmV^0_{\beta}(\Om)})^{1/2}. 
\]
We adopt the following convention:\\[4pt]
- if $\u\in\Hspace^{\out,\beta}(\curl)$, let $\tilde{\u}$, $\mathfrak{s}_{\u,\mu}$ be the elements of $\mmV^0_{-\beta}(\Om)$, $\mathcal{S}^+_\mu$ such that $\u=\tilde{\u}+\nabla \mathfrak{s}_{\u,\mu}$;\\[4pt]
- if $\u\in\HToutspace$, let $\boldsymbol{\psi}_{\u}$, $\mathfrak{s}_{\u,\eps}$ be the elements of $\mmV^0_{-\beta}(\Om)$, $\mathcal{S}^+_\eps$ such that $\curl\u=\boldsymbol{\psi}_{\u}+\eps\nabla \mathfrak{s}_{\u,\eps}$.

\subsection{Definition of the magnetic problem}
Regarding what has been done for the electric problem and using the fact that  the magnetic field $\H$ and the electric field $\E$ are linked by \eqref{EqMaxwellInitiale}, we are led to look for $\H$ in $\HToutspace$. More precisely, we consider the problem 
\begin{equation}\label{SectionEEqH}
		\begin{array}{|rll}
		\multicolumn{2}{|l}{\mbox{Find } \u\in\HToutspace\mbox{ such that  }}\\[3pt]
\curl(\eps^{-1}\curl\u) -\om^2 \mu \u&\hspace{-0.25cm}=\curl(\eps^{-1} \boldsymbol{J}) & \mbox{in }  \Om\backslash\{O\} \\[3pt]
\mu\u\cdot \nu  &\hspace{-0.25cm}=0& \mbox{on } \partial \Om\backslash\{O\}.
\end{array}
\end{equation}
This time we assume that $\J$ belongs to $\mmV^0_{-\beta}(\Om)$ where $\beta$ satisfies (\ref{ChoixPoids}). Then we consider the variational problem 
\begin{equation}
\begin{array}{|l}
\text{Find } \u \in\HToutspace \text{ such that } \\
\int_\Om \eps^{-1} \boldsymbol{\psi}_{\u}\cdot \curl\vb\,dx -\om^2\fint_\Om \mu\u\cdot\vb\,dx =\int_\Om \eps^{-1}\,\boldsymbol{J} \cdot\curl\vb\,dx,\qquad\forall\v\in \Hspace^{\out,\beta}(\curl),
		\end{array}
		\label{SectionEFv1H}
	\end{equation}
	in which the term $\fint_\Om\mu\u\cdot \vb\,dx$ is defined by
	$$ \fint_\Om \mu\u\cdot\vb\,dx:= \int_\Om \mu\Tilde{\u}\cdot\overline{\Tilde{\v}}\,dx+\int_{\Om}\mu\nabla \mathfrak{s}_{\u,\mu}\cdot\overline{\Tilde{\v}}\,dx+\int_{\Om}\mu\Tilde{\u}\cdot\nabla\overline{\mathfrak{s}_{\v,\mu}}\,dx-\int_\Om\div(\mu\nabla\mathfrak{s}_{\u,\mu})\,\overline{\mathfrak{s}_{\v,\mu}}\,dx.$$
	By working as in the proof of Proposition \ref{SectionEResEquivDis}, we obtain the
\begin{proposition}\label{SectionEResEquivDisH}
Every solution of \eqref{SectionEEqH} solves \eqref{SectionEFv1H}. Conversely, every solution of \eqref{SectionEFv1H} solves \eqref{SectionEEqH}. 
\end{proposition}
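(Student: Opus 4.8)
The plan is to transcribe the proof of Proposition~\ref{SectionEResEquivDis}, interchanging the roles of $\eps$ and $\mu$: here the field itself carries the $\mathcal{S}_\mu^+$-singularity, $\u=\tilde{\u}+\nabla\mathfrak{s}_{\u,\mu}$, while its curl carries the $\mathcal{S}_\eps^+$-singularity, $\curl\u=\boldsymbol{\psi}_{\u}+\eps\nabla\mathfrak{s}_{\u,\eps}$; the test space is $\Hspace^{\out,\beta}(\curl)$ and the singular test directions now live in $\nabla\mathcal{S}_\mu^+$. The algebraic fact that replaces the one used for the electric field is $\curl(\eps^{-1}\curl\u)=\curl(\eps^{-1}\boldsymbol{\psi}_{\u})$ in $\Om\backslash\{O\}$, which holds because $\eps^{-1}\curl\u-\eps^{-1}\boldsymbol{\psi}_{\u}=\nabla\mathfrak{s}_{\u,\eps}$ is a gradient.

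For the implication \eqref{SectionEFv1H}$\Rightarrow$\eqref{SectionEEqH} I would first insert $\v\in\Cgras^{\infty}_0(\Om\backslash\{O\})\subset\Hspace^{\out,\beta}(\curl)$, which recovers the volume equation $\curl(\eps^{-1}\curl\u)-\om^2\mu\u=\curl(\eps^{-1}\J)$ in the distributional sense on $\Om\backslash\{O\}$. To obtain the boundary condition $\mu\u\cdot\nu=0$ I would exploit a feature absent from the electric setting: since $\Hspace^{\out,\beta}(\curl)$ carries no essential tangential constraint, it contains every gradient $\nabla q$ with $q\in\mV^1_{-\beta}(\Om)$ and arbitrary trace on $\partial\Om$. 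Testing \eqref{SectionEFv1H} against such $\v=\nabla q$ annihilates both curl terms and leaves $\om^2\fint_\Om\mu\u\cdot\nabla\overline{q}\,dx=0$; combined with $\div(\mu\u)=0$ in $\Om\backslash\{O\}$ (which follows for $\om\ne0$ by taking the divergence of the volume equation), an integration by parts reduces this to $\int_{\partial\Om}\mu\u\cdot\nu\,\overline{q}\,ds=0$ for all $q$, whence $\mu\u\cdot\nu=0$ on $\partial\Om\backslash\{O\}$.

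For the converse \eqref{SectionEEqH}$\Rightarrow$\eqref{SectionEFv1H} I would split an arbitrary $\v=\tilde{\v}+\nabla\mathfrak{s}_{\v,\mu}\in\Hspace^{\out,\beta}(\curl)$. The singular direction $\v=\nabla\mathfrak{s}_{\v,\mu}\in\nabla\mathcal{S}_\mu^+$ is handled exactly as in Proposition~\ref{SectionEResEquivDis}: approximate $\mathfrak{s}_{\v,\mu}$ by $(\varphi_n)\subset\mathscr{C}^{\infty}_0(\Om\backslash\{O\})$ in $\mV^1_\beta(\Om)$, note that $\curl\overline{\nabla\varphi_n}=0$ so that pairing the equation with $\nabla\varphi_n$ gives $\om^2\int_\Om\mu\u\cdot\nabla\overline{\varphi_n}\,dx=0$ (the curl--curl term and the source $\curl(\eps^{-1}\J)$ both integrate to zero against a compactly supported gradient), and pass to the limit with the $\mu$-analogue of Lemma~\ref{SectionELemmeEquiv} to reach $\fint_\Om\mu\u\cdot\nabla\overline{\mathfrak{s}_{\v,\mu}}\,dx=0$, which is precisely \eqref{SectionEFv1H} for this $\v$. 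For the regular part $\tilde{\v}\in\mmV^0_{-\beta}(\Om)$ with $\curl\tilde{\v}\in\mmV^0_\beta(\Om)$ I would use a weighted Helmholtz splitting of the kind employed in Proposition~\ref{SectionEEquivNormes1}, $\tilde{\v}=\nabla q+\curl\boldsymbol{\zeta}$: the gradient piece is treated by $\div(\mu\u)=0$ together with $\mu\u\cdot\nu=0$ exactly as above, while the $\curl\boldsymbol{\zeta}$ piece is obtained by integrating the volume equation by parts against $\overline{\curl\boldsymbol{\zeta}}$.

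The one genuinely new point, and the main obstacle, is the boundary. In Proposition~\ref{SectionEResEquivDis} the condition $\v\times\nu=0$ is essential (it is built into $\Hspace_N^{\out,\beta}(\curl)$), so all boundary contributions in the integrations by parts vanish for free and the density of $\Cgras^{\infty}_0(\Om\backslash\{O\})$ (Proposition~\ref{AppednixResDensity}) settles the regular test functions at once. Here the test space has no essential tangential constraint, so that density is unavailable; consequently $\mu\u\cdot\nu=0$ must be \emph{produced} as a natural condition (via gradients with nonzero trace, as above), and in the converse direction one must separately check that the surviving tangential boundary term $\int_{\partial\Om}\overline{\v}\cdot\big(\eps^{-1}(\boldsymbol{\psi}_{\u}-\J)\times\nu\big)\,ds$ vanishes, i.e. control the tangential trace $\eps^{-1}(\curl\u-\J)\times\nu$. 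Away from $O$ the fields are of standard $\Hspace(\curl)$ regularity, so this is the classical Maxwell boundary analysis; in particular, in Case~1 the singularities are supported near the interior point $O$ and $\partial\Om$ is untouched. The only delicate situation is Case~2, where $O\in\partial\Om$ and the singular contributions reach the boundary, and this is exactly where the more systematic vector-potential and trace results of the appendix are invoked.
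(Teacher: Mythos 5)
The paper's own ``proof'' of this proposition is a single sentence --- ``by working as in the proof of Proposition \ref{SectionEResEquivDis}'' --- so your transcription is in the intended spirit, and most of it is sound: the identity $\curl(\eps^{-1}\curl\u)=\curl(\eps^{-1}\boldsymbol{\psi}_{\u})$, the recovery of the volume equation from test functions in $\Cgras^{\infty}_0(\Om\backslash\{O\})$, the treatment of the singular test directions $\nabla\mathcal{S}^+_\mu$ by approximation and a $\mu$-analogue of Lemma \ref{SectionELemmeEquiv}, and the derivation of $\mu\u\cdot\nu=0$ as a natural condition by testing against gradients (valid for $\om\ne0$, which is the only case where that boundary condition is visible to either formulation) are all correct adaptations. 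You have also correctly located the one place where the analogy with Proposition \ref{SectionEResEquivDis} breaks down, namely that the test space carries no essential tangential constraint, so Proposition \ref{AppednixResDensity} is not directly applicable and a boundary term survives the integration by parts.

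The gap is that you flag this surviving term $\int_{\partial\Om}\overline{\v}\cdot\big(\eps^{-1}(\boldsymbol{\psi}_{\u}-\J)\times\nu\big)\,ds$ as ``the main obstacle'' and then do not close it, and the resolution you sketch cannot work as described. No amount of trace regularity or ``classical Maxwell boundary analysis'' will make this term vanish, because $\eps^{-1}(\curl\u-\J)\times\nu=0$ on $\partial\Om\backslash\{O\}$ is an \emph{independent} boundary condition: it is not implied by the volume equation of \eqref{SectionEEqH} together with $\mu\u\cdot\nu=0$ (the volume equation only yields $\curl\big(\eps^{-1}(\curl\u-\J)\big)\cdot\nu=\om^2\mu\u\cdot\nu=0$, i.e.\ the surface divergence of that tangential trace vanishes, not the trace itself). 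This condition is precisely the transcription of $\E\times\nu=0$ from \eqref{CLMaxwell}; it is encoded by the unconstrained test space in \eqref{SectionEFv1H} as a natural condition, and the implication \eqref{SectionEEqH}$\Rightarrow$\eqref{SectionEFv1H} only holds if one reads it into the strong formulation (or states it explicitly alongside $\mu\u\cdot\nu=0$). The paper sidesteps this entirely by pointing to the electric proof, where the issue does not arise since the essential condition $\v\times\nu=0$ on the test functions kills the boundary term. Incidentally, your closing worry about Case~2 and singularities touching the boundary is not the real difficulty here: near $O$ the weighted spaces and cut-offs behave as in the electric case; the problem is the tangential condition on all of $\partial\Om$.
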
	
	
\subsection{Equivalent formulation for the magnetic field}
In order to take into account the divergence free condition, we define the spaces 
\begin{equation}\label{DefSpaceChampH}
\begin{array}{rcl}
\mX_T^{\out,\beta}&\hspace{-0.2cm}:=&\hspace{-0.2cm}\{\u\in\Hspace^{\out,\beta}(\curl) \, |\, \div(\mu\u)=0\mbox{ in }\Om\backslash\{O\},\,\mu\u\cdot\nu=0\,\mbox{  on }\partial\Om\backslash\{O\}\},\\[6pt]
\XToutspace&\hspace{-0.2cm}:=&\hspace{-0.2cm}\{\u\in \HToutspace \, |\, \div(\mu\u)=0\mbox{ in }\Om\backslash\{O\},\,\mu\u\cdot\nu=0\,\mbox{  on }\partial\Om\backslash\{O\}\}.
\end{array}
\end{equation}
We endow $\mX_T^{\out,\beta}$, $\XToutspace$ respectively with the norms of $\Hspace^{\out,\beta}(\curl)$,  $\HToutspace$. By replacing $\Hspace^{\out,\beta}(\curl)$, $\HToutspace$ respectively by $\mX_T^{\out,\beta}$, $\XToutspace$ in \eqref{SectionEFv1H}, we get the problem
\begin{equation}\label{SectionEFv2H}
\begin{array}{|l}
\text{Find } \u \in   \XToutspace \text{ such that } \\[6pt]
\int_\Om \eps^{-1} \boldsymbol{\psi}_{\u}\cdot \curl\vb\,dx -\om^2\fint_\Om \mu\u\cdot\vb\,dx =\int_\Om \eps^{-1}\,\boldsymbol{J} \cdot\curl\vb\,dx,\qquad\forall\v\in  \mX_T^{\out,\beta}.
		\end{array}
\end{equation}
The analogue of Proposition \ref{SectionEResEquiv} writes
\begin{proposition}\label{SectionEResEquivH}
Assume that $\om\ne0$.\\[3pt]
$\bullet$ Every solution of \eqref{SectionEFv1H} solves \eqref{SectionEFv2H}.\\[3pt]
$\bullet$ Suppose that Assumptions \ref{AssumptionCritique}-\ref{AssumptionNoTrappedMode} hold. Then every solution of \eqref{SectionEFv2H} solves \eqref{SectionEFv1H}. 
\end{proposition}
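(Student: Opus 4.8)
The plan is to transcribe the proof of Proposition~\ref{SectionEResEquiv} into the magnetic setting, exchanging the Dirichlet data for $\eps$ with the Neumann data for $\mu$ (so that $\mathring{\mV}$, $\mX_T(1)$ and $\mA^\out_\eps$ are replaced by $\mathcal{V}$, $\mX_N(1)$ and $\mA^\out_\mu$), and exploiting the fact that here the source $\J$ enters \eqref{SectionEFv1H}--\eqref{SectionEFv2H} only through $\curl\vb$, which makes that term insensitive to gradients.

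For the forward implication, let $\u$ solve \eqref{SectionEFv1H}; then $\u\in\HToutspace$ by hypothesis, so it suffices to verify the two constraints defining $\XToutspace$, after which the variational identity restricts automatically from $\Hspace^{\out,\beta}(\curl)$ to its subspace $\mX_T^{\out,\beta}$. I would insert $\v=\nabla\varphi$ into \eqref{SectionEFv1H}, first for $\varphi\in\mathscr{C}^\infty_0(\Om\backslash\{O\})$ and then for $\varphi$ smooth up to the boundary and supported away from $O$. Since $\curl\overline{\nabla\varphi}=0$, the principal term $\int_\Om\eps^{-1}\psib_\u\cdot\curl\overline{\nabla\varphi}\,dx$ and the right-hand side $\int_\Om\eps^{-1}\J\cdot\curl\overline{\nabla\varphi}\,dx$ both vanish, and because $\om\ne0$ the identity collapses to $\fint_\Om\mu\u\cdot\nabla\overline{\varphi}\,dx=0$. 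For such gradient test fields one has $\fint_\Om\mu\u\cdot\nabla\overline{\varphi}\,dx=\int_\Om\mu\u\cdot\nabla\overline{\varphi}\,dx$ (the magnetic counterpart of Lemma~\ref{SectionELemmeEquiv}), so the first family of test functions yields $\div(\mu\u)=0$ in $\Om\backslash\{O\}$ and the second, after integrating by parts, yields the natural condition $\mu\u\cdot\nu=0$ on $\partial\Om\backslash\{O\}$, the boundary term being retained precisely because these Neumann-type potentials need not vanish on $\partial\Om$. Hence $\u\in\XToutspace$ solves \eqref{SectionEFv2H}.

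The converse is the substantial direction. Let $\u\in\XToutspace$ solve \eqref{SectionEFv2H}; since $\XToutspace\subset\HToutspace$, I only need to upgrade the identity from $\v\in\mX_T^{\out,\beta}$ to an arbitrary $\v=\tilde{\v}+\nabla\mathfrak{s}_{\v,\mu}\in\Hspace^{\out,\beta}(\curl)$, by subtracting a pure-gradient correction that places the remainder in the constrained test space. Using the Neumann-type analogue of the weighted Helmholtz decomposition of Proposition~\ref{AppendixHelmholtzWeighted}, I would write $\tilde{\v}=\nabla\varphi+\curl\boldsymbol{\zeta}$ with $\varphi\in\mathcal{V}^1_{-\beta}(\Om)$ and $\boldsymbol{\zeta}\in\mX_N(1)$ satisfying $\curl\boldsymbol{\zeta}\in\mmV^0_{-\beta}(\Om)$. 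Since $\curl\boldsymbol{\zeta}\in\mmV^0_{-\beta}(\Om)$ and $\div(\mu\nabla\mathfrak{s}_{\v,\mu})\in\mL^2(\Om)$ by \eqref{PropertySingu}, the functional $\div(\mu(\curl\boldsymbol{\zeta}+\nabla\mathfrak{s}_{\v,\mu}))$ belongs to $(\mathcal{V}^1_\beta(\Om))^\ast$, so Theorem~\ref{MainThmScalaire} furnishes a unique $\phi\in\mathcal{V}^\out_\beta(\Om)$ with $\langle\mA^\out_\mu\phi,\phi'\rangle=\fint_\Om\mu(\curl\boldsymbol{\zeta}+\nabla\mathfrak{s}_{\v,\mu})\cdot\nabla\overline{\phi'}\,dx$ for all $\phi'\in\mathcal{V}^1_\beta(\Om)$. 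Setting $\hat{\v}:=\v-\nabla\varphi-\nabla\phi=\curl\boldsymbol{\zeta}+\nabla\mathfrak{s}_{\v,\mu}-\nabla\phi$, one checks $\hat{\v}\in\nabla\mathcal{S}^+_\mu\oplus\mmV^0_{-\beta}(\Om)$ with $\curl\hat{\v}=\curl\v\in\mmV^0_\beta(\Om)$, while the equation for $\phi$ rewrites as $\fint_\Om\mu\hat{\v}\cdot\nabla\overline{\phi'}\,dx=0$ for all $\phi'\in\mathcal{V}^1_\beta(\Om)$, i.e. $\div(\mu\hat{\v})=0$ in $\Om\backslash\{O\}$ and $\mu\hat{\v}\cdot\nu=0$ on $\partial\Om\backslash\{O\}$; thus $\hat{\v}\in\mX_T^{\out,\beta}$ is an admissible test field.

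It then remains to feed $\hat{\v}$ into \eqref{SectionEFv2H} and restore $\v$ term by term. Because $\curl\overline{\v}=\curl\overline{\hat{\v}}$ (the discarded part is a gradient), the principal and source terms are unchanged, $\int_\Om\eps^{-1}\psib_\u\cdot\curl\overline{\v}\,dx=\int_\Om\eps^{-1}\psib_\u\cdot\curl\overline{\hat{\v}}\,dx$ and similarly for the right-hand side; this is the point where the magnetic problem is in fact easier than the electric one, since no hypothesis on $\div\J$ is needed. For the mass term I would invoke that $\u$ is magnetic divergence free, namely $\fint_\Om\mu\u\cdot\nabla\overline{w}\,dx=0$ for all $w\in\mathcal{V}^1_\beta(\Om)$ (the analogue of \eqref{defDivNulle}, valid since $\varphi+\phi\in\mathcal{V}^1_\beta(\Om)$), applied to $w=\varphi+\phi$, to obtain $\fint_\Om\mu\u\cdot\overline{\v}\,dx=\fint_\Om\mu\u\cdot\overline{\hat{\v}}\,dx$. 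Combining these equalities converts the identity satisfied by $\hat{\v}$ into the one for $\v$, which proves the claim. I expect the main obstacle to be the magnetic Helmholtz decomposition adapted to the $\mu$-weighted normal condition, together with the verification that the Neumann solve through $\mA^\out_\mu$ enforces both $\div(\mu\hat{\v})=0$ and $\mu\hat{\v}\cdot\nu=0$: this is exactly where the Dirichlet framework of the electric case must genuinely be replaced by a Neumann one, the non-vanishing boundary values of the test functions $\phi'\in\mathcal{V}^1_\beta(\Om)$ being what encodes the normal trace condition rather than letting it be discarded.
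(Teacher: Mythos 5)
Your proposal is essentially the proof the paper intends: Section \ref{SectionH} explicitly omits it, stating that the argument is obtained from the electric case by exchanging the roles of $\eps$ and $\mu$, and your transcription carries out exactly that exchange, with the two genuinely new points correctly identified — namely that the right-hand side is insensitive to gradient test fields (so no hypothesis on $\div\J$ is needed), and that the normal-trace condition $\mu\hat{\v}\cdot\nu=0$ is recovered weakly from the Neumann solve with test functions $\phi'\in\mathcal{V}^1_\beta(\Om)$ that do not vanish on $\partial\Om$.

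One step needs repair as written. You invoke the second item of Proposition \ref{AppendixHelmholtzWeighted} to decompose $\tilde{\v}$ with $\varphi\in\mathcal{V}^1_{-\beta}(\Om)$ and $\boldsymbol{\zeta}\in\mX_N(1)$, but that item is stated only for fields satisfying $\tilde{\v}\cdot\nu=0$ on $\partial\Om$, whereas the test space $\Hspace^{\out,\beta}(\curl)$ carries no boundary condition, so a general $\tilde{\v}$ does not qualify. The slip is harmless because the decomposition of $\tilde{\v}$ plays no role in enforcing the boundary condition on $\hat{\v}$ (that is done entirely by the correction $\nabla\phi$ from $\mA^\out_\mu$): you can simply use the first item of Proposition \ref{AppendixHelmholtzWeighted}, which applies to any element of $\mmV^0_{-\beta}(\Om)$ and yields $\varphi\in\mathring{\mV}^1_{-\beta}(\Om)$, $\boldsymbol{\zeta}\in\mX_T(1)$ with $\curl\boldsymbol{\zeta}\in\mmV^0_{-\beta}(\Om)$; the remainder of your argument goes through verbatim, since all that is used of the decomposition is that $\v-\curl\boldsymbol{\zeta}-\nabla\mathfrak{s}_{\v,\mu}$ is a gradient of an element of $\mathcal{V}^1_\beta(\Om)$ and that $\curl\boldsymbol{\zeta}\in\mmV^0_{-\beta}(\Om)$ makes the datum of the $\mA^\out_\mu$ problem an element of $(\mathcal{V}^1_\beta(\Om))^\ast$. (Alternatively one can check that the normal-trace hypothesis in the second item is not actually needed for existence of that decomposition, via item $iv)$ of Proposition \ref{AppendixHelmoltzclassique} and Proposition \ref{LaplacianPoids}, but the paper does not state it in that generality.)
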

Define the continuous operators $\mathbb{A}^\out_T,\,\mathbb{K}^\out_T:\XToutspace\to(\mX_T^{\out,\beta})^\ast$ such that for all $\u\in\XToutspace$, $\v\in\mX_T^{\out,\beta}$,
\[
\langle\mathbb{A}^\out_T\u,\v\rangle=\int_\Om\eps^{-1} \boldsymbol{\psi}_{\u}\cdot \curl\vb\,dx,\qquad\quad 	\langle\mathbb{K}^\out_T\u,\v\rangle=\fint_\Om\mu\u\cdot \vb\,dx.
\]
Finally, set $\mathscr{A}^\out_T(\om):=\mathbb{A}^\out_T-\om^2\mathbb{K}^\out_T$ so that for $\u\in\XToutspace$, $\v\in \mX_T^{\out,\beta}$, 
\[
\langle\mathscr{A}^\out_T(\om) \u,\v\rangle=\int_\Om\eps^{-1} \boldsymbol{\psi}_{\u}\cdot \curl\vb\,dx-\om^2\fint_\Om\mu\u\cdot \vb\,dx.
\]
Similarly to (\ref{ImportantIdentity}), we have the important identity, for $\u,\v\in\XToutspace$, 
\begin{equation}\label{ImportantIdentityH}
\langle\mathscr{A}^\out_T(\om) \u,\v\rangle-\overline{\langle\mathscr{A}^\out_T(\om) \v,\u\rangle}=-q_\eps(\mathfrak{s}_{\u,\eps},\mathfrak{s}_{\v,\eps})-\om^2q_\mu(\mathfrak{s}_{\u,\mu},\mathfrak{s}_{\v,\mu}).
\end{equation}

\subsection{Equivalent norms in \texorpdfstring{$\mX_T^{\out,\beta}$ and $\XToutspace$ }{TEXT}}
	
Similarly to Propositions \ref{SectionEEquivNormes1}, \ref{SectionEEquivNormes2}, we have the following results. 
\begin{proposition}\label{SectionEEquivNormes1H}
Suppose that Assumptions \ref{AssumptionCritique}-\ref{AssumptionNoTrappedMode} hold.  There is a constant $C>0$ such that
		\begin{equation}
			\|\tilde{\u}\|_{\mmV^0_{-\beta}(\Om)}+\|\nabla \mathfrak{s}_{\u,\mu}\|_{\mmV^0_{\beta}(\Om)}\leq C\,\|\curl \u\|_{\mmV^0_\beta(\Om)},\qquad\forall \u\in \mX_T^{\out,\beta}.
		\end{equation}
		Consequently, the norms $\|\cdot\|_{\Hspace^{\out,\beta}(\curl)}$ and $\|\curl \cdot\|_{\mmV^0_\beta(\Om)}$ are equivalent in $\mX_T^{\out,\beta}$.
	\end{proposition}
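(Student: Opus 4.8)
The plan is to mirror the proof of Proposition \ref{SectionEEquivNormes1}, exchanging the roles of $\eps$ and $\mu$ and of the Dirichlet and Neumann functional settings. Fix $\u\in\mX_T^{\out,\beta}$ and write $\u=\Tilde{\u}+\nabla\mathfrak{s}_{\u,\mu}$ with $\Tilde{\u}\in\mmV^0_{-\beta}(\Om)$ and $\mathfrak{s}_{\u,\mu}\in\mathcal{S}_\mu^+$. First I would invoke the Neumann-type Helmholtz decomposition of Proposition \ref{AppendixHelmoltzclassique} (the companion of the item used in the electric case) to split $\Tilde{\u}=\nabla\varphi+\curl\boldsymbol{\psi}$ with $\varphi\in\mH^1_\#(\Om)$ and $\boldsymbol{\psi}\in\mX_N(1)$. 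This is the natural dual of the electric splitting, where the scalar potential is now only determined up to a constant and the solenoidal part carries a vanishing tangential trace.

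Next, since $\curl\nabla\mathfrak{s}_{\u,\mu}=0$, one has $\curl(\curl\boldsymbol{\psi})=\curl\u\in\mmV^0_\beta(\Om)$; and because $\boldsymbol{\psi}\in\mX_N(1)$ has vanishing tangential trace, its curl satisfies $\curl\boldsymbol{\psi}\cdot\nu=0$ on $\partial\Om\backslash\{O\}$. Together with $\div(\curl\boldsymbol{\psi})=0$ this places $\curl\boldsymbol{\psi}$ in the normal-trace weighted space
\[
\mZ^\beta_T:=\{\v\in\Lspace^2(\Om)\,|\,\curl\v\in\mmV^0_\beta(\Om),\ \div\,\v=0,\ \v\cdot\nu=0\mbox{ on }\partial\Om\backslash\{O\}\},
\]
the normal analogue of $\mZ^\beta_N$ in \eqref{SectionHzbeta}. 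Invoking the weighted regularity result of Proposition \ref{AppendixHelweightedRegularity} in this setting then upgrades $\curl\boldsymbol{\psi}$ to $\mmV^0_{-\beta}(\Om)$ with
\[
\|\curl\boldsymbol{\psi}\|_{\mmV^0_{-\beta}(\Om)}\le C\,\|\curl\u\|_{\mmV^0_\beta(\Om)}.
\]

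Then I would exploit the constraint $\div(\mu\u)=0$ in $\Om\backslash\{O\}$. Setting $c_{\u}:=|\Om|^{-1}\int_\Om\mathfrak{s}_{\u,\mu}$ so that $\mathfrak{s}_{\u,\mu}-c_{\u}+\varphi$ is mean free and lies in $\mathcal{V}^\out_\beta(\Om)$ (here $\beta<1/2$ guarantees $1\in\mV^1_{-\gamma}(\Om)$ for $\gamma=\beta$, exactly as in Proposition \ref{SectionEEquivNormes2}), the divergence constraint gives the weak identity
\[
\langle\mA^\out_\mu(\mathfrak{s}_{\u,\mu}-c_{\u}+\varphi),\phi'\rangle=-\int_\Om\mu\,\curl\boldsymbol{\psi}\cdot\nabla\overline{\phi'}\,dx,\qquad\forall\phi'\in\mathcal{V}^1_\beta(\Om).
\]
Since $\mA^\out_\mu:\mathcal{V}^\out_\beta(\Om)\to(\mathcal{V}^1_\beta(\Om))^\ast$ is an isomorphism by Theorem \ref{MainThmScalaire} (which also furnishes the a posteriori regularity placing the solution in the detached-asymptotics space), and constants have vanishing gradient, I obtain $\|\nabla\varphi\|_{\mmV^0_{-\beta}(\Om)}+\|\nabla\mathfrak{s}_{\u,\mu}\|_{\mmV^0_\beta(\Om)}\le C\,\|\curl\boldsymbol{\psi}\|_{\mmV^0_{-\beta}(\Om)}$. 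Combining this with the weighted regularity estimate and $\Tilde{\u}=\nabla\varphi+\curl\boldsymbol{\psi}$ yields the claimed bound on $\|\tilde{\u}\|_{\mmV^0_{-\beta}(\Om)}+\|\nabla\mathfrak{s}_{\u,\mu}\|_{\mmV^0_\beta(\Om)}$, and the equivalence of $\|\cdot\|_{\Hspace^{\out,\beta}(\curl)}$ with $\|\curl\cdot\|_{\mmV^0_\beta(\Om)}$ is then immediate from the definition of the norm.

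The main obstacle I anticipate is the weighted regularity step: one must be sure that the gain from weight $+\beta$ to $-\beta$ for divergence-free fields with prescribed \emph{normal} (rather than tangential) trace is available, i.e. that Proposition \ref{AppendixHelweightedRegularity} and the underlying compact-embedding and vector-potential results genuinely hold in the $\mZ^\beta_T$ setting. This is precisely the normal-trace representation that the systematic appendix proofs are designed to supply, and it is the only place where the argument is more than a formal transcription of the electric case. A secondary, purely bookkeeping point is the mean-value correction needed to land in the mean-free space $\mathcal{V}^\out_\beta(\Om)$, which is handled exactly as in Proposition \ref{SectionEEquivNormes2}.
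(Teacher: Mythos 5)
Your proposal is correct and is exactly the argument the paper intends: the paper omits the proof, stating only that it mirrors Proposition \ref{SectionEEquivNormes1}, and your transcription makes the right substitutions (item $iv)$ of Proposition \ref{AppendixHelmoltzclassique} in place of item $iii)$, the space $\mZ^\beta_T$ covered by Proposition \ref{AppendixHelweightedRegularity}, the isomorphism $\mA^\out_\mu$ on the mean-free space with the constant shift $c_{\u}$ borrowed from Proposition \ref{SectionEEquivNormes2}). The only cosmetic point is that the weak identity tested against all $\phi'\in\mathcal{V}^1_\beta(\Om)$ uses the boundary condition $\mu\u\cdot\nu=0$ on $\partial\Om\backslash\{O\}$ in addition to $\div(\mu\u)=0$, both of which are built into the definition of $\mX_T^{\out,\beta}$.
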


\begin{proposition}\label{SectionEEquivNormes2H}
		Suppose that  Assumptions \ref{AssumptionCritique}-\ref{AssumptionNoTrappedMode} hold. There is a constant $C>0$ such that
		\begin{equation}
			\|\tilde{\u}\|_{\mmV^0_{-\beta}(\Om)}+\|\nabla \mathfrak{s}_{\u,\mu}\|_{\mmV^0_{\beta}(\Om)}\leq C\, \|\boldsymbol{\psi}_{ \u}\|_{\mmV^0_{-\beta}(\Om)},\quad\forall \u\in \XToutspace\mbox{ with }\curl \u=\boldsymbol{\psi}_{\u}+\eps\nabla\mathfrak{s}_{\u,\eps}.
		\end{equation}
		Consequently, in $\XToutspace$ the map $\u\mapsto\|\boldsymbol{\psi}_{ \u}\|_{\mmV^0_{-\beta}(\Om)}$ is a norm which is equivalent to $\|\cdot\|_{\HToutspace}$.
	\end{proposition}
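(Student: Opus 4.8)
The plan is to follow the proof of Proposition \ref{SectionEEquivNormes2} line by line, exchanging the roles of $\eps$ and $\mu$ and, correspondingly, those of the Dirichlet and Neumann scalar frameworks. Since $\XToutspace\subset\mX_T^{\out,\beta}$, Proposition \ref{SectionEEquivNormes1H} immediately reduces the whole statement to the single estimate
\[
\|\curl\u\|_{\mmV^0_\beta(\Om)}\le C\,\|\boldsymbol{\psi}_{\u}\|_{\mmV^0_{-\beta}(\Om)},\qquad\forall\u\in\XToutspace .
\]
Indeed, once this is known, chaining it with Proposition \ref{SectionEEquivNormes1H} controls $\|\tilde{\u}\|_{\mmV^0_{-\beta}(\Om)}$ and $\|\nabla\mathfrak{s}_{\u,\mu}\|_{\mmV^0_{\beta}(\Om)}$ by $\|\boldsymbol{\psi}_{\u}\|_{\mmV^0_{-\beta}(\Om)}$, which is exactly the announced inequality; the equivalence of $\u\mapsto\|\boldsymbol{\psi}_{\u}\|_{\mmV^0_{-\beta}(\Om)}$ with $\|\cdot\|_{\HToutspace}$ then follows at once, the reverse bound $\|\boldsymbol{\psi}_{\u}\|_{\mmV^0_{-\beta}(\Om)}\le\|\u\|_{\HToutspace}$ being trivial.

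To establish the reduced estimate I would fix $\u\in\XToutspace$ and exploit the decomposition $\curl\u=\boldsymbol{\psi}_{\u}+\eps\nabla\mathfrak{s}_{\u,\eps}$ with $\mathfrak{s}_{\u,\eps}\in\mathcal{S}^+_\eps$. The key structural difference with the electric case is that the curl now carries a \emph{Dirichlet} singularity of $\eps$ rather than a Neumann singularity of $\mu$; hence the relevant scalar operator is $\mA^\out_\eps:\mathring{\mV}^\out_\beta(\Om)\to(\mathring{\mV}^1_\beta(\Om))^\ast$, and, since constants do not belong to $\mathring{\mV}^1$, no mean-value subtraction is needed: $\mathfrak{s}_{\u,\eps}$ already lies in $\mathring{\mV}^\out_\beta(\Om)$. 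I would then prove the variational identity
\[
\langle\mA^\out_\eps\mathfrak{s}_{\u,\eps},\phi'\rangle=-\int_\Om\boldsymbol{\psi}_{\u}\cdot\nabla\overline{\phi'}\,dx,\qquad\forall\phi'\in\mathring{\mV}^1_\beta(\Om).
\]
One starts from $\langle\mA^\out_\eps\mathfrak{s}_{\u,\eps},\phi'\rangle=-\int_\Om\div(\eps\nabla\mathfrak{s}_{\u,\eps})\,\overline{\phi'}\,dx$ (definition of $\mA^\out_\eps$, the regular part of $\mathfrak{s}_{\u,\eps}$ being zero), uses $\div(\eps\nabla\mathfrak{s}_{\u,\eps})=\div(\curl\u-\boldsymbol{\psi}_{\u})=-\div\,\boldsymbol{\psi}_{\u}$ since $\div(\curl\u)=0$ (the analogue of \eqref{ProprieteHcurl} guaranteeing $\div\,\boldsymbol{\psi}_{\u}\in\mL^2(\Om)$), and finally integrates by parts. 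Here, in contrast with the electric situation, the Dirichlet test function $\phi'$ vanishes on $\partial\Om$, so the boundary term drops automatically and no boundary condition on $\curl\u$ is required at this stage.

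Because $\boldsymbol{\psi}_{\u}\in\mmV^0_{-\beta}(\Om)$ pairs continuously with $\nabla\phi'\in\mmV^0_\beta(\Om)$, the right-hand side above is an anti-linear form on $\mathring{\mV}^1_\beta(\Om)$ of norm $\le C\,\|\boldsymbol{\psi}_{\u}\|_{\mmV^0_{-\beta}(\Om)}$. Invoking that $\mA^\out_\eps$ is an isomorphism (Theorem \ref{MainThmScalaire}, applicable since $\beta\in(0;\beta_\star)\subset(0;\beta_D)$) yields $\|\mathfrak{s}_{\u,\eps}\|_{\mV^1_\beta(\Om)}\le C\,\|\boldsymbol{\psi}_{\u}\|_{\mmV^0_{-\beta}(\Om)}$, whence $\|\nabla\mathfrak{s}_{\u,\eps}\|_{\mmV^0_\beta(\Om)}\le C\,\|\boldsymbol{\psi}_{\u}\|_{\mmV^0_{-\beta}(\Om)}$. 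Writing $\curl\u=\boldsymbol{\psi}_{\u}+\eps\nabla\mathfrak{s}_{\u,\eps}$ and using the continuous inclusion $\mmV^0_{-\beta}(\Om)\hookrightarrow\mmV^0_\beta(\Om)$ (valid since $\Om$ is bounded and $\beta>0$) then produces the reduced estimate and closes the argument.

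The only genuinely delicate point I anticipate is the regularized integration by parts: $\curl\u$ merely belongs to $\mmV^0_\beta(\Om)$, so the naive pairing $\int_\Om\curl\u\cdot\nabla\overline{\phi'}$ need not converge termwise, and one must route through the $\fint$/$\div(\eps\nabla\mathfrak{s}_{\u,\eps})\in\mL^2(\Om)$ representation (as in \eqref{defExtenInt}–\eqref{PropertySingu}) to keep every integral meaningful. Once this bookkeeping is set up exactly as in Proposition \ref{SectionEEquivNormes2}, the remaining manipulations are routine, and the Dirichlet character of the curl singularity in fact makes the magnetic case slightly cleaner than its electric counterpart.
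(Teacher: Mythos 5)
Your proposal is correct and is essentially the proof the paper intends: the paper states this result without proof as the mirror image of Proposition \ref{SectionEEquivNormes2} under the exchange $\eps\leftrightarrow\mu$ (Dirichlet $\leftrightarrow$ Neumann), and your argument reproduces exactly that, including the correct observation that the Dirichlet framework removes the need for the mean-value normalization and for the boundary condition $\curl\u\cdot\nu=0$ used in the electric case.
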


\subsection{Main results for the magnetic problem}

By exchanging the roles of $\eps$ and $\mu$ in the proofs of \S\ref{paragraphePrincPart}, \S\ref{paragraphCompactness}, we obtain the following theorem. 
\begin{theorem}
Suppose that Assumptions \ref{AssumptionCritique}-\ref{AssumptionNoTrappedMode} hold. Then the operator $\mathbb{A}^\out_T:\XToutspace\to(\mX_T^{\out,\beta})^\ast$ is an isomorphism while $\mathbb{K}^\out_T:\XToutspace\to(\mX_T^{\out,\beta})^\ast$ is compact.
\end{theorem}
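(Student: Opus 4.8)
The plan is to transpose, essentially verbatim, the two-part argument developed for the electric operator in \S\ref{paragraphePrincPart} and \S\ref{paragraphCompactness}, exchanging the roles of $\eps$ and $\mu$ and correspondingly the roles of the normal framework ($\mX_N$, boundary condition $\u\times\nu=0$) and the tangential framework ($\mX_T$, boundary condition $\mu\u\cdot\nu=0$). Concretely, in the magnetic setting the field carries the $\mu$-singularities and its curl carries the $\eps$-singularities, which is the mirror image of the electric case. Everything needed about the scalar problems (Theorem \ref{MainThmScalaire}, Lemma \ref{FluxScalairDefPosi}), the equivalent norms (Propositions \ref{SectionEEquivNormes1H}, \ref{SectionEEquivNormes2H}), and the symplectic identity (\ref{ImportantIdentityH}) is already available in both variants $\sigma=\eps,\mu$.

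To prove that $\mathbb{A}^\out_T$ is an isomorphism, I would first reproduce Proposition \ref{propoOntoE} to construct a continuous operator $\mathbb{T}:\mX_T^{\out,\beta}\to\XToutspace$ satisfying
\[
\langle\mathbb{A}^\out_T \circ \mathbb{T}\u,\v\rangle=\int_\Om r^{2\beta}\curl \u \cdot\curl \vb\,dx,\qquad \forall\u,\v \in\mX_T^{\out,\beta}.
\]
The three-step construction is the mirror image of the electric one: Step 1 now uses that $\mA^\out_\eps:\mathring{\mV}^\out_\beta(\Om)\to(\mathring{\mV}^1_\beta(\Om))^\ast$ is an isomorphism (Theorem \ref{MainThmScalaire}) to produce the scalar potential carrying the $\eps$-singularities; Step 2 invokes the weighted vector potential representation and the associated regularity (Propositions \ref{AppendixWeightedPotenials} and \ref{AppendixHelweightedRegularity}), now in the tangential configuration, to lift the field $\eps(\nabla\varphi+r^{2\beta}\curl\u)$ into a potential $\boldsymbol{\zeta}$; Step 3 uses that $\mA^\out_\mu$ is an isomorphism to subtract a gradient and restore $\div(\mu\,\cdot)=0$. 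Continuity of $\mathbb{T}$ then follows from Propositions \ref{SectionEEquivNormes1H} and \ref{SectionEEquivNormes2H}. Exactly as in Theorem \ref{SectionEPrinicpalPartIsom}, combining the identity above with the Lax--Milgram theorem and Proposition \ref{SectionEEquivNormes1H} shows that $\mathbb{A}^\out_T\circ\mathbb{T}$ is an isomorphism, whence $\mathbb{A}^\out_T$ is onto. For injectivity I take $\u\in\XToutspace$ with $\langle\mathbb{A}^\out_T\u,\v\rangle=0$ for all $\v$; identity (\ref{ImportantIdentityH}) with $\om=0$ gives $-q_\eps(\mathfrak{s}_{\u,\eps},\mathfrak{s}_{\u,\eps})=0$, so Lemma \ref{FluxScalairDefPosi} applied to $\mathcal{S}^+_\eps$ yields $\mathfrak{s}_{\u,\eps}=0$, the identity reduces to $\langle\mathbb{A}^\out_T\u,\v\rangle=\int_\Om\eps^{-1}\boldsymbol{\psi}_{\u}\cdot\overline{\boldsymbol{\psi}_{\v}}\,dx=\overline{\langle\mathbb{A}^\out_T\v,\u\rangle}$, and choosing $\v=\mathbb{T}\u$ forces $\curl\u=0$, hence $\boldsymbol{\psi}_{\u}=0$ and $\u\equiv0$ by Proposition \ref{SectionEEquivNormes2H}.

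For the compactness of $\mathbb{K}^\out_T$ I would mirror Theorem \ref{SectionERescompcaite}. The analogue of the expansion (\ref{defKExpand}) furnishes the estimate $\|\mathbb{K}^\out_T\u\|_{(\mX_T^{\out,\beta})^\ast}\le C(\|\tilde\u\|_{\mmV^0_{-\beta}(\Om)}+\|\mathfrak{s}_{\u,\mu}\|_{\mmV^1_\beta(\Om)})$. Given a bounded sequence $(\u_n)_n$ in $\XToutspace$, the finite-dimensional component in $\mathcal{S}^+_\mu$ converges after extraction, so it suffices to treat the regular parts $\tilde\u_n\in\mmV^0_{-\beta}(\Om)$. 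For these I apply the weighted Helmholtz decomposition (Proposition \ref{AppendixHelmholtzWeighted}) together with the regularity of $\mX_T(1)$ (Proposition \ref{AppendixWeightedRegularityX(1)}), extract a convergent subsequence of the curl-components from the compact embedding of the tangential analogue of $\mZ^\beta_N$ into $\mmV^0_{-\beta}(\Om)$ (Proposition \ref{AppendixHelweightedRegularity}), and then use $\div(\mu\u_n)=0$ together with the isomorphism $\mA^\out_\mu$ to obtain convergence of the gradient parts.

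The step I expect to be the real obstacle is not the algebra of the $\eps\leftrightarrow\mu$ exchange, which is routine, but rather the requirement that all of the vector-potential and compact-embedding results of the appendix hold in the tangential ($\mu\u\cdot\nu=0$) configuration with the same weights as they do in the normal ($\u\times\nu=0$) one. This symmetry between the two boundary conditions is exactly what the systematic formulation of the appendix results is designed to guarantee, and once it is in place no genuinely new argument beyond the exchange of indices is needed; this is why the theorem can be asserted as an immediate consequence of \S\ref{paragraphePrincPart}--\S\ref{paragraphCompactness}.
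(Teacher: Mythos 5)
Your proposal is correct and follows exactly the paper's route: the paper's entire proof of this theorem is the one-line remark that one exchanges the roles of $\eps$ and $\mu$ (and of the normal and tangential frameworks) in the arguments of \S\ref{paragraphePrincPart}--\S\ref{paragraphCompactness}, which is precisely the mirroring you carry out, in somewhat more detail than the paper itself. Your identification of the only genuinely delicate point --- that the appendix's potential-representation and compact-embedding results must be available in the tangential configuration with the same weights --- is also exactly why the paper wrote those appendix results symmetrically for both boundary conditions.
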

This allows one to deduce the following theorem. 
\begin{theorem}\label{SectionEMainResH}
Suppose that  Assumptions \ref{AssumptionCritique}-\ref{AssumptionNoTrappedMode} hold.  \\[2pt]
$\bullet$ The operator   $\mathscr{A}^\out_T(\om)$ is Fredholm of index zero for all $\om\in\R$. \\[3pt]
$\bullet$ $\mathscr{A}^\out_T(\om)$ is an isomorphism for all $\om\in\R\backslash\mathscr{S}_{\H}$  where $\mathscr{S}_{\H}$ is a discrete set which can accumulate only at infinity.\\[3pt]
$\bullet$ The set $\mathscr{S}_{\H}$ is independent of $\beta$ satisfying (\ref{ChoixPoids}).\\[3pt]
$\bullet$ If $\om\in\R\backslash\mathscr{S}_{\H}$, the solution of (\ref{SectionEFv2H}) for $\J\in\mmV^0_{-\beta_\star}(\Om)$ is independent of $\beta$ satisfying (\ref{ChoixPoids}) ($\beta_\star$ is defined in (\ref{ChoixPoids})).
\end{theorem}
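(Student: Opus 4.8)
The plan is to follow, almost verbatim, the treatment of the electric field, exchanging everywhere the roles of $\eps$ and $\mu$: the four assertions are the magnetic counterparts of Theorem \ref{SectionEMainRes} together with Propositions \ref{PropoKernelPoids} and \ref{SectionEIndepedencebeta}. For the first two items I would start from the theorem stated just above, which provides that $\mathbb{A}^\out_T:\XToutspace\to(\mX_T^{\out,\beta})^\ast$ is an isomorphism and that $\mathbb{K}^\out_T$ is compact. Writing $\mathscr{A}^\out_T(\om)=\mathbb{A}^\out_T-\om^2\mathbb{K}^\out_T$, we see that $\mathscr{A}^\out_T(\om)$ is a compact perturbation of an isomorphism, hence Fredholm of index zero for every $\om\in\R$. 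Since $\om\mapsto\mathscr{A}^\out_T(\om)$ is polynomial, hence analytic, and $\mathscr{A}^\out_T(0)=\mathbb{A}^\out_T$ is injective, the analytic Fredholm theorem yields that $\mathscr{A}^\out_T(\om)$ is an isomorphism for all $\om$ outside a set $\mathscr{S}_\H\subset\R$ which is discrete and can accumulate only at infinity.

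For the third item I would mimic the proof of Proposition \ref{PropoKernelPoids}. Given $\u\in\ker\mathscr{A}^\out_T(\om)$ for some $\beta$ satisfying (\ref{ChoixPoids}), the quantity $\langle\mathscr{A}^\out_T(\om)\u,\u\rangle$ vanishes, so the symplectic identity (\ref{ImportantIdentityH}) with $\v=\u$ forces
\[
q_\eps(\mathfrak{s}_{\u,\eps},\mathfrak{s}_{\u,\eps})+\om^2\,q_\mu(\mathfrak{s}_{\u,\mu},\mathfrak{s}_{\u,\mu})=0.
\]
Because $\mathfrak{s}_{\u,\eps}\in\mathcal{S}_\eps^+$ and $\mathfrak{s}_{\u,\mu}\in\mathcal{S}_\mu^+$, the orthogonality relations (\ref{OrthoConditions}) show that both terms lie in $i\R$ and, divided by $i$, are nonnegative; hence each one vanishes, and Lemma \ref{FluxScalairDefPosi} gives $\mathfrak{s}_{\u,\eps}=\mathfrak{s}_{\u,\mu}=0$. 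Thus $\u\in\mX_T(\mu)$ and satisfies $\curl(\eps^{-1}\curl\u)=\om^2\mu\u$, so (using that the data is here homogeneous) one checks that $\eps^{-1}\curl\u\in\mX_N(\eps)$. Invoking the magnetic analogue of Proposition \ref{AppendixX(eps)}, obtained by exchanging $\eps$ and $\mu$, I would conclude that $\u,\curl\u\in\mmV^0_{-\beta}(\Om)$ for every $\beta$ satisfying (\ref{ChoixPoids}); consequently $\ker\mathscr{A}^\out_T(\om)$, and therefore $\mathscr{S}_\H$, does not depend on $\beta$.

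For the last item I would transpose the proof of Proposition \ref{SectionEIndepedencebeta}. Fixing $\beta_1<\beta_2$ satisfying (\ref{ChoixPoids}), the space $\XToutspace$ computed with $\beta_2$ embeds into the one computed with $\beta_1$, so $\e:=\u_1-\u_2$ lies in the $\beta_1$ space. When $\om\neq0$, Propositions \ref{SectionEResEquivDisH} and \ref{SectionEResEquivH} show that $\u_1$ and $\u_2$ both solve (\ref{SectionEEqH}) in $\Om\backslash\{O\}$, whence $\e\in\ker\mathscr{A}^\out_{T,1}(\om)=\{0\}$ and $\u_1=\u_2$. When $\om=0$, I would argue directly at the variational level as in the electric case: testing the two formulations against elements of the smaller space yields $\langle\mathscr{A}^\out_{T,1}(0)\e,\e\rangle=0$, so (\ref{ImportantIdentityH}) gives $q_\eps(\mathfrak{s}_{\e,\eps},\mathfrak{s}_{\e,\eps})=0$ and Lemma \ref{FluxScalairDefPosi} gives $\mathfrak{s}_{\e,\eps}=0$; the reduced identity $\langle\mathscr{A}^\out_{T,1}(0)\e,\v\rangle=\int_\Om\eps^{-1}\psib_\e\cdot\overline{\psib_\v}\,dx=0$, evaluated at $\v=\mathbb{T}\e$ (the operator of Proposition \ref{propoOntoE} in its $\eps\leftrightarrow\mu$ version), forces $\psib_\e=0$, and Proposition \ref{SectionEEquivNormes2H} then yields $\e=0$. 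The hypothesis $\J\in\mmV^0_{-\beta_\star}(\Om)$ guarantees that the right-hand side $\int_\Om\eps^{-1}\J\cdot\curl\vb\,dx$ of (\ref{SectionEFv2H}) is simultaneously well defined for every admissible $\beta$, so that $\u_1$ and $\u_2$ are indeed solutions for the same data.

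I expect the main obstacle to be twofold. The genuinely new technical input is the weighted regularity and compact embedding machinery of Section \ref{SectionAppendix}, and specifically the magnetic counterpart of Proposition \ref{AppendixX(eps)} used in the third item: one must know that a divergence-free field of $\mX_T(\mu)$ whose curl, divided by $\eps$, belongs to $\mX_N(\eps)$ lies automatically in $\mmV^0_{-\beta}(\Om)$ together with its curl, uniformly in $\beta$. Checking that each such appendix result has been established symmetrically in $\eps$ and $\mu$, and in both geometric configurations of (\ref{ChoixCase}), is where care is required. The second delicate point is the $\om=0$ case of the last item: contrary to the electric problem, the magnetic right-hand side carries no factor $\om$ and does not vanish at $\om=0$, so $\u_1$ and $\u_2$ are in general nonzero, and the equality $\u_1=\u_2$ must be extracted from the symplectic identity (\ref{ImportantIdentityH}) together with the operator $\mathbb{T}$ rather than from a trivial injectivity argument.
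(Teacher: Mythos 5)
Your proposal is correct and follows essentially the same route as the paper, which proves this theorem precisely by exchanging the roles of $\eps$ and $\mu$ in the electric arguments (analytic Fredholm alternative from the isomorphism of $\mathbb{A}^\out_T$ and compactness of $\mathbb{K}^\out_T$, then the symplectic identity (\ref{ImportantIdentityH}) combined with Lemma \ref{FluxScalairDefPosi} for the $\beta$-independence statements). Note only that Proposition \ref{AppendixX(eps)} is already stated for both $\mX_N(\eps)$ and $\mX_T(\mu)$, so no separate magnetic analogue needs to be established for your third item.
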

\begin{remark}
Coming back to the initial Maxwell's equations (\ref{EqMaxwellInitiale})- (\ref{CLMaxwell}) in the sense of distributions in $\Om\backslash\{O\}$ thanks to Propositions \ref{SectionEResEquivDis}, \ref{SectionEResEquiv}, \ref{SectionEResEquivDisH}, \ref{SectionEResEquivH}, one can prove that $\mathscr{S}_{\H}$ coincides with the set $\mathscr{S}_{\E}$ appearing in Theorem \ref{SectionEResFinal}.
\end{remark}
Finally, we state the main result for the magnetic field:
\begin{theorem}\label{SectionHResFinal}
Suppose that Assumptions \ref{AssumptionCritique}-\ref{AssumptionNoTrappedMode} hold and that $\J\in\mmV^0_{-\beta}(\Om)$ with $\beta$ satisfying (\ref{ChoixPoids}).\\[3pt]
$\bullet$ For all $\om\in\R\backslash \mathscr{S}_{\H}$ the problem \eqref{SectionEFv2H}(or equivalently  \eqref{SectionEFv1H} when $\om\ne0$) admits a unique solution.\\[3pt]
$\bullet$ When $\om\in \mathscr{S}_{\H}$  the problem  \eqref{SectionEFv2H} (or equivalently  \eqref{SectionEFv1H})  is well-posed in the Fredholm sense. Moreover, it has a kernel of finite dimension that is independent of $\beta$ satisfying (\ref{ChoixPoids}).
	\end{theorem}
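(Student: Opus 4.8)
The plan is to obtain Theorem \ref{SectionHResFinal} exactly as Theorem \ref{SectionEResFinal} was derived in the electric case, namely by combining the equivalence of the variational formulations (Proposition \ref{SectionEResEquivH}) with the Fredholm analysis of $\mathscr{A}^\out_T(\om)$ gathered in Theorem \ref{SectionEMainResH}. First I would check that the data yield an admissible right-hand side: since $\J\in\mmV^0_{-\beta}(\Om)$ and every $\v\in\mX_T^{\out,\beta}$ satisfies $\curl\v\in\mmV^0_\beta(\Om)$ with $\eps^{-1}\in\mL^\infty(\Om)$, the antilinear form $\v\mapsto\int_\Om\eps^{-1}\J\cdot\curl\vb\,dx$ is continuous on $\mX_T^{\out,\beta}$, hence belongs to $(\mX_T^{\out,\beta})^\ast$. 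Thus \eqref{SectionEFv2H} amounts to solving $\mathscr{A}^\out_T(\om)\u=\ell$ for this $\ell\in(\mX_T^{\out,\beta})^\ast$.

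For the first bullet, when $\om\in\R\backslash\mathscr{S}_{\H}$ the second item of Theorem \ref{SectionEMainResH} guarantees that $\mathscr{A}^\out_T(\om)$ is an isomorphism, so \eqref{SectionEFv2H} has exactly one solution; when moreover $\om\neq0$, Proposition \ref{SectionEResEquivH} shows that \eqref{SectionEFv2H} and \eqref{SectionEFv1H} have the same solutions, yielding the stated equivalence. For the second bullet, the first item of Theorem \ref{SectionEMainResH} gives that $\mathscr{A}^\out_T(\om)$ is Fredholm of index zero for every $\om\in\R$; hence, even for $\om\in\mathscr{S}_{\H}$, Problem \eqref{SectionEFv2H} is well posed in the Fredholm sense with finite-dimensional kernel. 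Note that $0\notin\mathscr{S}_{\H}$ because $\mathbb{A}^\out_T=\mathscr{A}^\out_T(0)$ is an isomorphism, so the restriction $\om\neq0$ in Proposition \ref{SectionEResEquivH} is harmless and the equivalence with \eqref{SectionEFv1H} also holds throughout $\mathscr{S}_{\H}$.

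It remains to establish that this kernel is independent of $\beta$ in the range \eqref{ChoixPoids}, which I would prove by transposing the argument of Proposition \ref{PropoKernelPoids}. Given $\u\in\ker\mathscr{A}^\out_T(\om)$ for some admissible $\beta$, taking $\v=\u$ in \eqref{ImportantIdentityH} forces $q_\eps(\mathfrak{s}_{\u,\eps},\mathfrak{s}_{\u,\eps})+\om^2q_\mu(\mathfrak{s}_{\u,\mu},\mathfrak{s}_{\u,\mu})=0$; since the two singular parts lie in $\mathcal{S}^+_\eps$, $\mathcal{S}^+_\mu$ and each flux is purely imaginary, both fluxes vanish separately, and Lemma \ref{FluxScalairDefPosi} then gives $\mathfrak{s}_{\u,\eps}=\mathfrak{s}_{\u,\mu}=0$. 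Consequently $\u$ carries no singularity: it belongs to $\mX_T(\mu)$, solves $\curl(\eps^{-1}\curl\u)=\om^2\mu\u$ in $\Om\backslash\{O\}$, and satisfies $\eps^{-1}\curl\u\in\mX_N(\eps)$. The regularity result for the classical Maxwell spaces (the $\eps\leftrightarrow\mu$ counterpart of Proposition \ref{AppendixX(eps)}) then shows $\u,\curl\u\in\mmV^0_{-\beta}(\Om)$ for every $\beta$ satisfying \eqref{ChoixPoids}, so $\u$ lies in $\ker\mathscr{A}^\out_T(\om)$ for all such $\beta$; the reverse inclusion is immediate because the space $\XToutspace$ associated with a larger weight is contained in the one associated with a smaller weight.

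The argument is in essence a bookkeeping assembly of results already proved in the parallel electric setting, so no genuinely new difficulty arises. The one point requiring care is the last one, the $\beta$-independence of the kernel, since it relies on the vector-potential and weighted-regularity machinery of the appendix (the $\eps\leftrightarrow\mu$ analogue of Proposition \ref{AppendixX(eps)}); this is precisely where the more systematic proofs announced for the vector potential representations of singular fields are needed to cover both the internal tip and the boundary tip of Case~2.
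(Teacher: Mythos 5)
Your proposal is correct and follows essentially the same route as the paper, which obtains the magnetic theorem by exchanging the roles of $\eps$ and $\mu$ in the electric arguments: combining Proposition \ref{SectionEResEquivH} with Theorem \ref{SectionEMainResH}, and transposing Proposition \ref{PropoKernelPoids} via identity \eqref{ImportantIdentityH}, Lemma \ref{FluxScalairDefPosi} and the weighted regularity of $\mX_T(\mu)$ and $\mX_N(\eps)$. Note only that Proposition \ref{AppendixX(eps)} as stated already covers both $\mX_T(\mu)$ and $\mX_N(\eps)$, so no separate $\eps\leftrightarrow\mu$ counterpart is needed for the last step.
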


\begin{remark}
One can check that the results of \S \ref{SectionELimiting} concerning the limiting absorption hold when considering the magnetic problem  instead of the electric one.
\end{remark}

\section{Classical Maxwell framework}\label{SectionNecessity}

In this section, we consider the initial system \eqref{EqMaxwellInitiale}-\eqref{CLMaxwell} when looking for fields $\E$, $\H$ in the classical space $\Lspace^2(\Om)$. This leads us to consider the problems (compare with (\ref{SectionEEqE}), (\ref{SectionEEqH}))
\[
\begin{array}{|rcll}
\multicolumn{4}{|l}{\mbox{Find }\E\in\Hspace_N(\curl)\mbox{ such that}}\\[3pt]
\curl(\mu^{-1}\curl\E) -\om^2 \eps \E&\hspace{-0.25cm}=&\hspace{-0.25cm}i\om\boldsymbol{J} & \hspace{-0.25cm}\mbox{in }  \Om \\[4pt]
						\E\times \nu&\hspace{-0.25cm}=\hspace{-0.25cm}&0  & \hspace{-0.25cm}\mbox{on } \partial \Om
					\end{array}\qquad\begin{array}{|rcll}
\multicolumn{4}{|l}{\mbox{Find }\H\in\Hspace(\curl)\mbox{ such that}}\\[3pt]
						\curl(\eps^{-1}\curl\H) -\om^2 \mu \H&\hspace{-0.25cm}=&\hspace{-0.25cm}\curl(\eps^{-1} \boldsymbol{J}) & \hspace{-0.25cm}\mbox{in }  \Om \\[4pt]
						\mu\H\cdot \nu&\hspace{-0.25cm}=&0  & \hspace{-0.25cm}\mbox{on } \partial \Om.
					\end{array}\hspace{-0.3cm}
\]
Here the volumic equations are written in the sense of distributions of $\Om$. These problems are equivalent to the following variational formulations 
\[
\begin{array}{|l}
\text{Find } \E \in   \Hspace_N(\curl) \text{ such that } \\
\int_\Om \mu^{-1} \curl\E\cdot \curl\vb -\om^2 \eps\E\cdot\vb\,dx =i\om\int_\Om \boldsymbol{J} \cdot\vb\,dx,\qquad\forall\v\in  \Hspace_N(\curl)\phantom{\qquad}
\end{array}
\]
\[
\begin{array}{|l}
\text{Find } \H \in   \Hspace(\curl) \text{ such that } \\
\int_\Om \eps^{-1} \curl\H\cdot \curl\vb -\om^2 \mu\H\cdot\vb\,dx =\int_\Om \eps^{-1}\boldsymbol{J} \cdot\curl\vb\,dx,\qquad\forall\v\in  \Hspace(\curl).
\end{array}
\]
Our goal here is to show that this is not a satisfactory framework when $\kappa_\eps$, $\kappa_\mu$ are critical. Define the continuous operators $\mathcal{A}_N(\om): \Hspace_N(\curl)\to (\Hspace_N(\curl))^\ast$ and  $\mathcal{A}_T(\om): \Hspace(\curl)\to (\Hspace(\curl))^\ast$ such that
\[
\langle\mathcal{A}_N(\om)\u,\v\rangle=\int_\Om\mu^{-1}\curl\u\cdot\curl\overline{\v}-\om^2\eps\u\cdot\overline{\v}\,dx,\qquad\forall \u,\v\in\Hspace_N(\curl)
\]
\[
\langle\mathcal{A}_T(\om)\u,\v\rangle=\int_\Om\eps^{-1}\curl\u\cdot\curl\overline{\v}-\om^2\mu\u\cdot\overline{\v}\,dx,\qquad\forall \u,\v\in\Hspace(\curl).\,
\]

\begin{proposition}
Suppose that Assumptions \ref{AssumptionCritique}-\ref{AssumptionNoTrappedMode} hold. Then the operators $\mathcal{A}_N(\om)$ and $\mathcal{A}_T(\om)$ are not of Fredholm type. 
\end{proposition}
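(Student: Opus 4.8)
The plan is to show that $\mathcal{A}_N(\om)$ (resp. $\mathcal{A}_T(\om)$) is not Fredholm by exhibiting a \emph{singular sequence}, i.e. a sequence $(\u_n)$ with $\|\u_n\|_{\Hspace(\curl)}=1$, $\u_n\rightharpoonup 0$ weakly, and $\|\mathcal{A}_N(\om)\u_n\|_{(\Hspace_N(\curl))^\ast}\to 0$. Such a sequence rules out any a priori estimate of the type $\|\u\|_{\Hspace(\curl)}\le C\,\|\mathcal{A}_N(\om)\u\|_{(\Hspace_N(\curl))^\ast}+\|K\u\|$ with $K$ compact: applying it to $\u_n$ and using $K\u_n\to 0$ (because $\u_n\rightharpoonup 0$ and $K$ is compact) would give $1\le 0$. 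As no such estimate can hold, $\mathcal{A}_N(\om)$ cannot be Fredholm. The key observation making this work is that the obstruction is purely \emph{scalar}: on curl-free fields the principal curl--curl part of $\mathcal{A}_N(\om)$ disappears and only the sign-changing coefficient $\eps$ survives, so everything reduces to the non-Fredholmness of $\mA_\eps$, which holds under Assumption \ref{AssumptionCritique} since $\kappa_\eps\in I_\eps$.

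To build the sequence I exploit self-adjointness. Since $\eps$ is real, the form $(\varphi,\varphi')\mapsto\int_\Om\eps\nabla\varphi\cdot\nabla\overline{\varphi'}\,dx$ is Hermitian, so composing $\mA_\eps$ with the Riesz isomorphism $\mH^1_0(\Om)\to(\mH^1_0(\Om))^\ast$ attached to the inner product $(\nabla\,\cdot,\nabla\,\cdot)_\Om$ yields a bounded self-adjoint operator $T$ on $\mH^1_0(\Om)$. Under Assumption \ref{AssumptionCritique} the contrast is critical, hence $\mA_\eps$, and so $T$, is not Fredholm; equivalently $0\in\sigma_{\mathrm{ess}}(T)$. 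Weyl's criterion then provides $(\varphi_n)\subset\mH^1_0(\Om)$ with $\|\nabla\varphi_n\|_\Om=1$, $\varphi_n\rightharpoonup 0$ and $\|\mA_\eps\varphi_n\|_{(\mH^1_0(\Om))^\ast}\to 0$. I set $\u_n:=\nabla\varphi_n$. As $\varphi_n\in\mH^1_0(\Om)$, one has $\u_n\times\nu=0$ on $\partial\Om$ and $\curl\u_n=0$, so $\u_n\in\Hspace_N(\curl)$ with $\|\u_n\|_{\Hspace(\curl)}=\|\nabla\varphi_n\|_\Om=1$; moreover $\u_n\rightharpoonup 0$ in $\Lspace^2(\Om)$ and $\curl\u_n=0$, hence $\u_n\rightharpoonup 0$ in $\Hspace(\curl)$.

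It remains to estimate the image. Since $\curl\u_n=0$, for every $\v\in\Hspace_N(\curl)$ we have $\langle\mathcal{A}_N(\om)\u_n,\v\rangle=-\om^2\int_\Om\eps\nabla\varphi_n\cdot\overline{\v}\,dx$. Using the classical $\Lspace^2$ Helmholtz decomposition $\v=\nabla\psi+\v_0$, with $\psi\in\mH^1_0(\Om)$ and $\v_0\in\mX_N(1)$, one gets $\|\nabla\psi\|_\Om\le\|\v\|_{\Hspace(\curl)}$ and, by Proposition \ref{PropoEmbeddingCla}, $\|\v_0\|_{\mX_N(1)}\le C\|\v\|_{\Hspace(\curl)}$. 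The gradient part equals $-\om^2\langle\mA_\eps\varphi_n,\psi\rangle$, bounded by $\om^2\|\mA_\eps\varphi_n\|_{(\mH^1_0(\Om))^\ast}\to 0$ uniformly for $\|\v\|_{\Hspace(\curl)}\le 1$. For the coupling term $\int_\Om\eps\nabla\varphi_n\cdot\overline{\v_0}\,dx$ I argue by weak--strong compactness: were it not tending to $0$ uniformly, there would exist $\varepsilon>0$ and $\v_0^{(n)}$ bounded in $\mX_N(1)$ with $|\int_\Om\eps\nabla\varphi_n\cdot\overline{\v_0^{(n)}}\,dx|\ge\varepsilon$; by the compact embedding $\mX_N(1)\hookrightarrow\Lspace^2(\Om)$ of Proposition \ref{PropoEmbeddingCla} a subsequence $\v_0^{(n)}\to\v_0^\ast$ strongly in $\Lspace^2(\Om)$, and then $\int_\Om\eps\nabla\varphi_n\cdot\overline{\v_0^{(n)}}\,dx=\int_\Om\eps\nabla\varphi_n\cdot\overline{\v_0^\ast}\,dx+\int_\Om\eps\nabla\varphi_n\cdot\overline{(\v_0^{(n)}-\v_0^\ast)}\,dx\to 0$, the first integral because $\nabla\varphi_n\rightharpoonup 0$ and $\eps\v_0^\ast\in\Lspace^2(\Om)$ is fixed, the second by Cauchy--Schwarz since $\|\nabla\varphi_n\|_\Om=1$ and $\|\v_0^{(n)}-\v_0^\ast\|_\Om\to 0$, a contradiction. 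Hence $\|\mathcal{A}_N(\om)\u_n\|_{(\Hspace_N(\curl))^\ast}\to 0$ and $(\u_n)$ is the announced singular sequence.

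The delicate point, and the step I expect to be the main obstacle, is precisely this coupling term $\int_\Om\eps\nabla\varphi_n\cdot\overline{\v_0}\,dx$ generated by the non-gradient test functions: one must make it negligible \emph{uniformly} in $\v_0$, and it is exactly here that the compactness of $\mX_N(1)\hookrightarrow\Lspace^2(\Om)$ (Proposition \ref{PropoEmbeddingCla}) makes the reduction to the scalar obstruction rigorous, avoiding any delicate trace estimate at the conical tip. The operator $\mathcal{A}_T(\om)$ is handled identically with the roles of $\eps$ and $\mu$ exchanged: one uses the self-adjointness of $\mA_\mu$ on $\mH^1_\#(\Om)$ (non-Fredholm since $\kappa_\mu\in I_\mu$), takes $\u_n=\nabla\varphi_n$ for a Weyl sequence $(\varphi_n)\subset\mH^1_\#(\Om)$, splits test functions via the Neumann Helmholtz decomposition $\v=\nabla\psi+\v_0$ with $\v_0\in\mX_T(1)$, and concludes with the compact embedding $\mX_T(1)\hookrightarrow\Lspace^2(\Om)$ of Proposition \ref{PropoEmbeddingCla}.
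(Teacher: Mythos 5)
Your proof is correct, and it takes a genuinely different route from the paper's. The paper argues by contradiction with the a priori estimate implied by Fredholmness (treating separately $\om=0$, the injective case, and the case of a finite-dimensional kernel) and builds an explicit blowing-up sequence $\u_n=\nabla(r^{1/n}\chi\, s^{\eps}_{1,1,0})$ from the hypersingularity: $\|\u_n\|_{\Hspace(\curl)}\to+\infty$ while $\|\mathcal{A}_N(\om)\u_n\|_{(\Hspace_N(\curl))^\ast}$ stays bounded, and controlling the coupling with the $\curl\boldsymbol{\psi}$ part of the test functions then requires the weighted embedding $\mX_N(1)\subset\mmV^0_{-\gamma}(\Om)$ of Proposition \ref{AppendixWeightedRegularityX(1)}, since $\nabla\mathfrak{s}_n$ is only uniformly bounded in $\mmV^0_{\gamma}(\Om)$. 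You normalize the other way: you extract an abstract Weyl sequence from the non-Fredholmness of the scalar operator $\mA_\eps$ (established in the paper via Proposition \ref{PropoFredholmCriterion}) using its self-adjoint realization, set $\u_n=\nabla\varphi_n$ with $\|\nabla\varphi_n\|_{\Om}=1$ and $\nabla\varphi_n\rightharpoonup 0$, and then the coupling term dies by weak--strong convergence using only the classical Weber compactness $\mX_N(1)\hookrightarrow\Lspace^2(\Om)$ of Proposition \ref{PropoEmbeddingCla}; no weighted machinery is needed, and the singular-sequence formulation absorbs the paper's three cases into a single argument (for $\om=0$ your $\u_n$ even lie in the kernel). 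What the paper's version buys is explicit information on which fields destroy Fredholmness (gradients of the black-hole singularities), consistent with its narrative; what yours buys is brevity and independence from the appendix. Only cosmetic points remain: the near-maximizers $\v_0^{(n)}$ in your contradiction step should be indexed along a subsequence, and the bound $\|\nabla\psi\|_{\Om}\le\|\v\|_{\Om}$ deserves the one-line justification by $\Lspace^2$-orthogonality of the Helmholtz decomposition.
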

\begin{proof}
Let us work on $\mathcal{A}_N(\om)$, the reasoning being similar for $\mathcal{A}_T(\om)$. If $\om=0$, $\nabla\mH^1_0(\Om)$ belongs to the kernel of $\mathcal{A}_N(0)$. Since this space is of infinite dimension, we infer that $\mathcal{A}_N(\om)$ is not of Fredholm type. Note that this is true also when $\kappa_\eps$, $\kappa_\mu$ are not critical, in particular when $\kappa_\eps$, $\kappa_\mu$ are positive, contrary to what follows.\\
Assume now that $\om\ne0$. We proceed by contradiction and assume that $\mathcal{A}_N(\om)$ is of Fredholm type. Since $\mathcal{A}_N(\om)$ is symmetric, necessarily it is of index zero.\\
\newline
If additionally $\mathcal{A}_N(\om)$ is injective, then it is an isomorphism and there is a constant $c>0$ such that there holds
\begin{equation}\label{AssumInjectivity}
\|\u\|_{\Hspace(\curl)} \le c\,\|\mathcal{A}_N(\om)\u\|_{(\Hspace_N(\curl))^\ast},\qquad\forall\u\in\Hspace_N(\curl).
\end{equation}
Let us show that this is impossible. To proceed, for $n\in\N^\ast$, define the field
\begin{equation}\label{DefSingu}
\u_n:=\nabla\mathfrak{s}_n\qquad\mbox{ with }\qquad \mathfrak{s}_n:=r^{1/n}\mathfrak{s}\qquad\mbox{ and }\qquad\mathfrak{s}(x):=\chi(r)s^{\eps}_{1,1,0}(x),
\end{equation}
where $\chi$ and $s^{\eps}_{1,1,0}$ appear respectively in (\ref{DefSpace}) and (\ref{defHypersingu}). Due to the multiplication by the regularization term $r^{1/n}$, the function $\mathfrak{s}_n$ belongs to $\mH^1_0(\Om)$, which ensures that $\u_n$ is in $\Hspace_N(\curl)$. Moreover, we have
\begin{equation}\label{EstimSingu}
\begin{array}{rcl}
\|\u_n\|^2_{\Hspace(\curl)}=\|\nabla\mathfrak{s}_n\|^2_{\Om} &\ge & C\,\int_{0}^{\rho/2}|\nabla(r^{-1/2+i\eta_1^\sigma+1/n})|^2\,r^2dr \\[10pt]
& = & C\,\cfrac{n}{2}\,|-1/2+1/n+i\eta_1^\sigma|^2 \bigg(\cfrac{\rho}{2}\bigg)^{2/n}\underset{n\to+\infty}{\longrightarrow}+\infty.
\end{array}
\end{equation}
Here and below $C>0$ stands for a constant which may change from one line to another but which remains independent of $n$. Now let us compute $\|\mathcal{A}_N(\om)\u_n\|_{(\Hspace_N(\curl))^\ast}$. Since $\u_n=\nabla\mathfrak{s}_n$, we simply have
\begin{equation}\label{NormDual}
\|\mathcal{A}_N(\om)\u_n\|_{(\Hspace_N(\curl))^\ast}= \sup_{\v\in\Hspace_N(\curl)\setminus\{0\}}\cfrac{\bigg|\,\om^2\dsp\int_{\Om}\eps\nabla\mathfrak{s}_n\cdot\v\,dx\bigg|}{\|\v\|_{\Hspace(\curl)}}\,.
\end{equation}
According to the item $iii)$ of Proposition \ref{AppendixHelmoltzclassique}, any $\v\in\Hspace_N(\curl)$ admits the decomposition $\v=\nabla \varphi+\curl \boldsymbol{\psi}$ with $\varphi\in \mH^1_0(\Omega)$ and $\boldsymbol{\psi}\in\mX_T(1)$. By observing that  $\curl\boldsymbol{\psi}$ belongs to $\mX_N(1)$ and by applying  Proposition \ref{AppendixWeightedRegularityX(1)}, we deduce that   $\curl\boldsymbol{\psi}\in\mmV^0_{-\gamma}(\Om)$ for some $\gamma>0$. Furthermore, we have 
\begin{equation}\label{EstimDouble}
\|\nabla\varphi\|_{\Om}+\|\curl\boldsymbol{\psi}\|_{\mmV^0_{-\gamma}(\Om)}\leq C\,\|\v\|_{\Hspace(\curl)}.									
\end{equation}
Let us write
\begin{equation}\label{DecompoNonFred1}
\int_{\Om}\eps\nabla\mathfrak{s}_n\cdot\v\,dx=\dsp\int_{\Om}\eps\nabla\mathfrak{s}_n\cdot\nabla\varphi\,dx+\dsp\int_{\Om}\eps\nabla\mathfrak{s}_n\cdot\curl\boldsymbol{\psi}\,dx.
\end{equation}
To estimate the second integral of the right hand side of (\ref{DecompoNonFred1}), we can remark that 
\[
\|\nabla\mathfrak{s}_n\|_{\mmV^0_{\gamma}(\Om)}\le \|r^{1/n}\nabla\mathfrak{s}\|_{\mmV^0_{\gamma}(\Om)}+\|r^{1/n-1}\mathfrak{s}\|_{\mmV^0_{\gamma}(\Om)} \le C \|\nabla\mathfrak{s}\|_{\mmV^1_{\gamma}(\Om)} \le C,
\]
which yields, together with (\ref{EstimDouble}), 
\begin{equation}\label{DecompoNonFred2}
\bigg|\dsp\int_{\Om}\eps\nabla\mathfrak{s}_n\cdot\curl\boldsymbol{\psi}\,dx\bigg| \le C\,\|\nabla\mathfrak{s}_n\|_{\mmV^0_{\gamma}(\Om)}\,\|\curl\boldsymbol{\psi}\|_{\mmV^0_{-\gamma}(\Om)} \le C\|\v\|_{\Hspace(\curl)}.
\end{equation}
The first integral of the right hand side of (\ref{DecompoNonFred1}) can be expanded as
\begin{equation}\label{DecompoTermGrad}
\begin{array}{rcl}
\dsp\int_{\Om}\eps\nabla\mathfrak{s}_n\cdot\nabla\varphi\,dx&\hspace{-0.3cm}=&\hspace{-0.3cm}\dsp\int_{\Om}\eps\nabla\mathfrak{s}\cdot\nabla(r^{1/n}\varphi)\,dx+\dsp\int_{\Om}\eps\mathfrak{s}\nabla(r^{1/n})\cdot\nabla\varphi\,dx-\dsp\int_{\Om}\eps\nabla\mathfrak{s}\cdot\nabla(r^{1/n})\varphi\,dx\\[12pt]
&\hspace{-0.3cm}=&\hspace{-0.3cm}-\dsp\int_{\Om}\div(\eps\nabla\mathfrak{s})\,r^{1/n}\varphi\,dx+\dsp\int_{\Om}\eps\mathfrak{s}\nabla(r^{1/n})\cdot\nabla\varphi\,dx-\dsp\int_{\Om}\eps\nabla\mathfrak{s}\cdot\nabla(r^{1/n})\varphi\,dx.
\end{array}\hspace{-0.3cm}
\end{equation}
Exploiting that $\div(\eps\nabla\mathfrak{s})=0$ for $r\le\rho/2$, we obtain
\begin{equation}\label{DecompoTermGradEstim1}
\bigg|\dsp\int_{\Om}\div(\eps\nabla\mathfrak{s})\,r^{1/n}\varphi\,dx\bigg|=\bigg|\dsp\int_{\Om\setminus\overline{B(O,\rho/2)}}\div(\eps\nabla\mathfrak{s})\,r^{1/n}\varphi\,dx\bigg|\le C\,\|\nabla\varphi\|_{\Om}.
\end{equation}
Besides, using the Cauchy-Schwarz inequality, we can write
\begin{equation}\label{DecompoTermGradEstim2}
\begin{array}{rcl}
\bigg|\dsp\int_{\Om}\eps\mathfrak{s}\nabla(r^{1/n})\cdot\nabla\varphi\,dx\bigg| &\le& \cfrac{C}{n}\,\bigg|\dsp\int_{\Om}r^{-2}|\mathfrak{s}|^2r^{2/n}\,dx\bigg|^{1/2}\|\nabla\varphi\|_{\Om}\\[10pt]
 &\le& \cfrac{C}{n}\,\bigg|\dsp\int_{\Om}r^{-3+2/n}\,dx\bigg|^{1/2}\|\nabla\varphi\|_{\Om} \le \cfrac{C}{\sqrt{n}}\,\|\nabla\varphi\|_{\Om}.
\end{array}
\end{equation}
For the third term of the right hand side of (\ref{DecompoTermGrad}), we have
\begin{equation}\label{DecompoTermGradEstim3}
\begin{array}{rcl}
\bigg|\dsp\int_{\Om}\eps\nabla\mathfrak{s}\cdot\nabla(r^{1/n})\varphi\,dx\bigg| &\le& \cfrac{C}{n}\,\bigg|\dsp\int_{\Om}r^{-2}|\mathfrak{s}|^2r^{2/n}\,dx\bigg|^{1/2}\|r^{-1}\varphi\|_{\Om}\\[10pt]
 &\le& \cfrac{C}{n}\,\bigg|\dsp\int_{\Om}r^{-3+2/n}\,dx\bigg|^{1/2}\|\nabla\varphi\|_{\Om} \le \cfrac{C}{\sqrt{n}}\,\|\nabla\varphi\|_{\Om}.
\end{array}
\end{equation}
Using (\ref{DecompoTermGradEstim1})--(\ref{DecompoTermGradEstim3}) into (\ref{DecompoTermGrad}), thanks to (\ref{EstimDouble}), we get
\begin{equation}\label{DecompoNonFred3}
\bigg|\dsp\int_{\Om}\eps\nabla\mathfrak{s}_n\cdot\nabla\varphi\,dx\bigg| \le C\,\|\nabla\varphi\|_{\Om} \le C\|\v\|_{\Hspace(\curl)}.
\end{equation}
Gathering (\ref{DecompoNonFred2}) and (\ref{DecompoNonFred3}) in (\ref{DecompoNonFred1}), from the definition (\ref{NormDual}), we conclude that 
\begin{equation}\label{BoundedSrcTerm}
\|\mathcal{A}_N(\om)\u_n\|_{(\Hspace_N(\curl))^\ast}\le C. 
\end{equation}
Together with (\ref{EstimSingu}), this contradicts (\ref{AssumInjectivity}) and proves that $\mathcal{A}_N(\om)$ cannot be an isomorphism.\\
\newline
To complete the proof, it remains to consider the case where $\mathcal{A}_N(\om)$ is assumed to have a kernel of dimension $K$. In that situation, let $\boldsymbol{\lambda}^1,\dots,\boldsymbol{\lambda}^K$, $K\ge1$, be an orthonormal basis of $\ker\mathcal{A}_N(\om)$. Set
\[
\begin{array}{rcl}
\tilde{\Hspace}_N(\curl)&:=& \{\u\in\Hspace_N(\curl)\,|\,(\u,\boldsymbol{\lambda}^k)_{\Hspace(\curl)}=0,\,k=1,\dots, K\}\\[4pt]
\boldsymbol{\mathcal{H}} &:=& \{\boldsymbol{F}\in(\Hspace_N(\curl))^\ast\,|\,\langle \boldsymbol{F},\boldsymbol{\lambda}^k\rangle=0,\,k=1,\dots, K\}
\end{array}
\] 
and define the continuous operator $\tilde{\mathcal{A}}_N(\om):=\mathcal{A}_N(\om)|_{\tilde{\Hspace}_N(\curl)}$. Then $\tilde{\mathcal{A}}_N(\om): \tilde{\Hspace}_N(\curl)\to\boldsymbol{\mathcal{H}}$ is an isomorphism so that there is a constant $c>0$ such that there holds
\begin{equation}\label{AssumInjectivityBis}
\|\u\|_{\Hspace(\curl)} \le c\,\|\tilde{\mathcal{A}}_N(\om)\u\|_{(\Hspace_N(\curl))^\ast},\qquad\forall\u\in\tilde{\Hspace}_N(\curl).
\end{equation}
Let us show that this is impossible. For $n\in\N^\ast$, take $\u_n$ as in (\ref{DefSingu}) and set 
\[
\tilde{\u}_n:=\u_n-\sum_{k=1}^K a^k_n\,\boldsymbol{\lambda}^k\qquad\mbox{ with }\qquad a^k_n:=(\u_n,\boldsymbol{\lambda}^k)_{\Hspace(\curl)}=(\nabla \mathfrak{s}_n,\boldsymbol{\lambda}^k)_{\Om}.
\]
The field $\tilde{\u}_n$ clearly belongs to $\tilde{\Hspace}_N(\curl)$. Moreover one observes that the $\boldsymbol{\lambda}^k$ are not only in $\Hspace_N(\curl)$ but also in $\mX_N(\eps)$ (see (\ref{DefSpaceMaxwell}) for the definition of this space). From the proof of Proposition \ref{PropoKernelPoids}, we infer that the $\boldsymbol{\lambda}^k$ are in $\mmV^0_{-\gamma}(\Om)$ for a certain fixed $\gamma>0$. Since $\u_n$ does not belong to $\mmV^0_{-\gamma}(\Om)$ for $n$ large enough, we deduce that $\tilde{\u}_n$ is non zero for $n$ large enough. On the other hand, since $(a^k_n)_n$ converges to $(\nabla \mathfrak{s},\boldsymbol{\lambda}^k)_{\Om}$ as $n$ tends to $+\infty$, from (\ref{EstimSingu}) we get 
\[
\lim_{n\to+\infty}\|\tilde{\u}_n\|_{\Hspace(\curl)}=+\infty.
\]
Finally, since $\tilde{\mathcal{A}}_N(\om)\tilde{\u}_n=\mathcal{A}_N(\om)\u_n$, estimate (\ref{BoundedSrcTerm}) guarantees that $(\|\tilde{\mathcal{A}}_N(\om)\tilde{\u}_n\|_{(\Hspace_N(\curl))^\ast})_n$ remains bounded when $n$ tends to $+\infty$. From this, we deduce that the estimate (\ref{AssumInjectivityBis}) cannot hold. This completes the proofs of the fact that $\mathcal{A}_N(\om)$ cannot be Fredholm of index zero.
\end{proof}

\section{Concluding remarks}
In this article, we investigated the time-harmonic Maxwell's equations in a setting involving an inclusion of negative index material whose geometry  is smooth except at a point where it has a conical tip. We proved that when the contrasts of the electromagnetic parameters take certain critical values, the  Maxwell's equations are no longer well-posed in the classical framework due to the existence of hypersingularities also known as black-hole singularities. We explained how to take into account certain of these singularities, by imposing radiation conditions at the tip, to get new frameworks in which Fredholmness is recovered. Let us mention that for the corresponding solutions, both the field and the curl of the field are singular. The selection of the outgoing behaviour is realized by applying the Mandelstam radiation principle to the two scalar operators for $\eps$ and $\mu$ which appear in the analysis. However the Mandelstam radiation principle does not provide a unique setting where the problem is well-posed. To select the one which is interesting from a physical point of view, additionally we applied the limiting absorption principle that we were able to justify when it holds for the scalar problems.\\
\newline
In the possible continuations of this work, one could consider situations where the negative index inclusion has some edges. In that case, the coefficient in front of the hypersingularities should be replaced by a function. This makes the analysis more involved. Another interesting direction would be to study the approximation of the solution in the new framework by numerical methods. For this, the difficulty lies in the fact that since the field and the curl of the field are infinitely oscillating when approaching the tip, simple mesh based methods fail to capture the phenomenon and produce spurious reflections. In \cite{BCCC16}, a method using Perfectly Matched Layers (PMLs) has been proposed to deal with the 2D scalar case. More precisely, the idea is to implement PMLs in a neighbourhood of the corner to absorb the energy leaving the domain through the black-hole singularities. For the moment it is not clear if it can be adapted to the Maxwell problem. In practice, the coefficients $\eps$ and $\mu$ depend on the frequency $\om$. Therefore it would be relevant to incorporate this aspect in the analysis as it has been done for the scalar problems in \cite{HaPa20}. For the Maxwell's equations in the singular geometry considered here, however, this seems far from being obvious because the number of black-hole waves and their features (the singular exponent in particular) depend on $\om$.

\section{Appendix}\label{SectionAppendix}
We remind the reader that by assumption, $\Om$ is simply connected and that its boundary is connected. When this hypothesis is not satisfied, the results below must be adapted.

\subsection{Classical  Helmholtz decompositions }
We start with some well-known results.
\begin{proposition}\label{AppendixHelmoltzclassique}~\\[2pt]
								i)\, According to \cite[Theorem~3.12]{AmrBerDau98}, if $\u\in \Lspace^2(\Om)$ satisfies $\div\,\u=0$ in $\Om$, then there exists a unique $\boldsymbol{\psi}\in \mX_T(1)$ such that $\u= \curl \boldsymbol{\psi}$.\\
								\newline
								ii)\, According to \cite[Theorem~3.17]{AmrBerDau98}), if $\u\in \Lspace^2(\Om)$ satisfies $\div\,\u=0$ in $\Omega$ and  $\u\cdot\nu=0$ on $\partial\Omega$, then there exists a unique $\boldsymbol{\psi}\in \mX_N(1)$ such that $\u= \curl \boldsymbol{\psi}$.\\
								\newline
								iii)\, According to \cite[Thereom~3.45]{Mon03}, every $\u\in \Lspace^2(\Om)$ can be decomposed as 
								$
								\u= \nabla p +\curl \boldsymbol{\psi},
								$
								with $p\in \mH^1_0(\Omega)$ and $\boldsymbol{\psi}\in \mX_T(1)$ which are uniquely defined. \\
								\newline
								iv)\, According to \cite[Remark~3.46]{Mon03}, every $\u\in \Lspace^2(\Om)$ can be decomposed as 
								$
								\u= \nabla p +\curl \boldsymbol{\psi},
								$
								with $p\in \mH^1_{\#}(\Om)$ and $\boldsymbol{\psi}\in \mX_N(1)$ which are uniquely defined.
							\end{proposition}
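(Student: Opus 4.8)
The four statements are all classical, and each is already attributed above to a precise result of \cite{AmrBerDau98,Mon03}; accordingly the shortest route is simply to invoke those references. For completeness, the plan is to take the two vector potential results i) and ii) as the genuine input and to deduce the Helmholtz decompositions iii) and iv) from them by a Lax--Milgram argument at the level of the scalar potential.

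For iii), given $\u\in\Lspace^2(\Om)$, I would first define $p\in\mH^1_0(\Om)$ as the unique solution of
\[
\int_\Om\nabla p\cdot\nabla\overline{q}\,dx=\int_\Om\u\cdot\nabla\overline{q}\,dx,\qquad\forall q\in\mH^1_0(\Om),
\]
which exists by Lax--Milgram thanks to the Poincaré inequality. By construction $\w:=\u-\nabla p$ satisfies $\int_\Om\w\cdot\nabla\overline{q}\,dx=0$ for all $q\in\mH^1_0(\Om)$, i.e. $\div\,\w=0$ in $\Om$, so item i) furnishes a unique $\boldsymbol{\psi}\in\mX_T(1)$ with $\w=\curl\boldsymbol{\psi}$, giving the decomposition $\u=\nabla p+\curl\boldsymbol{\psi}$. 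Uniqueness follows from the orthogonality relation $\int_\Om\nabla p\cdot\curl\overline{\boldsymbol{\psi}}\,dx=0$, obtained by integrating by parts and using $p=0$ on $\partial\Om$ together with $\div\curl=0$: if $\nabla p+\curl\boldsymbol{\psi}=0$, testing against $\nabla p$ yields $\nabla p=0$, hence $p=0$, and then uniqueness of $\boldsymbol{\psi}$ in $\mX_T(1)$ is exactly the uniqueness in i).

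For iv), the only change is to replace the Dirichlet potential by a Neumann one: I would seek $p\in\mH^1_\#(\Om)$ with
\[
\int_\Om\nabla p\cdot\nabla\overline{q}\,dx=\int_\Om\u\cdot\nabla\overline{q}\,dx,\qquad\forall q\in\mH^1_\#(\Om),
\]
well posed by the Poincaré--Wirtinger inequality. Testing additionally against constants (for which both sides vanish) shows the identity holds for every $q\in\mH^1(\Om)$, whence $\w:=\u-\nabla p$ satisfies $\div\,\w=0$ in $\Om$ and $\w\cdot\nu=0$ on $\partial\Om$ in the weak sense; item ii) then provides the unique $\boldsymbol{\psi}\in\mX_N(1)$ with $\w=\curl\boldsymbol{\psi}$, and uniqueness of $p$ follows as before.

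The real content, and the step I expect to be the main obstacle, lies in i) and ii) themselves: the existence of a vector potential for a divergence-free field and the uniqueness gained by fixing the gauge in $\mX_T(1)$ or $\mX_N(1)$. These rely crucially on the standing topological assumptions that $\Om$ is simply connected with connected boundary, so that no harmonic (cohomology) fields obstruct the construction, and their proof in \cite{AmrBerDau98} proceeds by extending the field, solving a whole-space or exterior problem, and then correcting the gauge. I would not reprove them here but quote \cite{AmrBerDau98} directly, exactly as the statement already does.
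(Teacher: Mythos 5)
Your proposal is correct and matches the paper, which offers no proof of this proposition beyond the citations to \cite{AmrBerDau98} and \cite{Mon03}. Your additional Lax--Milgram derivation of items iii) and iv) from i) and ii) (with the orthogonality $\int_\Om\nabla p\cdot\curl\overline{\boldsymbol{\psi}}\,dx=0$ giving uniqueness) is a standard and correct argument, though the paper simply quotes \cite{Mon03} for those items as well.
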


In our analysis, we needed some representation results with potentials similar to above but in weighted Sobolev spaces. To establish them, we recall   results concerning the classical Laplace operator in weighted Sobolev spaces.	For the proofs, we refer the reader to the monographs \cite{KoMR97,KoMR01,MaNP00} (see also \cite{DNBL90a,DNBL90b}).			
						
\subsection{Classical Laplace operator}

For $\gamma\in\R$, define the linear and continuous operator $\mA_D^{\gamma}:\mathring{\mV}^1_{\gamma}(\Om)\to(\mathring{\mV}^1_{-\gamma}(\Om))^{\ast}$ such that
	\[
	\langle \mA^{\gamma}_D\varphi,\varphi'\rangle=\int_{\Om}\nabla\varphi\cdot\nabla\overline{\varphi'}\,dx,\qquad \forall \varphi\in\mathring{\mV}^1_{\gamma}(\Om),\,\varphi'\in\mathring{\mV}^1_{-\gamma}(\Om).
	\]
	In the same way, for $\gamma\in(-5/2;5/2)$, define $\mA^\gamma_N:\mathcal{V}^1_{\gamma}(\Om)\to(\mathcal{V}^1_{-\gamma}(\Om))^\ast$ such that
	\[
	\langle \mA_N^{\gamma}\varphi,\varphi'\rangle=\int_{\Om}\nabla\varphi\cdot\nabla\overline{\varphi'}\,dx,\qquad \forall\varphi\in\mathcal{V}^1_{\gamma}(\Om),\,\varphi'\in\mathcal{V}^1_{-\gamma}(\Om).
	\]
Here the subscripts ${}_D$, ${}_N$	stand for Dirichlet and Neumann. Similarly to $\beta_D$, $\beta_N$ in (\ref{defBetaDN}), set
\begin{equation}\label{defGammaDN}
\begin{array}{l}
\gamma_D:=\min \{\Re e\,(\lambda-1/2) \,|\,\lambda\in\mrm{spec}(\mathscr{L}_D)\mbox{ and }\Re e\,\lambda>-1/2\},\\[6pt]
\gamma_N:=\min (\{\Re e\,(\lambda-1/2) \,|\,\lambda\in\mrm{spec}(\mathscr{L}_N)\mbox{ and }\Re e\,\lambda>-1/2\}\cup\{5/2\})
\end{array}
\end{equation}
where the symbols $\mathscr{L}_D$, $\mathscr{L}_N$ are defined as $\mathscr{L}_\eps$, $\mathscr{L}_\mu$ (see (\ref{DefLeps}), (\ref{DefLmu})) with $\eps\equiv1$, $\mu\equiv1$.	
\begin{proposition}\label{LaplacianPoids} ~\\[2pt]
$\bullet$ For $\gamma\in(-\gamma_D;\gamma_D)$, the operator $\mA_D^{\gamma}:\mathring{\mV}^1_{\gamma}(\Om)\to(\mathring{\mV}^1_{-\gamma}(\Om))^{\ast}$ is an isomorphism.\\[4pt]
$\bullet$ For $\gamma\in(-\gamma_N;\gamma_N)$, the operator $\mA^\gamma_N:\mathcal{V}^1_{\gamma}(\Om)\to(\mathcal{V}^1_{-\gamma}(\Om))^\ast$ is an isomorphism.
\end{proposition}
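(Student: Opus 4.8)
The plan is to argue exactly as in the sign-changing case of Propositions \ref{PropoFredholmCriterion}--\ref{PropositionFreholm}, but to exploit the positivity of the coefficient to pin down both the index and the kernel. I would treat the Dirichlet operator $\mA_D^{\gamma}$ in detail; the Neumann operator $\mA^\gamma_N$ is handled in the same way, with the mean-free constraint of $\mathcal{V}^1_\gamma(\Om)$ playing the role of the Poincar\'e inequality.

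First I would record the Fredholm property. By the Kondratiev theory of \cite{KoMR97,MaNP00} (the constant-coefficient analogue of Proposition \ref{PropoFredholmCriterion}), $\mA_D^{\gamma}$ is of Fredholm type if and only if $\mathscr{L}_D$ has no eigenvalue on the line $\{\Re e\,\lambda=-1/2+\gamma\}$. The crucial simplification here is that, $\eps\equiv1$ being positive, the eigenvalues of $\mathscr{L}_D$ are real: they solve $\lambda(\lambda+1)=\nu$ for $\nu\ge0$ an eigenvalue of the (positive) Laplace--Beltrami operator on $\varpi$ with Dirichlet data, whence $\lambda=-1/2\pm\sqrt{1/4+\nu}\in\R$. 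In particular no eigenvalue lies on the critical line $\Re e\,\lambda=-1/2$, and by the symmetry $\lambda\mapsto-1-\lambda$ the definition \eqref{defGammaDN} of $\gamma_D$ shows that the whole open strip $|\Re e\,\lambda+1/2|<\gamma_D$ is free of spectrum. Hence $\mA_D^{\gamma}$ is Fredholm for every $\gamma\in(-\gamma_D;\gamma_D)$.

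Next I would fix the index and settle the easy sign of the weight. Since the strip is connected and eigenvalue-free, the index of $\mA_D^{\gamma}$ is constant on $(-\gamma_D;\gamma_D)$; at $\gamma=0$ one has $\mA_D^{0}:\mH^1_0(\Om)\to(\mH^1_0(\Om))^\ast$, which is an isomorphism by Lax--Milgram and the Poincar\'e inequality, so the index is $0$ throughout. For $\gamma\le0$ the inclusions $\mathring{\mV}^1_\gamma(\Om)\subset\mH^1_0(\Om)=\mathring{\mV}^1_0(\Om)\subset\mathring{\mV}^1_{-\gamma}(\Om)$ allow me to test the equation $\mA_D^{\gamma}u=0$ with $\varphi'=u$ itself, which gives $\|\nabla u\|_{\Om}=0$ and then $u\equiv0$ since $u\in\mH^1_0(\Om)$. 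A Fredholm operator of index $0$ with trivial kernel is an isomorphism, so $\mA_D^{\gamma}$ is an isomorphism for all $\gamma\in(-\gamma_D;0]$.

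Finally I would reach the positive weights by duality, which I expect to be the only genuinely delicate point. Because the form $(\varphi,\varphi')\mapsto\int_\Om\nabla\varphi\cdot\nabla\overline{\varphi'}\,dx$ is Hermitian, the adjoint of $\mA_D^{\gamma}$ (between the reflexive Hilbert spaces and their antiduals) is precisely $\mA_D^{-\gamma}$. For $\gamma\in(0;\gamma_D)$ the previous step applied to the weight $-\gamma<0$ shows that $\mA_D^{-\gamma}$ is an isomorphism; taking adjoints, $\mA_D^{\gamma}=(\mA_D^{-\gamma})^\ast$ is an isomorphism as well. This closes the Dirichlet case for all $\gamma\in(-\gamma_D;\gamma_D)$. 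The Neumann statement follows verbatim once one replaces $\mH^1_0(\Om)$ by $\mH^1_\#(\Om)=\mathcal{V}^1_0(\Om)$, uses the Poincar\'e--Wirtinger inequality at $\gamma=0$, and notes that the mean-free condition removes the constant (the eigenfunction responsible for $\lambda\in\{0,-1\}$), so that again no eigenvalue of $\mathscr{L}_N$ sits on $\Re e\,\lambda=-1/2$; the restriction $\gamma\in(-5/2;5/2)$ inherited from the definition of $\mathcal{V}^1_\gamma(\Om)$ is already built into $\gamma_N$. The main obstacle, as indicated, is the injectivity for positive weights, where admissible solutions live in a space strictly larger than $\mH^1_0(\Om)$ and could a priori pick up a singular harmonic mode; the Hermitian duality together with the eigenvalue-free strip is exactly what rules this out.
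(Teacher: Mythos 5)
Your argument is correct, and it is essentially the standard Kondratiev-theory proof: the paper itself gives no proof of Proposition \ref{LaplacianPoids} and simply defers to \cite{KoMR97,KoMR01,MaNP00}, where exactly this chain (reality of the Mellin eigenvalues $\lambda=-1/2\pm\sqrt{1/4+\nu}$, hence an eigenvalue-free strip around $\Re e\,\lambda=-1/2$; Fredholmness and constancy of the index there; index $0$ via Lax--Milgram at $\gamma=0$; energy identity for injectivity when $\gamma\le0$; Hermitian duality $(\mA_D^{-\gamma})^\ast=\mA_D^{\gamma}$ for $\gamma>0$) is carried out. One small imprecision in the Neumann case: the mean-free constraint lives on $\Om$ and does not alter the symbol $\mathscr{L}_N$ (whose eigenvalues $\lambda=0,-1$ coming from the constant on $\varpi$ are genuinely present and are what force $\gamma_N\le1/2$); its role is only to give injectivity of $\mA_N^{\gamma}$ for $\gamma\le0$ after the energy identity shows $\nabla u=0$, so your conclusion stands but the justification of ``no eigenvalue on $\Re e\,\lambda=-1/2$'' should rest on the reality of the $\lambda$'s alone.
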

\begin{remark}
The values for $\gamma_D$, $\gamma_N$ depend on the geometry of $\Om$ near $O$ (see \cite[\S2.2]{KoMR01} as well as \cite[\S6.6]{KoMR97} for the Dirichlet problem and \cite[\S2.3]{KoMR01} for the Neumann problem).  When $O\in\Om$ (case 1 in (\ref{ChoixCase})), one has $\gamma_D=\gamma_N=1/2$. When $O\in\partial\Om$ and $\mathcal{K}$  coincides with a half-space (case 2 in (\ref{ChoixCase})), there holds $\gamma_D=3/2$, $\gamma_N=1/2$. Note that we have always $\gamma_D\ge1/2$, $\gamma_N\ge1/2$. This latter property is used in the definition of $\beta_\star$ in (\ref{ChoixPoids}) and ensures that $\beta_\star\le\gamma_D$, $\beta_\star\le\gamma_N$.
\end{remark}

\subsection{Decompositions in weighted Sobolev spaces}

The next result generalizes the items $iii)$, $iv)$ of Proposition \ref{AppendixHelmoltzclassique}.
\begin{proposition}\label{AppendixHelmholtzWeighted}
~\\[4pt]
$\bullet$ Fix $\gamma\in[0;\gamma_D)$. Any $\u\in\mmV^0_{-\gamma}(\Om)$ decomposes as $\u=\nabla \varphi+\curl\boldsymbol{\psi}$  with $\varphi\in\mathring{\mV}^1_{-\gamma}(\Om)$ and $\boldsymbol{\psi}\in\mX_T(1)$ satisfying $\curl\boldsymbol{\psi}\in\mmV^0_{-\gamma}(\Om)$.\\[4pt]
$\bullet$ Fix $\gamma\in[0;\gamma_N)$. Any $\u\in\mmV^0_{-\gamma}(\Om)$ satisfying $\u\cdot \nu=0$ on $\partial\Om$ decomposes as $\u=\nabla \varphi+\curl\boldsymbol{\psi}$  with $\varphi\in\mathcal{V}^1_{-\gamma}(\Om)$ and $\boldsymbol{\psi}\in\mX_N(1)$ satisfying $\curl\boldsymbol{\psi}\in\mmV^0_{-\gamma}(\Om)$.
\end{proposition}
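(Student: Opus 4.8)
The plan is to build the scalar potential $\varphi$ first, as the solution of a suitable weighted Laplace problem, and then to recover the vector potential $\psib$ by feeding the (divergence-free) remainder $\u-\nabla\varphi$ into the classical representations i) and ii) of Proposition \ref{AppendixHelmoltzclassique}. Since $\gamma\ge 0$ we have the continuous embedding $\mmV^0_{-\gamma}(\Om)\subset\Lspace^2(\Om)$, so the classical results apply to the remainder, and the whole construction stays consistent with the $\Lspace^2$ framework while the weighted regularity comes entirely from Proposition \ref{LaplacianPoids}.

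For the first item, given $\u\in\mmV^0_{-\gamma}(\Om)$ I would note that $\varphi'\mapsto\int_\Om\u\cdot\nabla\overline{\varphi'}\,dx$ is a continuous antilinear form on $\mathring{\mV}^1_{\gamma}(\Om)$, because $\nabla\varphi'\in\mmV^0_{\gamma}(\Om)$ and Cauchy--Schwarz bounds it by $\|\u\|_{\mmV^0_{-\gamma}(\Om)}\|\nabla\varphi'\|_{\mmV^0_{\gamma}(\Om)}$; hence it defines an element of $(\mathring{\mV}^1_{\gamma}(\Om))^\ast$. As $\gamma\in[0;\gamma_D)$, Proposition \ref{LaplacianPoids} says $\mA_D^{-\gamma}:\mathring{\mV}^1_{-\gamma}(\Om)\to(\mathring{\mV}^1_{\gamma}(\Om))^\ast$ is an isomorphism, so there is a unique $\varphi\in\mathring{\mV}^1_{-\gamma}(\Om)$ with $\int_\Om\nabla\varphi\cdot\nabla\overline{\varphi'}\,dx=\int_\Om\u\cdot\nabla\overline{\varphi'}\,dx$ for all $\varphi'\in\mathring{\mV}^1_{\gamma}(\Om)$. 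Setting $\wv:=\u-\nabla\varphi\in\mmV^0_{-\gamma}(\Om)\subset\Lspace^2(\Om)$ and testing against $\mathscr{C}^{\infty}_0(\Om)\subset\mH^1_0(\Om)=\mathring{\mV}^1_0(\Om)\subset\mathring{\mV}^1_{\gamma}(\Om)$ yields $\div\,\wv=0$ in $\Om$ (including at $O$). Item i) of Proposition \ref{AppendixHelmoltzclassique} then furnishes $\psib\in\mX_T(1)$ with $\wv=\curl\psib$, and $\wv\in\mmV^0_{-\gamma}(\Om)$ gives $\curl\psib\in\mmV^0_{-\gamma}(\Om)$, as claimed.

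The second item is parallel, using instead the isomorphism $\mA_N^{-\gamma}:\mathcal{V}^1_{-\gamma}(\Om)\to(\mathcal{V}^1_{\gamma}(\Om))^\ast$ from Proposition \ref{LaplacianPoids} (valid since $\gamma\in[0;\gamma_N)$, so in particular $\gamma<5/2$ and the mean-free spaces are well defined). This produces $\varphi\in\mathcal{V}^1_{-\gamma}(\Om)$ and $\wv:=\u-\nabla\varphi$ with $\int_\Om\wv\cdot\nabla\overline{\varphi'}\,dx=0$ for every mean-free $\varphi'$. To get $\div\,\wv=0$ in $\Om$ I would test against $\varphi'-|\Om|^{-1}\int_\Om\varphi'\,dx$ for $\varphi'\in\mathscr{C}^{\infty}_0(\Om)$, which is admissible because constants lie in $\mV^1_{\gamma}(\Om)$ when $\gamma>-1/2$ and subtracting a constant does not change the gradient. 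For the normal trace, the natural boundary condition of this weak Neumann problem is $\partial_\nu\varphi=\u\cdot\nu$, so the hypothesis $\u\cdot\nu=0$ gives $\wv\cdot\nu=\u\cdot\nu-\partial_\nu\varphi=0$ on $\partial\Om$ (rigorously, $\div\,\wv=0$ makes $\wv\cdot\nu$ well defined in $\mH^{-1/2}(\partial\Om)$, and Green's formula together with the variational identity forces it to vanish). Item ii) of Proposition \ref{AppendixHelmoltzclassique} then gives $\psib\in\mX_N(1)$ with $\wv=\curl\psib\in\mmV^0_{-\gamma}(\Om)$.

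The continuity and density bookkeeping above is routine; the genuine content—and the only place where the thresholds $\gamma<\gamma_D$ (resp. $\gamma<\gamma_N$) intervene—is the weighted elliptic isomorphism of Proposition \ref{LaplacianPoids}, which upgrades the scalar potential from $\mH^1_0(\Om)$ (resp. $\mH^1_\#(\Om)$) into the weighted space $\mathring{\mV}^1_{-\gamma}(\Om)$ (resp. $\mathcal{V}^1_{-\gamma}(\Om)$). I expect the main point requiring care to be the recovery of the full divergence-free condition \emph{up to the tip} and of the vanishing normal trace in the Neumann case, since these are exactly the hypotheses of the classical potential representations i) and ii), and they must be checked against the weighted, rather than the usual, test-function spaces.
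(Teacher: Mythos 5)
Your proof is correct and rests on exactly the same two ingredients as the paper's argument, namely the weighted Laplacian isomorphisms of Proposition \ref{LaplacianPoids} and the classical potential representations of Proposition \ref{AppendixHelmoltzclassique}; the only difference is the order of operations. The paper first applies the unweighted Helmholtz decomposition (items $iii)$/$iv)$) and then upgrades the scalar potential by observing that $\Delta\varphi=\div\,\u\in(\mathring{\mV}^1_\gamma(\Om))^\ast$, whereas you solve the weighted Laplace problem first and then feed the divergence-free remainder into items $i)$/$ii)$ — by uniqueness of the weak solution the resulting $\varphi$ is the same, so the two routes coincide.
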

 \begin{proof} Fix $\gamma\in[0;\gamma_D)$ and consider some $\u\in\mmV^0_{-\gamma}(\Om)$. Since $\mmV^0_{-\gamma}(\Om)\subset\Lspace^2(\Om)$,  the item $iii)$ of Proposition \ref{AppendixHelmoltzclassique} guarantees that we have $\u=\nabla \varphi+\curl \boldsymbol{\psi}$ with $\varphi\in\mH^1_0(\Om)$ and $ \boldsymbol{\psi}\in\mX_T(1)$.
By observing that $\Delta \varphi=\div\,\u\in(\mathring{\mV}^1_\gamma(\Om))^\ast $ and by using Proposition \ref{LaplacianPoids}, we conclude that $\varphi$ belongs to $\mathring{\mV}^1_{-\gamma}(\Om)$. This  ends the proof of the first item. The second one can be established similarly. 
\end{proof}

Now we generalize the results of items $i)$, $ii)$ of Proposition \ref{AppendixHelmoltzclassique}. For $\gamma\in\R$, introduce the spaces
\[ 
\begin{array}{ll}
\mZ^\gamma_N:=   \{\u\in\Lspace^2(\Om)~|~ \curl\u\in\mmV^0_\gamma(\Om),~ \div\,\u=0\mbox{ in }\Om, ~\u\times\nu= 0\mbox{ on } \partial\Om\backslash\{O\}\} \\[3pt]
\mZ^\gamma_T:=   \{\u\in\Lspace^2(\Om)~|~ \curl\u\in \mmV^0_\gamma(\Om),~ \div\,\u=0\mbox{ in }\Om, ~\u\cdot\nu= 0\mbox{ on } \partial\Om\backslash\{O\}\}.
\end{array}
\]
We endow $\mZ^\gamma_N$ and $ \mZ^\gamma_T$ with the norm $\|\cdot\|_{\mZ^\gamma}:=(\|\cdot\|^2_\Om+\|\curl \cdot\|^2_{\mmV^0_\gamma(\Om)})^{1/2}$. Set 
\begin{equation}\label{DefGammatilde}
\Tilde{\gamma}_D:=\min(\gamma_D,1),\qquad \qquad \Tilde{\gamma}_N:=\min(\gamma_N,1).
\end{equation}

\begin{proposition}\label{AppendixWeightedPotenials}  
Let $\Tilde{\gamma}_D$, $\Tilde{\gamma}_N$ be as in (\ref{DefGammatilde}).\\[4pt]
$\bullet$ Fix $\gamma\in[0;\Tilde{\gamma}_D)$. If $\u\in\mmV^0_\gamma(\Om)$  satisfies $\div\,\u=0$ in $\Om\backslash\{O\}$,  then there exists a unique $\boldsymbol{\psi}\in\mZ^\gamma_T$ such that
									$\u=\curl \boldsymbol{\psi}$.\\[4pt]
$\bullet$ Fix $\gamma\in[0;\Tilde{\gamma}_N)$. If $\u\in\mmV^0_\gamma(\Om)$   satisfies $\div\,\u=0$ in $\Om\backslash\{O\}$ and $\u\cdot\nu=0$ on $\partial\Om\backslash\{O\}$, then there exists a unique $\boldsymbol{\psi}\in\mZ^\gamma_N$ such that
									$ \u=\curl \boldsymbol{\psi}$.
\end{proposition}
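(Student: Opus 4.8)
The plan is to treat both items by one scheme based on the weighted Laplace isomorphisms of Proposition~\ref{LaplacianPoids}; I describe the first item (potential in $\mZ^\gamma_T$, so with $\psib\cdot\nu=0$) in detail, the second being identical after exchanging the boundary conditions and replacing $\gamma_D$ by $\gamma_N$. First I would upgrade the hypothesis $\div\,\u=0$ in $\Om\backslash\{O\}$ to $\div\,\u=0$ in $\Om$. Although $\u\in\mmV^0_\gamma(\Om)$ need not be square integrable when $\gamma>0$, it is integrable near $O$ (because $r^{-\gamma}\in\Lspace^2(\Om)$ for $\gamma<3/2$) and $\div\,\u\in(\mathring{\mV}^1_{-\gamma}(\Om))^\ast$. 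Thus $\div\,\u$ is a distribution carried by $\{O\}$ lying in $(\mathring{\mV}^1_{-\gamma}(\Om))^\ast$, and the spectral information on the Mellin symbol underlying Proposition~\ref{LaplacianPoids} shows that for $\gamma\in[0;\gamma_D)$ no nonzero such distribution belongs to this dual; hence $\div\,\u=0$ in $\Om$.

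The core is a Newtonian vector potential in Kondratiev spaces. Regarding $\u\in\mmV^0_\gamma(\Om)\subset(\mathring{\mV}^1_{-\gamma}(\Om))^\ast$ as a right-hand side, I would solve componentwise the weighted Dirichlet problem $\mA_D^\gamma\boldsymbol A=\u$, which by Proposition~\ref{LaplacianPoids} has a unique solution $\boldsymbol A\in(\mathring{\mV}^1_{\gamma}(\Om))^3$. Since the datum lies in the smaller space $\mmV^0_\gamma(\Om)$ and $\gamma$ avoids the forbidden weights, weighted elliptic regularity (valid on the same range of weights) upgrades this to $\boldsymbol A\in(\mathring{\mV}^2_{\gamma}(\Om))^3$. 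I then set $\psib:=\curl\boldsymbol A$. By definition of the $\mV^2_\gamma$ norm, $\boldsymbol A\in\mathring{\mV}^2_{\gamma}$ controls $r^{\gamma-1}\nabla\boldsymbol A$ in $\Lspace^2$, so $\nabla\boldsymbol A\in\mmV^0_{\gamma-1}(\Om)$ and thus $\psib\in\mmV^0_{\gamma-1}(\Om)$; here the bound $\gamma<\tilde{\gamma}_D\le 1$ is used precisely to make $\gamma-1<0$, so that $\mmV^0_{\gamma-1}(\Om)\subset\Lspace^2(\Om)$ and $\psib\in\Lspace^2(\Om)$. Moreover $\div\,\psib=0$ automatically, and the homogeneous condition $\boldsymbol A=0$ on $\partial\Om$ yields $\psib\cdot\nu=\curl\boldsymbol A\cdot\nu=0$.

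It remains to identify $\curl\psib$. From $\curl\curl\boldsymbol A=\nabla(\div\boldsymbol A)-\Delta\boldsymbol A=\nabla(\div\boldsymbol A)+\u$, it suffices to prove that $w:=\div\boldsymbol A$ vanishes. Now $w\in\mV^1_\gamma(\Om)$ is harmonic in $\Om$ (using $\div\,\u=0$ in $\Om$) and inherits a homogeneous boundary condition from the construction, so the uniqueness part of Proposition~\ref{LaplacianPoids} (again because $\gamma\in[0;\gamma_D)$) gives $w=0$, whence $\curl\psib=\u\in\mmV^0_\gamma(\Om)$ and $\psib\in\mZ^\gamma_T$. For uniqueness of $\psib$ I would invoke the classical fact that, under the standing assumptions that $\Om$ is simply connected with connected boundary, any $\Lspace^2(\Om)$ field that is curl-free, divergence-free and satisfies $\psib\cdot\nu=0$ must vanish. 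The second item follows verbatim, solving instead the weighted vector problem with the boundary condition tuned so that $\curl\boldsymbol A\times\nu=0$ (the hypothesis $\u\cdot\nu=0$ on $\partial\Om\backslash\{O\}$ guaranteeing consistency) and using the Neumann range $\gamma\in[0;\gamma_N)$.

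The main obstacle, I expect, lies in the two places where the threshold $\gamma<\tilde{\gamma}_D=\min(\gamma_D,1)$ is genuinely needed and must be handled through the weighted theory rather than by elementary cut-off estimates: first, the removability at $O$ of the point singularities of $\div\,\u$ and of $w=\div\boldsymbol A$, which a naive truncation argument only controls for $\gamma<1/2$; and second, the one-weight gain ensuring that $\psib=\curl\boldsymbol A$ belongs to $\Lspace^2(\Om)$ and not merely to $\mmV^0_\gamma(\Om)$, which is exactly what the condition $\gamma<1$ buys.
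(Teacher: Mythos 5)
Your construction of the potential breaks down at the gauge step, and this is a genuine gap rather than a fixable detail. Solving the componentwise Dirichlet problem $-\Delta A_i=u_i$ gives a field $\boldsymbol{A}$ for which $w:=\div\,\boldsymbol{A}$ is indeed harmonic, but $w$ does \emph{not} inherit any homogeneous boundary condition: on $\partial\Om$ the condition $\boldsymbol{A}=0$ only kills the tangential derivatives of the components, so $\div\,\boldsymbol{A}=\partial_\nu(\boldsymbol{A}\cdot\nu)$ there, which is generically nonzero. Concretely, for $\Om=B(0,1)$ and $\u=(0,0,1)$ one finds $\boldsymbol{A}=(0,0,(1-|x|^2)/6)$, $w=-x_3/3\ne0$ and $\curl(\curl\boldsymbol{A})=(0,0,2/3)\ne\u$; the identity $\curl\psib=\u$ fails and the scheme collapses. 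This is precisely why the paper does not use a componentwise Newtonian/Dirichlet potential: its proof solves the variational curl--curl problem
\[
\boldsymbol{\zeta}\in\mX_N(1),\qquad\int_\Om\curl\boldsymbol{\zeta}\cdot\curl\boldsymbol{\zeta}'\,dx=\int_\Om\u\cdot\boldsymbol{\zeta}'\,dx,\qquad\forall\boldsymbol{\zeta}'\in\mX_N(1),
\]
which is meaningful because Proposition~\ref{AppendixWeightedRegularityX(1)} gives $\mX_N(1)\subset\mmV^0_{-\gamma}(\Om)$ for $\gamma<\Tilde{\gamma}_D$, hence $\mmV^0_{\gamma}(\Om)\subset(\mX_N(1))^\ast$; then $\psib:=\curl\boldsymbol{\zeta}$ is automatically divergence free with $\psib\cdot\nu=0$ (since $\boldsymbol{\zeta}\times\nu=0$), and $\div\,\u=0$ in $\Om\backslash\{O\}$ yields $\curl\psib=\u$ there. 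The gauge condition has to be built into the space, not recovered a posteriori.

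Two secondary inaccuracies. First, even where your scheme is meaningful, the claim $\boldsymbol{A}\in(\mV^2_\gamma(\Om))^3$ is false at an interior conical tip for $\gamma\in[0;1/2)$: the exponent $\lambda=0$ (constants) lies in the strip between the $\mV^1_\gamma$ and $\mV^2_\gamma$ energy lines and $1\notin\mV^2_\gamma(\Om)$ for $\gamma\le1/2$, so the correct statement is $\boldsymbol{A}=\boldsymbol{c}+\tilde{\boldsymbol{A}}$ near $O$ with $\boldsymbol{c}$ constant and $\tilde{\boldsymbol{A}}\in(\mV^2_\gamma)^3$; this still gives $\nabla\boldsymbol{A}\in\mmV^0_{\gamma-1}(\Om)$, but not by the route you state. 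Second, the removal of the point mass of $\div\,\u$ at $O$ requires no Mellin analysis: $\mathscr{C}^{\infty}_0(\Om\backslash\{O\})$ is by definition dense in $\mathring{\mV}^1_{-\gamma}(\Om)$, so a functional of $(\mathring{\mV}^1_{-\gamma}(\Om))^\ast$ vanishing on it vanishes identically.
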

\begin{remark}
This result also extends the one obtained in \cite[Appendix  B]{dhia2022maxwell} where only the case $O\in\Om$ has been considered.  Furthermore, here we give a simpler proof.
\end{remark}
\begin{proof}
Let $\u\in\mmV^0_\gamma(\Om)$ with $\gamma\in[0;\tilde{\gamma}_D)$ such that $\div\,\u=0$ in $\Om\backslash\{O\}$. Proposition \ref{AppendixWeightedRegularityX(1)} ensures that $\mX_N(1)\subset\mmV^0_{-\gamma}(\Om)$. This shows that we have $\mmV^0_\gamma(\Om)\subset(\mX_N(1))^\ast$ so that we can consider the problem
$$
\begin{array}{|l}
\mbox{ Find }\boldsymbol{\zeta}\in\mX_N(1) \mbox{ such that }\\[4pt]
\int_\Om\curl \boldsymbol{\zeta}\cdot \curl \boldsymbol{\zeta}'\,dx=\int_\Om \u\cdot  \boldsymbol{\zeta}'\,dx,\qquad\forall \boldsymbol{\zeta}'\in \mX_N(1).
\end{array}
$$
Using Proposition \ref{PropoEmbeddingCla}, one proves classically that it has a unique solution $\boldsymbol{\zeta}\in\mX_N(1)$. Additionally, by exploiting that $\div\,\u=0$ in $\Om\backslash\{O\}$, one obtains
\[
\curl(\curl\boldsymbol{\zeta})=\u\quad\mbox{ in }\Om\backslash\{O\}.
\]
Set $\boldsymbol{\psi}:=\curl\boldsymbol{\zeta}$. Then $\boldsymbol{\psi}$ is an element of $\mZ^\gamma_T$ such that $\u=\curl \boldsymbol{\psi}$. This ends the proof of the first item. The demonstration of the second one follows the same steps.
\end{proof}

\subsection{Compact embedding results}

In this paragraph, we prove results of compact embedding of Maxwell's spaces into weighted Sobolev spaces. Let us mention that this has some connections with the work \cite{buffa2003anisotropic}. In our approach, we will study the regularity of fields in a neighbourhood of $O$. To proceed, we set $\mathscr{O}:=\Om \cap B(O,\rho)$, where $\rho$ is defined in (\ref{defRho}), and introduce the spaces
$$ 
\begin{array}{rcl}
\mY_N(\mathscr{O})&:=&\{\u\in\Lspace^2(\mathscr{O})\,|\,\curl\u\in\Lspace^2(\mathscr{O}),\,\div\,\u\in\mL^2(\mathscr{O}),\,\u\times\nu=0\mbox{ on } \partial\mathscr{O}\}\\[2pt]
\mY_T(\mathscr{O})&:=&\{\u\in\Lspace^2(\mathscr{O})\,|\,\curl\u\in\Lspace^2(\mathscr{O}),\,\div\,\u\in\mL^2(\mathscr{O}),\,\u\cdot\nu=0\mbox{ on } \partial\mathscr{O}\}\\[2pt]
\Hspace_N(\mathscr{O})&:=&\mY_N(\mathscr{O})\cap\Hspace^1(\mathscr{O})\\[2pt]
\Hspace_T(\mathscr{O})&:=&\mY_T(\mathscr{O})\cap\Hspace^1(\mathscr{O}).
\end{array}
$$
When $\mathscr{O}$ is convex, in particular when $O\subset\Om$, we have that $\mY_N(\mathscr{O})=\Hspace_N(\mathscr{O})$, $\mY_T(\mathscr{O})=\Hspace_T(\mathscr{O})$ (see \cite{Cost91}). However this is not true for all conical tips $\mathscr{O}$. Generally speaking however, it is known that the quotient spaces $\mY_N(\mathscr{O})/\Hspace_N(\mathscr{O})$, $\mY_T(\mathscr{O})/\Hspace_T(\mathscr{O})$ are always of finite dimension \cite{CoDa00} and that the latter depend on the features of the conical tip, more precisely on $\varpi$. In case where this dimension is positive, the Birman-Solomyak decomposition (see \cite[Theorem 3.1]{BiSo87b}, \cite{BiSo94}, \cite[Theorem 1.1]{CoDa00}) ensures that there exist some finite dimensional spaces $S_D\subset\mH^1_0(\mathscr{O})\backslash\mH^2(\mathscr{O})$, $S_N\subset\mH^1_\#(\mathscr{O})\backslash\mH^2(\mathscr{O})$ such that
\begin{equation}\label{BirmanSolDecompo}
\begin{array}{rcl}
\mY_N(\mathscr{O}) =  \Hspace_N(\mathscr{O})\oplus\nabla S_D, \qquad\quad\mY_T(\mathscr{O}) =  \Hspace_T(\mathscr{O})\oplus\nabla S_N.
\end{array}
\end{equation}
Additionally the elements of the bases of $S_D$, $S_N$ can be chosen such that their laplacian belong to $\mathscr{C}^{\infty}(\overline{\mathscr{O}}\backslash\{O\})$.\\
\newline
This said, we begin with a result concerning the classical spaces $\mX_N(1)$, $\mX_T(1)$ which generalizes  \cite[Proposition A.2]{dhia2022maxwell} where only the case $O\in\Om$ has been considered.
\begin{proposition}\label{AppendixWeightedRegularityX(1)}
Let $\Tilde{\gamma}_D$, $\Tilde{\gamma}_N$ be as in (\ref{DefGammatilde}).\\[4pt]
$\bullet$ Fix $\gamma\in[0;\Tilde{\gamma}_D)$. The space $\mX_N(1)$ is compactly embedded  in $\mmV_{-\gamma}^0(\Om)$. Moreover, there is $C>0$ independent of $\u$ such that
\[
\|\u\|_{\mmV^0_{-\gamma}(\Om)}\leq C\,\|\curl \u\|_{\Om},\qquad \forall \u\in\mX_N(1).
\]
$\bullet$ Fix $\gamma\in[0;\Tilde{\gamma}_N)$. The space $\mX_T(1)$ is compactly embedded  in $\mmV_{-\gamma}^0(\Om)$. Moreover, there is $C>0$ independent of $\u$ such that
\[
\|\u\|_{\mmV^0_{-\gamma}(\Om)}\leq C\,\|\curl \u\|_{\Om},\qquad \forall \u\in\mX_T(1).
\]
\end{proposition}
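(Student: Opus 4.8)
The plan is to prove both assertions for $\mX_N(1)$, the case of $\mX_T(1)$ being identical after exchanging the Dirichlet and Neumann data, i.e. $(\gamma_D,S_D,\mA_D^{-\gamma},\mathring{\mV})$ by $(\gamma_N,S_N,\mA_N^{-\gamma},\mathcal{V})$. Since $r^{-\gamma}$ is bounded away from $O$, the statement is purely local at the tip, so I would first split $\u=(1-\chi)\u+\chi\u$ with the cut-off $\chi$ of (\ref{DefSpace}). The field $(1-\chi)\u$ is supported in $\overline\Om\setminus B(O,\rho/2)$, a region where $\partial\Om$ is of class $\mathscr{C}^2$; as $\div((1-\chi)\u)$ and $\curl((1-\chi)\u)$ belong to $\Lspace^2(\Om)$ and $(1-\chi)\u\times\nu=0$ on $\partial\Om$, the classical regularity of the div-curl system near a smooth boundary (see \cite{AmrBerDau98}) gives $(1-\chi)\u\in\Hspace^1(\Om)$ with $\|(1-\chi)\u\|_{\Hspace^1(\Om)}\le C(\|\u\|_\Om+\|\curl\u\|_\Om)\le C\|\curl\u\|_\Om$, the last bound coming from Proposition \ref{PropoEmbeddingCla} and $\div\,\u=0$.

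For the second piece, I would note that $\chi\u$, extended by zero, lies in $\mY_N(\mathscr{O})$: indeed $\curl(\chi\u)=\nabla\chi\times\u+\chi\curl\u$ and $\div(\chi\u)=\nabla\chi\cdot\u$ are in $\Lspace^2(\mathscr{O})$, while $\chi\u\times\nu=0$ on $\partial\mathscr{O}$ (on $\partial\Om\cap\overline{B(O,\rho)}$ because $\u\times\nu=0$, and on $\Om\cap\partial B(O,\rho)$ because $\chi$ vanishes there). Invoking the Birman--Solomyak decomposition (\ref{BirmanSolDecompo}), I write $\chi\u=\w+\nabla s$ with $\w\in\Hspace_N(\mathscr{O})\subset\Hspace^1(\mathscr{O})$ and $s\in S_D$; the decomposition being topological, $\|\w\|_{\Hspace^1(\mathscr{O})}+\|s\|_{S_D}\le C\,\|\chi\u\|_{\mY_N(\mathscr{O})}\le C\|\curl\u\|_\Om$. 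The two regular contributions are then harmless: since $\Om$ (hence $\mathscr{O}$) is bounded, the Hardy inequality gives $\Hspace^1=\mmV^1_0$, and Lemma \ref{CompacitePoids} (with $m=1$, $\beta=0$ and exponent $1-\gamma>0$) shows that $\mmV^1_0$ embeds compactly into $\mmV^0_{-\gamma}$ for every $\gamma<1$. As $\gamma<\Tilde{\gamma}_D\le1$ by (\ref{DefGammatilde}), both $(1-\chi)\u$ and $\w$ belong to $\mmV^0_{-\gamma}$, their norms are bounded by $C\|\curl\u\|_\Om$, and the maps $\u\mapsto(1-\chi)\u$ and $\u\mapsto\w$ are compact into $\mmV^0_{-\gamma}$.

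The genuinely singular term is $\nabla s$, and this is where the threshold $\Tilde{\gamma}_D=\min(\gamma_D,1)$ enters; \emph{this is the main obstacle of the proof}. Because $S_D$ is finite dimensional, it suffices to show that $\nabla s\in\mmV^0_{-\gamma}(\mathscr{O})$ for each fixed basis element $s$ and each $\gamma\in[0,\gamma_D)$, after which the bound $\|\nabla s\|_{\mmV^0_{-\gamma}}\le C\,\|s\|_{S_D}$ and the compactness into $\mmV^0_{-\gamma}$ of the finite-rank map $\u\mapsto\nabla s$ are automatic. I would choose the Birman--Solomyak potentials harmonic near $O$ (as in their construction, consistently with $\Delta s\in\mathscr{C}^\infty(\overline{\mathscr{O}}\setminus\{O\})$) and localise with a second radial cut-off $\chi_1$ equal to $1$ near $O$ and supported in $B(O,\rho/2)$, setting $\hat s:=\chi_1 s\in\mH^1_0(\Om)=\mathring{\mV}^1_0(\Om)$. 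Then $\Delta\hat s=\chi_1\Delta s+2\nabla\chi_1\cdot\nabla s+s\,\Delta\chi_1$ is supported away from $O$, hence lies in $\mV^0_{-\gamma}(\Om)\subset(\mathring{\mV}^1_\gamma(\Om))^\ast$. For $\gamma\in[0,\gamma_D)$, Proposition \ref{LaplacianPoids} asserts that $\mA_D^{-\gamma}:\mathring{\mV}^1_{-\gamma}(\Om)\to(\mathring{\mV}^1_\gamma(\Om))^\ast$ is an isomorphism, so there is a unique $w\in\mathring{\mV}^1_{-\gamma}(\Om)$ with $\mA_D^{-\gamma}w=-\Delta\hat s$. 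Since $w\in\mathring{\mV}^1_{-\gamma}(\Om)\subset\mathring{\mV}^1_0(\Om)$ and $\hat s\in\mathring{\mV}^1_0(\Om)$ solve the same variational Laplace problem (the forms defining $\mA_D^{-\gamma}$ and $\mA_D^0$ agree on $\mathring{\mV}^1_{-\gamma}(\Om)$), and $\mA_D^0$ is injective (as $0\in(-\gamma_D,\gamma_D)$, recall $\gamma_D\ge1/2$), we get $\hat s=w\in\mathring{\mV}^1_{-\gamma}(\Om)$. By the definition of the Kondratiev norm this is exactly $\nabla\hat s\in\mmV^0_{-\gamma}$, and as $\hat s=s$ near $O$ with $s$ smooth away from $O$, we conclude $\nabla s\in\mmV^0_{-\gamma}(\mathscr{O})$.

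That the correct threshold is $\gamma_D$ can be read off the model computation $\int_0 |r^{-\gamma}\nabla(r^\lambda\varphi)|^2 r^2\,dr<\infty\Leftrightarrow\gamma<\Re e\,\lambda+1/2$ applied to the harmonic singular functions $r^\lambda\varphi(\boldsymbol\theta)$ entering $s$, whose exponents satisfy $\Re e\,\lambda+1/2\ge\gamma_D$ by the definition (\ref{defGammaDN}) of $\gamma_D$; the extra $\min(\cdot,1)$ in $\Tilde{\gamma}_D$ is forced only by the regular $\Hspace^1$ parts. Gathering the three contributions yields $\|\u\|_{\mmV^0_{-\gamma}(\Om)}\le C\|\curl\u\|_\Om$ and the compactness of the embedding $\mX_N(1)\hookrightarrow\mmV^0_{-\gamma}(\Om)$. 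Finally, for $\mX_T(1)$ one repeats the argument verbatim with $\mY_T(\mathscr{O})=\Hspace_T(\mathscr{O})\oplus\nabla S_N$, the Neumann Laplace isomorphism $\mA_N^{-\gamma}$ on the mean-free spaces $\mathcal{V}^1_{-\gamma}(\Om)$ (the additive constant being irrelevant to $\nabla s$), and $\gamma_N$ in place of $\gamma_D$.
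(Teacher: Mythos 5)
Your overall strategy is the paper's: cut off near the tip, apply the Birman--Solomyak decomposition in $\mathscr{O}$, absorb the $\Hspace^1$ contributions via the Hardy identity $\mH^1=\mV^1_0$ and Lemma \ref{CompacitePoids}, and control the singular gradients $\nabla S_D$ through a weighted isomorphism for the Laplacian. Your treatment of $\nabla s$ is a legitimate variant of the paper's: the paper goes through the second-order isomorphism $\Delta:\mV^2_\eta(\mathscr{O})\cap\mH^1_0(\mathscr{O})\to\mV^0_\eta(\mathscr{O})$ for $\eta\in(1-\gamma_D;1)$ and then extracts subsequences, whereas you use the first-order isomorphism $\mA_D^{-\gamma}$ of Proposition \ref{LaplacianPoids} together with an injectivity/identification argument, and you obtain compactness of the $S_D$-component for free because the corresponding projection has finite rank. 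That part is correct; note only that the extra hypothesis that $s$ be harmonic near $O$ is not needed (and is not guaranteed by the statement of (\ref{BirmanSolDecompo})): since $\Delta\hat{s}\in\mL^2(\Om)$ and $\mL^2(\Om)\subset(\mathring{\mV}^1_\gamma(\Om))^\ast$ for $\gamma\in[0;1]$, while $\gamma<\Tilde{\gamma}_D\le1$, your argument already applies.

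The one step that does not hold as written is the treatment of $(1-\chi)\u$. You assert that $\partial\Om$ is of class $\mathscr{C}^2$ away from $B(O,\rho/2)$ and invoke $\Hspace^1$ regularity of the div--curl system near a smooth boundary. But the paper assumes only that $\partial\Om$ is Lipschitz; it is $\Om_-$ (hence the interface), not the outer boundary, that is $\mathscr{C}^2$ away from $O$. For a Lipschitz $\partial\Om$ (with, say, a reentrant corner), fields with $\Lspace^2$ curl and divergence and vanishing tangential trace are in general not in $\Hspace^1$, so the bound $\|(1-\chi)\u\|_{\Hspace^1(\Om)}\le C\,\|\curl\u\|_{\Om}$ fails. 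The repair is immediate and is what the paper does: $(1-\chi)\u$ has curl and divergence bounded in $\Lspace^2(\Om)$ and zero tangential (resp.\ normal) trace, so Weber's compactness theorem \cite{Webe80} (the result behind Proposition \ref{PropoEmbeddingCla}) gives compactness of $\u\mapsto(1-\chi)\u$ into $\Lspace^2(\Om)$; since this piece is supported in $\{r\ge\rho/2\}$, the $\Lspace^2$ and $\mmV^0_{-\gamma}$ norms are equivalent on it, which is all that is required. With this substitution your proof is complete.
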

\begin{proof}
For $\gamma=0$, this is nothing but the classical result recalled in  Proposition \ref{PropoEmbeddingCla}. Now we treat the case $\gamma\ne0$. Let $(\u_n)_{n}$ be a bounded sequence of elements of $\mX_N(1)$. For $n\in\N$, write
\begin{equation}\label{DecompoIni}
\u_n=\v_n+\w_n\qquad\mbox{ with }\quad\v_n:=\chi\u_n,\quad\w_n:=(1-\chi)\u_n,
\end{equation}
where $\chi\in\mathscr{C}^{\infty}(\R)$ is the cut-off function introduced before (\ref{DefSpace}) such that $\chi(r)=1$ for $r\le\rho/2$ and $\chi(r)=0$ for $r\ge \rho$. Let us study the behaviours of $(\v_n)$, $(\w_n)$ separately.\\[3pt]
$\star$ We start by $(\w_n)$. We have $\curl\w_n=-\nabla\chi\times\u_n+(1-\chi)\,\curl\u_n$ and $\div\,\w_n=-\nabla\chi\cdot\u_n$ in $\Om$. Therefore the sequences $(\w_n)$, $(\curl\w_n)$ and $(\div\,\w_n)$ are bounded in $\Lspace^2(\Om)$. Since additionally there holds $\w_n\times\nu=0$ on $\partial\Om$, we infer from \cite{Webe80} that we can extract a subsequence from $(\u_n)$ such that $(\w_n)$ converges in $\Lspace^2(\Om)$. Observe that since $\w_n$ vanishes in $\Om\cap B(O,\rho/2)$, this ensures that $(\w_n)$ converges in $\mmV_{\gamma}^0(\Om)$ for all $\gamma\in\R$.\\[3pt]
$\star$ Now we work with $(\v_n)$. Note that $\v_n$ is an element of $\mY_N(\mathscr{O})$. According to the Birman-Solomyak decomposition  (\ref{BirmanSolDecompo}), for all $n\in\N$, we have
\begin{equation}\label{decomLocale}
\v_n=\v^{\mrm{reg}}_n+\nabla\varphi_n
\end{equation}
with $\v^{\mrm{reg}}_n\in\Hspace_N(\mathscr{O})$ and $\varphi_n\in S_D$ such that $\nabla\varphi_n\in\mY_N(\mathscr{O})$ (simply take $\varphi_n\equiv0$ when $\mathscr{O}$ is convex). Moreover this comes with the estimate
\[
\|\v^{\mrm{reg}}_n\|_{\Hspace^1(\mathscr{O})}+\|\nabla\varphi_n\|_{\mathscr{O}}+\|\Delta\varphi_n\|_{\mathscr{O}} \le C\,\|\curl\v_n\|_{\mathscr{O}}
\]
where $C$ is independent of $n$. Using that $\Delta:\mV^2_{\eta}(\mathscr{O})\cap\mH^1_0(\mathscr{O})\to\mV^0_{\eta}(\mathscr{O})$ is an isomorphism for all $\eta\in(1-\gamma_D;1)$ (see \cite{KoMR97,KoMR01,MaNP00}) and that $\Lspace^2(\mathscr{O})\subset\mmV_{\eta}^0(\mathscr{O})$ for all $\eta\ge0$, we deduce that $(\varphi_n)$ is bounded in $\mV^1_{\gamma}(\mathscr{O})$ for all $\gamma\in(1-\tilde{\gamma}_D;1)$. From Lemma \ref{CompacitePoids}, this implies that we can extract from $(\varphi_n)$ a subsequence such that $(\nabla\varphi_n)$ converges in $\mV^0_{\gamma}(\mathscr{O})$ for all $\gamma\in(-\tilde{\gamma}_D;0)$. On the other hand, since $\mH^1(\mathscr{O})=\mV^1_{0}(\mathscr{O})$, Lemma \ref{CompacitePoids} also guarantees that we can extract from $(\v^{\mrm{reg}}_n)$ a subsequence such that $(\v^{\mrm{reg}}_n)$ converges in 
$\mmV_{\gamma-1}^0(\mathscr{O})$ for all $\gamma>0$. Combining these two results with the decomposition (\ref{decomLocale}), we deduce that we can extract from $(\v_n)$ a subsequence which converges in $\mmV_{-\gamma}^0(\mathscr{O})$ for all $\gamma\in(0;\tilde{\gamma}_D)$.\\[3pt]
Using the results for $(\w_n)$, $(\v_n)$ in (\ref{DecompoIni}) gives the proof of the first item. The demonstration of the second one follows the same steps.
\end{proof}

We continue by studying the spaces $\mX_N(\eps)$, $\mX_T(\mu)$. 
\begin{proposition}\label{AppendixX(eps)}	Suppose that Assumptions \ref{AssumptionCritique}-\ref{AssumptionNoTrappedMode} hold.\\[3pt]
$\bullet$ Fix $\gamma\in(0;\tilde{\beta}_D)$ where $\tilde{\beta}_D=\min(\beta_D,\gamma_D,1)$. The space  $\mX_N(\eps)$  is compactly embedded in $\mmV^0_{-\gamma}(\Om)$. Moreover, there is a constant $C>0$ independent of $\u$ such that
\[
\|\u\|_{\mmV^0_{-\gamma}(\Om)}\leq C\,\|\curl \u\|_{\Om},\qquad \forall \u\in\mX_N(\eps).
\]
$\bullet$ Fix $\gamma\in(0;\tilde{\beta}_N)$ where $\tilde{\beta}_N=\min(\beta_N,\gamma_N,1)$. The space  $\mX_T(\mu)$  is compactly embedded in $\mmV^0_{-\gamma}(\Om)$.\\[3pt] 
Moreover, there is a constant $C>0$ independent of $\u$ such that
\[
\|\u\|_{\mmV^0_{-\gamma}(\Om)}\leq C\,\|\curl \u\|_{\Om},\qquad \forall \u\in\mX_T(\mu).
\]
\end{proposition}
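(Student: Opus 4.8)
The plan is to treat $\mX_N(\eps)$ exactly as $\mX_N(1)$ was handled in Proposition \ref{AppendixWeightedRegularityX(1)}, transferring the essential difficulty to the scalar theory of \S\ref{SectionScalar}. Fix $\gamma\in(0;\tilde{\beta}_D)$ and take $\u\in\mX_N(\eps)$. Since $\u\in\Lspace^2(\Om)$, item $iii)$ of Proposition \ref{AppendixHelmoltzclassique} provides the decomposition $\u=\nabla p+\curl\boldsymbol{\psi}$ with $p\in\mH^1_0(\Om)$ and $\boldsymbol{\psi}\in\mX_T(1)$. First I would observe that $\curl\boldsymbol{\psi}$ inherits good properties: it is divergence free, it satisfies $\curl(\curl\boldsymbol{\psi})=\curl\u\in\Lspace^2(\Om)$, and because $\nabla p\times\nu=0$ and $\u\times\nu=0$ on $\partial\Om$ one gets $\curl\boldsymbol{\psi}\times\nu=0$. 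Hence $\curl\boldsymbol{\psi}\in\mX_N(1)$, and Proposition \ref{AppendixWeightedRegularityX(1)} (applicable since $\gamma<\tilde{\gamma}_D$, because $\tilde{\beta}_D\le\min(\gamma_D,1)=\tilde{\gamma}_D$) yields $\curl\boldsymbol{\psi}\in\mmV^0_{-\gamma}(\Om)$ together with $\|\curl\boldsymbol{\psi}\|_{\mmV^0_{-\gamma}(\Om)}\le C\,\|\curl\u\|_\Om$.

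Next, the gradient part is controlled through the sign-changing scalar operator. From $\div(\eps\u)=0$ I obtain $\langle\mA^{-\gamma}_\eps p,\varphi'\rangle=-\int_\Om\eps\,\curl\boldsymbol{\psi}\cdot\nabla\overline{\varphi'}\,dx$ for all $\varphi'\in\mathring{\mV}^1_\gamma(\Om)$, so that $\div(\eps\nabla p)\in(\mathring{\mV}^1_\gamma(\Om))^\ast$ with dual norm bounded by $C\,\|\curl\boldsymbol{\psi}\|_{\mmV^0_{-\gamma}(\Om)}$. Since $\gamma<\beta_D$, Proposition \ref{RegulariteScalaire} upgrades the regularity of $p$ to $p\in\mathring{\mV}^1_{-\gamma}(\Om)$; then the monomorphism estimate \eqref{Estimmonomorphism}, available under Assumption \ref{AssumptionNoTrappedMode}, gives $\|p\|_{\mathring{\mV}^1_{-\gamma}(\Om)}\le C\,\|\mA^{-\gamma}_\eps p\|_{(\mathring{\mV}^1_\gamma(\Om))^\ast}\le C\,\|\curl\u\|_\Om$. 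Summing the two contributions proves the announced inequality $\|\u\|_{\mmV^0_{-\gamma}(\Om)}\le C\,\|\curl\u\|_\Om$.

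For the compactness I would run the same decomposition along a bounded sequence $(\u_n)\subset\mX_N(\eps)$. Writing $\u_n=\nabla p_n+\curl\boldsymbol{\psi}_n$, the fields $\curl\boldsymbol{\psi}_n$ form a bounded sequence of $\mX_N(1)$ (their curls equal $\curl\u_n$), so by the compact embedding of Proposition \ref{AppendixWeightedRegularityX(1)} a subsequence of $(\curl\boldsymbol{\psi}_n)$ converges in $\mmV^0_{-\gamma}(\Om)$. Along that subsequence the functionals $\langle\mA^{-\gamma}_\eps p_n,\cdot\rangle=-\int_\Om\eps\,\curl\boldsymbol{\psi}_n\cdot\nabla\overline{(\cdot)}\,dx$ converge in $(\mathring{\mV}^1_\gamma(\Om))^\ast$; since $\mA^{-\gamma}_\eps$ is bounded below on $\mathring{\mV}^1_{-\gamma}(\Om)$ by \eqref{Estimmonomorphism}, the sequence $(p_n)$ is Cauchy, hence convergent, in $\mathring{\mV}^1_{-\gamma}(\Om)$. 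Therefore $(\u_n)$ converges in $\mmV^0_{-\gamma}(\Om)$ and the embedding is compact.

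The second item follows by exchanging the roles of $\eps$, $\mu$ and of the Dirichlet/Neumann settings: one uses item $iv)$ of Proposition \ref{AppendixHelmoltzclassique} to write $\u=\nabla p+\curl\boldsymbol{\psi}$ with $p\in\mH^1_\#(\Om)$, $\boldsymbol{\psi}\in\mX_N(1)$ (so that $\curl\boldsymbol{\psi}\in\mX_T(1)$, using that $\boldsymbol{\psi}\times\nu=0$ forces $\curl\boldsymbol{\psi}\cdot\nu=0$), and replaces $\mA^{-\gamma}_\eps$ by the Neumann operator $\mA^{-\gamma}_\mu$, invoking the second parts of Propositions \ref{AppendixWeightedRegularityX(1)}, \ref{RegulariteScalaire} and of \eqref{Estimmonomorphism}, all valid for $\gamma<\tilde{\beta}_N$. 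The main obstacle, and the reason the statement is not a direct corollary of the $\mX_N(1)$ case, lies in the gradient part: because $\eps$ (resp. $\mu$) changes sign and the contrast is critical, the scalar operator $\mA^{-\gamma}_\eps$ (resp. $\mA^{-\gamma}_\mu$) is \emph{not} an isomorphism, so the passage from ``$\div(\eps\nabla p)$ is regular'' to ``$\nabla p\in\mmV^0_{-\gamma}(\Om)$ with the right estimate'' must go through the detached-asymptotics machinery — namely the weighted regularity of Proposition \ref{RegulariteScalaire} followed by the injectivity-based bound \eqref{Estimmonomorphism} secured by Assumption \ref{AssumptionNoTrappedMode} — rather than through a single bounded inverse.
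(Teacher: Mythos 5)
Your proof is correct and follows essentially the same route as the paper's: the Helmholtz decomposition $\u=\nabla p+\curl\boldsymbol{\psi}$ from item $iii)$ of Proposition \ref{AppendixHelmoltzclassique}, the observation that $\curl\boldsymbol{\psi}\in\mX_N(1)$ combined with Proposition \ref{AppendixWeightedRegularityX(1)} for the solenoidal part, and Proposition \ref{RegulariteScalaire} together with the injectivity estimate \eqref{Estimmonomorphism} for the gradient part, with the same Cauchy-sequence argument for compactness. No discrepancies to report.
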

\begin{proof}
Let $\u\in\mX_N(\eps)$. By means of item $iii)$ of  Proposition \ref{AppendixHelmoltzclassique}, introduce $\varphi\in\mrm{H}^1_0(\Om)$ and $\boldsymbol{\psi}\in\mX_T(1)$ such that $ \u=\nabla\varphi+\curl\boldsymbol{\psi}$. By observing that  $\curl\boldsymbol{\psi}$ belongs to $\mX_N(1)$ and by applying    Proposition \ref{AppendixWeightedRegularityX(1)}, we deduce that   $\curl\boldsymbol{\psi}\in\mmV^0_{-\gamma}(\Om)$ for $\gamma\in(0;\tilde{\beta}_D)$. Furthermore, we have 
								\begin{equation}
									\|\curl\boldsymbol{\psi}\|_{\mmV^0_{-\gamma}(\Om)}\leq C\,\|\curl(\curl\boldsymbol{\psi})\|_{\Om}=C\,\|\curl\u\|_{\Om}.
									\label{Appendixeq1}
								\end{equation}
								By observing that $\div(\eps\nabla \varphi)=-\div(\eps\,\curl\boldsymbol{\psi})\in(\mathring{\mV}^1_\gamma(\Om))^\ast$ for all $\gamma\in(0;\tilde{\beta}_D)$, one deduces from Proposition \ref{RegulariteScalaire} and (\ref{Estimmonomorphism}) that $\varphi\in\mathring{\mV}^1_{-\gamma}(\Om)$ with the estimate
\begin{equation}\label{Appendixeq2}
\|\nabla\varphi\|_{\mmV^0_{-\gamma}(\Om)}\leq C\,\|\curl\boldsymbol{\psi}\|_{\mmV^0_{-\gamma}(\Om)}\leq C\,\|\curl\u\|_{\Om}.									
\end{equation}
This shows that $\u\in\mmV^0_{-\gamma}(\Om)$.
								By combining \eqref{Appendixeq1} and \eqref{Appendixeq2}, we obtain the desired estimate. Now, let us prove the compactness result.  Consider $(\u_n)_{n}$ a bounded sequence of elements of $\mX_N(\eps)$. For $n\in\N$, we have the decomposition $\u_n=\nabla \varphi_n+\curl \boldsymbol{\psi}_n$ with $\varphi_n\in\mH^1_0(\Om)$ and $\boldsymbol{\psi}_n\in\mX_T(1)$. Thanks to Proposition \ref{AppendixWeightedRegularityX(1)}, we infer that up to a sub-sequence, still indexed by $n$, $(\curl \boldsymbol{\psi}_n)_{n}$ converges in $\mmV^0_{-\gamma}(\Om)$ for all $\gamma\in(0;\tilde{\beta}_D)$. The estimate \eqref{Appendixeq2} implies that $(\nabla \varphi_n)_{n}$ is a Cauchy sequence in $\mmV^0_{-\gamma}(\Om)$ and thus it   converges in $\mmV^0_{-\gamma}(\Om)$. This ends the proof of the first item. The second one can be shown similarly.
\end{proof}

\begin{proposition}\label{AppendixHelweightedRegularity} Fix $\gamma\in[0;1/2)$. The spaces $\mZ^\gamma_N$ and $\mZ^\gamma_T$ are compactly embedded in $\mmV^0_{-\eta}(\Om)$ for all $\eta<1/2$. Moreover, there is a constant $C>0$ independent of $\u$ such that
$$ \|\u\|_{\mmV^0_{-\eta}(\Om)}\leq C\,\|\curl \u\|_{\mmV^0_{\gamma}(\Om)},\qquad \forall\u\in\mZ^\gamma_N\cup\mZ^\gamma_T.
$$
\end{proposition}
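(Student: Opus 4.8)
The plan is to reduce the weighted div--curl estimate to a \emph{second order} weighted estimate for the scalar Laplacian, for which the isomorphism property of Proposition~\ref{LaplacianPoids} is available, and then to deduce the compactness from the compact Kondratiev embedding of Lemma~\ref{CompacitePoids}. I would treat $\mZ^\gamma_N$ in detail; the case of $\mZ^\gamma_T$ is identical after exchanging the roles of the two boundary conditions. Let $\u\in\mZ^\gamma_N$ and set $\w:=\curl\u\in\mmV^0_\gamma(\Om)$. Since $\div\,\u=0$ in $\Om$, item $i)$ of Proposition~\ref{AppendixHelmoltzclassique} furnishes a unique $\boldsymbol{\psi}\in\mX_T(1)$ with $\u=\curl\boldsymbol{\psi}$, and Proposition~\ref{AppendixWeightedRegularityX(1)} already gives $\boldsymbol{\psi}\in\mmV^0_{-\gamma}(\Om)$ with $\|\boldsymbol{\psi}\|_{\mmV^0_{-\gamma}(\Om)}\le C\,\|\curl\boldsymbol{\psi}\|_{\Om}=C\,\|\u\|_{\Om}$. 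Note also that $\u$ is in fact entirely determined by $\w$: a curl-free, divergence-free field with vanishing tangential trace in the simply connected domain $\Om$ is a gradient of a harmonic function with constant boundary trace, hence vanishes. This injectivity is what makes the final estimate carry no $\Lspace^2$-norm of $\u$ on its right-hand side.

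The heart of the argument is the regularity step for the potential. Because $\div\,\boldsymbol{\psi}=0$, the field $\boldsymbol{\psi}$ solves the vector problem $-\Delta\boldsymbol{\psi}=\curl\curl\boldsymbol{\psi}=\w\in\mmV^0_\gamma(\Om)$, supplemented with the conditions $\boldsymbol{\psi}\cdot\nu=0$ and the natural condition $\curl\boldsymbol{\psi}\times\nu=\u\times\nu=0$ on $\partial\Om\backslash\{O\}$. Away from $O$ this is classical elliptic regularity; near the conical tip I would invoke the Kondratiev/Mellin theory underlying Proposition~\ref{LaplacianPoids}. Starting from $\boldsymbol{\psi}\in\mmV^0_{-\gamma}(\Om)$ and $\Delta\boldsymbol{\psi}\in\mmV^0_\gamma(\Om)$, one upgrades $\boldsymbol{\psi}$ to $\mmV^2_\gamma(\Om)$ \emph{up to} a finite-dimensional singular part built from the homogeneous solutions whose exponents lie in the intermediate strip. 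The key point — and the main obstacle — is to check that these obstructing homogeneous fields are curl-free, so that they do not contribute to $\u=\curl\boldsymbol{\psi}$; this is precisely where the $\Lspace^2$-borderline $r^{-1/2}$ enters and pins down the threshold $\eta<1/2$, since the constraints $\u\in\Lspace^2$ and $\div\,\u=0$ rule out any genuinely curl-carrying singularity stronger than that exponent. Granting this, $\u=\curl\boldsymbol{\psi}\in\mmV^1_\gamma(\Om)$, and the inclusion $\mmV^1_\gamma(\Om)\subset\mmV^0_{\gamma-1}(\Om)\subset\mmV^0_{-\eta}(\Om)$ coming from \eqref{InclusionKondratiev} (valid because $\gamma-1<-1/2\le-\eta$ for $\gamma,\eta<1/2$) yields the desired membership together with the bound $\|\u\|_{\mmV^0_{-\eta}(\Om)}\le C\,(\|\w\|_{\mmV^0_\gamma(\Om)}+\|\u\|_{\Om})$.

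To close, I would remove the $\Lspace^2$-term from the last estimate by the standard Peetre-type argument: if it could not be removed, a normalized sequence would contradict the injectivity noted above together with the compactness established next. Finally, for the compact embedding itself, given a bounded sequence $(\u_n)$ in $\mZ^\gamma_N$, the corresponding potentials $(\boldsymbol{\psi}_n)$ are bounded in $\mmV^2_\gamma(\Om)$ modulo their (uniformly controlled, finite-dimensional) singular parts, so that $(\u_n)=(\curl\boldsymbol{\psi}_n)$ is bounded in $\mmV^1_\gamma(\Om)$. Choosing $\gamma<\gamma'<1-\eta$, Lemma~\ref{CompacitePoids} provides the compact embedding $\mmV^1_\gamma(\Om)\hookrightarrow\mmV^0_{\gamma'-1}(\Om)$, and since $\gamma'-1<-\eta$ one has $\mmV^0_{\gamma'-1}(\Om)\subset\mmV^0_{-\eta}(\Om)$ continuously; hence $(\u_n)$ has a subsequence converging in $\mmV^0_{-\eta}(\Om)$. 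This proves both the a priori estimate and the compactness, the delicate weighted second-order regularity near the tip being the step requiring the most care.
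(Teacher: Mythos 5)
Your overall strategy --- represent $\u=\curl\boldsymbol{\psi}$ with $\boldsymbol{\psi}\in\mX_T(1)$, observe that $-\boldsymbol{\Delta}\boldsymbol{\psi}=\curl\u\in\mmV^0_\gamma(\Om)$, gain weighted second-order regularity for $\boldsymbol{\psi}$ modulo a finite-dimensional \emph{curl-free} defect, and conclude with the compact embedding of Lemma \ref{CompacitePoids} --- is the same as the paper's. But two genuine gaps remain. First, you work globally on $\Om$ and assert $\u=\curl\boldsymbol{\psi}\in\mmV^1_\gamma(\Om)$. This cannot hold in general: away from $O$ the boundary $\partial\Om$ is only Lipschitz, and divergence-free fields with curl in $\Lspace^2$ and vanishing tangential (or normal) trace on a Lipschitz boundary need not be in $\Hspace^1$ up to that boundary. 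The paper therefore splits $\u=\zeta\u+(1-\zeta)\u$: the part supported away from the tip is handled by Weber's $\Lspace^2$-compactness (where the weight is irrelevant since that part vanishes near $O$), and the weighted regularity is claimed only for the localized potential $\chi\boldsymbol{\psi}$ on the conical neighbourhood $\mathscr{O}$, whose cross-section $\varpi$ is smooth.

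Second, and more importantly, the step you introduce with ``Granting this'' is precisely the mathematical content of the proposition, and your heuristic (that $\u\in\Lspace^2$ and $\div\,\u=0$ ``rule out any genuinely curl-carrying singularity stronger than $r^{-1/2}$'') is not a proof of it. What the paper actually uses is (a) the Birman--Solomyak decomposition (\ref{BirmanSolDecompo}), which asserts that the defect of $\Hspace^1$-regularity of $\mY_T(\mathscr{O})$ is spanned by gradients $\nabla S_N$, hence drops out under $\curl$; and (b) Theorem \ref{RegularityMaxwell}, i.e.\ the absence of eigenvalues of the Mellin symbol of the elliptic system $\curl\curl-\nabla\div$ with the boundary conditions of (\ref{MaxwellellipticH}) in the strip $-1<\Re e\,\lambda<0$, which is what allows the regular part $\boldsymbol{\Phi}_n^{\mrm{reg}}$ to reach $\mmV^2_\eta(\mathscr{O})$ for all $\eta\in(1/2;3/2)$ and hence $\curl\boldsymbol{\Phi}_n^{\mrm{reg}}\in\mmV^1_\eta(\mathscr{O})$. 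Both are nontrivial inputs (cited results in the paper); without them your argument does not close. Your side remarks --- the injectivity of $\curl$ on $\mZ^\gamma_N\cup\mZ^\gamma_T$ and the Peetre-type removal of the $\Lspace^2$ term from the estimate --- are correct and consistent with what the paper leaves implicit.
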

\begin{proof}
Let $(\u_n)_{n}$ be a bounded sequence of elements of $\mZ^\gamma_N$. For $n\in\N$, write
\begin{equation}\label{DecompoIniBis}
\u_n=\v_n+\w_n\qquad\mbox{ with }\quad\v_n:=\zeta\u_n,\quad\w_n:=(1-\zeta)\u_n,
\end{equation}
where $\zeta\in\mathscr{C}^{\infty}(\R)$ is a cut-off function such that $\zeta(r)=1$ for $r\le\rho/4$ and $\zeta(r)=0$ for $r\ge \rho/2$. Let us study the behaviours of $(\v_n)$, $(\w_n)$ separately.\\[3pt]
$\star$ We start by $(\w_n)$. As in (\ref{DecompoIni}), using that the support of $\w_n$ does not meet the origin, we find that the sequences $(\w_n)$, $(\curl\w_n)$ and $(\div\,\w_n)$ are bounded in $\Lspace^2(\Om)$. Since additionally there holds $\w_n\times\nu=0$ on $\partial\Om$, we infer from \cite{Webe80} that we can extract a subsequence from $(\u_n)$ such that $(\w_n)$ converges in $\Lspace^2(\Om)$. Moreover, exploiting again that $\w_n$ vanishes in $\Om\cap B(O,\rho/4)$, this ensures that $(\w_n)$ converges in $\mmV_{\eta}^0(\Om)$ for all $\eta\in\R$.\\[3pt]
$\star$ Now we work with $(\v_n)$. The item $i)$ of Proposition \ref{AppendixHelmoltzclassique} ensures that for all $n\in\N$, there is a unique $\boldsymbol{\psi}_n\in\mX_T(1)$ such that $\u_n=\curl\boldsymbol{\psi}_n$. Then we obtain $\curl(\curl\boldsymbol{\psi}_n)=\curl\u_n$ in $\Om\backslash\{O\}$ and so 
\begin{equation}\label{PbConicalTip}
-\boldsymbol{\Delta} \boldsymbol{\psi}_n=\curl\u_n\in\mmV^0_{\gamma}(\Om)
\end{equation}
because we have $\div\,\boldsymbol{\psi}_n=0$ in $\Om\backslash\{O\}$. The desired result will be a consequence of a regularization property for Problem (\ref{PbConicalTip}) that we describe now.\\
\newline
Let $\chi\in\mathscr{C}^{\infty}(\R)$ be the cut-off function appearing in (\ref{DefSpace}) such that $\chi(r)=1$ for $r\le\rho/2$ and $\chi(r)=0$ for $r\ge \rho$. Note that we have $\chi\equiv1$ on the support of $\zeta$. The function $\boldsymbol{\Phi}_n:=\chi\psib_n$ satisfies
\begin{equation}\label{AnnexRegZbetaProof}
\begin{array}{|rcll}
-\boldsymbol{\Delta}\boldsymbol{\Phi}_n&=&\boldsymbol{f}_n:=\chi\curl\u_n-\psib_n\Delta\chi-2\boldsymbol{\nabla}\psib_n\nabla\chi  &\mbox{ in }\mathscr{O}\\[3pt]
\boldsymbol{\Phi}_n\cdot\nu&=&0&\mbox{ on } \partial \mathscr{O}\\[3pt]
\curl(\boldsymbol{\Phi}_n)\times\nu&=&(\nabla\chi\times\psib_n)\times\nu+\chi\curl\psib_n\times\nu =0&\mbox{ on } \partial \mathscr{O}.
\end{array}
\end{equation}
To obtain the third line above, we have used in particular that on $\partial\mathscr{O}$, both $\psib_n$ and $\nabla\chi$ are tangential. Working with localization functions from the equation (\ref{PbConicalTip}),  one can show that there holds $\psib_n\in\Hspace^1(\mathscr{O}\backslash\overline{B(O,\rho/2)})$ with the estimate
\[
\|\psib_n\|_{\Hspace^1(\mathscr{O}\backslash\overline{B(O,\rho/2)})} \le C\,(\|\curl\u_n\|_{\mmV^0_\gamma(\Om)}+\|\u_n\|_{\Om)}). 
\]
Here and below, $C>0$ is a constant which may change from one line to another but which remains independent of $n$. From this, we infer that the $\boldsymbol{f}_n$ in (\ref{AnnexRegZbetaProof}) is an element of $\mmV^0_{\eta}(\mathscr{O})$ for all $\eta\in(1/2;3/2)$ (because in this case $\mmV^0_{\gamma}(\mathscr{O})$ is continuously embedded in $\mmV^0_{\eta}(\mathscr{O})$) and we have
\[
\|\boldsymbol{f}_n\|_{\mmV^0_\eta(\mathscr{O})} \le C\,(\|\curl\u_n\|_{\mmV^0_\gamma(\Om)}+\|\u_n\|_{\Om)}).
\]
On the other hand, from the Birman-Solomyak decomposition  (\ref{BirmanSolDecompo}), for all $n\in\N$, we have
\[
\boldsymbol{\Phi}_n=\boldsymbol{\Phi}_n^{\mrm{reg}}+\nabla\varphi_n
\]
with $\boldsymbol{\Phi}_n^{\mrm{reg}}\in\Hspace_T(\mathscr{O})$ and $\varphi_n\in S_N$ such that $\nabla\varphi_n\in\mY_T(\mathscr{O})$ (simply take $\varphi_n\equiv0$ when $\mathscr{O}$ is convex). This comes with the estimate
\[
\|\nabla\varphi_n\|_{\mathscr{O}}+\|\Delta\varphi_n\|_{\mathscr{O}} \le C\,(\|\div\,\boldsymbol{\Phi}_n\|_{\mathscr{O}}+\|\curl\boldsymbol{\Phi}_n\|_{\mathscr{O}}) \le C\,\|\curl\psib_n\|_{\Om}=C\,\|\u_n\|_{\Om}.
\]
Using that $S_N$ is of finite dimension and spanned by some functions whose laplacian belongs to $\mathscr{C}^{\infty}(\mathscr{O}\backslash\{O\})$, we deduce that $\boldsymbol{\Phi}_n^{\mrm{reg}}$ solves the problem
\begin{equation}\label{AnnexRegZbetaProofBis}
\begin{array}{|rcll}
-\boldsymbol{\Delta}\boldsymbol{\Phi}_n^{\mrm{reg}}&=&\tilde{\boldsymbol{f}}_n:=\boldsymbol{f}_n+\nabla(\Delta\varphi_n) &\mbox{ in }\mathscr{O}\\[3pt]
\boldsymbol{\Phi}_n^{\mrm{reg}}\cdot\nu&=&0&\mbox{ on } \partial \mathscr{O}\\[3pt]
\curl(\boldsymbol{\Phi}_n^{\mrm{reg}})\times\nu&=& 0&\mbox{ on } \partial \mathscr{O}
\end{array}
\end{equation}
where $\tilde{\boldsymbol{f}}_n$ is such that for all $\eta\in(1/2;3/2)$, 
\[
\|\tilde{\boldsymbol{f}}_n\|_{\mmV^0_\eta(\mathscr{O})} \le C\,(\|\curl\u_n\|_{\mmV^0_\gamma(\Om)}+\|\u_n\|_{\Om)}).
\]
As a consequence of Theorem \ref{RegularityMaxwell}, we infer that $\boldsymbol{\Phi}_n^{\mrm{reg}}$ belongs to $\mmV^2_\eta(\mathscr{O})$ for all $\eta\in(1/2;3/2)$ with the estimate 
\[
\|\boldsymbol{\Phi}_n^{\mrm{reg}}\|_{\mmV^2_\eta(\mathscr{O})} \le C\,(\|\curl\u_n\|_{\mmV^0_\gamma(\Om)}+\|\u_n\|_{\Om)}).
\]
Therefore $(\curl\boldsymbol{\Phi}_n^{\mrm{reg}})$ is bounded in $\mmV^1_\eta(\mathscr{O})$. Since there holds 
\[
\v_n=\zeta\u_n=\zeta\curl\psib_n=\zeta\curl\boldsymbol{\Phi}_n=\zeta\curl\boldsymbol{\Phi}_n^{\mrm{reg}}
\]
because $\chi=1$ on the support of $\zeta$, we infer that $(\v_n)$ is bounded in $\mmV^1_\eta(\Om)$. From Lemma \ref{CompacitePoids}, this implies that we can extract a subsequence such that $(\v_n)$ converges in $\mmV^0_{\eta-1+\delta}(\Om)$ for all $\delta>0$.\\
\newline
Finally, gathering the results for $(\v_n)$ and $(\w_n)$ in the decomposition (\ref{DecompoIniBis}) yields the statement for the space $\mZ^\gamma_N$. The proof for $\mZ^\gamma_T$ can be obtained by obvious modifications of the above lines.
\end{proof}

\subsection{Density result} 
For $\gamma\in\R$, set
\begin{equation}\label{defHcurlDblePoids}
\Hspace^{-\gamma,\gamma}_N(\curl):=\{\u\in\mmV^0_{-\gamma}(\Om)\,|\,\curl\u\in\mmV^0_\gamma(\Om),\,\u\times\nu=0\mbox{ on } \partial\Om\backslash\{O\}\}
\end{equation}
\begin{proposition}\label{AppednixResDensity}
The space $\Cgras^{\infty}_0(\Om\backslash\{O\})$ is dense in  $\Hspace^{-\gamma,\gamma}_N(\curl)$ for $\gamma\in[0;1/2)$.
\end{proposition}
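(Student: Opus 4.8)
The plan is to split an arbitrary $\u\in\Hspace^{-\gamma,\gamma}_N(\curl)$ into a gradient part, which is dense for trivial reasons (it is essentially the definition of the Kondratiev space), and a solenoidal part, which turns out to be regular enough near $O$ to be truncated. First I would set $\bff:=\curl\u\in\mmV^0_\gamma(\Om)$. Then $\div\bff=0$ in $\Om\backslash\{O\}$, and since $\u\times\nu=0$ the tangential trace condition forces $\bff\cdot\nu=0$ on $\partial\Om\backslash\{O\}$ (the normal component of a curl is a surface-differential operator applied to $\u\times\nu$). Because $\gamma<1/2\le\tilde{\gamma}_N$, the second item of Proposition~\ref{AppendixWeightedPotenials} gives a (unique) $\boldsymbol\psi\in\mZ^\gamma_N$ with $\curl\boldsymbol\psi=\bff=\curl\u$. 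Consequently $\u-\boldsymbol\psi$ is curl-free in $\Om\backslash\{O\}$; since $\Om$, and hence $\Om\backslash\{O\}$ (a point has codimension three), is simply connected, $\u-\boldsymbol\psi=\nabla p$ for a scalar $p$. As both $\u$ and $\boldsymbol\psi$ have vanishing tangential trace, $\nabla p\times\nu=0$, so $p$ is constant on the connected boundary; subtracting that constant and invoking a weighted Hardy inequality together with $\nabla p=\u-\boldsymbol\psi\in\mmV^0_{-\gamma}(\Om)$ (note $\mZ^\gamma_N\subset\mmV^0_{-\gamma}(\Om)$ by Proposition~\ref{AppendixHelweightedRegularity}), I would obtain $p\in\mathring{\mV}^1_{-\gamma}(\Om)$.

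The gradient part is then immediate: by definition $\mathring{\mV}^1_{-\gamma}(\Om)$ is the closure of $\mathscr{C}^{\infty}_0(\Om\backslash\{O\})$ for the $\mV^1_{-\gamma}$–norm, so there exist $\varphi_n\in\mathscr{C}^{\infty}_0(\Om\backslash\{O\})$ with $\varphi_n\to p$ in $\mV^1_{-\gamma}(\Om)$. Hence $\nabla\varphi_n\to\nabla p$ in $\mmV^0_{-\gamma}(\Om)$ while $\curl\nabla\varphi_n=0=\curl\nabla p$, i.e. $\nabla\varphi_n\to\nabla p$ in $\Hspace^{-\gamma,\gamma}_N(\curl)$ with $\nabla\varphi_n\in\Cgras^{\infty}_0(\Om\backslash\{O\})$. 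It therefore only remains to approximate the solenoidal field $\boldsymbol\psi$.

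For $\boldsymbol\psi$ the plan is to truncate near $O$ using the cut-off $\chi$ of (\ref{DefSpace}), writing $\chi_\delta(x):=\chi(r/\delta)$, so that $\chi_\delta=0$ for $r\le\delta$, $\chi_\delta=1$ for $r\ge2\delta$ and $|\nabla\chi_\delta|\le C/\delta$. Then $\chi_\delta\boldsymbol\psi\to\boldsymbol\psi$ in $\mmV^0_{-\gamma}(\Om)$ by dominated convergence, and $\curl(\chi_\delta\boldsymbol\psi)=\chi_\delta\curl\boldsymbol\psi+\nabla\chi_\delta\times\boldsymbol\psi$, the first term converging to $\curl\boldsymbol\psi$ in $\mmV^0_\gamma(\Om)$. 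The commutator would be controlled by
\[
\|\nabla\chi_\delta\times\boldsymbol\psi\|_{\mmV^0_\gamma(\Om)}^2\le \frac{C}{\delta^2}\int_{\delta\le r\le 2\delta}r^{2\gamma}|\boldsymbol\psi|^2\,dx\le C\int_{\delta\le r\le 2\delta}r^{2\gamma-2}|\boldsymbol\psi|^2\,dx,
\]
using $r^{2\gamma}/\delta^2\le 4\,r^{2\gamma-2}$ on the annulus $\{\delta\le r\le 2\delta\}$. The right-hand side is the tail of $\|\boldsymbol\psi\|^2_{\mmV^0_{\gamma-1}(\Om)}$, hence tends to $0$ as $\delta\to0$ \emph{provided} $\boldsymbol\psi\in\mmV^0_{\gamma-1}(\Om)$. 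Granting this, $\chi_\delta\boldsymbol\psi\to\boldsymbol\psi$ in $\Hspace^{-\gamma,\gamma}_N(\curl)$; and since each $\chi_\delta\boldsymbol\psi$ vanishes near $O$ it lies in $\Hspace_N(\curl)$, with the weighted norm equivalent to the $\Hspace(\curl)$–norm on its support $\{r\ge\delta\}$, so the classical density of $\mathscr{C}^{\infty}_0(\Om)^3$ in $\Hspace_N(\curl)$, followed by multiplication by a fixed cut-off equal to $1$ on $\supp(\chi_\delta\boldsymbol\psi)$ and vanishing near $O$, yields approximants in $\Cgras^{\infty}_0(\Om\backslash\{O\})$. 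A diagonal argument combined with the gradient part then concludes.

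The main obstacle is precisely the commutator estimate at the critical weight, that is, establishing $\boldsymbol\psi\in\mmV^0_{\gamma-1}(\Om)$ with $\|\boldsymbol\psi\|_{\mmV^0_{\gamma-1}(\Om)}\le C\,\|\curl\boldsymbol\psi\|_{\mmV^0_\gamma(\Om)}$. The embedding recorded in Proposition~\ref{AppendixHelweightedRegularity}, namely $\mZ^\gamma_N\subset\mmV^0_{-\eta}(\Om)$ for $\eta<1/2$, is half a power too weak for this, and one checks that $\boldsymbol\psi\in\mmV^0_{-\gamma}(\Om)$ alone can never kill the commutator. What rescues the argument is exactly that $\curl\boldsymbol\psi\in\mmV^0_\gamma(\Om)$ with $\gamma<1/2$: this keeps the Mellin weight line strictly below the first exceptional value of the solenoidal (vector-Laplace) problem at the conical tip—the one responsible for the borderline $r^{-1/2}$ behaviour—so that the curl does control the field in the $r^{-1}$–weighted norm. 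Proving this sharp weighted bound by the same Birman--Solomyak plus weighted elliptic regularity route as in the proof of Proposition~\ref{AppendixHelweightedRegularity}, now keeping the weight at $\gamma-1$ instead of relaxing it to $-\eta$, is the only delicate point; the rest is a routine assembly of the decomposition and of the classical density theorem.
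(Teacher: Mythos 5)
Your global architecture (split $\u$ into a gradient part, handled by the very definition of $\mathring{\mV}^1_{-\gamma}(\Om)$, plus a divergence-free part $\boldsymbol{\psi}\in\mZ^\gamma_N$) matches the paper's, but your treatment of the divergence-free part has a genuine gap. Everything hinges on the commutator bound, i.e.\ on the estimate $\|\boldsymbol{\psi}\|_{\mmV^0_{\gamma-1}(\Om)}\le C\,\|\curl\boldsymbol{\psi}\|_{\mmV^0_{\gamma}(\Om)}$ with $\gamma-1\in[-1;-1/2)$. This is strictly stronger than anything the paper establishes: Proposition \ref{AppendixHelweightedRegularity} only gives $\mZ^\gamma_N\subset\mmV^0_{-\eta}(\Om)$ for $\eta<1/2$, i.e.\ weights strictly above $-1/2$, and as you yourself compute, a field behaving like $r^{-1+\epsilon}$ with small $\epsilon$ is compatible with that embedding yet makes $\delta^{-2}\int_{\delta\le r\le 2\delta}r^{2\gamma}|\boldsymbol{\psi}|^2\,dx$ blow up. Your suggested fix --- rerun the Birman--Solomyak plus weighted regularity argument ``keeping the weight at $\gamma-1$'' --- is not a bookkeeping matter: it would require $\boldsymbol{\Phi}^{\mrm{reg}}\in\mmV^2_{\eta}(\mathscr{O})$ for some $\eta\le 1/2$, and the isomorphism of Theorem \ref{RegularityMaxwell} stops exactly at $\eta=1/2$ because the Mellin symbol of the vector Laplacian has an eigenvalue at $\lambda=0$ (constant fields, whose membership in $\mmV^2_\eta$ fails for $\eta\le1/2$). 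Crossing that threshold forces a detached-asymptotics argument (extract the constant term, whose curl happens to vanish) \emph{and} a geometry-dependent verification that $\mathscr{L}_E$ has no other eigenvalue in $(0;1/2-\gamma]$ with a non-curl-free eigenfield; for the boundary case of (\ref{ChoixCase}) this is not addressed anywhere in the paper and is not obviously true. So as written the proof is incomplete at its one load-bearing step.

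The paper avoids the truncation entirely, and that is the trick you are missing. After localizing near $O$ and writing $\zeta\u=\nabla\varphi+\curl\boldsymbol{\psi}$ via Proposition \ref{AppendixHelmholtzWeighted}, it uses the \emph{interior} conclusion of the proof of Proposition \ref{AppendixHelweightedRegularity}: near the tip the solenoidal part equals $\zeta\curl\boldsymbol{\Phi}^{\mrm{reg}}$ with $\boldsymbol{\Phi}^{\mrm{reg}}\in\mmV^2_{\eta}(\mathscr{O})$ for $\eta$ slightly above $1/2$, hence the solenoidal part lies in the full weighted space $\mmV^1_{\eta}(\mathscr{O})$ with vanishing tangential trace. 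One then approximates it directly by elements of $\Cgras^{\infty}_0(\mathscr{O}\backslash\{O\})$ in the $\mmV^1_{\eta}$-norm; convergence of the fields in $\mmV^0_{-\gamma}$ and of their curls in $\mmV^0_{\gamma}$ follows from the embeddings $\mmV^1_{\eta}\subset\mmV^0_{\eta-1}\subset\mmV^0_{-\gamma}$ (valid precisely because $\gamma<1/2$ lets one pick $\eta\in(1/2;1-\gamma)$). No commutator with a cut-off concentrating at $O$ ever appears, so no weight below $-1/2$ is ever needed. If you replace your truncation step by this density-in-$\mmV^1_{\eta}$ argument, the rest of your proof goes through.
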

\begin{proof}
Let $\zeta\in\mathscr{C}^{\infty}(\R)$ be the cut-off function appearing in the proof of Proposition \ref{AppendixHelweightedRegularity} such that $\zeta(r)=1$ for $r\le\rho/4$ and $\zeta(r)=0$ for $r\ge \rho/2$. Consider some $\u$ in $\Hspace^{-\gamma,\gamma}_N(\curl)$. Observe that $(1-\zeta)\u$ belongs to $\Hspace_N(\curl)$ and vanishes in a neighbourhood of $O$. Therefore, classically, it can be approximated by a sequence of elements of $\Cgras^{\infty}_0(\Om\backslash\{O\})$. Now let us focus our attention on the approximation of $\zeta\u$. Since $\zeta\u\in\mmV^0_{-\gamma}(\Om)$, according to Proposition \ref{AppendixHelweightedRegularity}, we have the decomposition
\[
\zeta\u=\nabla  \varphi+\curl \psib
\]
with $\varphi\in\mathring{\mV}^1_{-\gamma}(\Om)$ and $\boldsymbol{\psi}\in\mX_T(1)$ such that $\curl\boldsymbol{\psi}\in\mmV^0_{-\gamma}(\Om)$. Using that $\zeta\chi=\zeta$, we get
\begin{equation}\label{DecompoLocale}
\zeta\u=\chi\nabla  \varphi+\chi\curl \psib.
\end{equation}
The proof of Proposition \ref{AppendixHelweightedRegularity} ensures that $\chi\curl \psib$ belongs to 
\begin{equation}\label{defHV}
\Hspace^\gamma_N(\mathscr{O}):=\{\u\in\mmV^1_{\gamma}(\mathscr{O})\,|\,\u\times\nu=0\mbox{ on } \partial\mathscr{O}\backslash\{O\}\}.
\end{equation}
Since $\mathscr{C}^{\infty}_0(\Om\backslash\{O\})$ is dense in $\mathring{\mV}^1_{-\gamma}(\Om)$, we can find a sequence $(\varphi_n)$ of elements of $\mathscr{C}^{\infty}_0(\Om\backslash\{O\})$ which converges to $\varphi$ in $\mathring{\mV}^1_{-\gamma}(\Om)$. Then we get
\[
\begin{array}{ll}
&\|\chi\nabla\varphi-\chi\nabla\varphi_n\|_{\mmV^0_{-\gamma}(\mathscr{O})}+\|\curl(\chi\nabla\varphi)-\curl(\chi\nabla\varphi_n)\|_{\mmV^0_{\gamma}(\mathscr{O})}\\[3pt]
=&\|\chi\nabla(\varphi-\varphi_n)\|_{\mmV^0_{-\gamma}(\mathscr{O})}+\|\nabla\chi\times\nabla(\varphi-\varphi_n)\|_{\mmV^0_{\gamma}(\mathscr{O})}\underset{n\to+\infty}{\longrightarrow}0.
\end{array}
\]
On the other hand, using that $\Cgras^{\infty}_0(\mathscr{O}\backslash\{O\})$ is dense in $\Hspace^\gamma_N(\mathscr{O})$, we can find a sequence $(\boldsymbol{\phi}_n)$ of elements of $\Cgras^{\infty}_0(\mathscr{O}\backslash\{O\})$ which converges to $\chi\curl \psib$ in $\Hspace^\gamma_N(\mathscr{O})$. This allows us to write
\[
\|\chi\curl \psib-\boldsymbol{\phi}_n\|_{\mmV^0_{-\gamma}(\mathscr{O})}+\|\curl(\chi\curl \psib)-\curl\boldsymbol{\phi}_n\|_{\mmV^0_{\gamma}(\mathscr{O})} \le C\,\|\chi\curl \psib-\boldsymbol{\phi}_n\|_{\mmV^1_{\gamma}(\mathscr{O})}\underset{n\to+\infty}{\longrightarrow}0.
\]
Above we used that for $\gamma\in[0;1/2)$, there holds $\gamma-1<-\gamma$ which ensures that we have the continuous embeddings $\mmV^1_{\gamma}(\mathscr{O})\subset\mmV^0_{\gamma-1}(\mathscr{O})\subset\mmV^0_{-\gamma}(\mathscr{O})$. From (\ref{DecompoLocale}), this shows that $(\chi\nabla\varphi_n+\boldsymbol{\phi}_n)$ is a sequence of elements of $\Cgras^{\infty}_0(\mathscr{O}\backslash\{O\})$ which converges to $\zeta\u$ in $\Hspace^{-\gamma,\gamma}_N(\curl)$. This ends the proof.
\end{proof}

\subsection{Homogeneous Maxwell's operators in domains with conical tips}

In this paragraph, we present regularity results concerning the Maxwell's operators in domains with conical tips. We work in $\mathscr{O}=\Om \cap B(O,\rho)$ and consider the problems
\begin{equation}
\label{MaxwellellipticE}
\begin{array}{|rcll}
\curl(\curl \u)-\nabla \div\,\u&=& \boldsymbol{f}&\mbox{ in } \mathscr{O}\\
\u\times\nu&=&0&\mbox{ on }\partial \mathscr{O}\\
\div\,\u&=&0&\mbox{ on }\partial \mathscr{O}
\end{array}
\end{equation}
\begin{equation}
\label{MaxwellellipticH}
\begin{array}{|rcll}
\curl(\curl \u)-\nabla \div\,\u&=& \boldsymbol{f}&\mbox{ in } \mathscr{O}\\
\u\cdot \nu&=&0&\mbox{ on }\partial \mathscr{O}\\
\curl\u\times \nu&=&0&\mbox{ on }\partial \mathscr{O}.
\end{array}
\end{equation}
where $\boldsymbol{f}\in\mmV^0_{\gamma}(\mathscr{O})$ for all $\gamma>1/2$. Define the bilinear form $a(\cdot,\cdot)$ such that
$$ 
a(\u,\v)=\int_\mathscr{O}\curl\u\cdot\curl\v+\div(\u)\div(\v)\,dx
$$
and introduce the problems
\begin{equation}\label{StudyReguMaxwell} 
\begin{array}{|l}
\mbox{Find } \u\in\Hspace_N(\mathscr{O}) \mbox{ such that }\\[2pt]
 a(\u,\v)=\int_{\Om}\boldsymbol{f}\cdot\v\,dx,\quad\forall\v\in\Hspace_N(\mathscr{O}),
\end{array}\qquad\quad\begin{array}{|l}
\mbox{Find } \u\in\Hspace_T(\mathscr{O}) \mbox{ such that }\\[2pt]
 a(\u,\v)=\int_{\Om}\boldsymbol{f}\cdot\v\,dx,\quad\forall\v\in\Hspace_T(\mathscr{O}).
\end{array}
\end{equation}
Observe that if $\v\in\Hspace^1(\mathscr{O})=\mmV^1_{0}(\mathscr{O})$, then $\v\in\mmV^0_{-1}(\mathscr{O})$ which guarantees that the right hand sides in the above problems define linear forms respectively on $\Hspace_N(\mathscr{O})$, $\Hspace_T(\mathscr{O})$. On the other hand, it is known (see \cite{Cost91}) that $a(\cdot,\cdot)$ is coercive both in $\Hspace_N(\mathscr{O})$ and in $\Hspace_T(\mathscr{O})$. Therefore the problems (\ref{StudyReguMaxwell}) admit unique solutions which solve respectively (\ref{MaxwellellipticE}), (\ref{MaxwellellipticH}). The next theorem presents a weighted regularity result for these solutions.

\begin{theorem}\label{RegularityMaxwell} ~\\[2pt]
$\bullet$ The unique solution of (\ref{MaxwellellipticE}) in $\Hspace_N(\mathscr{O})$ belongs to $\mmV^2_{\gamma}(\mathscr{O})$ for all $\gamma>1/2$.\\[2pt]
$\bullet$ The unique solution of (\ref{MaxwellellipticH}) in $\Hspace_T(\mathscr{O})$ belongs to $\mmV^2_{\gamma}(\mathscr{O})$ for all $\gamma>1/2$.\\[2pt]
Moreover, for each solution, we have the estimate, for all $\gamma\in(1/2;3/2)$,
\[
\|\u\|_{\mmV^2_{\gamma}(\mathscr{O})} \le C\,\|\boldsymbol{f}\|_{\mmV^0_{\gamma}(\mathscr{O})}
\]
where $C>0$ is independent of $\boldsymbol{f}$.
\end{theorem}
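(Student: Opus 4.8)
The key observation is the identity $\curl(\curl\u)-\nabla\div\,\u=-\boldsymbol{\Delta}\u$, where $\boldsymbol{\Delta}$ acts componentwise, so that both \eqref{MaxwellellipticE} and \eqref{MaxwellellipticH} are the vector Poisson equation $-\boldsymbol{\Delta}\u=\boldsymbol{f}$ completed, respectively, by the electric and magnetic conditions, which are exactly the natural conditions of the coercive form $a(\cdot,\cdot)$ on $\Hspace_N(\mathscr{O})$ and $\Hspace_T(\mathscr{O})$. The plan is to obtain the weighted regularity by localization. Away from $O$ the boundary $\partial\mathscr{O}$ is smooth, so classical elliptic regularity up to the boundary gives $\u\in\Hspace^2$ there; in case $1$, where $\mathscr{O}=B(O,\rho)$ is smooth and $O$ is interior, this already yields the full statement. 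It therefore remains to treat a neighbourhood of the vertex in case $2$: I would cut off with $\chi$ and analyse $\chi\u$, which satisfies $-\boldsymbol{\Delta}(\chi\u)=\chi\boldsymbol{f}+\boldsymbol{g}$ on the infinite cone $\mathcal{K}$, the commutator $\boldsymbol{g}$ being supported in $\{\rho/2\le r\le\rho\}$ and hence as regular as needed.

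On $\mathcal{K}$ I would apply the Mellin transform in $r$, which turns the problem into the operator pencil $\mathfrak{A}(\lambda)$ on the cross-section $\varpi$ associated with $(-\boldsymbol{\Delta},\text{electric/magnetic b.c.})$. The standard weighted theory for elliptic systems in cones (\cite{KoMR97}) then asserts that the a priori $\mmV^1_0=\Hspace^1$ solution belongs to $\mmV^2_\gamma(\mathscr{O})$, with the quantitative estimate, as soon as the line $\{\Re\lambda=1/2-\gamma\}$ carries no eigenvalue of $\mathfrak{A}$ and no eigenvalue separates it from the energy line $\{\Re\lambda=-1/2\}$ encoding the $\Hspace^1$ information. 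Since $r^\lambda\in\mmV^2_\gamma$ locally precisely when $\Re\lambda>1/2-\gamma$, the range $\gamma\in(1/2;3/2)$ sweeps the strip $-1<\Re\lambda<0$, so the whole matter reduces to proving that $\mathfrak{A}$ has no eigenvalue in this open strip.

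This spectral analysis of the vector pencil is the point I expect to be the main obstacle, since the two scalar boundary conditions do not decouple the Cartesian components. I would handle the coupling through the derived scalar quantities: for \eqref{MaxwellellipticE} the field $\div\,\u$ solves a scalar \emph{Dirichlet} Laplace problem (its trace vanishes by the very boundary condition $\div\,\u=0$), while the normal trace of $\curl\,\u$ vanishes because $\u\times\nu=0$; for \eqref{MaxwellellipticH} the normal and tangential roles are exchanged and one meets the scalar \emph{Neumann} problem. Consequently the exponents of $\mathfrak{A}$ in the critical strip are governed by the symbols $\mathscr{L}_D$, $\mathscr{L}_N$ of \eqref{defGammaDN}, and the smoothness of $\partial\varpi$ yields, via the remark following Proposition \ref{LaplacianPoids}, $\gamma_D,\gamma_N\ge 1/2$; this is exactly what keeps the strip $-1<\Re\lambda<0$ free of scalar eigenvalues. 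The remaining vector exponents at the edge of the strip are those of the gradient fields isolated by the Birman–Solomyak decomposition \eqref{BirmanSolDecompo}; these do not belong to $\Hspace^1$ and are therefore absent from our solution $\u\in\Hspace_N(\mathscr{O})$ (resp. $\Hspace_T(\mathscr{O})$).

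It then remains to assemble the pieces. Transporting the weighted estimate on $\mathcal{K}$ back to $\mathscr{O}$, reabsorbing $\boldsymbol{g}$ and combining with the classical $\Hspace^2$ bound away from $O$, gives $\u\in\mmV^2_\gamma(\mathscr{O})$ together with $\|\u\|_{\mmV^2_\gamma(\mathscr{O})}\le C\,\|\boldsymbol{f}\|_{\mmV^0_\gamma(\mathscr{O})}$ for every $\gamma\in(1/2;3/2)$; membership for all $\gamma>1/2$ follows from the inclusion $\mmV^2_{\gamma'}(\mathscr{O})\subset\mmV^2_\gamma(\mathscr{O})$ when $\gamma'\le\gamma$. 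Finally, to identify the field produced by the Mellin argument with the variational solution of \eqref{StudyReguMaxwell}, I would note that their difference lies in $\Hspace_N(\mathscr{O})$ (resp. $\Hspace_T(\mathscr{O})$), solves the homogeneous problem, and hence vanishes by the coercivity of $a(\cdot,\cdot)$, which closes the argument.
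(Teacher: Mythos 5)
Your overall strategy is the one the paper follows: reduce everything, via the Mellin transform and the Kondratiev machinery, to the assertion that the symbol of the system $(-\boldsymbol{\Delta},\ \u\times\nu=0,\ \div\,\u=0)$ (resp.\ the magnetic conditions) has no eigenvalue in the open strip $-1<\Re e\,\lambda<0$, and then sweep $\gamma$ through $(1/2;3/2)$. The one place where you genuinely depart from the paper is that the paper does not prove this spectral fact — it invokes \cite{KoMR98} and \cite[\S4.f]{CoDa00} — whereas you try to derive it from the scalar symbols $\mathscr{L}_D$, $\mathscr{L}_N$ together with the Birman--Solomyak decomposition. That derivation is exactly where your argument has a gap.

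The passage from scalar to vector exponents is not the identity: it shifts them, and the shift pushes eigenvalues into the strip. Let $s_D=r^{\lambda_D}\varphi_D(\boldsymbol{\theta})$ be the first Dirichlet-harmonic homogeneous function in the cone, where $\lambda_D=-1/2+\sqrt{1/4+\Lambda_1}$ and $\Lambda_1$ is the first Dirichlet eigenvalue of the Laplace--Beltrami operator on $\varpi$. Then $\nabla s_D$ solves the homogeneous problem \eqref{MaxwellellipticE}: it is componentwise harmonic, $\nabla s_D\times\nu=0$ because $s_D$ vanishes identically on the lateral boundary, and $\div(\nabla s_D)=\Delta s_D=0$. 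It is homogeneous of degree $\lambda_D-1$, which lies \emph{inside} $(-1;0)$ as soon as $\lambda_D<1$, i.e.\ as soon as $\Lambda_1<2$ — that is, for every cone larger than a half-space, a situation the geometric setting of case 2 allows and which is precisely the one where the spaces $S_D$, $S_N$ in \eqref{BirmanSolDecompo} can be nontrivial. So the bound $\gamma_D,\gamma_N\ge1/2$ keeps the \emph{scalar} strip free, not the vector one, and these exponents are not "at the edge". Your way of discarding them ("they do not belong to $\Hspace^1$ and are therefore absent from our solution") does not close the gap for two reasons. First, the Fredholm property of the operators on the intermediate lines, which you need in order to move $\gamma$ across $(1/2;3/2)$, is governed by the spectrum of the pencil, not by which singular functions happen to occur in one particular solution. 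Second, the symmetry of the pencil about $\Re e\,\lambda=-1/2$ forces the dual eigenvalue $-\lambda_D=-1-(\lambda_D-1)$, also in the strip, whose eigenfunction can be written explicitly as a combination of $\boldsymbol{x}q$ and $r^2\nabla q$ with $q=r^{-1-\lambda_D}\varphi_D$; it satisfies both boundary conditions, has nonvanishing divergence, and \emph{does} belong to $\Hspace^1$ near the tip when $\lambda_D<1/2$, so nothing in your argument prevents it from appearing in the variational solution and destroying $\mmV^2_\gamma$-regularity for $\gamma$ close to $1/2$. Until the eigenvalue distribution of the vector pencil itself is controlled — which is the content of the references the paper leans on, and cannot be replaced by the scalar information alone — your proof is incomplete at its central step.
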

\begin{proof}
We only give the strategy to prove the first item, the second one can be obtained similarly.  For $\gamma\in\R$, define the continuous operator $\mathcal{A}_N^\gamma:\Hspace^\gamma_N(\mathscr{O})\to (\Hspace^{-\gamma}_N(\mathscr{O}))^\ast$ such that 
$$\langle \mathcal{A}_N^\gamma \u,\v\rangle= a(\u,\v),\qquad \forall\u\in \Hspace^{\gamma}_N(\mathscr{O}),\ \v\in \Hspace^{-\gamma}_N(\mathscr{O}).
$$
Here $\Hspace^{\pm\gamma}_N(\mathscr{O})$ stand for the spaces appearing in (\ref{defHV}) and such that
$$
\Hspace^{\pm\gamma}_N(\mathscr{O})=\{\u\in\mmV^1_{\pm\gamma}(\mathscr{O})\,|\,\u\times\nu=0\mbox{ on } \partial\mathscr{O}\backslash\{O\}\}
$$
Observe that for $\gamma\le0$, we have $\Hspace^\gamma_N(\mathscr{O})\subset \Hspace^0_N(\mathscr{O})=\Hspace_N(\mathscr{O})$. As mentioned in \cite{CoDa00}, the system is elliptic. As a consequence, we can apply the Kondratiev approach \cite{kond67} to study it. Denote by $\mathscr{L}_E(\cdot)$  the symbol obtained when applying the Mellin transform to \eqref{MaxwellellipticE}. The ellipticity of the problem allows one to show that the spectrum of $\mathscr{L}_E(\cdot)$ is  discrete. Additionally, using that $\curl \curl \u-\nabla \div\,\u=-\boldsymbol{\Delta} \u$ and adapting \cite{KoMR98}, one can show that $\mathscr{L}_E(\cdot)$ has no eigenvalue in the energy strip (as called in \cite{KoMR01}) $\{\lambda\in\Cplx\,|\,\Re e\,\lambda\in(-1;0)\}$. Note that this result is also mentioned at the end of \cite[\S4.f]{CoDa00}. 
As a consequence, the general theory presented in \cite{KoMR97} ensures that for all $\gamma\in(-1/2;1/2)$, $\mathcal{A}_N^\gamma$ is Fredholm of index zero. Moreover we know that $\ker\,\mathcal{A}_N^\gamma$ is independent of $\gamma\in(-1/2;1/2)$. Since $\mathcal{A}_N^0$ is injective (because we have $\Hspace^0_N(\mathscr{O})=\Hspace_N(\mathscr{O})$), we infer that  $\mathcal{A}_N^\gamma$ is an isomorphism for all $\gamma\in(-1/2;1/2)$. This ensures that the solution in $\Hspace_N(\mathscr{O})$ belongs to $\mmV^1_{\gamma}(\mathscr{O})$ for all $\gamma\in(-1/2;1/2)$. Additionally, since $\partial\varpi$ is smooth, the Kondratiev theory \cite{kond67} guarantees that $\boldsymbol{\Delta}$ is an isomorphism from 
\[
\{\u\in\mmV^2_{\gamma}(\mathscr{O})\,|\,\u\times\nu=0\mbox{ on }\partial\mathscr{O}\backslash\{O\},\, \div\,\u=0\mbox{ on }\partial\mathscr{O}\backslash\{O\}\}
\]
to $\mmV^0_{\gamma}(\mathscr{O})$ for all $\gamma\in(1/2;3/2)$. As a consequence, the solution in $\Hspace_N(\mathscr{O})$ also belongs to $\mmV^2_{\gamma}(\mathscr{O})$ for all $\gamma\in(1/2;3/2)$. This ends the proof.
\end{proof}

\bibliography{bib.bib}
\bibliographystyle{plain}
\end{document}